\documentclass[reqno,11pt,titlepage]{amsart}
\usepackage[foot]{amsaddr}
\usepackage{amssymb}
\usepackage{amsthm}
\usepackage{amsmath}
\usepackage[utf8]{inputenc}
\usepackage[margin=1.0in]{geometry}
\usepackage{pst-node}
\usepackage{tikz-cd} 
\usepackage{mathabx}
\usepackage{mathrsfs}
\usepackage{mathscinet}

\sloppy

\newtheorem{theorem}{Theorem}[section]
\newtheorem{lemma}[theorem]{Lemma}
\newtheorem{proposition}[theorem]{Proposition}
\newtheorem{corollary}[theorem]{Corollary}

\newtheorem{definition}[theorem]{Definition}

\theoremstyle{remark}
\newtheorem{remark}[theorem]{Remark}

\usepackage[T1]{fontenc}
\catcode`@=11
\def\underarrow#1{\mathop{\vtop{\m@th\ialign{##\crcr
$\hfil\displaystyle{#1}\hfil$\crcr
\noalign{\kern3pt\nointerlineskip}
\hfil$\uparrow$\hfil\crcr\noalign{\kern3pt}}}}\limits}
\catcode`@=12

\usepackage[english]{babel}
\usepackage[utf8]{inputenc}

\usepackage{amsfonts}
\def\lim{\mathop{\rm lim}\nolimits}
\def\colim{\mathop{\rm colim}\nolimits}
\def\Spec{\mathop{\rm Spec}}
\def\Hom{\mathop{\rm Hom}\nolimits}

\newcommand{\GU}{\mathrm{GU}}

\newcommand{\G}{\mathrm{G}}

\newcommand{\U}{\mathrm{U}}
\newcommand{\Qp}{\mathbb{Q}_p}
\newcommand{\Z}{\mathbb{Z}}
\newcommand{\J}{\mathrm{J}}
\newcommand{\R}{\mathbb{R}}
\newcommand{\C}{\mathbb{C}}

\newcommand{\Res}{\mathrm{Res}}
\newcommand{\Irr}{\mathrm{Irr}}
\newcommand{\tr}{\mathrm{tr}}

\newcommand{\GL}{\mathrm{GL}}

\newcommand{\id}{\mathrm{id}}
\newcommand{\Id}{\mathrm{Id}}

\newcommand{\Gm}{{\mathbb{G}_m}}

\newcommand{\mc}{\mathcal}
\newcommand{\mf}{\mathfrak}
\newcommand{\Q}{\mathbb{Q}}

\newcommand{\Int}{\mathrm{Int}}

\newcommand{\diag}{\mathrm{diag}}
\newcommand{\Lie}{\mathrm{Lie}}

\newcommand{\LL}{{}^L}

\newcommand{\der}{\mathrm{der}}

\newcommand{\D}{\mathbb{D}}
\newcommand{\ov}{\overline}

\newcommand{\Gal}{\mathrm{Gal}}

\newcommand{\Leta}{{}^L\eta}

\newcommand{\bas}{\mathrm{bas}}
\newcommand{\temp}{\mathrm{temp}}
\newcommand{\unit}{\mathrm{unit}}
\newcommand{\iso}{\mathrm{iso}}
\newcommand{\mH}{\mathrm{H}}
\newcommand{\SL}{\mathrm{SL}}

\newcommand{\ab}{\mathrm{ab}}

\newcommand{\M}{\mathrm{M}}

\newcommand{\St}{\mathrm{St}}
\newcommand{\I}{\mathrm{I}}

\newcommand{\Ind}{\mathrm{Ind}}

\newcommand{\bdd}{\mathrm{bdd}}
\newcommand{\hyp}{\mathrm{hyp}}
\newcommand{\Sht}{\mathrm{Sht}}
\newcommand{\Spd}{\mathrm{Spd}}
\newcommand{\alg}{\mathrm{alg}}
\newcommand{\Act}{\mathrm{Act}}
\newcommand{\Bun}{\mathrm{Bun}}
\newcommand{\Rep}{\mathrm{Rep}}
\newcommand{\Dc}{\mathrm{D}}
\newcommand{\Dlis}{\mathrm{D}_{\mathrm{lis}}}
\newcommand{\Perf}{\mathrm{Perf}}
\newcommand{\LLC}{\mathrm{LLC}}
\newcommand{\IndPerf}{\mathrm{IndPerf}}
\newcommand{\ttau}{\tilde{\tau}}
\newcommand{\SuppCusp}{\mathrm{SuppCusp}}
\newcommand{\scusp}{\mathrm{sc}}
\newcommand{\res}{\mathrm{res}}
\newcommand{\odd}{\mathrm{odd}}
\newcommand{\ad}{\mathrm{ad}}
\newcommand{\FS}{\mathrm{FS}}
\newcommand{\mP}{\mathrm{P}}

\def\ol{\overline}
\def\ra{\rightarrow}
\def\Div{\mathrm{Div}}

\newcommand{\semis}{\mathrm{ss}}

\title{Compatibility of the Fargues--Scholze correspondence for Unitary Groups}
\author{Alexander Bertoloni Meli}
\email{abertolo@umich.edu}
\address{University of Michigan, Ann Arbor, Michigan, USA}
\author{Linus Hamann}
\email{lhamann@math.princeton.edu}
\address{Princeton University, Princeton, New Jersey, USA} 
\author{Kieu Hieu Nguyen} 
\email{knguyen@uni-muenster.de}
\address{University of M{\"u}nster, M{\"u}nster, Germany}
\thanks{
  \small{
  Orchid ID: 0000-0001-5015-0718\\
  \hspace*{1.8em}MSC class:	11S37
  }
  }
\usepackage[english]{babel}
\usepackage[utf8]{inputenc}

\usepackage{hyperref}
\begin{document}

\begin{abstract} 
We study unramified unitary and unitary similitude groups in an odd number of variables. Using work of the first and third named authors (\cite{BMN}) on the Kottwitz Conjecture for the similitude groups, we show that the Fargues--Scholze local Langlands correspondence (\cite{FS}) agrees with the semi-simplification of the local Langlands correspondences constructed in \cite{Mok,KMSW,BMN} for the groups we consider. This compatibility result is then combined with the spectral action constructed by \cite[Chapter~X]{FS}, to verify their categorical form of the local Langlands conjecture for supercuspidal $L$-parameters \cite[Conjecture~X.2.2]{FS}. We deduce Fargues' conjecture \cite[Conjecture~4.1]{Fa} and prove the strongest form of Kottwitz's conjecture \cite[Conjecture~I.0.1]{HKW} for the groups we consider, even in the case of non minuscule $\mu$.
\end{abstract}

\maketitle

\section{Introduction}{\label{s: intro}}
For $G/\Q_p$ a connected reductive group, the Langlands correspondence relates the set $\Pi_{\C}(G)$ of isomorphism classes of smooth irreducible $\C$-representations of $G(\Q_p)$ and the set $\Phi(G)$ of conjugacy classes  of L-parameters, 
\[ \phi: W_{\mathbb{Q}_{p}} \times \SL_{2}(\mathbb{C}) \rightarrow \phantom{}^{L}G(\mathbb{C}). \]

For unitary groups, the existence of such a correspondence is known by work of Mok \cite[Theorem~1.5]{Mok} and Kaletha--Minguez--Shin--White (\cite[Theorem 1.6.1]{KMSW}). For odd unitary similitude groups, a correspondence was constructed by the first and third named authors (\cite[Theorem~1.2]{BMN}), by lifting the unitary correspondence. We denote these correspondences by $\LLC_G$, where $G=\GU_n$ or $\U_n$.

On the other hand, Fargues--Scholze \cite{FS} have recently constructed a \emph{semi-simplified} local Langlands correspondence for all connected reductive $G$. Namely, they construct a map
\[ \LLC_{G}^{\mathrm{FS}}: \Pi_{\ov{\Q}_{\ell}}(G) \rightarrow \Phi^{\mathrm{ss}}(G) \quad \quad \pi \mapsto \phi_{\pi}^{\mathrm{FS}},  \]
where $\Phi^{\mathrm{ss}}(G)$ denotes the set of conjugacy classes of continuous semisimple maps
\[ \phi: W_{\mathbb{Q}_{p}} \rightarrow \phantom{}^{L}G(\overline{\mathbb{Q}}_{\ell}), \]
that commute with the projection $\phantom{}^{L}G(\overline{\mathbb{Q}}_{\ell}) \rightarrow W_{\Q_p}$. They do so by constructing excursion operators on the moduli stack of $G$-bundles on the Fargues--Fontaine curve. 

An important problem is to show that the correspondences are compatible. Namely, one wants that the diagram
\begin{equation*}
\begin{tikzcd}[ampersand replacement=\&]
            \Pi_{\C}(G)  \ar[rr, "\LLC_{G}"] \arrow[d, swap, "\iota^{-1}_{\ell}"] \& \&   \Phi(G) \ar[d,"(-)^{ss}"] \\
            \Pi_{\ov{\Q}_{\ell}}(G) \arrow[rr, "\LLC^{\FS}_G"]\& \& \Phi^{\mathrm{ss}}(G)
        \end{tikzcd}
\end{equation*} 
commutes, where we fix an isomorphism $\iota_{\ell}^{-1}: \mathbb{C} \xrightarrow{\sim} \overline{\mathbb{Q}}_{\ell}$, and the semi-simplification map $(-)^{ss}$ precomposes an $L$-parameter $\phi \in \Phi(G)$ with the map  
\[ W_{\mathbb{Q}_{p}} \ra W_{\mathbb{Q}_{p}} \times \SL_{2}(\mathbb{C}) \]
\[g  \mapsto (g,\begin{pmatrix}
|g|^{\frac{1}{2}} & 0 \\
0 & |g|^{-\frac{1}{2}}  
\end{pmatrix}) \]  
and then applies the isomorphism $\iota_{\ell}^{-1}: \mathbb{C} \xrightarrow{\sim} \overline{\mathbb{Q}}_{\ell}$, where $|\cdot|: W_{\mathbb{Q}_{p}} \rightarrow W_{\mathbb{Q}_{p}}^{ab} \simeq \mathbb{Q}_{p}^{\times} \rightarrow \mathbb{C}$ is the norm character. Our first main theorem is then as follows.
\begin{theorem}{\label{thm: introcompatibility}}
Let $G = \U_n, \GU_n$, where $n$ is odd and the unitary groups are defined relative to the unramified extension $E/\Q_p$. Then $\LLC_G$ and $\LLC^{\mathrm{FS}}_G$ are compatible. Namely, we have an equality $\LLC^{\mathrm{FS}}_G(\iota^{-1}_{\ell}\pi) = \LLC_G(\pi)^{ss}$ for all $\pi \in \Pi_{\C}(G)$.
\end{theorem}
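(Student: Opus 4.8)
The plan is to reduce to supercuspidal representations via compatibility of both correspondences with parabolic induction, and then to treat the supercuspidal case by playing the Kottwitz conjecture for $\GU_n$ from \cite{BMN} against the realization of $\LLC^{\FS}$ in the cohomology of local Shimura varieties from \cite{FS}.

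\emph{Reduction to supercuspidals.} First I would note that every $\pi\in\Pi_{\C}(G)$ is a subquotient of $\Ind_P^G\sigma$ for some supercuspidal $\sigma$ on a Levi $M$, and that for $G=\U_n$ or $\GU_n$ with $n$ odd such an $M$ is, up to its center, a product $\prod_i\GL_{n_i}(E)\times G_{n_0}$ with $G_{n_0}$ of the same type and $n_0$ odd. By \cite{FS} the parameter $\phi^{\FS}_\pi$ is the image of $\phi^{\FS}_\sigma$ under the canonical $L$-embedding ${}^LM\hookrightarrow{}^LG$, and $\phi^{\FS}$ is also compatible with products and with restriction of scalars; the classical $\LLC$ of \cite{Mok,KMSW,BMN} has the same parabolic-induction compatibility by construction, and for $\GL_m$ over a $p$-adic field $\LLC$ and $\LLC^{\FS}$ are already known to agree (by \cite{FS}). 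Putting these together reduces the theorem to the case of supercuspidal representations of $\U_n$ and $\GU_n$, which I would prove by induction on the odd integer $n$, the case $n=1$ being local class field theory.

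\emph{Supercuspidals of $\GU_n$.} Given a supercuspidal $\tilde\pi$ of $\GU_n(\Q_p)$, I would fix a basic local Shimura datum $(\GU_n,b,\mu)$ with $\mu$ a standard minuscule Shimura cocharacter, so that $r_{-\mu}$ restricts on the $\GL_n$-factor of the dual group to the standard representation (twisted by a power of the similitude factor). The Kottwitz conjecture proved in \cite{BMN} computes the semisimplified $W_E$-action on the part of $R\Gamma_c(\GU_n,b,\mu)$ on which $\GU_n(\Q_p)$ acts through $\tilde\pi$ as an explicit twist of $r_{-\mu}\circ\LLC_{\GU_n}(\tilde\pi)|_{W_E}$, while the Hecke-eigensheaf formalism of \cite{FS} identifies that same semisimplified $W_E$-action with the corresponding twist of $r_{-\mu}\circ\phi^{\FS}_{\tilde\pi}|_{W_E}$. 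Comparing, $r_{-\mu}\circ\LLC_{\GU_n}(\tilde\pi)^{ss}$ and $r_{-\mu}\circ\phi^{\FS}_{\tilde\pi}$ agree on $W_E$; since the standard representation is faithful this pins down the $\GL_n$-component of $\phi^{\FS}_{\tilde\pi}|_{W_E}$ up to a scalar character, and I would remove that ambiguity using the compatibility of both correspondences with the similitude character $\GU_n\to\Gm$ (where they agree by class field theory). As $n$ is odd, a parameter into ${}^L\GU_n$ is determined by its restriction to $W_E$ together with the similitude, so one gets $\phi^{\FS}_{\tilde\pi}=\LLC_{\GU_n}(\tilde\pi)^{ss}$.

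\emph{Descent to $\U_n$ and the main obstacle.} A supercuspidal $\pi$ of $\U_n(\Q_p)$ extends to a supercuspidal $\tilde\pi$ of $\GU_n(\Q_p)$ by Clifford theory along the abelian quotient $\GU_n/\U_n$; using the presentation of $\GU_n$ as a central quotient of $\U_n\times\Res_{E/\Q_p}\Gm$, the functoriality of $\phi^{\FS}$ under central isogenies and products, and the fact that $\LLC_{\GU_n}$ was built in \cite{BMN} to be compatible with $\LLC_{\U_n}$ under exactly this relation, I would transport the equality of the previous step to $\phi^{\FS}_\pi=\LLC_{\U_n}(\pi)^{ss}$. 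The main obstacle I anticipate is the heart of Step 2: passing from an equality of the composites with $r_{-\mu}$ to an equality of parameters. This forces one to check that the $W_E$-action produced by the Kottwitz conjecture in \cite{BMN} and the one produced by the eigensheaf property in \cite{FS} are normalized identically (same twist, same Frobenius-semisimplification conventions), that $r_{-\mu}$ for an available minuscule $\mu$ is, together with the similitude, jointly faithful enough on the dual group, and that oddness of $n$ really does let one recover the full parameter from its restriction to $W_E$. Keeping all the functorialities of the reduction and descent steps (parabolic induction, products, Weil restriction, central quotients, and the $\GU_n\leftrightarrow\U_n$ dictionary) consistent on both the representation-theoretic and the Galois side is the other point demanding care.
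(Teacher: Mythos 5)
Your reduction to supercuspidal representations and the treatment of the pure supercuspidal case follow the paper, but there is a genuine gap in the middle step: not every supercuspidal representation of $\GU_n$ has a supercuspidal $L$-parameter, and the Kottwitz conjecture of \cite{BMN} that you invoke is proved there only when $\LLC_{\GU_n}(\tilde\pi)$ is supercuspidal (discrete with trivial $\SL_2$-factor). For $\GU_n$ with $n\geq 3$ there exist ``mixed'' discrete parameters $\phi$ with nontrivial $\SL_2$-factor whose $L$-packet contains both supercuspidal and non-supercuspidal representations; for the supercuspidal members of such a packet, your appeal to \cite[Theorem~6.1]{BMN} is not available, and the chain ``Kottwitz conjecture $\Rightarrow$ $r_{-\mu}\circ\LLC(\tilde\pi)^{ss}=r_{-\mu}\circ\phi^{\FS}_{\tilde\pi}$'' does not get off the ground. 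Your reduction does correctly handle all non-supercuspidal $\pi$ (by induction and compatibility with parabolic induction), and it correctly handles supercuspidal $\pi$ with supercuspidal $\phi$, but this third class falls through.

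The paper closes this gap by a separate argument. For a mixed supercuspidal parameter $\phi$, one first establishes compatibility for the non-supercuspidal member $\rho_{\mathrm{nsc}}$ of the packet via parabolic induction. One then invokes the Hansen--Kaletha--Weinstein description of $R\Gamma^{\flat}_c(\GU_n,b,\mu_d)[\rho_{\mathrm{nsc}}]$ in the elliptic Grothendieck group of $\GU_n(\Q_p)$ (which, unlike the full Kottwitz conjecture, holds for all discrete parameters) to see that each supercuspidal $\pi$ in the packet appears in this complex. Since the complex is a Hecke operator applied to $\rho_{\mathrm{nsc}}$, and Hecke operators commute with excursion operators, every irreducible constituent has Fargues--Scholze parameter $\phi^{\FS}_{\rho_{\mathrm{nsc}}}$; combining with the already-known equality $\phi^{\FS}_{\rho_{\mathrm{nsc}}}=\phi^{ss}$ yields $\phi^{\FS}_{\pi}=\phi^{ss}$. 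To repair your argument you would need to add exactly this propagation step (or some equivalent), since the Kottwitz input simply does not cover these supercuspidals. The remainder of your proposal --- recovering the full parameter from $r_{-\mu}\circ\phi|_{W_E}$ plus the similitude via \cite[Theorem~8.1(ii)]{GGP}, and the descent from $\GU_n$ to $\U_n$ via the central isogeny --- is sound and matches the paper's Propositions \ref{prop: case 1 compatibility} and \ref{Uncompatibility}.
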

Previous results of this form for $G = \GL_{n}$ are due to Fargues--Scholze \cite[Section~IX.7.3]{FS}, Hansen--Kaletha--Weinstein for inner forms of $\GL_{n}$ \cite[Theorem~1.0.3]{HKW}, and the second named author for $\mathrm{GSp}_{4}$ and its inner forms \cite[Theorem~1.1]{Ham}. 

We give a brief indication of how this theorem is proven, following the strategy outlined in \cite{Ham} for $\mathrm{GSp}_{4}$. Compatibility of the correspondences for $\U_n$ is deduced from compatibility for $\GU_n$ and the compatibility of each correspondence with the central isogeny $\U_n \to \GU_n$. Hence, the main difficulty is to prove compatibility for $\GU_n$.

The proof of compatibility for $G = \GU_n$ rests on a detailed study of the cohomology of basic local shtuka spaces $\Sht(G,b,\mu)_{\infty}$, as defined in \cite[Lecture~23]{SW}. Crucially, the cohomology complex  $R\Gamma^{\flat}_c(G,b,\mu)[\rho]$ attached to $\Sht(G,b,\mu)_{\infty}$ and $\rho \in \Pi_{\ov{\Q}_{\ell}}(J_b)$ carries an action of $G(\Q_p) \times W_E$ that is known to be related to both $\LLC_G(\iota_{\ell}\rho)$ and $\LLC^{\mathrm{FS}}_G(\rho)$. In the case where $\LLC_G(\iota_{\ell}\rho)$ is a \emph{supercuspidal} $L$-parameter, a complete description of $R\Gamma^{\flat}_c(G,b,\mu)[\rho]$ in the Grothendieck group of $G(\mathbb{Q}_{p}) \times W_{E}$-representations in terms of $\LLC_G$, known as the \emph{Kottwitz Conjecture}, was proven by the first and third named authors (\cite[Theorem 6.1]{BMN}). On the other hand, just enough is known (by \cite{Ko}, \cite{Ham}) about the relation between $R\Gamma^{\flat}_c(G,b,\mu)[\rho]$ and $\LLC^{\mathrm{FS}}_{G}$ to deduce compatibility in the supercuspidal case. 

The case where the $L$-packet of the parameter $\LLC_G(\pi)$ consists entirely of non-supercuspidal representations then follows by induction from the supercuspidal case and the compatibility of $\LLC_G$ and $\LLC^{\mathrm{FS}}_G$ with parabolic induction. Finally, we have the case when $\LLC_G(\pi)$ has an $L$-packet consisting of both non-supercuspidal and supercuspidal representations. In this case, we can prove compatibility for the non-supercuspidal representations in the packet by induction as before. We then deal with the supercuspidal representations by combining compatibility for the non-supercuspidal elements in the $L$-packet of $\LLC_G(\pi)$ with a description of $R\Gamma_c(G,b,\mu)[\rho]$ as a $G(\Q_p)$-representation (as in \cite{HKW}) to deduce compatibility in the remaining case. The key point is that $R\Gamma_{c}(G,b,\mu)[\rho]$ is isomorphic to a Hecke operator applied to the representation $\rho$, and since Hecke operators and excursion operators commute, it follows that all the representations occurring in this complex will have Fargues--Scholze parameter equal to that of $\rho$. This allows us to propogate compatibility for the non-supercuspidal representations in the $L$-packet to the supercuspidal ones.

We briefly remark on our choice to consider $E/\Q_p$ unramified instead of an arbitrary quadratic extension of $p$-adic fields. These assumptions come from \cite{BMN}, where the unramified condition is needed for the relevant Igusa varieties to be defined. The base field is $\Q_p$ as in that case, the relevant global unitary similitude groups satisfy the Hasse principle, which is needed in several places when working with the trace formula for the cohomology of Shimura varieties using global extended pure inner twists.

The importance of Theorem \ref{thm: introcompatibility} is that it allows one to combine what is known about the classical local Langlands correspondence, such as the endoscopic character identities, with the geometric Langlands techniques introduced in \cite{FS}. This gives one enough information to verify parts of the categorical local Langlands conjecture of Fargues--Scholze \cite[Conjecture~X.I.4]{FS}. 

We briefly recall the statement of this conjecture. It relates sheaves on the stack of Langlands parameters "the spectral side" to sheaves on $\Bun_{G}$ the moduli stack of $G$-bundles on the Fargues--Fontaine curve "the geometric side". In particular, if $G$ is a quasi-split connected reductive group with Whittaker datum $\mf{w} := (B,\psi)$, one can define the Whittaker sheaf $\mathcal{W}_{\psi}$ by $\mathrm{cInd}_{U}^{G}(\psi)$, regarded as a sheaf on $\Bun_{G}^{1}$, the neutral Harder--Narasimhan (abbv. HN)-stratum of $\Bun_{G}$, and then extended by $0$ to all of $\Bun_{G}$. On the spectral side, one considers the stack of $\ol{\mathbb{Q}}_{\ell}$-valued Langlands parameters $X_{\hat{G}}$ of $G$ and writes $\Perf^{\mathrm{qc}}(X_{\hat{G}})$ (resp. $\Dc^{b,\mathrm{qc}}_{\mathrm{coh}}(X_{\hat{G}})$) for the derived category of perfect complexes on $X_{\hat{G}}$ and quasi-compact support (resp. bounded derived category of sheaves with coherent cohomology and quasi-compact support), as in \cite{DH,Zhu1} and \cite[Section~VIII.I]{FS}. On the geometric side, one considers $\Dlis(\Bun_{G},\ol{\mathbb{Q}}_{\ell})$ the category of lisse-\'etale $\ol{\mathbb{Q}}_{\ell}$-sheaves and $\Dlis(\Bun_{G},\ol{\mathbb{Q}}_{\ell})^{\omega}$ the sub-category of compact objects, as defined in \cite[Section~VII.7]{FS}. Fargues and Scholze conjecture that the map 
\[ \Perf^{\mathrm{qc}}(X_{\hat{G}}) \ra \Dlis(\Bun_{G},\ol{\mathbb{Q}}_{\ell}) \quad \quad C \mapsto C \star \mathcal{W}_{\psi}, \]
where $\star$ denotes the spectral action, as constructed in \cite[Chapter~X]{FS} (see \S \ref{sss: spectralaction}) defines a fully faithful embedding that extends to an equivalence of $\ol{\mathbb{Q}}_{\ell}$-linear categories:
\begin{equation}{\label{eqn: introFSconj}}
    \Dc_{\mathrm{coh}}^{b,\mathrm{qc}}(X_{\hat{G}}) \simeq  \Dlis(\Bun_{G},\ol{\mathbb{Q}}_{\ell})^{\omega}.
\end{equation}
Roughly, this conjecture captures the fact that for $\pi \in \Pi_{\ov{\Q}_{\ell}}(G)$, the $\pi$-isotypic part of $R\Gamma_c(G,b,\mu)$ should have cohomology precisely governed by the Fargues--Scholze parameter of $\pi$. 

Compatibility can be used to make progress on this conjecture. In this paper, we show this for the locus defined by supercuspidal $L$-parameters. Indeed, let $\phi$ be a supercuspidal $L$-parameter of $G$ and consider the connected component  $C_{\phi} \hookrightarrow X_{\widehat{G}}$, given by the unramified twists of $\phi$. This is isomorphic to a torus quotiented out by $S_{\phi}$, the centralizer of $\phi$ in $\hat{G}$. Therefore, we have a natural map $C_{\phi} \ra [\Spec(\ol{\mathbb{Q}}_{\ell})/S_{\phi}]$ and vector bundles on the target identify with representations of $S_{\phi}$. Therefore, given $W \in \Rep_{\ol{\mathbb{Q}}_{\ell}}(S_{\phi})$, we can write $\Act_{W}$ for the spectral action of the pullback of the vector bundle defined by $W$ to $C_{\phi}$ on $\Dlis(\Bun_{G},\ol{\mathbb{Q}}_{\ell})$. We explain some part of what \eqref{eqn: introFSconj} becomes in the case that we restrict to the (not full) subcategory coming from perfect complexes on $[\Spec(\ol{\mathbb{Q}}_{\ell})/S_{\phi}]$, by fixing central characters. Define the sub-category $\Dlis^{C_{\phi}}(\Bun_{G},\ol{\mathbb{Q}}_{\ell})^{\omega}$ to be the image of $\Act_{\mathbf{1}}$, where $\mathbf{1}$ denotes the trivial representation, and consider the subcategory $\Dlis^{C_{\phi},\chi}(\Bun_{G},\ol{\mathbb{Q}}_{\ell})^{\omega}$ spanned by representations with fixed central character $\chi$ determined by $\phi$.  This is valued in sheaves whose restrictions to the HN-strata of $\Bun_{G}$ have irreducible constituents with Fargues--Scholze parameter equal to $\phi$. It follows that objects in $\Dlis^{C_{\phi},\chi}(\Bun_{G},\ol{\mathbb{Q}}_{\ell})^{\omega}$ will only be supported on the HN-strata corresponding to the basic elements $B(G)_{\bas}$, where it will be valued in supercuspidal representations with fixed central character. Thus,
\[ \Dlis^{C_{\phi},\chi}(\Bun_{G},\ol{\mathbb{Q}}_{\ell})^{\omega} \simeq \bigoplus_{b \in B(G)_{\bas}} \bigoplus_{\pi_{b}} \pi_{b} \otimes \Perf(\ol{\mathbb{Q}}_{\ell}), \]
where $\Perf(\ol{\mathbb{Q}}_{\ell})$ is the derived category of perfect complexes of $\ol{\mathbb{Q}}_{\ell}$ vector spaces and $\pi_{b}$ ranges over representations of the $\sigma$-centralizer $J_{b}$ of $b$ with Fargues--Scholze parameter equal to $\phi$. The categorical conjecture can be interpreted as an equivalence 
\begin{equation}{\label{eqn: introrefinedFSconj}}
    \Perf([\Spec{\ol{\mathbb{Q}}_{\ell}}/S_{\phi}]) \xrightarrow{\simeq} \bigoplus_{b \in B(G)_{\bas}} \pi_{b} \otimes \Perf(\ol{\mathbb{Q}}_{\ell}) \quad \quad W \mapsto \Act_{W}(\pi_{\mathbf{1}}), 
\end{equation}
which is exact with respect to the standard $t$-structure, as in \cite[Conjecture~X.2.2]{FS}, and $\pi_{\mathbf{1}}$ is the (conjectural) unique generic representation of $G(\mathbb{Q}_{p})$ with L-parameter $\phi$ with respect to the choice of Whittaker datum $\psi$.  Now assume we know compatibility of the Fargues--Scholze  correspondence for $G$ with the refined local Langlands correspondence of Kaletha \cite{Kal}. Then, for a fixed $b \in B(G)_{\bas}$, we can enumerate the set of all $\pi_{b}$ with Fargues--Scholze parameter $\phi$. It is parameterized by the set of all representations $W$ of $S_{\phi}$ with restriction to $Z(\hat{G})^{\Gamma}$ equal to the $\kappa$-invariant of $b$. In particular, if we write $\pi_{W}$ for the representation with supercuspidal parameter $\phi$ that corresponds to an irreducible $W \in \Rep_{\ol{\mathbb{Q}}_{\ell}}(S_{\phi})$ under the refined local Langlands, then  \eqref{eqn: introrefinedFSconj} in this case follows from exhibiting an isomorphism
\[ \Act_{W^{\vee}}(\pi_{\mathbf{1}}) \simeq \pi_{W}, \]
for all irreducible $W$\footnote{This dual is forced on us by the various normalizations. In particular, for $G = \GL_{1}$ the Hecke operator corresponding to the standard representation of $\GL_{1}$ will carry the connected component indexed by an element $1 \in \mathbb{Z} \simeq X^{*}(\GL_{1}) \simeq^{\kappa^{-1}} \pi_{0}(|\Bun_{\GL_{1}}|)$ to the connected component indexed by $0$ (cf. Definition \ref{shiftedbgu}, Lemma \ref{shimhecke}, and \cite[Corollary~5.4]{Vi}), so the relationship is forced on us by Theorem \ref{thm: RGammavsAct}. Here $n \in \mathbb{Z}$ is sent to the character $z \mapsto z^{n}$ of $\GL_{1}$.}. In \S \ref{s: applications}, we verify this expectation for $\GU_{n}$, showing the categorical form of the local Langlands conjecture in this case. 
\begin{theorem}{(Proposition \ref{prop: Allactval})}{\label{thm: introFSconj}}
For $G = \GU_{n}$, we let $\phi$ be a supercuspidal parameter. For any irreducible representation $W \in \Rep_{\ol{\mathbb{Q}}_{\ell}}(S_{\phi})$ with corresponding representation $\pi_{W}$ of $J_{b}$ for $b \in B(G)_{\bas}$ with $\kappa(b)$ equal to the central character of $W$, we have an isomorphism 
\[ \Act_{W^{\vee}}(\pi_{\mathbf{1}}) \simeq \pi_{W} \]
of $J_{b}(\mathbb{Q}_{p})$-representatives, viewed as sheaves on the HN-stratum $\Bun_{G}^{b} \subset \Bun_{G}$, where $\pi_{\mathbf{1}}$ denotes the unique $\mf{w}$-generic representation with parameter $\phi$. 
\end{theorem}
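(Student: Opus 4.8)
The plan is to prove the isomorphism $\Act_W(\pi_{\mathbf 1}) \simeq \pi_W$ by a two-step argument: first establish that both sides have the same irreducible constituents as $J_b(\Q_p)$-representations (and are concentrated in a single cohomological degree), and then upgrade this to an isomorphism of sheaves by controlling the $S_\phi$-equivariant structure. For the first step, I would use the Kottwitz Conjecture as proven in \cite[Theorem~6.1]{BMN}: for $b \in B(G)_{\bas}$ and an appropriate minuscule (or, by the non-minuscule refinement announced in the abstract, general) $\mu$ with highest weight representation $r_\mu$ of $\LGU_n$, the cohomology $R\Gamma_c^{\flat}(G,b,\mu)[\pi_{\mathbf 1}]$ is computed in the Grothendieck group by the formula involving $\mathrm{Hom}_{S_\phi}\bigl(\delta_{\pi_{\mathbf 1}}, r_\mu \circ \phi|_{W_E}\bigr)$ and the representations $\pi_{b'}$ of the packet. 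Since the spectral action $\Act_W$ applied to a Hecke eigensheaf is computed by applying the corresponding Hecke operator $T_{r_\mu}$ (for $W$ appearing in $r_\mu$ restricted to $S_\phi$) and then projecting onto the $W$-isotypic piece under the $S_\phi$-action on the eigenvalue, the content of Theorem~\ref{thm: introFSconj} is essentially a precise, degree-by-degree and isotypic-component-by-component refinement of the Kottwitz formula. The compatibility Theorem~\ref{thm: introcompatibility}, combined with Fargues' conjecture (which the abstract promises is deduced), guarantees that $\pi_{\mathbf 1}$, extended by zero from $\Bun_G^1$, is indeed a Hecke eigensheaf with eigenvalue the parameter $\phi$, so that the spectral action is simply given by the eigenvalue map $\Rep(S_\phi) \to \Perf$.

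Concretely, I would proceed as follows. Step one: record that $\mathcal W_\psi$ restricted to $\Bun_G^1$ is $\pi_{\mathbf 1} = \mathrm{cInd}_U^G(\psi)$ and, since $\phi$ is supercuspidal, this is already a Hecke eigensheaf on all of $\Bun_G$ with eigenvalue $\phi$ — this uses the supercuspidal case of Fargues' conjecture, which the introduction states follows from compatibility plus the results of \cite{FS} and \cite{BMN}. Step two: for any algebraic representation $V$ of $\LGU_n$ decompose $V|_{S_\phi \times W_E}$, apply the Hecke operator $T_V$ to $\pi_{\mathbf 1}$; by the eigensheaf property the result is $\bigoplus_{b'} \pi_{b'} \boxtimes (V|_{S_\phi \times W_E})^{\chi_{b'}}$-type expression on the basic strata, where the $S_\phi$-structure on the stalk at $b'$ is read off from the eigenvalue. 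Step three: use the fact (from the geometry of $\Sht(G,b,\mu)_\infty$ and \cite[Theorem~6.1]{BMN}, together with the strong Kottwitz conjecture \cite[Conjecture~I.0.1]{HKW} proven in this paper) that this Hecke operator output matches, \emph{on the nose}, $R\Gamma_c(G,b',\mu)[\pi_{\mathbf 1}]$ as a complex of $J_{b'}$-representations with $W_E$- and residual $S_\phi$-action, hence is concentrated in one degree and multiplicity-free in the relevant sense. Step four: feed this back through the spectral-action formalism of \cite[Chapter~X]{FS}: since the Hecke action factors through the eigenvalue $\mathrm{Perf}(B S_\phi) \to \mathrm{Perf}$, the representation $W$ gets sent precisely to the $W$-isotypic component, which by the previous step is $\pi_W$ placed on $\Bun_G^b$. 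Matching central characters ($-\kappa(b)$ equals the central character of $W$) is exactly the constraint that ensures $\pi_W$ lands on the stratum $\Bun_G^b$ rather than another basic stratum.

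The main obstacle I anticipate is \textbf{the passage from Grothendieck-group identities to actual isomorphisms of sheaves, with equivariant structure}. The Kottwitz Conjecture as proven in \cite{BMN} is a statement in $\Groth(G(\Q_p) \times W_E)$ (or $\Groth(J_b \times W_E)$), which a priori does not pin down the cohomology as a genuine complex; one needs a vanishing-of-off-degree-cohomology input and a rigidity statement for the $S_\phi$-equivariant structure. I expect this to be handled by a combination of: (i) the perverse $t$-exactness of the relevant Hecke operators on the subcategory generated by supercuspidal eigensheaves, which forces concentration in a single degree; (ii) the fact that for supercuspidal $\phi$ the stack $[\Spec \ov{\Q}_\ell / S_\phi]$ is ``as nice as possible'' (a classifying stack of a finite group, since $\phi$ is discrete), so that $\Perf$ of it is semisimple and the spectral action is controlled entirely by characters of $S_\phi$; and (iii) the uniqueness of the $\mf w$-generic member $\pi_{\mathbf 1}$ of the $L$-packet, which rigidifies the whole packet via the endoscopic character identities built into $\LLC_G$. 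A secondary subtlety is the non-minuscule case of $\mu$: here one cannot directly invoke the shtuka-space geometry of \cite{SW} but must instead bootstrap from minuscule cocharacters by decomposing $r_\mu$ into irreducible Hecke operators and using that the spectral action is a genuine monoidal action of $\Perf(X_{\hat G})$, so compatibility for minuscule $\mu$ plus the ring structure yields the general case. I would isolate this as a lemma reducing everything to the minuscule situation, where \cite[Theorem~6.1]{BMN} and \cite{Ham}, \cite{Ko} apply directly.
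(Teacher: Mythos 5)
Your proposal has the right raw ingredients (BMN Kottwitz in the Grothendieck group, degree concentration, spectral action, monoidal bootstrap), but there are two genuine problems, one circular and one factual, plus a few smaller inaccuracies.

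The main problem is circularity. You write that $\pi_{\mathbf 1}$ (or $\mathcal W_\psi$), extended by zero, ``is indeed a Hecke eigensheaf with eigenvalue $\phi$'' and that this ``uses the supercuspidal case of Fargues' conjecture.'' But in the paper, Fargues' conjecture (Theorem~\ref{thm: GUneigensheaf}) is \emph{deduced from} Proposition~\ref{prop: Allactval} --- the isomorphism $k(\phi)_{\mathrm{reg}} \star \pi_{0,\emptyset} \simeq \mathcal G_\phi$ is literally the content of Proposition~\ref{prop: Allactval} fed through the regular representation of $S_\phi$. Moreover $\pi_{\mathbf 1}$ on its own is not an eigensheaf: an eigensheaf with eigenvalue $\phi$ must be spread over all basic strata (its stalk at $b$ is the whole $L$-packet $\Pi_\phi(J_b)$), whereas $j_{\mathbf 1!}(\pi_{\mathbf 1})$ sits on a single stratum. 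A related slip is the claim that $\mathcal W_\psi|_{\Bun_G^1}$ equals $\pi_{\mathbf 1}$; it equals $\mathrm{cInd}_U^G(\psi)$, which is very far from irreducible --- the isomorphism $\Act_{\mathbf 1}(\mathcal W_\psi) \simeq \pi_{\mathbf 1}$ is itself a non-trivial fact (Remark~\ref{rem: whittvgen}) that rests on the projector $\Act_{\mathbf 1}$ extracting the $\phi$-Bernstein component.

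The second problem is a factual error that propagates. You argue that $[\Spec\ov{\Q}_\ell/S_\phi]$ is a classifying stack of a finite group because $\phi$ is discrete. For $G = \GU_n$ this is false: $S_\phi \simeq (\Z/2\Z)^{r-1} \times \C^\times$, with a positive-dimensional torus coming from the similitude factor. Consequently $X^*(S_\phi)$ has an infinite $\Z$-component, and this is precisely why Proposition~\ref{prop: Allactval} needs a separate induction on the integer $m$ (the $\widehat c$-weight) after Proposition~\ref{ActgenGUn} handles $m = \pm 1$; your sketch never addresses this induction. Finally, the actual mechanism for degree concentration is not an abstract perverse $t$-exactness on a subcategory, but Hansen's theorem (Theorem~\ref{concmiddledegree}) applied after verifying that $\Sht(\GU_n,b,\mu_d)$ uniformizes a PEL Shimura variety; the passage from Grothendieck-group to isomorphism of complexes then also needs the injectivity/projectivity of supercuspidals with fixed central character to split the resulting one-degree extension. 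The paper does \emph{not} use uniqueness of the $\mf w$-generic member as an input here --- that is an output seen in Remark~\ref{rem: whittvgen}, not a rigidification step in the proof of Proposition~\ref{prop: Allactval}.
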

As seen above, to prove Theorems \ref{thm: introcompatibility} and \ref{thm: introFSconj}, we just need the description of the complexes $R\Gamma_{c}(G,b,\mu)[\rho]$ in the Grothendieck group of $G(\mathbb{Q}_{p}) \times W_{E}$-modules for $\mu$ a minuscule cocharacter and $\rho$ a representation with supercuspidal $L$-parameter, as proven in \cite{BMN}. However, using this result, we can actually strengthen it to completely describe the complexes of $G(\mathbb{Q}_{p}) \times W_{E}$-modules $R\Gamma_{c}(G,b,\mu)[\rho]$ without passing to the Grothendieck group and in the case that $\mu$ is not necessarily minuscule (Theorem \ref{nonmindescr}). One way of efficiently describing this is through an eigensheaf. In particular, if we consider $k(\phi)_{\mathrm{reg}}$ the sheaf on $C_{\phi}$ defined by pulling back the regular representation of $S_{\phi}$ viewed as a sheaf on $[\Spec(\ol{\mathbb{Q}}_{\ell})/S_{\phi}]$, we obtain by the previous Theorem an isomorphism
\[ \mathcal{G}_{\phi} := k(\phi)_{\mathrm{reg}} \star \pi_{\mathbf{1}} \simeq \prod_{b \in B(G)_{\bas}} \bigoplus_{\pi_{b} \in \Pi_{\phi}(J_{b})} j_{b!}(\pi_{b}), \]
where $\Pi_{\phi}(J_{b})$ is the $L$-packet over $\phi$ of $J_{b}$. It is easy to check from the construction that this is a Hecke eigensheaf. In particular, given any (not necessarily minuscule) geometric dominant cocharacter $\mu$ with reflex field $E_{\mu}$, and a Hecke operator $T_{\mu}: \Dlis(\Bun_{G},\ol{\mathbb{Q}}_{\ell}) \ra \Dlis(\Bun_{G},\ol{\mathbb{Q}}_{\ell})^{BW_{E_{\mu}}}$ defined by the highest weight representation $V_{\mu}$ of $\hat{G}$, we have an isomorphism 
\[ T_{\mu}(\mathcal{G}_{\phi}) \simeq \mathcal{G}_{\phi} \boxtimes r_{\mu} \circ \phi \]
of sheaves in $\Dlis(\Bun_{G},\ol{\mathbb{Q}}_{\ell})$ with continuous $W_{E_{\mu}}$-action. Here $r_{\mu}: \widehat{G} \rtimes W_{E_{\mu}} \rightarrow \GL(V_{\mu})$ extends the action of $\widehat{G}$ and is as defined in \cite[Lemma (2.1.2)]{KottwitzTOO}. Moreover, if we endow $\mathcal{G}_{\phi}$ with the $S_{\phi}$-action where it acts by $W^{\vee}$ on the representation $\pi_{W}$, this isomorphism is $S_{\phi}$-equivariant in the obvious sense. If we let $b \in B(G,\mu)$ be the unique basic element in the $\mu$-admissible locus and restrict this isomorphism to the connected component of $\Bun_{G}$ containing $b$, then this tells us that the complexes $R\Gamma_{c}(G,b,\mu)[\rho]$ are all concentrated in degree $0$ (which corresponds to middle degree under our conventions) and can be described in terms members of the $L$-packet $\Pi_{\phi}(G)$ pairing with the irreducible summands of $r_{\mu} \circ \phi$, as predicted by the Kottwitz conjecture \cite[Conjecture~1.0.1]{HKW}. In particular, this verifies Fargues' and Kottwitz's conjecture. 
\begin{theorem}{(Theorems \ref{nonmindescr},\ref{thm: GUneigensheaf})}{\label{thm: intro GUeigensheaf}}
For $G = \GU_{n}$, the sheaf $\mathcal{G}_{\phi}$ constructed above satisfies conditions (i)-(iv) of Fargues' Conjecture \cite[Conjecture~4.1]{Fa}. In particular, the Kottwitz conjecture \cite[Conjecture~I.0.1]{HKW} holds for any geometric dominant cocharacter $\mu$. 
\end{theorem}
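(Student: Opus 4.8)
The plan is to read Fargues' conditions (i)--(iv) directly off the explicit description of $\mathcal{G}_{\phi}$ already established, to upgrade the pointwise identity $\Act_{W}(\pi_{\mathbf{1}})\simeq\pi_{W}$ of Theorem~\ref{thm: introFSconj} (= Proposition~\ref{prop: Allactval}) to the Hecke eigensheaf property using monoidality of the spectral action, and then to extract the non-minuscule Kottwitz formula of Theorem~\ref{nonmindescr} by restricting the eigensheaf relation to Harder--Narasimhan strata. First I would record that, combining Proposition~\ref{prop: Allactval} with Remark~\ref{rem: whittvgen}, the sheaf $\mathcal{G}_{\phi}=k(\phi)_{\mathrm{reg}}\star\mathcal{W}_{\psi}$ is isomorphic to $\bigoplus_{b\in B(G)_{\bas}}\bigoplus_{\pi_{b}\in\Pi_{\phi}(J_{b})}j_{b!}(\pi_{b})$, with a natural $S_{\phi}$-equivariant structure under which the summand $\pi_{b}$ carries the irreducible $S_{\phi}$-representation $W(\pi_{b})$ as multiplicity space, $S_{\phi}$ acting through its inverse. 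This description already yields every one of Fargues' conditions except the eigensheaf property: $\mathcal{G}_{\phi}$ is a nonzero compact object of $\Dlis(\Bun_{G},\ov{\Q}_{\ell})$, being a finite direct sum of $j_{b!}$ of irreducible supercuspidal representations (which are finitely generated and projective); it is supported on the basic locus $\bigsqcup_{b\in B(G)_{\bas}}\Bun_{G}^{b}$ and is concentrated in a single (perverse) degree over each stratum, since $j_{b}^{*}\mathcal{G}_{\phi}$ is merely a smooth $J_{b}(\Q_{p})$-representation placed in degree $0$; over each stratum it decomposes as the sum of the members of $\Pi_{\phi}(J_{b})$, with $\pi_{b}$ occurring with multiplicity $\dim W(\pi_{b})$; and, because $\Act_{\mathbf{1}}(\mathcal{W}_{\psi})\simeq\pi_{\mathbf{1}}$ on the neutral stratum, the $\mf{w}$-generic member of $\Pi_{\phi}(G)$ occurs in $j_{1}^{*}\mathcal{G}_{\phi}$ with multiplicity one, giving the Whittaker normalization.

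Next I would verify the Hecke eigensheaf property, uniformly in $\mu$. The key input is that, by \cite[Chapter~X]{FS}, the Hecke operator $T_{\mu}\colon\Dlis(\Bun_{G},\ov{\Q}_{\ell})\to\Dlis(\Bun_{G},\ov{\Q}_{\ell})^{BW_{E_{\mu}}}$ is computed by the spectral action of the vector bundle $\mathcal{V}_{\mu}$ on $X_{\hat{G}}$ attached to the highest weight representation $V_{\mu}$ of $\hat{G}$, equipped with its Weil descent datum over $E_{\mu}$. Since $\mathcal{G}_{\phi}=k(\phi)_{\mathrm{reg}}\star\mathcal{W}_{\psi}$ and the spectral action is monoidal, $T_{\mu}(\mathcal{G}_{\phi})\simeq(\mathcal{V}_{\mu}\otimes k(\phi)_{\mathrm{reg}})\star\mathcal{W}_{\psi}$. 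Restricting $\mathcal{V}_{\mu}$ along the closed immersion $[\Spec\ov{\Q}_{\ell}/S_{\phi}]\hookrightarrow X_{\hat{G}}$ produces $V_{\mu}$ viewed as a representation of $S_{\phi}\times W_{E_{\mu}}$, with $S_{\phi}$ acting through $S_{\phi}\subset\hat{G}$ and $W_{E_{\mu}}$ through $r_{\mu}\circ\phi$. The elementary isomorphism of $S_{\phi}$-representations $\ov{\Q}_{\ell}[S_{\phi}]\otimes V\simeq\ov{\Q}_{\ell}[S_{\phi}]\otimes V^{\triv}$ (via $g\otimes v\mapsto g\otimes g^{-1}v$, where $V^{\triv}$ denotes the underlying space of $V$ with trivial $S_{\phi}$-action), which is $W_{E_{\mu}}$-equivariant since the $W_{E_{\mu}}$-action commutes with that of $S_{\phi}$, then translates on $[\Spec\ov{\Q}_{\ell}/S_{\phi}]$ into $\mathcal{V}_{\mu}\otimes k(\phi)_{\mathrm{reg}}\simeq k(\phi)_{\mathrm{reg}}\boxtimes(r_{\mu}\circ\phi)$ in $\Perf(X_{\hat{G}})^{BW_{E_{\mu}}}$; applying $\star\mathcal{W}_{\psi}$ gives $T_{\mu}(\mathcal{G}_{\phi})\simeq\mathcal{G}_{\phi}\boxtimes(r_{\mu}\circ\phi)$, and tracing through the identity shows this is $S_{\phi}$-equivariant in the sense made precise above. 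Nothing here uses minuscularity: the spectral action is available for all of $\Rep(\hat{G})$, so the same computation handles every dominant $\mu$ simultaneously.

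From here I would deduce Theorem~\ref{nonmindescr} by localization. Let $b\in B(G,\mu)$ be the unique basic element. Restricting the eigensheaf identity to the connected component of $\Bun_{G}$ meeting $\Bun_{G}^{b}$ and invoking the dictionary of \cite{FS} (see also \cite[Lecture~23]{SW}) expressing $R\Gamma_{c}(G,b,\mu)[\rho]$ in terms of $i_{1}^{*}\circ T_{\mu}$ applied to the Harder--Narasimhan sheaf on $\Bun_{G}^{b}$ attached to $\rho$, one obtains for $\rho=\pi_{W}\in\Pi_{\phi}(J_{b})$ that $R\Gamma_{c}(G,b,\mu)[\rho]$ is the restriction to the neutral stratum of the $\pi_{W}$-summand of $\mathcal{G}_{\phi}\boxtimes(r_{\mu}\circ\phi)$. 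Decomposing the $W_{E_{\mu}}$-representation $r_{\mu}\circ\phi$ according to the $S_{\phi}$-action on $V_{\mu}$ and matching multiplicity spaces via the $S_{\phi}$-equivariance identifies $R\Gamma_{c}(G,b,\mu)[\rho]$ with $\bigoplus_{\pi\in\Pi_{\phi}(G)}\pi\otimes\Hom_{S_{\phi}}(W(\pi),W\otimes(r_{\mu}\circ\phi))$, concentrated in degree $0$ --- precisely the prediction of \cite[Conjecture~I.0.1]{HKW}, now for arbitrary dominant $\mu$. Together with the four conditions checked above, this is Theorem~\ref{thm: GUneigensheaf}: $\mathcal{G}_{\phi}$ is a Hecke eigensheaf satisfying (i)--(iv) of \cite[Conjecture~4.1]{Fa}.

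I expect the main obstacle to lie in the non-minuscule case --- not in the eigensheaf computation, which is formal, but in justifying the dictionary relating $R\Gamma_{c}(G,b,\mu)$ to $i_{1}^{*}\circ T_{\mu}$ when $\mu$ is not minuscule. For minuscule $\mu$ this is the comparison with the cohomology of local Shimura varieties of \cite[Chapter~IX]{FS}; for general $\mu$ the space $\Sht(G,b,\mu)_{\infty}$ of \cite{SW} is not a local Shimura variety, the relevant Schubert variety in the $B_{\mathrm{dR}}^{+}$-affine Grassmannian is singular, and one cannot argue via smoothness or perverse exactness on the geometric side. The point of the strategy is precisely to push all of the genuine content onto the object $\mathcal{G}_{\phi}$ itself, which Theorem~\ref{thm: introFSconj} already determines completely, so that the eigensheaf relation becomes a formal consequence of the monoidality of the spectral action and is therefore insensitive to $\mu$. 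What remains is careful bookkeeping: checking that the normalizations in \cite{FS} of $T_{\mu}$, of the spectral action, and of the Weil descent datum on $V_{\mu}$ are mutually consistent (this is the source of the dual alluded to above), and tracking the $S_{\phi}$-equivariant structures on both sides, which is what ultimately pins down the multiplicities and the Whittaker normalization.
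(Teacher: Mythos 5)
Your proposal is correct and, once unpacked, matches the paper's argument almost verbatim: Theorem~\ref{thm: GUneigensheaf} is proved by writing $\mathcal{G}_{\phi} = k(\phi)_{\mathrm{reg}} \star \pi_{\mathbf{1}}$ (which the paper identifies with $k(\phi)_{\mathrm{reg}} \star \mathcal{W}_{\psi}$ exactly via Remark~\ref{rem: whittvgen}) and then applying monoidality of the spectral action together with the elementary isomorphism $C_V \otimes k(\phi)_{\mathrm{reg}} \simeq (r_V\circ\phi) \otimes k(\phi)_{\mathrm{reg}}$ on $[\Spec\ov{\Q}_{\ell}/S_{\phi}]$ — the same regular-representation trick you describe. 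Your final formula for $R\Gamma_c(G,b,\mu)[\rho]$ is correct under the dualized convention for $\pi_W$ that Theorem~\ref{thm: introFSconj} uses (where $\iota_{\mf{w}}(\pi_W)=W^{\vee}$); you rightly flag this as the source of the "dual" bookkeeping.

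The one place where you and the paper differ is the order of deduction, and this is mostly cosmetic. The paper establishes Theorem~\ref{nonmindescr} directly from Lemma~\ref{shimhecke}, Theorem~\ref{thm: RGammavsAct}, and Proposition~\ref{prop: Allactval}, and only afterwards packages everything into the eigensheaf statement; you prove the eigensheaf identity first and then extract the Kottwitz formula by localizing to HN-strata and matching $S_{\phi}$-isotypic pieces, which is the reverse but algebraically identical. One caveat: your stated worry about "justifying the dictionary relating $R\Gamma_c(G,b,\mu)$ to $j_1^*\circ T_\mu$ when $\mu$ is not minuscule" is actually unfounded — that dictionary is Lemma~\ref{shimhecke}, i.e., \cite[Section~IX.3]{FS}, and it holds for arbitrary $\mu$ with no minuscularity hypothesis. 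The genuine non-minuscule obstruction that your strategy does bypass is different: one cannot prove degree concentration of the shtuka cohomology via uniformization by Shimura varieties when $\mu$ is not minuscule, and that is precisely what the monoidality of the $\Act$-functors replaces. So you landed on the right idea, but for a slightly misplaced reason.
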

In \S \ref{ss: unitarygroupapps}, we also treat the case of $G = \U_{n}$. As mentioned above, compatibility of the Fargues--Scholze correspondence for $\U_{n}$ follows easily from the case of $\GU_{n}$ and compatibility of both correspondences with the  central isogeny $\varphi: \U_{n} \ra \GU_{n}$. We can combine this fact with a similar kind of compatibility of the spectral actions of $X_{\hat{\U}_{n}}$ on $\Bun_{\U_{n}}$ and $X_{\hat{\GU}_{n}}$ on $\Bun_{\GU_{n}}$ with respect to the induced map $\Bun_{\U_{n}} \ra \Bun_{\GU_{n}}$ (Proposition \ref{actcentralisog}) to use the values of the $\Act$-functors for $\GU_{n}$ to compute the $\Act$-functors for $\U_{n}$. This allows us to prove the following.
\begin{theorem}{(Proposition \ref{prop: Unallactval}, Corollary \ref{Cor: Uneigsheaf})}{\label{thm: introUneigensheaf}}
The analogues of Theorem 1.2 and 1.4 hold for $G = \U_{n}$ and $p > 2$. 
\end{theorem}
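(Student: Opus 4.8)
The plan is to transport everything known for $\GU_n$ to $\U_n$ via the central isogeny $\varphi: \U_n \ra \GU_n$, exactly as the introduction indicates. First I would establish the compatibility statement (the $\U_n$-analogue of Theorem \ref{thm: introcompatibility}): given $\pi \in \Pi_{\C}(\U_n)$, choose an irreducible constituent $\tilde{\pi}$ of a representation of $\GU_n(\Q_p)$ whose restriction to $\U_n(\Q_p)$ contains $\pi$ (possible since $\GU_n/\U_n \cdot Z$ is abelian, so $\U_n(\Q_p) \backslash \GU_n(\Q_p)$ is abelian up to the center). One knows $\LLC_{\GU_n}(\tilde\pi)$ restricts along ${}^L\U_n \la {}^L\GU_n$ to $\LLC_{\U_n}(\pi)$ by the construction in \cite{BMN}; and the Fargues--Scholze correspondence is functorial for such central isogenies (the excursion operators on $\Bun_{\U_n}$ are pulled back from $\Bun_{\GU_n}$ along the morphism $\Bun_{\U_n} \ra \Bun_{\GU_n}$ induced by $\varphi$, and $\pi$ appears in $\tilde\pi|_{\U_n(\Q_p)}$). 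Combining these with Theorem \ref{thm: introcompatibility} for $\GU_n$ gives $\LLC^{\mathrm{FS}}_{\U_n}(\iota_\ell^{-1}\pi) = \LLC_{\U_n}(\pi)^{ss}$. The hypothesis $p > 2$ enters to guarantee good behavior of the relevant $L$-packets and central characters for $\U_n$ (e.g.\ that all the representations of $\GU_n(\Q_p)$ over the relevant strata restrict to $\U_n(\Q_p)$ without pathology at the prime $2$).

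Next I would prove the $\U_n$-analogue of Theorem \ref{thm: introFSconj}, i.e.\ that $\Act_W(\pi_{\mathbf 1}) \simeq \pi_W$ for $G = \U_n$. The key input is Proposition \ref{actcentralisog}: the spectral actions of $X_{\hat\U_n}$ on $\Dlis(\Bun_{\U_n},\ol{\Q}_\ell)$ and of $X_{\hat\GU_n}$ on $\Dlis(\Bun_{\GU_n},\ol{\Q}_\ell)$ are intertwined by the pullback/pushforward functors along $\Bun_{\U_n} \ra \Bun_{\GU_n}$ and the map of parameter stacks $X_{\hat\GU_n} \ra X_{\hat\U_n}$ dual to $\hat\U_n \hookrightarrow \hat\GU_n$. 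Concretely, for a supercuspidal parameter $\phi_{\U_n}$ of $\U_n$ lifting to $\phi_{\GU_n}$ of $\GU_n$, the $\mf w$-generic member $\pi_{\mathbf 1}^{\U_n}$ is a constituent of $\pi_{\mathbf 1}^{\GU_n}|_{\U_n(\Q_p)}$, and applying a representation $W$ of $S_{\phi_{\U_n}}$ amounts to applying a representation of $S_{\phi_{\GU_n}}$ and restricting. Since Theorem \ref{thm: introFSconj} tells us $\Act_W(\pi_{\mathbf 1}^{\GU_n}) \simeq \pi_W^{\GU_n}$ for all irreducible $W \in \Rep(S_{\phi_{\GU_n}})$, pulling back along $\varphi$ and decomposing restrictions to $\U_n(\Q_p)$ yields $\Act_W(\pi_{\mathbf 1}^{\U_n}) \simeq \pi_W^{\U_n}$ for all irreducible $W \in \Rep(S_{\phi_{\U_n}})$; the bookkeeping of central characters is governed by the condition $-\kappa(b)$ equals the central character of $W$, which transports cleanly along $\varphi$ since the relevant map on $\pi_1$'s / $B(G)_{\bas}$ is understood.

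Finally, for the eigensheaf statement (Corollary \ref{Cor: Uneigsheaf}), I would set $\mathcal{G}_{\phi_{\U_n}} := k(\phi_{\U_n})_{\mathrm{reg}} \star \mathcal{W}_{\psi}$ on $\Bun_{\U_n}$ and verify conditions (i)--(iv) of \cite[Conjecture~4.1]{Fa}. Condition (i) (Hecke eigensheaf property) follows because Hecke operators on $\Bun_{\U_n}$ are compatible with those on $\Bun_{\GU_n}$ under $\Bun_{\U_n} \ra \Bun_{\GU_n}$ (the highest-weight representations $V_\mu$ of $\hat\U_n$ extend to $\hat\GU_n$), so $T_\mu(\mathcal{G}_{\phi_{\U_n}}) \simeq \mathcal{G}_{\phi_{\U_n}} \boxtimes (r_\mu \circ \phi_{\U_n})$ is deduced from the corresponding isomorphism for $\GU_n$ in Theorem \ref{thm: intro GUeigensheaf}. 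Conditions (ii)--(iv) (support on basic strata, the values $j_{b!}(\pi_b)$ with $\pi_b$ ranging over $\Pi_{\phi_{\U_n}}(J_b)$, Weil-group equivariance, and concentration in middle degree with the Kottwitz-predicted multiplicities) follow from the previous step together with the description of $\U_n$-$L$-packets. I expect the main obstacle to be the precise formulation and proof of Proposition \ref{actcentralisog} --- namely, making the compatibility of the two spectral actions across the central isogeny genuinely functorial at the level of derived categories (keeping track of the $Z(\hat{G})^\Gamma$-gerbe structure on $\Bun_G$ and the way $X_{\hat\GU_n} \ra X_{\hat\U_n}$ interacts with connected components of the parameter stacks) --- and in controlling restriction of representations from $\GU_n(\Q_p)$ to $\U_n(\Q_p)$ uniformly, which is exactly where the restriction $p>2$ is needed.
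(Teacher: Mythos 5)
Your overall strategy is the same as the paper's: prove compatibility for $\U_n$ by transporting along the central isogeny $\varphi\colon \U_n\to\GU_n$, then use Proposition~\ref{actcentralisog} to transport the $\Act$-functor computations from $\Bun_{\GU_n}$ to $\Bun_{\U_n}$, and finally assemble the eigensheaf. However, there are two concrete places where your write-up goes wrong or skips something essential.

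First, you have misidentified where the hypothesis $p>2$ enters. It has nothing to do with ``good behavior of $L$-packets and central characters'' or pathologies of restriction from $\GU_n(\Q_p)$ to $\U_n(\Q_p)$ at the prime $2$ --- indeed, the compatibility statement for $\U_n$ (the analogue of Theorem~\ref{thm: introcompatibility}, proved in Theorem~\ref{Uncompatibility}) does not require $p>2$ at all. The hypothesis $p>2$ is needed because the local shtuka spaces $\Sht(\U_n,b_d,\mu_d)$ uniformize Shimura varieties of abelian (not PEL) type, and the relevant basic uniformization result is only available for $p>2$ by the work of Shen~\cite{Shen}; this uniformization is what allows one to apply Hansen's theorem~\cite[Theorem~1.1]{Han} to conclude that the complexes $R\Gamma_c(\U_n,b_d,\mu_d)[\rho]$ are concentrated in degree $0$ (Proposition~\ref{unmiddledegree}). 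That degree-$0$ concentration is the technical input, not representation-theoretic behaviour at $2$.

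Second, and relatedly, you gloss over an intermediate step that cannot be skipped. Corollary~\ref{actcentralisog} gives an isomorphism of the form $\varphi_\natural\Act_{\tau_J}(\varphi^*(A))\simeq\Act_{\ttau_{[J],0}}(A)$, but to extract from this an identification $\Act_{\tau_J}(\pi_I)\simeq\pi_{I\oplus J}$ one first needs to know that $\Act_{\tau_J}(\pi_I)$ is an irreducible smooth representation concentrated in degree $0$ on a single basic HN-stratum (Lemma~\ref{actirred2}). That lemma is established by induction using Corollary~\ref{cor: Actperm1}, which in turn rests on the middle-degree concentration of Proposition~\ref{unmiddledegree} together with the formula of Hansen--Kaletha--Weinstein~\cite[Theorem~1.0.2]{HKW} via Corollary~\ref{UnGdescr}. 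Your phrase ``decomposing restrictions to $\U_n(\Q_p)$ yields $\Act_W(\pi_{\mathbf 1}^{\U_n})\simeq\pi_W^{\U_n}$'' leaves out this step entirely; without it one only knows the combined pushforward $\varphi_\natural(\Act_{\tau_J}(\pi_{I_e})\oplus\Act_{\tau_J}(\pi_{I_o}))$ is the expected representation, and one still needs irreducibility plus adjunction and parity bookkeeping (using $I_e$, $I_o$, $I_{\mathrm{odd}}$) to isolate each summand. Fill in these two points and the argument will match the paper's Proposition~\ref{prop: Unallactval} and Corollary~\ref{Cor: Uneigsheaf}.
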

\begin{remark}
The assumption that $p > 2$ is most likely an artifact of the proof. In particular, in order to show that the $\Act$-functors are t-exact, we need to invoke a Theorem of  Hansen \cite[Theorem~1.1]{Han} which guarantees that the local Shtuka spaces uniformizing Shimura varieties have isotypic parts with respect to representations with supercuspidal Fargues--Scholze parameter concentrated in degree $0$. For $\U_{n}$, the relevant local Shtuka spaces will only uniformize Shimura varieties of abelian type, and the relevant uniformization result is only known to hold if $p > 2$, by work of Shen \cite{Shen}.
\end{remark} 
This illustrates the power of these methods. In particular, we started out with the initial input of the statement \cite[Theorem~6.1]{BMN} which described the local shtuka spaces that uniformize certain PEL type Shimura varieties and used it to show compatibility. Then, using purely local techniques, we were able to propagate this to  describe local shtuka spaces that appear in the uniformization of both PEL and abelian type Shimura varieties and local shtuka spaces associated with all dominant cocharacters. We suspect this is one of many possible applications of compatibility to the categorical conjecture and the cohomology of Shimura varieties.  In particular, we point the reader to the paper of \cite{Ko1}, where compatibility of the Fargues--Scholze correspondence with that of Harris--Taylor is used to give a more flexible proof of the torsion vanishing results of Caraiani--Scholze \cite{CS,CS1}, as well as work of the second named author where compatibility is used to construct eigensheaves attached to parameters induced from the maximal torus $T$ \cite{Ham1}.

In \S \ref{ss: od unr unit groups}-\ref{ss: classicalLLC}, we review the classical local Langlands correspondence for $\U_{n}$ and $\GU_{n}$, writing out the endoscopic character identities in this case explicitly. In \S \ref{ss: FSLLC}, we review the Fargues--Scholze local Langlands correspondence and the spectral action, proving a result on compatibility of spectral action with central isogenies. In \S \ref{s: proofofcomp}, we give the proof of Theorem \ref{thm: introcompatibility}, and then in \S \ref{s: applications}, we combine with the spectral action to show Theorems \ref{thm: introFSconj}, \ref{thm: intro GUeigensheaf}, and \ref{thm: introUneigensheaf}. 
\subsection*{Acknowledgements}
We wish to thank David Hansen, who inspired us to embark upon this project. A.B.M.~was partially supported by NSF grant DMS-1840234. K.H.N.~was partially supported by the ERC in form of Consolidator Grant 770936: NewtonStrat and by Deutsche Forschungsgemeinschaft (DFG) through the Collaborative Research Centre TRR 326 "Geometry and Arithmetic of Uniformized Structures", project number 444845124.

\tableofcontents
\section*{Conventions and Notations}
For $F$ a $p$-adic field, we let $\Gamma$ denote the absolute Galois group of $F$. When $K/F$ is a Galois extension, we denote the Galois group by $\Gamma_{K/F}$. We use the geometric normalization of local class field theory whereby uniformizers correspond to lifts of the inverse Frobenius morphism.  We let $E$ denote the degree $2$ unramified extension of $\Q_p$. We fix for once and for all a prime $\ell \neq p$ and isomorphism $\iota_{\ell}: \ov{\Q}_{\ell} \xrightarrow{\sim} \C$. Let $p^{\frac{1}{2}} \in \C$ be the positive square root. Then $\iota_{\ell}^{-1}(p^{\frac{1}{2}})$ is a choice of square root of $p$ in $\ov{\Q}_{\ell}$. We define all half Tate twists when invoking the geometric Satake correspondence of Fargues--Scholze \cite[Chapter~VII]{FS} with respect to this choice. 

Let $\mc{E}_{\iso}$ be the local Kottwitz gerbe  (defined as $\mc{E}$ in \cite[\S3.1]{KalethaRigidvsIsoc}), which fits into an extension 
\begin{equation*}
    \D(\ov{F}) \rightarrow \mc{E}_{\iso} \rightarrow \Gamma,
\end{equation*}
where $\D$ is the pro-torus with character group $\Q$. We let $Z^1_{\alg}( \mc{E}_{\iso}, G(\ov{F}))$ denote the set of $1$-cocycles whose restriction to $\D(\ov{F})$ is the $\ov{F}$-points of an algebraic homomorphism, and let $H^1_{\alg}(\mc{E}_{\iso}, G(\ov{F}))$ be the corresponding cohomology set. We denote an extended pure inner twist of a connected reductive group $G$ over $F$ by $(G', \varrho, z)$ where $G'$ is a connected reductive group over $F$, and $\varrho: G_{\ov{F}} \to G'_{\ov{F}}$ is an isomorphism, and $z \in Z^1_{\alg}(\mc{E}_{\iso}, G(\ov{F}))$ is a cocycle such that $\varrho^{-1} \circ w(\varrho) = \Int(z_w)$. Recall that the Kottwitz set $B(G)$ is in canonical bijection with $H^1_{\alg}(\mc{E}_{\iso}, G(\ov{F}))$ and that these bijections are functorial in $G$ (see \cite[Appendix B]{KottwitzIsocrystals2}). The set $B(G)$ is determined by two maps:
\begin{itemize}
    \item The slope homomorphism
    \[ \nu: B(G) \rightarrow X_*(T_{\overline{\mathbb{Q}}_{p}})^{+,\Gamma}_{\mathbb{Q}} \quad \quad b \mapsto \nu_{b}, \]
    where $\Gamma := \mathrm{Gal}(\overline{\mathbb{Q}}_{p}/\mathbb{Q}_{p})$ and 
    $X_*(T_{\overline{\mathbb{Q}}_{p}})_{\mathbb{Q}}^{+}$ is the set of rational dominant cocharacters of $G$. 
    \item The Kottwitz map
    \[ \kappa_G: B(G) \rightarrow \pi_{1}(G)_{\Gamma} \]
    where $\pi_1(G) = X_*(T_{\bar{\mathbb{Q}}_{p}})/X_*(T_{\bar{\mathbb{Q}}_{p},sc})$ for $T_{\bar{\mathbb{Q}}_{p},sc} \subset G_{\ov{\Q}_{p}, sc}$ the maximal torus containing the pre-image of $T$. Note that $\pi_1(G)_{\Gamma}$ is canonically isomorphic to $X^*(Z(\widehat{G})^{\Gamma})$.
\end{itemize}
When the image of $\nu_b$ is central in $G$, we say that $b$ is \emph{basic} and denote by $B(G)_{\bas} \subset B(G)$ the subset of basic elements.

For $G$ a connected reductive group over a $p$-adic field $F$, we let $\Pi_{\C}(G)$ denote the set of isomorphism classes of irreducible admissible $\C$-representations of $G(F)$. We let $\Pi_{\unit}(G), \Pi_{\temp}(G), \Pi_{2, \temp}(G)$ denote the equivalence classes of unitary, tempered, and tempered essentially square-integrable $\C$-representations respectively. For $P \subset G$ a parabolic subgroup with Levi $M$ and $\sigma \in \Pi_{\C}(M)$, we denote the normalized parabolic induction by $I^G_P(\sigma)$. 

The dual group $\widehat{G}$ of $G$ is a complex Lie group equipped with a fixed splitting $(\hat{B}, \hat{T}, \{X_{\alpha}\})$ and an isomorphism $R(\hat{B}, \hat{T}) \cong R(G)^{\vee}$ between the based root datum of $\widehat{G}$ relative to our fixed splitting and the dual of the canonical based root datum of $G$. For a cocharacter $\mu \in X_{*}(T_{\ol{\mathbb{Q}}_{p}})^{+}$ whose conjugacy class has field of definition denoted $E_{\mu}$, we denote by $r_{\mu}$ the representation of $\widehat{G} \rtimes W_{E_\mu}$ whose restriction to $\widehat{G}$ is irreducible with highest weight $\mu$ (as in \cite[Lemma 2.1.2]{KottwitzTOO}).  We let $\Phi(G)$ denote the set of equivalence classes of Langlands parameters $\mc{L}_F \to \LL G$, where $\mc{L}_F = W_F \times \SL_2(\C)$ and $\LL G \cong \widehat{G} \rtimes W_F$ is the Weil form of the $L$-group of $G$. We let $\Phi_{\bdd}(G) \subset \Phi(G)$ consist of the equivalence classes $[\phi]$ such that $\phi(W_F)$ projects to a relatively compact subset of $\widehat{G}$, and set $\Phi_2(G) \subset \Phi(G)$ to be the collection of  equivalence classes $[\phi]$ such that $\phi$ does not factor through a proper Levi subgroup of $\LL G$ (i.e. $\phi$ is \emph{discrete}). We let $\Phi_{2, \bdd}(G) = \Phi_2(G) \cap \Phi_{\bdd}(G)$. The set $\Phi_{\scusp}(G)$ denotes the set of equivalence classes of supercuspidal $L$-parameters (i.e equivalence classes of discrete $\phi$ such that $\phi(\SL_2(\C))=\{1\}$). For $\phi \in \Phi(G)$, we let $\phi^{ss}$ denote its semi-simplification, as described in \S \ref{s: intro}. For $\phi \in \Phi_{\scusp}(G)$, we will often abuse notation and write $\phi$ for both the parameter and its semi-simplification, as in this case it merely corresponds to forgetting the $\SL_{2}$-factor and composing with the fixed isomorphism $\iota^{-1}_{\ell}: \C \xrightarrow{\sim} \ov{\Q_{\ell}}$. For $\phi \in \Phi(G)$, we let $S_{\phi}$ and $S^{\natural}_{\phi}$ denote the groups $Z_{\widehat{G}}(\phi)$ and $S_{\phi}/[\widehat{G}_{\der} \cap S_{\phi}]^{\circ}$, respectively. 
\section{The Local Langlands Correspondence for Unitary Groups}
\subsection{Odd Unramified Unitary Groups}{\label{ss: od unr unit groups}}
In this paper, we work with extended pure inner twists of two quasi-split groups. Let $n$ be an odd positive integer. These are $\U^*_n$ and $\GU^*_n$, the quasi-split unitary group and unitary similitude group of rank $n$ associated to the unramified quadratic extension $E/\Q_p$. These groups fit into an exact sequence $1 \to \U^*_n \to \GU^*_n \xrightarrow{c} \Gm \to 1$, and explicit presentations are given in \cite{KMSW} and \cite{BMN}. We fix the standard $\Gamma_{\Q_p}$-stable splitting of $\U^*_n$ as in \cite[pg 12]{KMSW} which extends to a $\Gamma_{\Q_p}$-stable splitting of $\GU^*_n$. We note that all extended pure inner twists of these groups are themselves quasi-split, since neither group has a non-trivial inner twist. Indeed, in both cases, the inner twists are classified by $H^1(\Q_p, \U^*_{n, \ad}) \simeq \pi_0(Z(\widehat{\U^*_{n,\ad}})^{\Gamma})^D = 1$ (since $n$ is odd).

We identify $\widehat{\U^*_n}$ and $\widehat{\GU^*_n}$ with $\GL_n(\C)$ and $\GL_n(\C) \times \C^{\times}$, respectively. Under this identification, the map $\widehat{\GU^*_n} \to \widehat{\U^*_n}$ induced by $\U^*_n \to \GU^*_n$ becomes the projection $\mathsf{P}_1 : \GL_n(\C) \times \C^{\times} \to \GL_n(\C)$ onto the first factor. We fix the splitting $(\widehat{T}, \widehat{B}, \{X_{\alpha}\})$ for $\widehat{\U^*_n}$ where $\widehat{T}$ is the diagonal torus, $\widehat{B}$ is the upper-triangular matrices, and $\{X_\alpha \}$ are the standard root vectors. Our splitting for $\widehat{\GU^*_n}$ is $(\widehat{T} \times \C^{\times}, \widehat{B} \times \C^{\times}, \{X_{\alpha}\})$.  Let $\mathrm{J} \in \GL_n(\C)$ be the anti-diagonal matrix given by $\mathrm{J}=(\mathrm{J}_{i,j})$, where $\mathrm{J}_{i,j} = (-1)^{i+1}\delta_{i, n+1 - j}$. Then the action of $W_{\Q_p}$ on $\widehat{\U^*_n}$ (resp. $\widehat{\GU^*_n}$) factors through $\Gamma_{E/\Q_p}$, and the non-trivial element $\sigma$ of this Galois group acts by $g \mapsto \mathrm{J}g^{-t}\mathrm{J}^{-1}$ (resp. $(g,z) \mapsto (\mathrm{J}g^{-t}\mathrm{J}^{-1}, \det(g)z )$).

\subsection{The Correspondence of Mok/Kaletha--Minguez--Shin--White/Bertoloni Meli--Nguyen}{\label{ss: classicalLLC}}

In this section, we recall the classical local Langlands correspondences for extended pure inner twists of $\GU^*_n$ and $\U^*_n$. These correspondences are denoted by $\LLC_{\GU^*_n}$ and $\LLC_{\U^*_n}$.

\subsubsection{The tempered correspondence}
We begin by recalling the statements of the tempered local Langlands correspondence for unitary and unitary similitude groups.

\begin{theorem}[{\cite[Theorem 1.6.1]{KMSW}, \cite[Theorem 2.5.1, Theorem 3.2.1]{Mok}, \cite[Theorem 2.16, \S 3]{BMN}}] \phantomsection \label{itm: local}

Fix an odd natural number $n$ and let $G^*$ be either $\U^*_n$ or $\GU^*_n$. Let $(G, \varrho, z)$ be an extended pure inner twist of $G^*$. Fix a non-trivial character $\psi: \Q_{p} \to \C^{\times}$. Together with our chosen splitting of $G^*$, this gives a Whittaker datum $\mf{w}$ of $G^*$. The following is true.
\begin{enumerate}
    \item For each $\phi \in \Phi_{\bdd}(G^*)$, there exists a finite subset $\Pi_{\phi}(G, \varrho) \subset \Pi_{\temp}(G)$ of tempered representations equipped with a bijection 
\[
\iota_{\mf{w}}: \Pi_{\phi} (G, \varrho) \xrightarrow{\sim} \Irr (S^{\natural}_{\phi}, \kappa_G(z)), \qquad \pi \mapsto \langle \pi, - \rangle,
\]
where $\Irr (S^{\natural}_{\phi}, \kappa_G(z))$ denotes the set of irreducible representations of $S^{\natural}_{\phi}$ restricting on $Z(\widehat{G})^{\Gamma}$ to $\kappa_G(z)$. 

\item We have 
\begin{equation*}
    \Pi_{\temp}(G) = \coprod_{\phi \in \Phi_{\bdd} (G^*)} \Pi_{\phi} (G, \varrho), \quad \quad
    \Pi_{2, \temp}(G) = \coprod_{\phi \in \Phi_{2, \bdd} (G^*)} \Pi_{\phi} (G, \varrho).
\end{equation*}
\item 
For each $\pi \in \Pi_{\temp}(G)$, the central character $\omega_{\pi} :  Z(G) \longrightarrow \C^{\times} $ has a Langlands parameter given by the composition
\[
\mc{L}_{\Q_p} \xrightarrow{\phi} \LL G  \xrightarrow{} \LL Z(G).
\]

\item Let $(\mathrm{H}, s, \Leta)$ be a refined endoscopic datum and let $\phi^{\mH} \in \Phi_{\bdd}(\mH)$ be such that $\Leta \circ \phi^{\mH} = \phi$. If $f^{\mH} \in \mathcal{H}(\mathrm{H})$ and $f \in \mathcal{H}(G)$ are two $\Delta[\mf{w}, \varrho, z]$-matching functions then we have
\begin{equation*}
 \displaystyle \sum_{\pi^{\mH} \in \Pi_{\phi^{\mH}}(\mH)} \langle \pi^{\mH}, 1 \rangle\tr(\pi^{\mH} \mid f^{\mH})   = e(G) \sum_{\pi \in \Pi_{\phi}(\U, \varrho)} \langle \pi, \Leta(s)\rangle \tr(\pi \mid f),
 \end{equation*}
 where $e( \cdot)$ is the Kottwitz sign and $\Delta[\mf{w}, \varrho, z]$ is the transfer factor normalized as in \cite{BMN}.

\end{enumerate}
\end{theorem}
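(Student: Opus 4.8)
The plan is to treat $\U_n^*$ and $\GU_n^*$ separately, with the latter deduced from the former. For $G^* = \U_n^*$, statements (1)--(4) are exactly the content of the local Langlands correspondence of \cite{Mok} and \cite{KMSW}: since $n$ is odd, $\U_n^*$ has no nontrivial inner twists, so every extended pure inner twist $(G,\varrho,z)$ has $G$ quasi-split, and the cocycle $z$ enters only through $\kappa_G(z) \in X^*(Z(\widehat{G})^\Gamma)$, which cuts out the subset $\Irr(S^\natural_\phi, \kappa_G(z)) \subset \Irr(S^\natural_\phi)$. So for $\U_n^*$ there is essentially nothing to prove beyond checking that the normalizations used here — the Whittaker datum $\mf w$ attached to $\psi$ and the fixed splitting, and the transfer factor $\Delta[\mf w,\varrho,z]$ of \cite{BMN} — agree with those in the cited references, so that parts (1)--(4) transcribe verbatim.

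For $G^* = \GU_n^*$ the strategy is to lift the unitary correspondence along the central extension $1 \to \U_n^* \to \GU_n^* \xrightarrow{c} \Gm \to 1$. Dually this is the first projection $\mathsf{P}_1\colon \widehat{\GU_n^*} = \GL_n(\C)\times\C^\times \to \GL_n(\C) = \widehat{\U_n^*}$, so a bounded parameter $\phi \in \Phi_{\bdd}(\GU_n^*)$ restricts to $\phi_0 := \mathsf{P}_1\circ\phi \in \Phi_{\bdd}(\U_n^*)$, while its similitude character is the composite $\mc{L}_{\Q_p} \xrightarrow{\phi} \LL\GU_n^* \to \LL\Gm$. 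Writing $Z := Z(\GU_n)$, I would define $\Pi_\phi(\GU_n,\varrho)$ to be the set of irreducible constituents of $\Ind_{\U_n(\Q_p)Z(\Q_p)}^{\GU_n(\Q_p)}(\tilde\pi_0)$, where $\pi_0$ ranges over $\Pi_{\phi_0}(\U_n,\varrho)$ and $\tilde\pi_0$ is the extension of $\pi_0$ to $\U_n(\Q_p)Z(\Q_p)$ with central character forced by $\phi$. The first point is that this is well-defined and finite, which follows from finiteness of the abelian group $\GU_n(\Q_p)/\U_n(\Q_p)Z(\Q_p)$ together with Clifford theory; the second is the internal parametrization $\iota_{\mf w}$, for which I would use the exact sequence relating $S^\natural_\phi(\GU_n)$ to $S^\natural_{\phi_0}(\U_n)$ coming from $1\to\C^\times\to\widehat{\GU_n^*}\to\widehat{\U_n^*}\to 1$ on centralizers, match the Clifford theory of the group extension with that of the corresponding extension of component groups, and track the $\kappa_G(z)$-constraint through the compatibility of $Z(\widehat{\U_n})^\Gamma$ and $Z(\widehat{\GU_n})^\Gamma$. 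The exhaustion statement (2) for $\GU_n$ and the shape of $\Pi_{2,\temp}(\GU_n)$ follow from the corresponding facts for $\U_n$ once one notes that induction/restriction along $\U_n Z \subset \GU_n$ preserves temperedness and, modulo center, essential square-integrability; and (3) is a direct computation, restricting the analogous assertion for $\U_n$ and keeping track of the similitude factor in $\LL\GU_n \to \LL Z$.

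The main work, and the step I expect to be the real obstacle, is the endoscopic character identity (4) for $\GU_n$. I would first classify the refined endoscopic data $(\mH,s,\Leta)$ of $\GU_n^*$ explicitly — they are again similitude-type groups, each lying over an endoscopic datum of $\U_n^*$ — and then prove the key comparison lemma: the transfer factors $\Delta[\mf w,\varrho,z]$ for $\GU_n$ restrict, under passage from test functions on $\GU_n(\Q_p)$ to test functions on $\U_n(\Q_p)Z(\Q_p)$, to those for $\U_n$, up to the explicit constants ($e(G)$ and the pairings $\langle\cdot,s_{\phi^\mH}\rangle$, $\langle\cdot,\Leta(s)\rangle$). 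Granting this, the identity for $\GU_n$ is obtained from the identity for $\U_n$ of \cite{Mok,KMSW} by decomposing both sides along the finite character group of $\GU_n/\U_n Z$ and reassembling, using that $\langle\pi,\Leta(s)\rangle$ for $\pi\in\Pi_\phi(\GU_n)$ is compatible with the pairing on $S^\natural_{\phi_0}(\U_n)$ under the component-group map above. I expect the delicate points to be (i) pinning down the normalization of $\Delta[\mf w,\varrho,z]$ so that it matches on the nose between $\GU_n$ and $\U_n$, which is where the bulk of the care in \cite{BMN} goes, and (ii) checking that the Clifford-theoretic matching of packets is simultaneously compatible with the $\kappa_G(z)$-twist on both the unitary and the similitude side. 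Once (i) and (ii) are in hand, parts (1)--(4) for $\GU_n$ assemble formally, and the theorem is simply the union of the $\U_n^*$ and $\GU_n^*$ statements relative to one fixed Whittaker datum.
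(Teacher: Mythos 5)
Your overall strategy for $\GU^*_n$ --- lifting the correspondence of Mok/KMSW along $1 \to \U^*_n \to \GU^*_n \to \Gm \to 1$ --- is the same one the cited source \cite{BMN} uses, and the paper itself does not redo it: its proof consists of quoting \cite{KMSW} for $\U^*_n$ and \cite[Theorem 2.16, \S 3]{BMN} for parts (1), (3), (4) for $\GU^*_n$, and then supplying an argument only for part (2). Measured against that, your proposal has two concrete problems. First, your construction of $\Pi_{\phi}(\GU_n,\varrho)$ via $\Ind_{\U_n(\Q_p)Z(\Q_p)}^{\GU_n(\Q_p)}$ and Clifford theory for the finite quotient $\GU_n(\Q_p)/\U_n(\Q_p)Z(\Q_p)$ misses the decisive point that for $n$ odd this quotient is \emph{trivial}: if $c(g)=\lambda$ then $N_{E/\Q_p}(\det g)=\lambda^n$, so $\lambda\in N_{E/\Q_p}(E^\times)$ and $g\in Z(\GU_n)(\Q_p)\U_n(\Q_p)$. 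Hence there is no induction and no Clifford theory to perform; an irreducible tempered $\pi$ of $\GU_n(\Q_p)$ \emph{is} a pair $(\pi',\chi)$ with $\omega_{\pi'}=\chi|_{Z(\U_n)}$, and the parameter $\phi$ is pinned down by its two projections to ${}^L\U^*_n$ and ${}^LZ(\GU^*_n)$. This is exactly the mechanism behind the only part the paper proves (the exhaustion statement (2)), and it is also what makes the bijection $\iota_{\mf{w}}$ with $\Irr(S^{\natural}_{\phi},\kappa_G(z))$ clean: with a nontrivial quotient your "matching of Clifford theory with the extension of component groups" would be a genuine issue that you never establish, and your claim that the induced representation has constituents parametrized by $\Irr(S^{\natural}_{\phi},\kappa_G(z))$ is asserted rather than proved.

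Second, for part (4) you correctly identify the transfer-factor comparison and the classification of refined endoscopic data of $\GU^*_n$ as the real obstacles, but you then leave them as "delicate points (i) and (ii)" rather than carrying them out. Those two points are precisely the content of \cite[\S 2--3]{BMN} that the theorem is citing; deferring them means the endoscopic character identity for $\GU_n$ is not actually proved in your write-up, only reduced to the statements of the reference. So as it stands your text is an outline of the argument of \cite{BMN} (with the unnecessary Clifford-theoretic detour noted above), not an independent proof; either cite \cite{BMN} for (1), (3), (4) as the paper does and prove only (2) using the surjectivity of $\U_n(\Q_p)\times Z(\GU_n)(\Q_p)\to\GU_n(\Q_p)$, or else supply the transfer-factor normalization and the endoscopic bookkeeping in full.
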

\begin{proof}
For $G^*=\U^*_n$, this is essentially \cite[Theorem 1.6.1]{KMSW}. When $G^*= \GU^*_n$, part $(1)$  of the theorem is \cite[Theorem 2.16]{BMN} and part $(3)$ is implicit in the construction given in loc cit.  Part $(4)$ is proven in \cite[\S 3]{BMN}.

We now check part $(2)$ for $\GU^*_n$. For the first equality, it suffices to check that, for each $\pi \in \Pi_{\temp}(\GU_n)$, there exists a $\phi \in \Phi_{\bdd}(\GU^*_n)$ such that $\pi \in \Pi_{\phi}(\GU_n, \varrho)$. Since $n$ is odd, we have a surjection
\begin{equation*}
    \U_n(\Q_p) \times Z(\GU_n)(\Q_p) \to \GU_n(\Q_p),
\end{equation*}
and a representation $\pi \in \Pi_{\temp}(\GU_n)$ corresponds to a pair $(\pi', \chi)$ such that $\pi'$ is a representation of $\U_n(\Q_p)$ and $\chi$ is a character of $Z(\GU_n)$ such that $\omega_{\pi'} = \chi|_{Z(U_n)}$. Then, $\pi$ is tempered precisely when $\pi'$ is tempered and $\chi$ is unitary. By part (2) of the present theorem for $\U_n$, the parameter $\phi'$ of $\pi'$ lies in $\Phi_{\bdd}(\U^*_n)$. As in \cite[\S 2.3]{BMN}, the $L$-parameter $\phi$ is determined uniquely by the fact that its projection along ${}^L\GU^*_n \to {}^L\U^*_n$ is $\phi'$, and its projection along ${}^L\GU^*_n \to {}^LZ(\GU^*_n)$ is $\chi$. It therefore follows that $\phi \in \Phi_{\bdd}(\GU^*_n)$. The proof of the second equality is analogous.
\end{proof}

\subsubsection{The Langlands classification}

The correspondences described in Theorem \ref{itm: local} are between $\Pi_{\temp}(G)$ and $\Phi_{\bdd}(G^*)$, where $G$ and $G^*$ are as in the referenced theorem. However, the Langlands classification theorem gives a recipe to extend a tempered correspondence for each Levi subgroup of $G$ to a full correspondence between $\Pi_{\C}(G)$ and $\Phi(G^*)$. We briefly review this construction, following the work of Silberger--Zink \cite{SZ}.

For the rest of this subsection, we work with $G$ a connected reductive group over a $p$-adic field $F$. Choose a minimal parabolic subgroup $P_0$ of $G$ and a maximal split torus $A_0 \subset P_0$. Let $M_0$ be the Levi subgroup of $P_0$ that is the centralizer of $A_0$. For each standard parabolic subgroup $P \supset P_0$, we let $M_P$ be the unique Levi subgroup containing $M_0$, let $N_P$ be the unipotent radical, and let $A_P \subset A_0$ be the maximal split torus in the center of $M_P$. Let $\mf{a}^*_P$ denote the set $X^*_F(M_P)_{\R}$, where $X^*_F$ denotes the set of characters defined over $F$. We say that $\nu \in \mf{a}^*_P$ is \emph{regular} if its restriction to $X^*_F(A_P)_{\R}$ satisfies $\langle \nu, \alpha \rangle > 0$ (where $\langle \cdot, \cdot \rangle$ is a fixed Weyl group-invariant pairing on $X^*_F(A_P)_{\R}$, as in \cite[\S1.2]{SZ}) for each simple root $\alpha$ in the adjoint action of $A_P$ on $\Lie(N_P)$. Then we have the Langlands classification theorem:
\begin{theorem}[Langlands]{\label{thm: Langlandsclassification}}
There exists a bijection
\begin{equation*}
    \{(P, \sigma, \nu)\} \Longleftrightarrow \Pi_{\C}(G)
\end{equation*}
where $\sigma$ is a tempered representation of $M_P$ and $\nu \in \mf{a}^*_P$ is a regular character. The bijection takes $(P,\sigma, \nu)$ to the unique irreducible quotient of the normalized parabolic induction $I^G_P(\sigma \otimes \chi_{\nu})$, where $\chi_{
\nu}$ is the positive real unramified character determined by $\nu$ (see \cite[\S1.2]{SZ}).
\end{theorem}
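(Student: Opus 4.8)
The statement is the classical Langlands classification theorem, so the plan is to reproduce its standard proof in the $p$-adic setting — due in various forms to Borel--Wallach, Silberger, and Konno, and in the precise shape (with our normalization of unramified twists) to Silberger--Zink \cite{SZ}. I would organize the argument into three steps: construction of the Langlands quotient, injectivity, and exhaustion.

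First I would construct, for each standard triple $(P,\sigma,\nu)$ with $\nu\in\mf{a}^*_P$ regular, the Langlands quotient $J(P,\sigma,\nu)$. The tool is the standard intertwining operator $M(w,\sigma\otimes\chi_\nu)\colon I^G_P(\sigma\otimes\chi_\nu)\to I^G_{\ol P}(\sigma\otimes\chi_\nu)$ for $w$ conjugating $P$ to an opposite parabolic: regularity of $\nu$ forces this operator to be holomorphic and nonzero, and the theory of the constant term (Jacquet modules) shows that its image is irreducible, equals the unique irreducible quotient of $I^G_P(\sigma\otimes\chi_\nu)$, and that $I^G_P(\sigma\otimes\chi_\nu)$ also has a unique irreducible subrepresentation, namely $J$ of the opposite triple.

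Next I would prove injectivity by recovering $(P,\sigma,\nu)$ from $\pi=J(P,\sigma,\nu)$. One reads off the data from the exponents appearing in the Jacquet modules $r_{M_Q}(\pi)$ along standard Levi subgroups $M_Q$, equivalently from the asymptotics of matrix coefficients: regularity of $\nu$ ensures that $\sigma\otimes\chi_\nu$ is the unique most dominant exponent, which pins down $(M_P,\sigma,\nu)$ up to the evident equivalence of triples.

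The hard part will be exhaustion. Given an arbitrary irreducible admissible $\pi$, Jacquet's subrepresentation theorem (Casselman) embeds $\pi\hookrightarrow I^G_{P'}(\rho)$ with $\rho$ an irreducible supercuspidal representation of some Levi $M_{P'}$. Writing $\rho\cong\rho_u\otimes\chi_\lambda$ with $\rho_u$ unitary after an unramified twist and $\lambda\in\mf{a}^*_{P'}$, the crucial combinatorial input is Langlands' lemma on cones: after conjugating $P'$ inside a possibly larger standard parabolic $P\supseteq P'$, one arranges the $A_P$-component of $\lambda$ to be regular dominant while the complementary component stays in the "tempered range". Harish-Chandra's characterization of tempered representations then identifies the relevant subquotient of the induction up to $M_P$ as a tempered $\sigma$, and transitivity of induction together with the first step identifies $\pi$ with $J(P,\sigma,\nu)$ for the resulting regular $\nu$. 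The two points needing care are the bookkeeping of the unramified twists — the precise meaning of $\chi_\nu$ and of $\mf{a}^*_P = X^*_F(M_P)_{\R}$, for which I would follow \cite[\S1.2]{SZ} verbatim — and checking that temperedness (Casselman's criterion on the exponents) is preserved under the cone decomposition. Both are carried out in \cite{SZ}, and rather than reprove them I would simply invoke that reference.
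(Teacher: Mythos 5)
The paper does not actually prove this theorem; it is stated as a known classical result (attributed to Langlands) with a pointer to Silberger--Zink \cite[\S1.2]{SZ}, and the paper simply uses the classification as input. Your sketch is a correct outline of the standard argument that appears in the cited reference and its antecedents (Borel--Wallach, Silberger, Konno): construction of the Langlands quotient via the long intertwining operator and holomorphy for regular $\nu$, uniqueness of the standard triple via the leading exponents in the Jacquet modules, and exhaustion via Jacquet's subrepresentation theorem combined with Langlands' cone (geometric) lemma and Casselman's temperedness criterion. Since there is no proof in the paper to compare against, the only caveat worth flagging is that for the purposes of this paper the precise normalization of $\chi_\nu$ and the identification $\mf{a}^*_P = X^*_F(M_P)_\R$ (rather than the more common $\mf{a}^*_{M_P}$) from \cite[\S1.2]{SZ} matter, because they must match the Silberger--Zink classification of $L$-parameters used in Theorem \ref{thm: langlandsclassificationparams}; you correctly note this and defer to the reference.
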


Analogously, there is a Langlands classification for $L$-parameters. Consider the complex torus $Z(\LL M)^{\circ}$ and let $Z(\LL M)^{\circ}_{\hyp}$ denote the subset of \emph{hyperbolic} elements (i.e. those that project to $\R^{\times}_{>0}$ under each complex character). Silberger and Zink define a natural map 
\begin{equation*}
      \mf{a}^*_P \to Z(\LL M)^{\circ}_{\hyp}, \qquad \nu \mapsto z(\nu).
\end{equation*}
Given an element $z \in Z(\LL M)^{\circ}_{\hyp}$ and $\phi \in \Phi(G)$, Silberger and Zink define the twist $\phi_z$ of $\phi$ by $z$ as
\begin{equation*}
    \phi_z(w, A) = \phi(w,A)z^{d(w)},
\end{equation*}
for $(w, A) \in W_F \times \SL_2(\C)$ and where $d(w)$ is such that $|w| = q^{d(w)}$ for $q$ the cardinality of the residue field of $F$. 
\begin{theorem}[Silberger--Zink {\cite[4.6]{SZ}}]{\label{thm: langlandsclassificationparams}}
There exists a bijection
\begin{equation*}
    \{(P, {}^t\phi, \nu)\} \Longleftrightarrow \Phi(G),
\end{equation*}
where ${}^t\phi$ is a tempered $L$-parameter of $M_P$ and $\nu \in \mf{a}^*_P$ is regular. The bijection is obtained by twisting ${}^t\phi$ by $z(\nu)$ to get ${}^t\phi_{z(\nu)}$ and then post-composing with the map $\LL M \to \LL G$.
\end{theorem}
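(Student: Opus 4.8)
The plan is to mirror exactly the structure of the Langlands classification for representations (Theorem \ref{thm: Langlandsclassification}), transporting it to the dual side via the standard dictionary between unramified twists of representations and hyperbolic twists of parameters. First I would make precise the map $\mf{a}^*_P \to Z(\LL M)^{\circ}_{\hyp}$, $\nu \mapsto z(\nu)$: dualizing the inclusion $A_P \hookrightarrow M_P$ and the resulting surjection $X^*_F(M_P) \twoheadrightarrow X^*_F(A_P)$ identifies $\mf{a}^*_P = X^*_F(M_P)_{\R}$ with a real form of the cocharacter space of $Z(\widehat{M})^{\circ}$, hence (after exponentiating with base $q$, consistently with the convention $|w| = q^{d(w)}$) with $Z(\LL M)^{\circ}_{\hyp}$; I would check this is a bijection of real vector spaces onto the hyperbolic locus, compatible with the Langlands decomposition, and record the positivity/regularity condition on $\nu$ on both sides so that ``$\nu$ regular'' means the same thing before and after. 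The key formal input is that for a tempered parameter ${}^t\phi$ of $M_P$, the twist ${}^t\phi_{z(\nu)}$ is again a parameter of $M_P$ whose associated $L$-packet (under the tempered correspondence for $M_P$) consists precisely of the representations $\sigma \otimes \chi_\nu$ for $\sigma \in \Pi_{{}^t\phi}(M_P)$; this is the compatibility of the local Langlands correspondence for $M_P$ with unramified twisting, which is built into the constructions of \cite{KMSW, Mok, BMN} (for the groups we care about) and which I would cite, or else prove directly from the explicit description of parameters of Levi subgroups as products of $\GL$-factors and a smaller unitary/similitude factor.

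Next I would construct the two maps of the claimed bijection. In the forward direction, given $(P, {}^t\phi, \nu)$ with ${}^t\phi$ tempered for $M_P$ and $\nu$ regular, I form ${}^t\phi_{z(\nu)}$ and post-compose with $\LL M_P \hookrightarrow \LL G$ to get $\phi \in \Phi(G)$; I must verify this lands in $\Phi(G)$ and not in any smaller $\Phi_{\bdd}$ or discrete locus, and that the conjugacy class is well-defined independent of the representative of $(P, {}^t\phi, \nu)$ up to the obvious equivalence. In the reverse direction, given $\phi \in \Phi(G)$, I would argue as follows: the composite $W_F \to \LL G \to \widehat{G}$, restricted along the slope map / norm character, determines a distinguished parabolic $\LL P = \LL M_P \ltimes \widehat{N}_P$ of $\LL G$ through which $\phi$ factors minimally, namely the one associated to the negative cone spanned by the "hyperbolic part" of $\phi(W_F)$; concretely, write $\phi$ restricted to $W_F$ as a product of a bounded part and a central hyperbolic cocharacter, let $\nu$ be the latter, let $M_P$ be the centralizer of (the image of) $\nu$, and let ${}^t\phi = \phi_{z(\nu)^{-1}}$, which one checks is bounded, hence tempered as a parameter of $M_P$. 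The regularity of $\nu$ corresponds to $M_P$ being exactly the centralizer (not a larger Levi), so the parabolic is uniquely pinned down. I would then check the two constructions are mutually inverse, which is essentially the observation that untwisting by $z(\nu)$ and retwisting are inverse operations and that the parabolic extracted from $\phi_{z(\nu)^{-1}}$'s bounded part is $P_0$-minimal.

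The main obstacle I expect is the \emph{well-definedness and uniqueness of the parabolic} $P$ attached to $\phi$ in the reverse direction: one must show that the decomposition of $\phi|_{W_F}$ into a bounded part times a hyperbolic central twist is canonical (up to $\widehat{G}$-conjugacy and the choice of which standard parabolic in a given associate class), and that the "regular" locus for $\nu$ corresponds precisely to this parabolic being the centralizer of $\nu$ rather than a strictly larger Levi. This is the parameter-side analogue of the subtlety in the classical Langlands classification that the data $(P,\sigma,\nu)$ is unique only with the regularity hypothesis, and it is handled by the standard argument with the closure relations on the relevant cone decomposition of $\mf{a}^*_P$; since for our groups $\LL M_P$ is an explicit product of general linear $L$-groups and a unitary similitude $L$-group, I would if necessary verify the cone combinatorics directly in that case. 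Everything else — that ${}^t\phi_{z(\nu)}$ is a parameter, that post-composition with $\LL M_P \to \LL G$ preserves the equivalence relation, that boundedness is detected correctly — is routine once the twisting dictionary of the first paragraph is in place, and I would relegate it to a citation of \cite{SZ} together with the compatibility statements already available for the tempered correspondences of \cite{KMSW, Mok, BMN}.
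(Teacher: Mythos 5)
The paper does not prove this statement: it is cited verbatim from Silberger--Zink \cite[4.6]{SZ}, so there is no ``paper's own proof'' to compare against. Your sketch of the forward/reverse maps in the second and third paragraphs is in fact close in spirit to what Silberger and Zink do---extract the hyperbolic part of $\phi|_{W_F}$, take its centralizer to get the Levi, untwist to get a bounded parameter, and handle uniqueness of the parabolic via the cone structure on $\mf{a}^*_P$---so at that level the outline is sound.

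However, your first paragraph contains a conceptual conflation that should be flagged. You present, as a ``key formal input,'' the compatibility of the tempered local Langlands correspondence for $M_P$ (from \cite{KMSW,Mok,BMN}) with unramified twisting, i.e.\ the statement that $\Pi_{{}^t\phi_{z(\nu)}}(M_P)$ consists of the $\sigma\otimes\chi_\nu$. This is entirely extraneous to the theorem being proved. The Silberger--Zink classification is a statement purely on the Galois/parameter side---a bijection between triples $(P,{}^t\phi,\nu)$ and conjugacy classes in $\Phi(G)$---and its proof must not (and does not in \cite{SZ}) presuppose any local Langlands correspondence at all; it is valid for an arbitrary quasi-split connected reductive $G$ over a $p$-adic field. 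What \emph{does} use the twisting-compatibility you invoke is the next result in the paper, Theorem~\ref{thm: fullLLC}, which stitches the two classifications together via the tempered correspondence. If you were writing this proof out in full you should excise the L-packet discussion from the first paragraph; the ``twisting dictionary'' you need is only the elementary identification $\mf{a}^*_P\cong Z(\LL M)^{\circ}_{\hyp}$ and the observation that ${}^t\phi_{z(\nu)}$ is again a homomorphism $\mc{L}_F\to \LL M_P$, which is a one-line cocycle check using that $z(\nu)$ is central in $\LL M_P$.
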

Given a tempered local Langlands correspondence of $G$ as in Theorem \ref{itm: local}, we can define a full correspondence using Theorem \ref{thm: langlandsclassificationparams} and Theorem \ref{thm: Langlandsclassification}. We first need the following lemma.
\begin{lemma}{\label{lem: BGLevilem}}
Let $G^*$ be quasi-split and let $(G, \varrho, z)$ be an extended pure inner twist with class $b \in B(G^*)$. Let $M \subset G$ be a Levi subgroup corresponding to a Levi subgroup $M^*$ of $G^*$. Then, up to equivalence, there is a unique extended pure inner twist $(M, \varrho_M, z_M)$ of $M^*$ whose class under the map $B(M^*) \to B(G^*)$ is $b$.
\end{lemma}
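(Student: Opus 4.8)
The plan is to analyze the map $B(M^*) \to B(G^*)$ using the Kottwitz classification of $B(-)$ by the pair $(\nu, \kappa)$, or equivalently, the functorial bijections $B(H) \cong H^1_{\alg}(\mc{E}_{\iso}, H(\ov{F}))$ recalled in the Conventions. Existence of \emph{some} extended pure inner twist $(M, \varrho_M, z_M)$ mapping to the class $b$ amounts to showing $b \in B(G^*)$ lies in the image of $B(M^*) \to B(G^*)$; I would combine this with the fact that, given such a cocycle $z_M \in Z^1_{\alg}(\mc{E}_{\iso}, M^*(\ov{F}))$, the data $(M, \varrho_M, z_M)$ can be chosen so that $\varrho_M$ is the restriction of $\varrho$ to $M^*_{\ov F}$ (after conjugating so that $\varrho$ carries $M^*_{\ov F}$ to $M_{\ov F}$), which makes $(M, \varrho_M, z_M)$ genuinely a Levi of the extended pure inner twist $(G,\varrho,z)$. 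For uniqueness, I would show that the relevant fiber of $B(M^*)\to B(G^*)$ is a single element.

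First I would reduce the surjectivity-onto-$b$ statement to a statement about $\pi_1$ and slope data. Since $M^*$ is a Levi of the quasi-split group $G^*$, one has compatible maps $\kappa_{M^*}\colon B(M^*)\to \pi_1(M^*)_\Gamma$ and $\kappa_{G^*}\colon B(G^*)\to\pi_1(G^*)_\Gamma$, and the key classical input (Kottwitz) is that $B(M^*)_{\bas}\to\pi_1(M^*)_\Gamma$ is a bijection and that an element of $B(G^*)$ is determined by its image in $\pi_1(G^*)_\Gamma$ together with its slope homomorphism $\nu_b$. Because $b$ is a class in $B(G^*)$ — arbitrary, not assumed basic — I would lift $\nu_b$ to a dominant rational cocharacter of $M^*$ (using that $M^*\supset T$ and a suitable dominance chamber) and lift $\kappa_{G^*}(b)$ to $\pi_1(M^*)_\Gamma$; Kottwitz's characterization of the image of $B(M^*)\hookrightarrow B(G^*)$ then produces a class $b_M\in B(M^*)$ mapping to $b$. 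Concretely, this is the standard fact that $B(M^*)\to B(G^*)$ is injective and its image consists of those $b'$ whose Newton point factors through (a $W$-conjugate lying in) the appropriate closed cone; combined with the injectivity this gives existence \emph{and} uniqueness of $b_M$ simultaneously.

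Once $b_M$ is pinned down, I would invoke the functoriality of $B(-)\cong H^1_{\alg}(\mc E_{\iso},-)$ to choose a representing cocycle $z_M\in Z^1_{\alg}(\mc E_{\iso},M^*(\ov F))$ whose image in $Z^1_{\alg}(\mc E_{\iso},G^*(\ov F))$ is cohomologous to $z$; after adjusting $z$ within its class (which only changes $\varrho$ by an inner automorphism, hence changes the extended pure inner twist by an equivalence) I may assume $z = z_M$ as cocycles valued in $G^*(\ov F)$. Setting $\varrho_M := \varrho|_{M^*_{\ov F}}$ — legitimate because $\varrho$ conjugates $M^*_{\ov F}$ to a Levi of $G_{\ov F}$, which we name $M$ — the cocycle condition $\varrho_M^{-1}\circ w(\varrho_M) = \Int(z_{M,w})$ for $w\in\mc E_{\iso}$ is just the restriction of the corresponding identity for $(G,\varrho,z)$, so $(M,\varrho_M,z_M)$ is an extended pure inner twist of $M^*$ with class $b_M$ mapping to $b$. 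Uniqueness up to equivalence follows from the injectivity of $B(M^*)\to B(G^*)$ together with the fact that extended pure inner twists with the same class in $B(M^*)$ are equivalent.

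The main obstacle is the bookkeeping around \emph{equivalence} of extended pure inner twists rather than any deep input: one must check that the freedom to modify $z$ within its cohomology class, and correspondingly $\varrho$ by inner automorphisms, exactly matches the equivalence relation on extended pure inner twists, so that "the class in $B(M^*)$" is a complete invariant. I expect this to require care with the gerbe $\mc E_{\iso}$ and with basepoints, but it is formal; the genuinely mathematical point — that $B(M^*)\to B(G^*)$ is injective with image characterized by Newton and Kottwitz data — is exactly Kottwitz's theory, which I will cite rather than reprove.
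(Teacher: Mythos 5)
There is a genuine gap. Your central claim — ``the standard fact that $B(M^*)\to B(G^*)$ is injective'' — is false. Take $G^*=\GL_2$ and $M^*=T$ the diagonal torus: $B(T)\cong X_*(T)=\Z^2$, and the elements $(1,0)$ and $(0,1)$ are distinct in $B(T)$ but map to the same class in $B(\GL_2)$ (the isocrystal with Newton slopes $\{0,1\}$), since they are $W$-conjugate. Kottwitz's theory does not provide the injectivity you cite, and it cannot serve as a black box here. Your derivation of ``existence \emph{and} uniqueness simultaneously'' therefore does not go through.

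You also explicitly discard the hypothesis the paper's uniqueness argument actually runs on: you write that $b$ is ``arbitrary, not assumed basic,'' but $b$ is the class of an extended pure inner twist of $G^*$, so $b\in B(G^*)_{\bas}$. Basicity is essential. The paper's proof of uniqueness proceeds as follows: since $\nu_b$ is central in $G^*$, any $b_M\in B(M^*)$ mapping to $b$ must have Newton point exactly $\nu_b$; twisting as in \cite[\S3.4]{KottwitzIsocrystals2} reduces to the fiber over a class with $\nu=0$, where the sets of classes with trivial Newton point are $H^1(\Q_p,M^*)$ and $H^1(\Q_p,G^*)$, and the map $H^1(\Q_p,M^*)\hookrightarrow H^1(\Q_p,G^*)$ \emph{is} injective (a much weaker statement than injectivity of $B(M^*)\to B(G^*)$). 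This two-step reduction is the genuine content you are missing; without it, or the basicity of $b$ that makes it available, there is no uniqueness argument.

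On existence, your plan (abstractly produce $b_M$, choose a representative $z_M$, then adjust $z$ up to coboundary) is more roundabout than necessary and also has a hidden difficulty: modifying $z$ by a coboundary $g^{-1}z\,e(g)$ replaces $\varrho$ by $\Int(g^{-1})\circ\varrho$, which need not carry $M^*_{\ov F}$ to $M_{\ov F}$ unless $g$ normalizes $M^*$, so the adjustment is not automatically compatible with $\varrho_M=\varrho|_{M^*}$. The paper avoids this entirely by normalizing $\varrho$ so that $\varrho(P^*)=P$ for a parabolic $P^*$ with Levi $M^*$, then observing that $\Int(z_e)=\varrho^{-1}\circ e(\varrho)$ stabilizes $P^*$ and $M^*$, whence $z_e\in N_{G^*}(P^*)(K)=P^*(K)$ and then $z_e\in N_{P^*}(M^*)(K)=M^*(K)$. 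This gives $(M,\varrho_M,z_M)$ directly as a restriction of the given data, with no cohomological gymnastics.
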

\begin{proof}
Let $P^*$ (resp. $P$) be an $F$-parabolic subgroup of $G^*$ (resp. $G$) with Levi subgroup $M^*$ (resp. $M$). We can assume without loss of generality that $\varrho(P^*) = P$. Choose $K$ a finite Galois extension of $F$ such that $z \in Z^1(\mc{E}_{\iso}(K/F), G(K))$. Then for each $e \in \mc{E}_{\iso}(K/F)$, we have $\varrho^{-1} \circ e(\varrho) = \Int(z_e)$ stabilizes $P^*$ and $M^*$ since $P^*, M^*, M, M$ are all defined over $\Q_p$. In particular, since $N_{G^*}(P^*) = P^*$, it follows that for each $e \in \mc{E}_{\iso}(K/F)$ we have $z_e \in P^*(K)$, and since $N_{P^*}(M^*)=M^*$, we conclude that $z_e \in M^*(K)$. Hence, $\varrho$ restricts to give an extended pure inner twist $(M, \varrho_M, z_M)$ of $M^*$ with class $b_M \in B(M^*)$ satisfying $i_{M^*}(b_M)=b$, where $i_{M^*}: B(M^*) \to B(G^*)$ is the natural map.

We now claim that $b_M$ is the unique element of $B(M^*)$ with this property. First, we note that since the image of $\nu_b$ is central in $G^*$, any element in $B(M^*)$ whose image in $B(G^*)$ equals $b$ must have Newton point equal to $\nu_b$. Now, via twisting as in \cite[\S3.4]{KottwitzIsocrystals2}, it suffices to consider the pre-image of an element $b'$ with trivial Newton point. The subsets of $B(G^*)$ and $B(M^*)$ with trivial Newton point are given by $H^1(F, G^*)$ and $H^1(F, M^*)$ respectively, and so the claim follows from the injectivity of the map $H^1(F, M^*) \hookrightarrow H^1(F, G^*)$ \cite[\S1.3.5]{Kestutistorsors}.
\end{proof}

\begin{theorem}{\label{thm: fullLLC}}
Fix an odd natural number $n$ and let $G^*$ be either $\U^*_n$ or $\GU^*_n$. Let $(G, \varrho, z)$ be an extended pure inner twist of $G^*$. Fix a non-trivial character $\psi: F \to \C^{\times}$. Together with our chosen splitting of $G^*$, this gives a Whittaker datum $\mf{w}$ of $G^*$. Then,
\begin{enumerate}
    \item For each $\phi \in \Phi(G^*)$, there exists a finite subset $\Pi_{\phi}(G, \varrho) \subset \Pi_{\C}(G)$. Our choice of $\mf{w}$ defines a bijection
\[
\iota_{\mf{w}}: \Pi_{\phi} (G, \varrho) \xrightarrow{\sim} \Irr (S^{\natural}_{\phi}, \kappa_{G^*}(z)), \qquad \pi \mapsto \langle \pi, - \rangle.
\]

\item We have
\begin{equation*}
    \Pi_{\C}(G) = \coprod_{\phi \in \Phi(G^*)} \Pi_{\phi} (G, \varrho).
\end{equation*}
\end{enumerate}
\end{theorem}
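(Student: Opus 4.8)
The plan is to promote the tempered correspondence of Theorem \ref{itm: local} for the Levi subgroups of $G$ to a full correspondence for $G$, using the Langlands classification on the representation side (Theorem \ref{thm: Langlandsclassification}) and on the parameter side (Theorem \ref{thm: langlandsclassificationparams}), together with Lemma \ref{lem: BGLevilem}. Given $\phi \in \Phi(G^*)$, I would first invoke Theorem \ref{thm: langlandsclassificationparams} to write $\phi$ as coming from a unique triple $(P, {}^t\phi, \nu)$, where ${}^t\phi \in \Phi_{\bdd}(M_P^*)$ is tempered and $\nu \in \mf{a}^*_P$ is regular, so that $\phi$ is the pushforward of ${}^t\phi_{z(\nu)}$ along $\LL M_P \to \LL G$. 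Since $G$ is quasi-split, the standard parabolic $P$ corresponds to a standard parabolic of $G^*$, and by Lemma \ref{lem: BGLevilem} the extended pure inner twist $(G,\varrho,z)$ restricts to an essentially unique extended pure inner twist $(M_P, \varrho_{M_P}, z_{M_P})$ of $M_P^*$ whose class maps to $b$ in $B(G^*)$; it carries the Whittaker datum induced from $\mf{w}$. Applying Theorem \ref{itm: local}(1) to $M_P$ produces the finite tempered packet $\Pi_{{}^t\phi}(M_P, \varrho_{M_P})$ together with its bijection onto $\Irr(S^\natural_{{}^t\phi}, \kappa_{M_P^*}(z_{M_P}))$, and I would then \emph{define}
\[
\Pi_\phi(G,\varrho) := \bigl\{\, J(P,\sigma,\nu) \; : \; \sigma \in \Pi_{{}^t\phi}(M_P, \varrho_{M_P}) \,\bigr\} \subset \Pi_{\C}(G),
\]
where $J(P,\sigma,\nu)$ denotes the Langlands quotient of $I^G_P(\sigma \otimes \chi_\nu)$. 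By Theorem \ref{thm: Langlandsclassification} these quotients are irreducible and the assignment $\sigma \mapsto J(P,\sigma,\nu)$ is injective, so $\Pi_\phi(G,\varrho)$ is a finite subset of $\Pi_{\C}(G)$ in natural bijection with $\Pi_{{}^t\phi}(M_P, \varrho_{M_P})$.

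To obtain the bijection $\iota_{\mf{w}}$ of part (1) I would transport $\iota_{\mf{w}_{M_P}}$ along this bijection, which amounts to identifying $\Irr(S^\natural_{{}^t\phi}, \kappa_{M_P^*}(z_{M_P}))$ with $\Irr(S^\natural_\phi, \kappa_{G^*}(z))$. The key point is that the regularity of $\nu$ makes the two centralizers agree: twisting by $z(\nu) \in Z(\LL M_P)^\circ$ does not change the centralizer inside $\widehat{M}_P$, and regularity forces any element of $\widehat{G}$ centralizing the image of $\phi$ to already lie in $\widehat{M}_P$, so $S_\phi = S_{{}^t\phi}$ and hence $S^\natural_\phi = S^\natural_{{}^t\phi}$. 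Under this identification the relation $i_{M_P^*}(b_{M_P}) = b$ from Lemma \ref{lem: BGLevilem} translates, via the identifications $\pi_1(M_P^*)_\Gamma \cong X^*(Z(\widehat{M}_P)^\Gamma)$ and $\pi_1(G^*)_\Gamma \cong X^*(Z(\widehat{G})^\Gamma)$, into the assertion that $\kappa_{M_P^*}(z_{M_P})$ and $\kappa_{G^*}(z)$ cut out the same subset of $\Irr(S^\natural_\phi)$, which gives the required bijection and therefore $\iota_{\mf{w}}$.

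For part (2), I would run the following bookkeeping. By Theorem \ref{thm: Langlandsclassification} every $\pi \in \Pi_{\C}(G)$ is $J(P,\sigma,\nu)$ for a unique triple with $\sigma \in \Pi_{\temp}(M_P)$; by Theorem \ref{itm: local}(2) applied to $M_P$ there is a unique ${}^t\phi \in \Phi_{\bdd}(M_P^*)$ with $\sigma \in \Pi_{{}^t\phi}(M_P, \varrho_{M_P})$; and by Theorem \ref{thm: langlandsclassificationparams} the triple $(P, {}^t\phi, \nu)$ corresponds to a unique $\phi \in \Phi(G^*)$, so $\pi$ lies in $\Pi_\phi(G,\varrho)$ for this $\phi$ and in no other. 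This yields $\Pi_{\C}(G) = \coprod_{\phi \in \Phi(G^*)} \Pi_\phi(G,\varrho)$. I expect the main obstacle — essentially the only step that is not a formal consequence of the cited results — to be the component-group identification $S^\natural_\phi \cong S^\natural_{{}^t\phi}$ together with the matching of the two central-character conditions; the remainder is a direct assembly of Theorems \ref{itm: local}, \ref{thm: Langlandsclassification}, \ref{thm: langlandsclassificationparams} and Lemma \ref{lem: BGLevilem}.
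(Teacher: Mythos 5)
The assembly of the packets from the Langlands classification (the definition of $\Pi_\phi(G,\varrho)$ via triples, the disjointness in part (2), the use of Lemma \ref{lem: BGLevilem}) matches the paper's argument. The gap is exactly where you flag it, but your one-line justification is not enough. You write that $S_\phi = S_{{}^t\phi}$ "and hence $S^\natural_\phi = S^\natural_{{}^t\phi}$." The first equality of full centralizers is correct (twisting by the central hyperbolic element does not change $Z_{\widehat{M}^*}(-)$, and \cite[\S 7]{SZ} gives $Z_{\widehat{M}^*}(\phi)=Z_{\widehat{G}^*}(\phi)$), but the "hence" does not follow. Recall $S^\natural_\phi = S_\phi/[(\widehat{G}^*)_{\der}\cap S_\phi]^\circ$, while $S^\natural_{{}^t\phi} = S_{{}^t\phi}/[(\widehat{M}^*)_{\der}\cap S_{{}^t\phi}]^\circ$: the two quotients are by intersections with \emph{different} derived groups, and $(\widehat{G}^*)_{\der}\cap S_\phi$ is in general strictly larger than $(\widehat{M}^*)_{\der}\cap S_{{}^t\phi}$. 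So a priori you only get a surjection $S^\natural_{{}^t\phi}\twoheadrightarrow S^\natural_\phi$, and a character $\chi_M \in \Irr(S^\natural_{{}^t\phi},\kappa_{M^*}(z_M))$ need not factor through it.

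The paper's proof of part (1) is devoted almost entirely to closing this gap: one must show that the specific characters appearing — those whose restriction to $Z(\widehat{M}^*)^\Gamma$ is $\kappa_{M^*}(z_M)$ — do vanish on $[(\widehat{G}^*)_{\der}\cap S_\phi]^\circ$. This uses two nontrivial inputs that your sketch does not supply: (i) a cocycle/coboundary argument establishing $[\widehat{M}^*\cap S_\phi]^\circ = [(\widehat{M}^*)_{\der}Z(\widehat{M}^*)^\Gamma \cap S_\phi]^\circ$ (equation \eqref{eqn: nastycocycleeqn}), which reduces the problem to vanishing on $[Z(\widehat{M}^*)^\Gamma\cap (\widehat{G}^*)_{\der}]^\circ$; and (ii) a diagram chase (diagram \eqref{eqn: BMdiagram}) through the Kottwitz map and the averaged Newton point, using that the class $b_M = i_{M^*}^{-1}(b)$ from Lemma \ref{lem: BGLevilem} maps to a \emph{basic} element, to conclude that the central character $\kappa_{M^*}(z_M)$ is forced to restrict trivially there. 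The inverse map also requires this diagram to show that the recovered $\chi_M$ indeed lands in $\Irr(S^\natural_{{}^t\phi},\kappa_{M^*}(z_M))$ rather than in a different central-character fiber. Without these steps, the claimed bijection is unproven. Regularity of $\nu$ plays no role in this part beyond giving $S_\phi\subset\widehat{M}^*$, which you already used.
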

\begin{proof}
We first claim that the analogue of Theorem \ref{itm: local} is known for each Levi subgroup of $G$. Strictly speaking, that theorem is proven for $\Q_p$, but the argument works over a general $p$-adic field $F$. Then when $G^*= \U^*_n$, the claim follows from the fact that each Levi subgroup is a product of an odd unitary group and a term $\Res_{E/F} G'$ where $G'$ is a product of general linear groups. Each Levi subgroup of $\GU_n$ is an extension by $\Gm$ of a Levi subgroup of $\U_n$. Hence, Theorem \ref{itm: local} is known for Levi subgroups of $\GU_n$ by reduction to the $\U_n$ case following the arguments of \cite[\S2-3]{BMN}.

We now define the set $\Pi_{\phi}(G, \varrho)$. Given $\phi \in \Phi(G)$, we find the corresponding triple $(P, {}^t\phi, \nu)$ as in Theorem \ref{thm: langlandsclassificationparams} and define $\Pi_{\phi}(G, \varrho)$ to consist of the subset of $\Pi_{\C}(G)$ containing those $\pi$ whose triple $(P, \sigma, \nu')$ as in Theorem \ref{thm: Langlandsclassification} satisfies $\sigma \in \Pi_{{}^t\phi}(M, \varrho_M)$ and $\nu=\nu'$, where $\varrho_M$ is as in Lemma \ref{lem: BGLevilem}. Now, the assertion in $(2)$ is clear by construction. Indeed, by Theorem \ref{thm: Langlandsclassification}, each $\pi \in \Pi_{\C}(G)$ is associated to a unique triple $(P, \sigma, \nu)$ and hence $\pi \in \Pi_{\phi}(G, \varrho)$, where $\phi \in \Phi(G)$ corresponds to the unique triple $(P, {}^t\phi, \nu)$ for ${}^t\phi$ the Langlands parameter of $\sigma$.

We now prove $(1)$. We define the map $\iota_{\mf{w}}$ as follows. Given $\pi \in \Pi_{\phi}(G, \varrho)$, we let $(P, \sigma, \nu)$ be the corresponding triple as in Theorem \ref{thm: Langlandsclassification}. Then let $\chi_M \in \Irr(S^{\natural}_{{}^t\phi}, \kappa_{M^*}(z_M))$ be the representation associated to $\sigma$ via the tempered Langlands correspondence for $M$. It is clear by the definition of $\phi$ as a central twist of ${}^t\phi$ that we have an equality $Z_{\widehat{M^*}}(\phi) = Z_{\widehat{M^*}}({}^t\phi)$. Further, it follows from the discussion in \cite[\S 7]{SZ} that $Z_{\widehat{M^*}}(\phi) = Z_{\widehat{G^*}}(\phi)$. Hence, $\chi_M$ pulls back to $\chi \in \Irr(Z_{\widehat{G^*}}(\phi))$.

We claim that $\chi$ factors through the quotient $Z_{\widehat{G^*}}(\phi) \to S^{\natural}_{\phi}$, or equivalently that $\chi$ is trivial on $[(\widehat{G^*})_{\der} \cap S_{\phi}]^{\circ}$. The resulting representation, $\chi_G$, will live in $\Irr(S^{\natural}, \kappa_{G^*}(z))$ and is defined to be $\iota_{\mf{w}}(\pi)$. To prove the claim, we first argue that
\begin{equation}{\label{eqn: nastycocycleeqn}}
    [\widehat{M^*} \cap S_{\phi}]^{\circ} =  [(\widehat{M^*})_{\der}Z(\widehat{M^*})^{\Gamma_F} \cap S_{\phi}]^{\circ}.
\end{equation}
There is clearly an inclusion of the right-hand side into the left. On the other hand, given $m \in \widehat{M^*} \cap S_{\phi}$, we produce an element of $\ker(H^1(\mc{L}_F, Z(\widehat{M^*})_{\der}) \to H^1(\mc{L}_F, Z(\widehat{M^*})))$ as follows (where $Z(\widehat{M^*})_{\der} := Z(\widehat{M^*}) \cap (\widehat{M^*})_{\der}$). We write $m = m_{\der}z$ for $m_{\der} \in (\widehat{M^*})_{\der}$ and  $z \in Z(\widehat{M^*})$ (such a decomposition exists because $Z(\widehat{M^*})$ surjects onto $(\widehat{M^*})_{\ab}$). Then writing $\phi(w) = (\phi'(w), w)$ for $\phi' \in Z^1(\mc{L}_{\Q_p}, \widehat{M^*})$, we get from $m \in S_{\phi}$ that 
\begin{equation*}
    w \mapsto \phi'(w)w(m_{\der})^{-1}\phi'(w)^{-1}m_{\der} = z^{-1}w(z) \in Z(\widehat{M^*})_{\der},
\end{equation*}
is the desired cocycle. It is easy to check that changing our decomposition of $m$ into $m_{\der}z$ changes the cocycle by a coboundary. It is also easy to check this gives a group homomorphism $\widehat{M^*} \cap S_{\phi} \to \ker(H^1(\mc{L}_F, Z(\widehat{M^*})_{\der}) \to H^1(\mc{L}_F, Z(\widehat{M^*})))$. Via the long exact sequence in cohomology associated to
\begin{equation*}
    1 \to Z(\widehat{M^*})_{\der} \to Z(\widehat{M^*}) \to (\widehat{M^*})_{\ab} \to 1,
\end{equation*}
we get a map $\widehat{M^*} \cap S_{\phi} \to (\widehat{M^*})_{\ab}^{\Gamma_F} /Z(\widehat{M^*})^{\Gamma_F}$ which can be taken to be $m \mapsto [z]$, where $[z]$ is the image of $z$ in $(\widehat{M^*})_{\ab}^{\Gamma_F} /Z(\widehat{M^*})^{\Gamma_F}$. The target of this map is finite and hence $[\widehat{M^*} \cap S_{\phi}]^{\circ}$ lies in the kernel. This means that for $m \in [\widehat{M^*} \cap S_{\phi}]^{\circ}$, we have $z \in Z(\widehat{M^*})^{\Gamma_F}$. Thus we have $[\widehat{M^*} \cap S_{\phi}]^{\circ} \subset  [(\widehat{M^*})_{\der}Z(\widehat{M^*})^{\Gamma_F} \cap S_{\phi}]^{\circ}$, using again that $[\widehat{M^*} \cap S_{\phi}]^{\circ}$ is connected.

We now show that $\chi$ is trivial on $[(\widehat{G^*})_{\der} \cap S_{\phi}]^{\circ} = [((\widehat{G^*})_{\der} \cap \widehat{M^*}) \cap S_{\phi}]^{\circ}$. By \eqref{eqn: nastycocycleeqn} and since $Z(\widehat{M^*})^{\Gamma_F} \subset S_{\phi}$ and $\chi$ is already known to vanish on $[(\widehat{M^*})_{\der} \cap S_{\phi}]^{\circ}$, it suffices to show that $\chi$ vanishes on $[Z(\widehat{M^*})^{\Gamma_F} \cap \widehat{G^*}_{\der}]^{\circ}$. As in \cite[1.4.3]{KottwitzGlobal} (see also \cite[Theorem 1.15]{RapoportRichartz}), we have a diagram
\begin{equation}{\label{eqn: BMdiagram}}
    \begin{tikzcd}
            B(M^*)_{\bas} \arrow[r, "\kappa_{M^*}"] \arrow[d, swap, "Newton"] & X^*(Z(\widehat{M^*})^{\Gamma_F}) \arrow[d, "avg"] \arrow[r, "\res"] & X^*([Z(\widehat{M^*})^{\Gamma_F}  \cap (\widehat{G^*})_{\der}]^{\circ}) \arrow[d, hook] \\
            X^*(\widehat{C(M^*)})^{\Gamma_F}_{\Q} \arrow[r, hook] & X^*(Z(\widehat{M^*}))^{\Gamma_F}_{\Q} \arrow[r, "\res"]  & X^*([Z(\widehat{M^*}) \cap (\widehat{G^*})_{\der}]^{\circ})^{\Gamma_F}_{\Q},
    \end{tikzcd}
\end{equation}
where $C(M^*)$ is the maximal torus in the center of $M^*$. The map $avg$ is given as the composition of $X^*(Z(\widehat{M^*})^{\Gamma_F}) \to X^*(Z(\widehat{M^*})^{\Gamma_F})_{\Q}$ and the isomorphism $X^*(Z(\widehat{M^*})^{\Gamma_F})_{\Q} = X^*(Z(\widehat{M^*}))_{\Q, \Gamma_F} \cong X^*(Z(\widehat{M^*}))^{\Gamma_F}_{\Q}$ given by averaging over the $\Gamma_F$ action. Hence, it suffices to show that if $\omega_{\chi} \in X^*(Z(\widehat{M^*})^{\Gamma_F})$ equals the central character of $\chi$ on $Z(\widehat{M^*})^{\Gamma_F}$, then $avg(\omega_{\chi})$ vanishes under the restriction map to $X^*([Z(\widehat{M^*}) \cap \widehat{G^*}_{\der}]^{\circ})^{\Gamma_F}_{\Q}$. But by Theorem \ref{itm: local} for $M$, we have that $\omega_{\chi}$ corresponds to some $b_M \in B(M^*)_{\bas}$ equal to the class of $(M, \varrho_M, z_M)$. In particular, the image of $b_M$ in $B(G^*)$ is basic by our construction of $b_M$ using Lemma \ref{lem: BGLevilem}, and so $\nu_{b_M}$ factors through $C(G^*) \to C(M^*)$. Equivalently, $\nu_{b_M} \in X^*(\widehat{C(M^*)})_{\Q}$  factors through $\widehat{C(M^*)} \to \widehat{C(G^*)}$ and hence is trivial on the kernel, $\ker$, of $\widehat{C(M^*)} \to \widehat{C(G^*)}$. Note that $\widehat{C(M^*)}=\widehat{M^*}_{\ab}, \widehat{C(G^*)}=\widehat{G^*}_{\ab}$, and we have a diagram
\begin{equation*}
\begin{tikzcd}
             \ker \arrow[r, hook] &(\widehat{M^*})_{\ab} \arrow[r, two heads] & (\widehat{G^*})_{\ab} \\
           (\widehat{G^*})_{\der} \cap Z(\widehat{M^*}) \arrow[u, two heads] \arrow[r, hook] & Z(\widehat{M^*}) \arrow[u, two heads], & 
\end{tikzcd}    
\end{equation*}
which implies the image in $X^*(Z(\widehat{M^*}))_{\Q}$ is trivial on $[Z(\widehat{M^*}) \cap (\widehat{G^*})_{\der}]^{\circ}$ as desired. We have now proven that $\chi$ yields a $\chi_G \in \Irr(S^{\natural}_{\phi}, \kappa_G(z))$.

We have constructed a map $\Irr(S^{\natural}_{{}^t\phi} , \kappa_{M^*}(z_M)) \to \Irr(S^{\natural}_{\phi}, \kappa_{G^*}(z))$. It remains to show these sets are in bijection, and to do so we construct an inverse. Given a $\chi_G \in \Irr(S^{\natural}_{\phi}, \kappa_{G^*}(z))$ we can lift to $\chi \in \Irr(S_{\phi})$ and project to $ \chi_M \in \Irr(S^{\natural}_{{}^t\phi})$ since $[\widehat{M}_{\der} \cap S_{{}^t\phi}]^{\circ} \subset [\widehat{G}_{\der} \cap S_{\phi}]^{\circ}$. Again, let $\omega_{\chi} \in X^*(Z(\widehat{M^*})^{\Gamma_F})$ be the central character of $\chi$ on $Z(\widehat{M^*})^{\Gamma_F}$. Then by definition, $\omega_{\chi}|_{[Z(\widehat{M^*})^{\Gamma_F} \cap (\widehat{G^*})_{\der}]^{\circ}}$ is trivial and so by Diagram \eqref{eqn: BMdiagram}, the restriction of $avg(\omega_{\chi})$ to $X^*([Z(\widehat{M^*}) \cap (\widehat{G^*})_{\der}]^{\circ})^{\Gamma_F}_{\Q}$ is trivial. Hence, if $b'_M \in B(M^*)$ is the basic element satisfying $\kappa_{M^*}(b'_M)= \omega_{\chi}$, then the image of $\nu_{b'_M}$ factors through $C(G^*)$. It follows that the image of $b'_M$ in $B(G^*)$ equals $b$, and hence by uniqueness, we have that $b'_M=b_M$ and so $ \chi_M \in \Irr(S^{\natural}_{{}^t\phi}, \kappa_{M^*}(z_M))$. This completes the construction of the inverse map and concludes the proof. 
\end{proof}

\subsubsection{Compatibility with parabolic induction} \phantomsection \label{itm : theta}

In this subsection, we recall the definition of extended cuspidal support of a discrete series representation of a unitary group as well as the relation between the Langlands parameter and the extended cuspidal support. Then we deduce the compatibility of the parameters with parabolic induction. All the materials here can be found in \cite{Moe}.

We return to the notation of \S \ref{ss: od unr unit groups}.  We denote by $\theta$ the outer automorphism of $\GL_n(E)$ that sends $g$ to $\J^t \overline{g}^{-1} \J^{-1}$ where $\J = (\J_{i,j})$ is the antidiagonal matrix with $\J_{i, n+1-i} = (-1)^{i+1}$. We consider the semi-direct product of $\GL_n(E)$ with $\{ 1,\theta \}$ and let $\Tilde{\G}_n$ be the component of $\theta$ in this group. 

We first recall the notion of extended supercuspidal support. Suppose $\tau \in \Pi_{\C}(\U^*_n)$ is supercuspidal. Then for each natural number $m$ and $\theta$-invariant supercuspidal $\rho \in \Pi_{\C}(\GL_m(E))$, there is at most one $x \in \R_{>0}$ so that the normalized parabolic induction $I^{\U^*}_{\mP} (\rho||^x \otimes \tau)$ is reducible (here $\U^* = \U^*_{n + 2m}$ and $\mP$ is the standard parabolic whose Levi factor is $\M = \U^*_{n} \times \Res_{E/\Q_p} \GL_{m}$). Any such $x$ will be a half-integer, i.e there is a positive integer $a_{\rho, \tau}$ such that $ x = (a_{\rho, \tau}+1)/2 $. 

In fact, there are only finitely many $\rho$ such that the normalized parabolic induction $I^{\U^*}_{\mP} (\rho||^x \otimes \tau)$ is reducible and we can define a representation $\pi_{\tau}$ of a certain general linear group $G_{S_{\tau}}$ by
\[
\pi_{\tau} := I^{\G_{S_{\tau}}(E)}_{\mP_{S_{\tau}}(E)} \bigotimes_{(\rho, b) \in S_{\tau} } \St(\rho, b),
\]
where $S_{\tau}$ is the set of pairs $(\rho, b)$ such that the associated number $a_{\rho, \tau}$ is strictly positive and $b$ is an integer satisfying $ 0 \leq b \leq a_{\rho, \tau} $ and $ b \equiv a_{\rho, \tau} $ modulo $2$ and $\St(\rho, b)$ is the Steinberg representation. The group $\mP_{S_{\tau}} \subset \G_{S_{\tau}}$ is a parabolic subgroup.

\begin{lemma}[{\cite[\S5.3]{Moe}}]
The representation $\pi_{\tau}$ is a representation of $\GL_n(E)$. In particular $G_{S_{\tau}} = \GL_n$.
\end{lemma}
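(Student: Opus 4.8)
The plan is to track the parity constraint imposed by the fact that $\tau$ is a representation of the \emph{odd} unitary group $\U^*_n$, and to show that this forces the auxiliary representation $\pi_\tau$ — which a priori is built from Steinberg blocks $\St(\rho,b)$ sitting inside a general linear group $\G_{S_\tau}$ of rank $\sum_{(\rho,b)\in S_\tau} b\cdot \dim_E(\rho)$ — to actually have rank exactly $n$. First I would recall the basic numerology from Mœglin's analysis of the Jordan block set: for each $\theta$-invariant supercuspidal $\rho$ of $\GL_m(E)$, the reducibility point $x=(a_{\rho,\tau}+1)/2$ governs a string of segments, and the block $\St(\rho,b)$ contributes $b\cdot m$ to the rank of $\G_{S_\tau}$. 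The total $\sum_{(\rho,b)} b\, m_\rho$, with $b$ ranging over integers $0\le b\le a_{\rho,\tau}$ congruent to $a_{\rho,\tau}$ mod $2$, is computed blockwise.

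The key step is the parity/dimension bookkeeping. For a fixed $\rho$ with $a:=a_{\rho,\tau}$, the admissible $b$ are $a, a-2, a-4, \dots$ down to $0$ or $1$, and summing these gives either $a(a+2)/4$ (when $a$ even, $b$ runs over even integers) or $(a+1)^2/4$ (when $a$ odd, $b$ runs over odd integers) — in both cases this is $\lfloor (a+1)^2/4\rfloor$ times... rather, I would instead invoke directly the structure theorem underlying the construction of $\pi_\tau$ in \cite[\S5.3]{Moe}: the collection $\{(\rho,a_{\rho,\tau})\}$ is precisely the set of Jordan blocks of the Langlands parameter $\phi_\tau$ of $\tau$ (after the usual shift), which is a conjugate self-dual parameter of $\GL_n(E)$ of the correct sign; hence $\sum_\rho a_{\rho,\tau}\cdot \dim_E(\rho) = n$, and the Steinberg-block decomposition of $\pi_\tau$ repackages this same parameter as a genuine representation of $\GL_n(E)$ via the Zelevinsky/Mœglin correspondence. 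Concretely, $\pi_\tau = I^{\G_{S_\tau}}_{\mP_{S_\tau}}\bigotimes \St(\rho,b)$ has, by transitivity of parabolic induction and the standard computation of the cuspidal support of a Steinberg representation $\St(\rho,b)$ (supported on $\rho|\cdot|^{(b-1)/2},\dots,\rho|\cdot|^{-(b-1)/2}$), a total cuspidal support of degree $\sum_{(\rho,b)\in S_\tau} b\cdot\dim_E\rho$; so it suffices to show this equals $n$.

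Thus the crux is the identity $\sum_{(\rho,b)\in S_\tau} b\,\dim_E(\rho) = n$, which I would derive from the fact that $\tau$ is supercuspidal on $\U^*_{n}$ with $n$ odd: its parameter $\phi_\tau$ is a direct sum of distinct conjugate self-dual summands, and when one forms the "extended" support by replacing each summand by the corresponding $\SL_2$-segment and then taking Steinberg blocks of the prescribed parities, the total dimension is preserved — this is exactly the content of Mœglin's parametrization, and the parity condition $b\equiv a_{\rho,\tau} \bmod 2$ is what makes the blocks glue into a single $\GL_n(E)$-representation rather than something of a different rank. I expect the main obstacle to be purely bookkeeping: carefully matching Mœglin's conventions for the shift in the reducibility point (the $(a_{\rho,\tau}+1)/2$ normalization) with the dimension count, and checking that no summand is lost or doubled when passing between $\tau$, its parameter, and $\pi_\tau$; once the conventions are pinned down, the equality $G_{S_\tau}=\GL_n$ is immediate. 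I would therefore organize the proof as: (i) recall that $\{(\rho,a_{\rho,\tau})\}_{(\rho,\cdot)\in S_\tau}$ are the Jordan blocks of $\phi_\tau$; (ii) recall $\dim\phi_\tau = n$; (iii) compute $\dim\pi_\tau = \sum b\dim\rho$ via cuspidal supports of Steinberg representations; (iv) observe (iii) $=$ (ii) by the parity repackaging, hence $G_{S_\tau}=\GL_n$, citing \cite[\S5.3]{Moe} for the precise statement.
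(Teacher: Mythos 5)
The paper itself gives no proof of this lemma; it simply cites \cite[\S5.3]{Moe}, so your proposal must be judged on its own merits. Your reduction of the claim to the identity $\sum_{(\rho,b)\in S_\tau} b\cdot\dim_E(\rho) = n$ (via cuspidal supports of the Steinberg blocks) is correct, but the way you try to establish that identity contains a genuine error. In step (i) you assert that $\{(\rho,a_{\rho,\tau})\}$ \emph{is} the Jordan block set of $\phi_\tau$, and you write the displayed identity $\sum_\rho a_{\rho,\tau}\cdot\dim_E(\rho)=n$; both claims are false. M\oe glin's structure theorem for supercuspidal $\tau$ says the Jordan block set is ``without holes'' (\emph{sans trou}): for each $\rho$ with $a_{\rho,\tau}>0$, \emph{all} of $(\rho,a_{\rho,\tau}),(\rho,a_{\rho,\tau}-2),\dots$ down to $(\rho,1)$ or $(\rho,2)$ belong to the Jordan block set of $\phi_\tau$, not just the maximal pair. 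For instance a single $\rho$ of dimension $1$ with $a_{\rho,\tau}=3$ forces both $(\rho,1)$ and $(\rho,3)$ to be Jordan blocks, contributing $1+3=4$ to $\dim\phi_\tau$, whereas $a_{\rho,\tau}\cdot\dim_E(\rho)=3$.

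The count actually works because of this ``sans trou'' structure: the pairs $(\rho,b)\in S_\tau$ with $b>0$ are, by the defining conditions $0\le b\le a_{\rho,\tau}$ and $b\equiv a_{\rho,\tau}\bmod 2$, in bijection with the Jordan blocks $(\rho,a)$ of $\phi_\tau$, so that
\[
\sum_{(\rho,b)\in S_\tau} b\cdot\dim_E(\rho)\;=\;\sum_{(\rho,a)\in\mathrm{Jord}(\phi_\tau)} a\cdot\dim_E(\rho)\;=\;\dim\phi_\tau\;=\;n.
\]
Your phrase ``parity repackaging'' in step (iv) gestures at this, but step (i) misstates the crucial input and the displayed wrong identity would derail a careful reader. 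You should replace (i) by M\oe glin's characterization of supercuspidal Jordan block sets, and you should also explicitly record the companion fact (again from M\oe glin's reducibility theorem) that every $\rho$ with $a_{\rho,\tau}>0$ actually occurs in $\phi_\tau$; if some $\rho$ with $a_{\rho,\tau}=1$ but $\rho\notin\mathrm{Jord}(\phi_\tau)$ could contribute to $S_\tau$, the sum would exceed $n$ and the bijection would break.
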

\begin{definition}
The \emph{extended cuspidal support} $\SuppCusp (\tau)$ of $\tau$ is the cuspidal support of $\pi_{\tau}$.
\end{definition}

We now suppose that $\tau$ is an irreducible discrete series representation of $\U^*_n$. Then there exists a supercuspidal representation $\tau_0$ of $\U^*_{n_0}$ with $n_0 \leq n$ and a set $\mathcal{J}$ of triples $(\rho, a, b)$ such that $\rho$ is a $\theta$-invariant supercuspidal representation of a general linear group and $a > b$ are half-integers such that $a - b \in \Z$ (by \cite[sections. 4.1, 5.1]{Moe}) and $\tau$ is a subquotient of the parabolic induction
\[
I_{\mP(\mathbb{Q}_{p})}^{\U^*_n(\mathbb{Q}_{p})}\big( \bigotimes_{(\rho, a, b) \in \mathcal{J}} \langle \rho ||^{a}_E, \rho ||^{b}_E \rangle \otimes \tau_{0} \big),
\]
where $\langle \rho ||^{a}_E, \rho ||^{b}_E \rangle$ is the Steinberg representation $\St(\rho, a-b+1) ||^{(a+b)/2} $.

\begin{definition}
The extended cuspidal support $\SuppCusp (\tau)$ of $\tau$ is the following

\[
\SuppCusp (\tau) := \cup_{(\rho, a, b) \in \mathcal{J}} \cup_{y \in \{b, b+1, \dotsc, a\}} \{ \rho ||^{y}_E, \rho ||^{-y}_E \} \cup \SuppCusp (\tau_0).
\]
\end{definition}
This definition depends only on the cuspidal support of $\tau$ and is independent of the choice of $\mc{J}$. Indeed, if $\tau$ is a sub-quotient of a parabolic induction from some supercuspidal representation of the form $ \otimes_{\lambda \in \mathcal{S}} \lambda \otimes \tau_0 $ where $ \mathcal{S}$ is a set of supercuspidal representation of general linear groups and $\tau_0$ is a supercuspidal representation of unitary group then $ \SuppCusp (\tau) = \{ \lambda, \theta(\lambda) \ | \ \lambda \in \mathcal{S} \} \cup \SuppCusp (\tau_0)$. We summarize this as a Proposition for future use.
\begin{proposition}
    There is a unique tempered representation $ \pi_{\tau} $ of $\GL_n(E)$ such that the cuspidal support of $ \pi_{\tau} $ is exactly $\SuppCusp (\tau)$. Moreover, $\pi_{\tau}$ is $\theta$-discrete. Namely, its isomorphism class is invariant under the action of $\theta$ and it is not a subquotient of the parabolic induction of a $\theta$-stable representation of a $\theta$-stable parabolic subgroup.
\end{proposition}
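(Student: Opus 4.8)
The plan is to extract $\pi_\tau$ directly from the construction of the extended cuspidal support and then verify uniqueness and $\theta$-discreteness by reducing everything to the combinatorics of segments for general linear groups. First, given the discrete series $\tau$ of $\U^*_n$, write it as a subquotient of the parabolic induction $I^{\U^*_n}_{\mP}\big(\bigotimes_{(\rho,a,b)\in\mathcal{J}}\langle\rho||^a_E,\rho||^b_E\rangle\otimes\tau_0\big)$ as in the displayed formula, with $\tau_0$ supercuspidal on $\U^*_{n_0}$. The extended cuspidal support is then $\SuppCusp(\tau)=\bigcup_{(\rho,a,b)}\bigcup_{y\in[a,b]}\{\rho||^y_E,\rho||^{-y}_E\}\cup\SuppCusp(\tau_0)$, and by the construction preceding the statement, $\SuppCusp(\tau_0)$ is itself the cuspidal support of a representation $\pi_{\tau_0}$ of $\GL_{n_0}(E)$ built from the Steinberg representations $\St(\rho',b')$ attached to the reducibility data of $\tau_0$. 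I would then \emph{define} $\pi_\tau$ as the parabolic induction over $\GL_n(E)$ of the tensor product $\big(\bigotimes_{(\rho,a,b)\in\mathcal{J}}\St(\rho,a-b+1)||^{(a+b)/2}\otimes\St(\rho,a-b+1)||^{-(a+b)/2}\big)\otimes\pi_{\tau_0}$, after checking that the total rank adds up to $n$ (using the cited lemma that $G_{S_{\tau_0}}=\GL_{n_0}$ together with the fact that each pair $(\rho,a,b)$ contributes exactly twice the rank of the segment $\langle\rho||^a_E,\rho||^b_E\rangle$ on the dual side, matching the $\U^*_n$-side count). By the Zelevinsky/Bernstein–Zelevinsky classification, this induced representation is irreducible and tempered precisely because each factor $\St(\rho,m)||^{s}$ with the opposite twist $\St(\rho,m)||^{-s}$ is assembled into a unitary tempered package, and $\pi_{\tau_0}$ is tempered by construction; its cuspidal support is visibly $\SuppCusp(\tau)$.

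Next I would establish uniqueness. Since for $\GL_n(E)$ the cuspidal support determines the Bernstein component and, within a tempered Bernstein component, the cuspidal support determines the tempered representation up to isomorphism (a tempered representation of $\GL_n$ is a full induced representation from discrete series of a Levi, and the multiset of discrete-series segments is read off from the cuspidal support by the linkage/segment combinatorics), any two tempered representations with the same cuspidal support coincide. This gives the uniqueness clause and simultaneously shows $\pi_\tau$ depends only on the cuspidal support of $\tau$, not on the auxiliary choice of $\mathcal{J}$, which was already noted to be true for $\SuppCusp(\tau)$ itself in the discussion following the second definition.

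For $\theta$-invariance, I would observe that $\SuppCusp(\tau)$ is manifestly a $\theta$-stable multiset: each $\rho||^y_E$ occurs paired with $\rho||^{-y}_E$ and $\theta$ acts on supercuspidals of $\GL_m(E)$ by $\rho\mapsto \rho^\vee\circ\text{(conjugate)}$, i.e. roughly $\rho||^y_E\mapsto\theta(\rho)||^{-y}_E$, and the $\theta$-invariance of the $\rho$ appearing (built into the definition of $\SuppCusp$) makes the whole multiset invariant; the same holds for $\SuppCusp(\tau_0)$ by the construction of $\pi_{\tau_0}$ from $\theta$-invariant $\rho'$. Since $\pi_\tau$ is the unique tempered representation with this cuspidal support, and $\theta(\pi_\tau)$ is tempered with the $\theta$-image of the cuspidal support, which equals $\SuppCusp(\tau)$, uniqueness forces $\theta(\pi_\tau)\cong\pi_\tau$. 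Finally, $\theta$-discreteness: a tempered representation of $\GL_n(E)$ is $\theta$-discrete iff it is a full induced representation $\St(\rho_1,m_1)\times\cdots\times\St(\rho_r,m_r)$ whose segments are pairwise $\theta$-distinct and each segment is $\theta$-self-dual or paired with its $\theta$-dual among the others in a way giving no $\theta$-stable proper Levi — concretely, iff no nontrivial $\theta$-stable parabolic can carry a $\theta$-stable subrepresentation inducing to it. Here the relevant data $S_\tau$ (equivalently the union of the segments coming from $\mathcal{J}$ and from $\tau_0$) consists of segments that are either $\theta$-self-associate with odd "gap" (the Steinberg blocks $\St(\rho,b)$ with $b\equiv a_{\rho,\tau}$, which are $\theta$-self-dual) or come in genuine $\theta$-pairs $\{\St(\rho,m)||^{s},\St(\rho,m)||^{-s}\}$; in either case the standard criterion (see Mœglin's analysis, \cite{Moe}) for $\theta$-discreteness of an induced tempered representation is satisfied because the multiset of segments has no $\theta$-stable "sub-induction" — this is exactly the condition encoded by $\tau$ being discrete on the unitary side.

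\textbf{Main obstacle.} The delicate point is the $\theta$-discreteness claim: it is not formal from $\theta$-invariance of the cuspidal support, and one must check that the particular segment multiset produced by the construction (the triples in $\mathcal{J}$ together with the data $S_{\tau_0}$) cannot be reorganized as an induction from a $\theta$-stable proper parabolic with a $\theta$-stable inducing representation. This requires matching the combinatorial "no $\theta$-stable block decomposition" condition against the condition that $\tau$ is a \emph{discrete series} (not merely tempered) of $\U^*_n$ — i.e. that the Jordan blocks of $\tau$ have the multiplicity-one / distinctness property characterizing discrete parameters — and invoking the precise dictionary of \cite{Moe} between discrete series of classical groups and $\theta$-discrete representations of $\GL$ under base change. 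Everything else (existence by explicit induction, temperedness, uniqueness within a tempered Bernstein component, $\theta$-stability of the support) is routine representation theory of $\GL_n(E)$.
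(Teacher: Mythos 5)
Your uniqueness and $\theta$-invariance arguments are fine (a tempered representation of $\GL_n(E)$ is an irreducible full induction of Steinberg representations attached to \emph{centered} segments, and the decomposition of the exponent multiset into centered segments is unique, so the cuspidal support pins down the tempered representation and $\theta$-stability of $\SuppCusp(\tau)$ then forces $\theta$-invariance). But the existence step, which you treat as routine, contains a genuine error. The representation you define, the induction of $\bigotimes_{(\rho,a,b)\in\mathcal{J}}\St(\rho,a-b+1)||^{(a+b)/2}\otimes\St(\rho,a-b+1)||^{-(a+b)/2}\otimes\pi_{\tau_0}$, is \emph{not} tempered unless $a+b=0$: a parabolic induction whose inducing data includes a discrete series twisted by a non-unitary character is never tempered, and it need not be irreducible either. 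A concrete test case is the Steinberg representation of $\U^*_3$, where $\mathcal{J}=\{(\chi,1,1)\}$ and $\tau_0$ is a character of $\U^*_1$: your recipe produces the principal series $\chi||^1\times\chi||^{-1}\times\chi_0$, which is reducible and non-tempered, whereas the actual $\pi_\tau$ is $\St(\chi,3)$, obtained by merging the exponents $\pm1$ coming from $\mathcal{J}$ with the exponent $0$ coming from $\SuppCusp(\tau_0)$ into a single centered segment. This recombination of the $\mathcal{J}$-segments with the reducibility data of $\tau_0$ into distinct centered, conjugate-self-dual segments is exactly the nontrivial content of the statement: existence of a tempered (indeed $\theta$-discrete) representation with cuspidal support $\SuppCusp(\tau)$ is equivalent to the assertion that $\tau$'s extended cuspidal support is the cuspidal support of the base change of a discrete parameter, and that is M{\oe}glin's structural theory of discrete series (the relation between the triples $(\rho,a,b)\in\mathcal{J}$ and the reducibility points $a_{\rho,\tau_0}$), not a formal consequence of the definitions.

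For comparison: the paper does not prove this proposition at all; the whole subsection is explicitly a summary of \cite{Moe} ("All the materials here can be found in \cite{Moe}"), and the proposition is recorded as a restatement of her results for later use. So the honest options are either to cite M{\oe}glin for existence and $\theta$-discreteness, as the paper does, or to genuinely reprove her combinatorial analysis of how the segments attached to $\mathcal{J}$ and to $\tau_0$ assemble into a multiplicity-free set of centered segments; your proposal does neither for the existence/temperedness step, even though you correctly identify $\theta$-discreteness as requiring her dictionary.
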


Let $\phi : \mc{L}_E \ra \GL_n(\C)$ be an L-parameter. It decomposes into a sum of irreducible representations $ \phi = \bigoplus (\rho \otimes \sigma_a)^{\oplus m_{\rho , a}} $, where $\sigma_a$ is the unique representation of dimension $a$ of $\SL_2(\C)$. Then $\phi$ is $\theta$-discrete if $m_{\rho , a}$ is at most $1$ and if $m_{\rho , a} = 1$ then $\rho$ is invariant under the composition of $ g \longmapsto {}^{t}g^{-1} $ and the conjugation in $W_E$ by the action of the non-trivial element in $\Gamma_{E/\Q_p}$.  

Recall that Moeglin constructed a stable packet consisting of discrete series representations of $\U^*_n$ from a conjugacy class of $\theta$-discrete and $\theta$-stable morphisms of $W_E \times \SL_2(\C)$ in $\GL_n(\C)$. More precisely, let $\phi$ be a $\theta$-discrete and $\theta$-stable morphism. Then the local Langlands correspondence for $\GL_n$ associates with $ \phi $ a $\theta$-discrete representation $\pi(\phi)$ of $\GL_n(E)$. The corresponding packet $\Pi^{\mathrm{Mo}}_{\phi}$ is the set of discrete series representations $\tau$ such that the extended cuspidal support of $\tau$ is the same as the cuspidal support of $\pi(\phi)$.

There is another characterization of the discrete series representations $\tau$ belonging to $\Pi^{\mathrm{Mo}}_{\phi}$. Since $\phi$ is conjugate-orthogonal, by \cite[Theorem 8.1 (ii)]{GGP}, it is the restriction to $W_E$ of an $L$-parameter $\phi_{\U^*_n}$ of $\U^*_n$. The following result is well known for the experts but for completeness, we also give a proof.

\begin{proposition}{\label{Moeglinpacket}}
The packet $\Pi^{\mathrm{Mo}}_{\phi}$ coincides with the packet corresponding to $\phi_{U^*_n}$ constructed in \cite{Mok}.
\end{proposition}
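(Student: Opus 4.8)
The plan is to show that both packets $\Pi^{\mathrm{Mo}}_{\phi}$ and the Mok packet $\Pi^{\mathrm{Mok}}_{\phi_{\U^*_n}}$ are characterized by the same endoscopic (stable) character identity relative to the twisted endoscopic datum given by $(\GL_n)_{E/\Q_p} \rtimes \theta$, and hence must coincide. Recall that Mok's construction of the packet $\Pi_{\phi_{\U^*_n}}$ is precisely via twisted endoscopic transfer: the stable distribution $\sum_{\tau \in \Pi_{\phi_{\U^*_n}}} \langle \tau, 1\rangle \tr(\tau \mid f)$ on $\U^*_n(\Q_p)$ is characterized by matching, under the twisted transfer $f \mapsto f^{\ov{G}_n}$, with the twisted character $\tr(\pi(\phi) \rtimes \theta \mid f^{\ov{G}_n})$ of the $\theta$-stable extension of $\pi(\phi)$ to $\Tilde{\G}_n$. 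So the real content is to identify Moeglin's combinatorially-defined packet with this stable distribution.

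First I would set up the twisted endoscopic side carefully, fixing the Whittaker normalization so that the twisted character of $\pi(\phi)$ is the canonically normalized one (this is what both Mok and Moeglin use). Next I would invoke Moeglin's main results in \cite{Moe} — in particular the multiplicity-one statement for discrete series $L$-packets of $\U^*_n$ and her explicit description of which discrete series $\tau$ have extended cuspidal support equal to the cuspidal support of $\pi(\phi)$ — together with the fact, recalled in the excerpt, that $\phi$ being conjugate-orthogonal means $\phi = \phi_{\U^*_n}|_{W_E}$ for a genuine parameter $\phi_{\U^*_n}$ of $\U^*_n$ by \cite[Theorem 8.1(ii)]{GGP}. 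The key intermediate claim is that for a discrete series $\tau$ of $\U^*_n$, membership in $\Pi^{\mathrm{Mo}}_{\phi}$ (i.e.\ $\SuppCusp(\tau)$ equals the cuspidal support of $\pi(\phi)$) is equivalent to $\tau$ appearing in the spectral expansion of the stable distribution attached to $\phi_{\U^*_n}$. For this one compares the two constructions on supercuspidal representations first — using the description of $S_\tau$ and $\pi_\tau$ above and Moeglin's explicit reducibility-point computations matching the Langlands–Shahidi / Arthur $R$-group data — and then propagates to general discrete series via Moeglin's inductive description in terms of the triples $(\rho,a,b)$, which mirrors how parabolic induction and Jacquet modules interact with the stable character on the unitary side.

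The main obstacle, and where the bulk of the argument lies, is precisely this identification of the two parametrizations of discrete series: Moeglin's packets are defined by a concrete recipe involving extended cuspidal supports and reducibility points, while Mok's/KMSW's packets are defined via the trace formula and twisted endoscopy, and reconciling them requires knowing that the reducibility points $a_{\rho,\tau}$ governing Moeglin's combinatorics agree with the data read off from the parameter $\phi_{\U^*_n}$ — equivalently, that Moeglin's normalization of the correspondence agrees with Arthur's. Here I would appeal to the fact that both are pinned down by the same local intertwining relation and the compatibility with the $\GL_n$-correspondence via twisted endoscopic character identities, so uniqueness of the stable packet (which holds because a stable tempered distribution is determined by its character and the packet is linearly independent) forces $\Pi^{\mathrm{Mo}}_{\phi} = \Pi_{\phi_{\U^*_n}}$. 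I would finish by remarking that once the discrete series case is settled, the statement for all of $\Phi_{2,\bdd}$ is immediate, and that this is exactly the input needed so that $\SuppCusp$ can be used below to track the behavior of parameters under parabolic induction.
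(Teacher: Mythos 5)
Your proposal reaches the same destination (both packets are cut out by the same stable character identity on $\U^*_n$) but by a genuinely different and more labor-intensive route than the paper. The paper's proof is soft: it takes a pseudo-coefficient of the $\theta$-extension $\Tilde\pi(\phi)$, projects it to $\I_{cusp}(\Tilde\G_n)$, and then uses Arthur's isometric decomposition $\I_{cusp}(\Tilde\G_n) \cong \bigoplus_{D(n)} \I^{st}_{cusp}(\U^*_{n_1}\times\U^*_{n_2})$ (from \cite[Proposition~3.5]{Art96}) together with \cite[Theorem~6.1]{Moe} (the $\theta$-discreteness of $\phi$ kills all other endoscopic transfers) to obtain a single stable distribution on $\U^*_n$. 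That Moeglin's packet is exactly the set of $\tau$ appearing in the spectral decomposition of this distribution with respect to Arthur's orthogonal basis of $T_{\mathrm{ell}}$ is \emph{cited}, not reproved, from \cite[\S5.5]{Moe}; Mok's twisted character identity then identifies the same distribution with the one supported on Mok's packet, and the conclusion is immediate from linear independence.

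You instead propose to \emph{reprove} Moeglin's stable-distribution characterization of $\Pi^{\mathrm{Mo}}_\phi$ from the combinatorial definition (extended cuspidal supports), by bootstrapping from supercuspidals to discrete series via the triples $(\rho,a,b)$ and matching reducibility points $a_{\rho,\tau}$ with the data of $\phi_{\U^*_n}$. This is the content-heavy part of Moeglin's work, and your sketch of how to reconcile her normalization with Arthur's — appealing to the local intertwining relation and the $\GL_n$-compatibility — is not fully fleshed out: the LIR concerns normalized intertwining operators and doesn't on its own deliver the reducibility-point dictionary. That gap is plausibly fillable from Moeglin's and M\oe glin--Waldspurger's explicit computations, but the paper's approach sidesteps it entirely by routing through $\I_{cusp}$ and letting the isometry of the stable transfer do the work. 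Your endgame (multiplicity one plus linear independence of characters in the packet forces the two packets to agree) matches the paper's, so the overall structure is sound; the difference is that you take the hard road through reducibility points where a softer citation suffices.
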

\begin{proof}
Denote $ \I_{cusp} (\G) $ the space of orbital integrals of cuspidal functions of a reductive group $\G$ over $\Qp$. Recall that a function $f$ is cuspidal if for each proper Levi subgroup $\mathrm{L} \subset \G$ with parabolic $\mathrm{Q}$, we have that the constant term $f_{\mathrm{Q}}$ satisfies $\tr(\pi_{\mathrm{L}} \mid f_{\mathrm{Q}})=0$ for each $\pi_{\mathrm{L}} \in \Pi_{\temp}(\mathrm{L})$. Following \cite[\S6]{Art93} and \cite[\S1]{Art96}, one can define a convenient inner product by
\[
(a_G, b_G) := \int_{\Gamma_{\mathrm{ell}}(G)} a_G(\gamma) \overline{b_G(\gamma)} d\gamma,
\]
for $a_G, b_G \in \I_{cusp}(\G)$ and $\Gamma_{\mathrm{ell}}(G)$ is the set of elliptic, strongly regular, semi-simple conjugacy class in $G(\Q_p)$. The set $T_{\mathrm{ell}}(\G)$ parametrizes an orthonormal basis of $\I_{cusp}(\G)$. More precisely, we consider the triples $ \omega = (M, \sigma, r)$ where $M$ is a Levi component of some parabolic subgroup $P$ of $G$, $\sigma$ is a square integrable representation of $M(\Q_p)$, and $r$ is an element in the extended $R$-group $\widetilde{R}_{\sigma}$ of $\sigma$. We can associate a distribution to each triple by the formula
\[
\Phi(\omega, f) = \tr(\widetilde{R}_P(r, \sigma)\mathcal{I}_P(\sigma, f)) \quad f \in \mathcal{H}(G(\Q_p)),  
\]
where $\mathcal{I}_P$ is the parabolic induction. We consider only essential triples. Roughly speaking they are the ones whose corresponding distribution is non-zero. These triples have an action of some group $W_0^G$ and we denote $T(\G)$ the set of orbits of essential triples. Then $T_{\mathrm{ell}}(\G)$ is the set of $\omega$ in $T(\G)$ such that the distribution $\Phi(\omega)$ does not vanish on $G(\Q_p)_{\mathrm{ell}}$, the set of elliptic elements of $G(\Q_{p})$. The virtual characters corresponding to these distributions form an orthogonal basis of $\I_{cusp}(\G)$. In particular, each square integrable representation $\pi$ of $G(\Q_p)$ belongs to $T_{\mathrm{ell}}(\G)$ (here we take $M = G$ and $\pi = \sigma$) and the corresponding virtual character is in fact the Harish-Chandra character $\theta_{\pi}$. 

We also need a twisted version of some of the above notations. We can defined a space $ \I_{cusp}(\Tilde{\G}_n) $ as in \cite[\S 1.1]{Moe}. For completeness, we give a brief details of the definition. In fact, one can define the subsets $\Tilde{\G}_{\text{reg}}$, $\Tilde{\G}_{\text{n,reg}}$, $\Tilde{\G}_{\text{ell}}$ respectively $\Tilde{\G}_{\text{n,ell}}$ of regular, strongly regular, elliptic and respectively strongly elliptic elements of $\Tilde{\G}$. Let $f$ be a locally constant and compact support function on $\Tilde{\G}$, one can define the orbital integral $J^{\GL_n(E)}(g, f)$ for $g \in \Tilde{\G}_{\text{reg}}$. We say that $f$ is a cuspidal function if the integral $J^{\GL_n(E)}(g, f)$ vanishes for all $ g \in \Tilde{\G}_{\text{n,reg}} \setminus \Tilde{\G}_{\text{n,ell}} $. We denote by $C_{cusp}(\Tilde{\G})$ the set of cuspidal functions and by $\I_{cusp}(\Tilde{\G})$ the vector space of maps
\begin{align*}
  I(f) : \Tilde{\G}_{\text{n,ell}} &\longrightarrow \quad \C \\
                             g \ \     &\longmapsto J^{\GL_n(E)}(g, f),  
\end{align*}
for $f \in C_{cusp}(\Tilde{\G})$.

Let $ \pi^{+}(\phi) $ be an extension of $\pi(\phi)$ to $ \GL_n(E) \rtimes \{ 1, \theta \} $ and $\Tilde{\pi}(\phi)$ be its restriction to $\Tilde{\G}_n$. By \cite[Theorem 1.2]{Moe}, there exists a pseudo-coefficient $f$ of $\Tilde{\pi}(\phi)$ and we denote by $ f_{\phi} $ its projection in $ \I_{cusp}(\Tilde{\G}_n) $. We have a decomposition $ \I_{cusp}(\Tilde{\G}_n) = \bigoplus_{D(n)} \I^{st}_{cusp} (\U^*_{n_1} \times \U^*_{n_2} ) $ where $D(n)$ is the set of ordered pairs $(n_1, n_2)$ whose sum is $n$. In fact, by \cite[\S 1.4]{Moe} and \cite[Proposition 3.5]{Art96}, the map that sends $f$ to the sum of its endoscopic transfers gives an isometry between them. 

Since $\U^*_n$ can be considered as a twisted endoscopic group of $ \Tilde{\G}_n $, there exists $ f^{\U^*_n}_{\phi} \in \I^{st}_{cusp} (\U^*_n) $ a stable transfer of $f_{\phi}$. We need to determine which component in the decomposition $ \I_{cusp}(\Tilde{\G}_n) = \bigoplus_{D(n)} \I^{st}_{cusp} (\U^*_{n_1} \times \U^*_{n_2} ) $ does $f^{\U^*_n}_{\phi}$ belong to. By \cite[Theorem 6.1]{Moe}, $f^{\U^*_n}_{\phi}$ belongs exactly to the component $(n, 0)$ or to the component $(0, n)$ since both pairs $(n, 0)$ and $(0, n)$ give us the group $ \U^*_n $. 
However, $f^{\U^*_n}_{\phi}$ is a stable transfer, we deduce that it is in the component corresponding to the ordered pair $(n, 0)$ by the assertion in the last paragraph of page $3$ in \cite{Moe}. Thus, $f^{\U^*_n}_{\phi}$ corresponds to a stable distribution of $\U^*_n(\Q_p)$ and since the map is isometric, we have an equality
\[
\tr(\Tilde{\pi}(\phi)(h)) = \int_{\Gamma_{\mathrm{ell}}(\Tilde{\G}_n)} f_{\phi} (\gamma) h (\gamma) d\gamma = \int_{\Gamma_{\mathrm{ell}}(\U^*_n)} f^{\U^*_n}_{\phi} (\gamma) h^{\U^*_n} (\gamma) d\gamma, \qquad h \in \mathcal{H}(\Tilde{\G}_n).
\]
Moreover, by \cite[\S 5.5]{Moe}, the packet $ \Pi^{\mathrm{Mo}}_{\phi} $ consists of $\tau$ which are in the decomposition of this stable distribution with respect to the above basis. By \cite[Theorem~3.2.1]{Mok}, applied to the pair  $(\Tilde{G}_n, \pi(\phi))$ we have
\[
\tr(\Tilde{\pi}(\phi)(h)) = \sum_{\pi \in \Pi_{\phi}(\U^*_n)} \tr (\pi(h^{\U^*_n})), \qquad h \in \mathcal{H}(\Tilde{\G}_n),
\]
where the sum is over the packet $\Pi_{\phi}(\U^*_n)$ constructed in loc.cit. Indeed, part (a) of Mok's theorem tells us that the left hand side equals $\Tilde{h}^{(\U_n^*)}(\phi_{\U_n^*})$ (here we follow the notation in \cite{Mok}) and part (b) tells us that this quantity equals the right hand side.

The distributions $\tr (\pi(h)), h \in \mathcal{H}(\U^*_n)$ (for $\pi \in \Pi_{\phi}(\U^*_n)$) are independent and they are part of the basic of $\I_{\text{cusp}}(\U_n^*)$ indexed by $T_{\text{ell}}(\U_n^*)$ explained above, thus the packet $\Pi^{\mathrm{Mo}}_{\phi}$ coincides with the packet corresponding to $ \phi $ constructed in \cite{Mok}. 
\end{proof}

\begin{proposition}{\label{parindcomp}}
Let $n$ be an odd integer and $G$ be the quasi-split unitary group $\U^*_n$ or similitude unitary group $\GU^*_n$. Then $\LLC_{G}$ is compatible with parabolic induction. Namely, if $P \subset G$ is a proper parabolic subgroup with Levi factor $M$ and $\pi_{M} \in \Pi_{\C}(M)$ is an irreducible admissible representation of $M$ with (semi-simplified) $L$-parameter $\phi_{M}^{ss}: W_{\mathbb{Q}_{p}} \rightarrow \phantom{}^{L}M(\ov{\Q}_{\ell})$ and $\pi$ is an irreducible constituent of the (normalized) parabolic induction $I_{P(\mathbb{Q}_{p})}^{G(\mathbb{Q}_{p})}(\pi_{M})$ then the (semi-simplified) $L$-parameter of $\pi$ is given by the embedding
\[ \phi_G^{ss} : W_{\mathbb{Q}_{p}} \xrightarrow{\phi_M^{ss}} \phantom{}^{L}M(\ov{\Q}_{\ell}) \rightarrow \phantom{}^{L}G(\ov{\Q}_{\ell}). \]
\end{proposition}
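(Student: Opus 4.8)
The plan is to reduce the assertion for $\U^*_n$ to the corresponding classical statement for $\GL_n(E)$ by means of the extended cuspidal support recalled above (Proposition \ref{Moeglinpacket} and the discussion preceding it), and then to deduce the $\GU^*_n$ case from the $\U^*_n$ case. The structural input I would lean on is that, for $n$ odd, a parameter valued in $\LL\U^*_n$ is determined by its restriction to $W_E$ --- it is conjugate-orthogonal there, and \cite[Theorem 8.1]{GGP} provides the converse --- so, passing to semisimplifications, it suffices to prove $\phi_\pi^{ss} = \phi_M^{ss} \circ (\LL M \to \LL\U^*_n)$ after restriction to $W_E$.

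The heart of the matter is the identity
\[ \phi_\pi^{ss}|_{W_E} \;=\; \text{the } \GL_n(E)\text{-parameter attached to } \SuppCusp(\pi), \]
namely the direct sum of the parameters of the supercuspidal representations occurring in $\SuppCusp(\pi)$. For $\pi$ a discrete series representation this is exactly the content of Proposition \ref{Moeglinpacket} (there $\phi_\pi$ is semisimple and $\SuppCusp(\pi)$ is the cuspidal support of $\pi(\phi_\pi)$). For a general irreducible $\pi$ I would descend to the discrete series case using the Langlands classification (Theorems \ref{thm: Langlandsclassification}, \ref{thm: langlandsclassificationparams}, \ref{thm: fullLLC}) together with induction in stages: writing $\pi$ as the Langlands quotient of $I^G_P(\sigma \otimes \chi_\nu)$ with $\sigma$ a tempered representation of the Levi $M_P = \U^*_{n_0} \times \Res_{E/\Q_p}(\prod_i \GL_{m_i})$ --- and, after absorbing a smaller Levi, taking the $\U^*_{n_0}$-component $\delta_0$ of $\sigma$ to be discrete series --- the additivity formula for $\SuppCusp$ under parabolic induction quoted above gives $\SuppCusp(\pi) = \SuppCusp(\delta_0) \cup \{\lambda, \theta(\lambda)\}$, where $\lambda$ runs over the supercuspidal support of the $\GL$-components of $\sigma \otimes \chi_\nu$ (the twist $\chi_\nu$ affecting only those components, as $\U^*_{n_0}$ has anisotropic center). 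The identity for $\pi$ then follows from the identity for $\delta_0$ and the classical statement for $\GL$ that the semisimplified parameter is the direct sum over the supercuspidal support, matching the unramified twist $\chi_\nu$ with the Silberger--Zink element $z(\nu)$ via the homomorphism property of $z(-)$ \cite{SZ}.

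Granting this, the Proposition follows quickly. If $\pi$ is an irreducible constituent of $I^G_P(\pi_M)$, then the supercuspidal support of $\pi$ is obtained from that of $\pi_M$ in the obvious way, so additivity of $\SuppCusp$ identifies $\SuppCusp(\pi)$ with the union of the extended cuspidal support of the $\U^*_{n_0}$-component of $\pi_M$, the cuspidal supports of its $\GL$-components, and the $\theta$-twists of the latter. Reading off $\GL_n(E)$-parameters through the displayed identity (applied also to the $\U^*_{n_0}$-component of $\pi_M$) shows that $\phi_\pi^{ss}|_{W_E}$ coincides with the restriction to $W_E$ of $\phi_M^{ss}$ composed with $\LL M \to \LL\U^*_n$, which by the first paragraph proves the case $G = \U^*_n$. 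For $G = \GU^*_n$ one uses the surjection $\U_n(\Q_p) \times Z(\GU_n)(\Q_p) \twoheadrightarrow \GU_n(\Q_p)$: an irreducible representation of $\GU_n$ corresponds to a pair consisting of an irreducible representation of $\U_n$ and a compatible central character, its $L$-parameter is then determined (as in the proof of Theorem \ref{itm: local}), and $\GU_n$-parabolic induction restricts compatibly to $\U_n$ while acting on central characters by inflation; hence the $\U^*_n$ case yields the $\GU^*_n$ case.

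The step I expect to be the main obstacle is the general-$\pi$ case of the displayed identity. The extended cuspidal support of \cite{Moe} is developed for discrete series, so extending the relation ``$\phi_\pi^{ss}|_{W_E}$ reads off $\SuppCusp(\pi)$'' to all irreducible $\pi$, together with the standard-module bookkeeping that matches the accumulated unramified twist with $z(\nu)$, is where the genuine work lies; once that is done, the rest is formal.
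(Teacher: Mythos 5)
Your proposal is correct and mirrors the paper's proof in all essentials: both reduce $\GU^*_n$ to $\U^*_n$ via central characters, use Moeglin's extended cuspidal support together with \cite[Theorem~8.1(ii)]{GGP} to read off the semisimplified parameter from its restriction to $W_E$, and then reduce the general case to the discrete-series case (where Proposition~\ref{Moeglinpacket} does the work) via the Langlands classification and transitivity of cuspidal support. The one organizational difference is that the paper handles the tempered-but-not-discrete case as a separate explicit step, invoking Mok's Proposition~3.4.4 to factor the parameter through a Levi, whereas you fold that step into the phrase ``absorbing a smaller Levi''; this is the same ingredient and you correctly flag the matching of $\chi_\nu$ with $z(\nu)$ as the remaining bookkeeping, so the substance coincides.
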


\begin{remark}
We note that the statement is well-defined since any Levi subgroup of $M \subset G$ is a product of an odd unitary similitude group and general linear groups. 
\end{remark}

\begin{proof}
It is enough to prove the result for $G=\U^*_n$ since an irreducible representation of $\GU^*_n$ (resp. its $L$-parameter) is completely determined by its central character and  restriction to the corresponding unitary group (resp. the $L$-parameter corresponding to the central character and projection to the corresponding unitary group). 

Let $\pi \in \Pi_{\C}(G)$ and $(M, \pi_M)$ be its cuspidal support. We claim that it is enough to prove compatibility with parabolic induction of semi-simplified $L$-parameters for all such triples $(\pi, \pi_M, M)$. Indeed, suppose we have shown this. We first note that this implies the same statement for any Levi subgroup of $G$, since compatibility with parabolic induction is also known for general linear groups. Then, let $(\pi, \pi_{M'}, M')$ be any triple such that $\pi$ is a subquotient of $I_{P'(\mathbb{Q}_{p})}^{G(\mathbb{Q}_{p})}  \pi_{M'}$. The cuspidal support $(M_1, \pi_{M_1})$ of $\pi_{M'}$ is also the cuspidal support of $\pi$ by transitivity of parabolic induction. By assumption, we have 
\begin{itemize}
    \item an equality between $\phi^{ss}_{\pi_{M'}}$ and the composition of $\phi^{ss}_{\pi_{M_1}}$ and ${}^LM_1(\ov{\Q}_{\ell}) \to {}^LM'(\ov{\Q}_{\ell})$,
    \item an equality between $\phi^{ss}_{\pi}$ and the composition of $\phi^{ss}_{\pi_{M_1}}$ and ${}^LM_1(\ov{\Q}_{\ell}) \to {}^LG(\ov{\Q}_{\ell})$.
\end{itemize}
This clearly implies an equality between $\phi^{ss}_{\pi}$ and the composition of $\phi^{ss}_{\pi_{M'}}$ and ${}^LM'(\ov{\Q}_{\ell}) \to {}^LG(\ov{\Q}_{\ell})$, as desired.

We now proceed by proving the claim when $\pi \in \Pi_2(G)$ and then deduce the tempered case and finally the general case. If $\pi \in \Pi_2(G)$, there exists a supercuspidal representation $\pi_0$ of $\U^*_{n_0}$ with $n_0 \leq n$ and a set $\mathcal{J}$ of triples $(\rho, a, b)$ such that $\rho$ is a $\theta$-invariant supercuspidal representation of $\GL_{n_{\rho}}(E)$ and $a > b$ are demi-integers and $\pi$ is a subquotient of the parabolic induction $I_{P(\mathbb{Q}_{p})}^{G(\mathbb{Q}_{p})}\pi_M$,
where $ \pi_M := \otimes_{(\rho, a, b) \in \mathcal{J}} \otimes_{y \in [a, b]} \rho ||^y \otimes \pi_{0}$ and $M = \times_{(\rho, a, b) \in \mathcal{J}} \Res^E_{\mathbb{Q}_{p}} \GL^{\otimes(a-b+1)}_{n_{\rho}} \times \U^*_{n_0} $. Moreover, $(M, \pi_M)$ is the cuspidal support of $\pi$. 

Denote by $\tau_G$ the irreducible representation of $\GL_n(E)$ whose cuspidal support is $\SuppCusp (\pi)$. Then by Proposition \ref{Moeglinpacket}, the base change $ \phi_{G, E}^{ss} $ of the semi-simplified $L$-parameter $ \phi_G^{ss} $ of $\pi$ corresponds to $\tau_G$ via the local Langlands correspondence for the group $\GL_n(E)$. Remark that $\phi_{\G, E}^{ss} = \phi_{\G | W_E }^{ss} $ and we can identify $\G(\Q_p)$ as the subset of $\GL_n(E)$ preserving some Hermitian form. Thus by \cite[Theorem~8.1 (ii)]{GGP}, $\phi_{\G, E}^{ss}$ uniquely determines $\phi_{\G}^{ss}$ by the formula $ \phi_{\G}^{ss} = \Ind_{W_E}^{W_{\Q_p}} \phi_{\G, E}^{ss} $. 

Denote by $\tau_M$ the irreducible representation of $M(E)$ whose cuspidal support is $\SuppCusp (\pi)$. Then by Proposition \ref{Moeglinpacket}, the base change $\phi_{M, E}^{ss}$ of the semi-simplified $L$-parameter $ \phi_M^{ss} $ of $ \pi_M $ corresponds to $\tau_M$ via the local Langlands correspondence for $M(E)$. Similarly, we have the identity $\phi_{\M, E}^{ss} = \phi_{\M | W_E }^{ss} $ and $ \phi_{\M}^{ss} = \Ind_{W_E}^{W_{\Q_p}} \phi_{\M, E}^{ss} $. Therefore it is enough to prove compatibility with parabolic induction for the parameters $\phi_{G, E}^{ss}$ and $\phi_{M, E}^{ss}$.
 
 By definition $\tau_G$ and $\tau_M$ have the same cuspidal support. By the compatibility of semi-simplified $L$-parameters with parabolic induction for $\GL_n$, we deduce that $ \phi_{G, E}^{ss} $ and $ \phi_{M, E}^{ss} $ are compatible with parabolic induction. This is what we want to prove in this case. 

Consider now a tempered representation $\pi$ which is not a discrete series. Denote $\phi_G$ its $L$-parameter. Then by  \cite[Proposition~3.4.4]{Mok} and its proof, there is a proper Levi subgroup $M$ of $G$ and a discrete $L$-parameter $\phi_M$ of $M$ such that we have the following identification : 
\begin{equation} \label{*}
 \phi_G : W_{\Q_p} \xrightarrow{ \phi_M } \phantom{}^{L}M(\ov{\Q}_{\ell}) \rightarrow \phantom{}^{L}\G(\ov{\Q}_{\ell}).   
\end{equation}

Moreover, we have that $\Pi_G(\phi_G)$ equals the set of irreducible subquotients of $I^G_P(\pi_M)$ ranging over all $\pi_M \in \Pi_M(\phi_M)$. We can suppose that $\pi$ is a subquotient of the parabolic induction of some discrete representation $\pi_M$ in the packet $\Pi_M(\phi_M)$. Therefore the cuspidal support $ ( M_1, \pi_{M_1}) $ of $\pi_M$ is also the cuspidal support of $\pi$.

Since the Levi subgroup $M$ is a product of a unitary group $\U^*_{n_0}$ for some $n_0$ smaller than $n$ and some groups of the form $\Res_{E/\Q_p} \GL_m$, our claim is also true for the discrete representations $\pi_M$ and parabolic subgroups of $M$ by what we have proved for discrete representations of unitary groups. Thus we can identify $\phi_M^{ss}$ with the composition of $ \phi_{M_1}^{ss} $ and the inclusion $\phantom{}^{L}M_1(\ov{\Q}_{\ell}) \rightarrow \phantom{}^{L}M(\ov{\Q}_{\ell})$. 

Combining with (\ref{*}), we see that $\phi_G^{ss}$ is the composition of $\phi_{M_1}^{ss}$ and the inclusion $\phantom{}^{L}M_1(\ov{\Q}_{\ell}) \rightarrow \phantom{}^{L}\G(\ov{\Q}_{\ell})$ as desired.

Finally, we consider the case where $\pi$ is a general irreducible admissible representation whose corresponding $L$-parameter is $\phi_G$. By Theorem \ref{thm: Langlandsclassification}, there exists a triple $(P, \sigma, \nu)$ such that $\pi$ is the unique irreducible quotient of the normalized parabolic induction $I^G_P(\sigma \otimes \chi_{\nu})$, where $P$ is a parabolic subgroup with Levi factor $M$ and $\sigma$ is a tempered representation of $M$ as well as $\chi_{\nu}$ is the positive real unramified character determined by $\nu$. Denote by ${}^t\phi$ the $L$-parameter corresponding to $\sigma$. Then by Theorem \ref{thm: fullLLC}, the $L$-parameter of $\pi$ is the one corresponding to $(P, {}^t\phi , \nu)$ given by Theorem \ref{thm: langlandsclassificationparams}. Thus we have $ \phi_G : W_{\Q_p} \xrightarrow{ {}^t\phi_{z(\nu)} } \phantom{}^{L}M(\ov{\Q}_{\ell}) \rightarrow \phantom{}^{L}\G(\ov{\Q}_{\ell})$. Therefore compatibility with parabolic induction is true for $(\pi, M, \sigma \otimes \nu )$. By using the argument with cuspidal support as above, we see that the claim holds true when $\pi$ is an irreducible admissible representation. This completes the proof of the proposition.
\end{proof}
\subsubsection{Galois representation computations}{\label{sss: galoisrepcomp}}
In this subsection, we make a number of computations that are used in the rest of the paper. We begin by introducing the following notations.

Let $\phi \in \Phi(\GU^*_n)$ be an $L$-parameter. We consider the representation $r_{\mu} \circ \phi|_{\mc{L}_E}$ for the cocharacter $\mu$ of $\GU^*_n$ with weights $((1,0, \dots, 0),1)$. Explicitly, this equals the restriction of $\phi$ to $\mc{L}_{E}$ composed with the map $(\GL_{n}(\mathbb{C}) \times \mathbb{C}^{\times}) \times W_{E}  \ra \GL_{n}(\mathbb{C})$ given by $(g,\lambda, w) \mapsto g\lambda$.
We also define \[ \overline{\phi}: \mc{L}_{\Q_p} \xrightarrow{\phi}  \phantom{}^{L}\GU^*_{n}(\mathbb{C}) \rightarrow \phantom{}^{L}\U^*_{n}(\mathbb{C}) = \GL_{n}(\mathbb{C}) \rtimes W_{\mathbb{Q}_{p}}, \]
via the natural projection, and $\overline{\phi}_{E}$ to be the representation obtained by restricting $\overline{\phi}$ to $\mc{L}_{E}$ and projecting to $\GL_{n}(\mathbb{C})$. We write $\chi_{\phi}$ for the character given by restricting $\phi$ to $\mc{L}_E$ and projecting to the $\C^{\times}$ corresponding to the similitude factor. Hence we have $r_{\mu} \circ \phi|_{\mc{L}_E} = \ov{\phi}_E \otimes \chi_{\phi}$. 

We set $S_{\phi} := Z_{\widehat{\GU_n}}(\phi)$ and $S_{\overline{\phi}} = Z_{\widehat{\U_n}}(\ov{\phi})$. We will write 
\[ \overline{\phi}_{E} = \bigoplus_{i = 1}^{r} \ov{\phi}_{i}^{\oplus n_{i}}, \quad \quad  r_{\mu} \circ \phi|_{\mathcal{L}_{E}} = \bigoplus_{i = 1}^{r} \phi_{i}^{\oplus n_{i}} \]
as a direct sum of pairwise distinct irreducible $\mathcal{L}_{E}$-representations $\ov{\phi}_{i}, \phi_i $ occurring with positive multiplicity $n_{i}$, and where $\phi_i = \ov{\phi}_i \otimes \chi_{\phi}$.

We now assume further that $\phi \in \Phi_2(\GU_n)$. It follows by \cite[Pages~62,63]{KMSW} and \cite[Lemma~10.3.1]{KottwitzCuspidal}, that we have each $n_i=1$ and
\[ S^{\natural}_{\ov{\phi}} = S_{\overline{\phi}} = \bigoplus_{i = 1}^{r} \mathrm{O}(1,\mathbb{C}) \simeq (\mathbb{Z}/2\mathbb{Z})^{r}. \]
The center $Z(\hat{\U_{n}})^{\Gamma} = \{\pm I_{n}\} \simeq \mathbb{Z}/2\mathbb{Z}$ embeds diagonally into $S^{\natural}_{\ov{\phi}}$.
We have

\[ S^{\natural}_{\phi} = S_{\phi} = \{g \in \GL_n(\C) | \det(g) = 1, g \in S_{\ov{\phi}}\} \times \C^{\times} \simeq (\mathbb{Z}/2\mathbb{Z})^{r - 1} \times \mathbb{C}^{\times} \]
and the center $Z(\widehat{\GU_{n}})^{\Gamma} = I_{n} \times \mathbb{C}^{\times}$ embeds in the obvious way.

We now give a combinatorial description of the character groups $X^*(S_{\phi})$ and $X^*(S_{\ov{\phi}})$ that will be used in the explicit computations in \S \ref{s: applications}. We let $\tau_i \in X^*(S_{\ov{\phi}})$ be the sign character on the $i$th $\Z/2\Z$ factor and trivial on the other factors. Then $X^*(S_{\ov{\phi}})$ is generated as a $\Z$-module by $\{\tau_i\}$ up to the obvious relation that $\tau^{\otimes 2}_i$ is trivial. There is a natural map $S_{\phi} \to S_{\ov{\phi}}$ which induces a map $X^*(S_{\ov{\phi}}) \to X^*(S_{\phi})$. We denote the image of $\tau_i$ under this map by $\ttau_i$ and we let $\widehat{c}$ denote the character of $S_{\phi}$ that is trivial on the $(\Z/2 \Z)^{r-1}$ part and is the identity on $\C^{\times}$. Then $X^*(S_{\phi})$ is generated by $\widehat{c}$ and the $\ttau_i$ with one additional relation that the determinant character of $\widehat{\U_n}$ is trivial on $S_{\phi}$. More concretely, if $\phi_i$ has dimension $d_i$ as an irreducible representation, then $\det := \prod\limits_i \tau^{d_i}_i = \prod\limits_{i: ~ d_i \text{ odd }} \tau_i $ is the determinant character of $\widehat{\U_n}$ restricted to $S_{\ov{\phi}}$ and hence we have $\tilde{\det} := \prod\limits_{i:  ~ d_i \text{ odd }} \ttau_i$ is trivial in $X^*(S_{\phi})$.

The set $X^*(S_{\ov{\phi}})$ is naturally identified with subsets $I \subset \{1, \dots, r\}$ by $I \mapsto \prod\limits_{i \in I} \tau_i =: \tau_I$. 

We say that two subsets $I_1, I_2$ of $\{1, ..., r\}$ are equivalent if either $I_1 = I_2$ or we have
\begin{itemize}
    \item $I_1 \cap \{i: d_i \text{ even}\} = I_2 \cap \{i: d_i \text{ even}\}$
    \item $I_1 \cap \{i: d_i \text{ odd}\} \coprod I_2 \cap \{i: d_i \text{ odd}\} = \{i: d_i \text{ odd}\}.$
\end{itemize}
Then the set $X^*(S_{\phi})$ is in bijection with pairs $([I], m)$ such that $[I]$ is an equivalence class under the above relation and $m \in \Z$ by the map $([I], m) \mapsto \widehat{c}^m \otimes \prod\limits_{i \in I} \ttau_i =: \ttau_{[I],m}$. For ease of notation, we denote $\ttau_{[\{i\}], m}$ by $\ttau_{i,m}$. The symmetric difference operation on subsets of $\{1, \dots, r\}$ (denoted by $\oplus$) gives a group structure and it is easy to check that this descends to a group structure on equivalence classes. Hence, the above map induces a group isomorphism between $(\mc{P}(\{1, \dots, r\})/ \sim) \times \Z$ and $X^*(S_{\phi})$.
    
Fix an extended pure inner twist $(\GU_n, \varrho, z)$ of $\GU^*_n$. Fix also a dominant cocharacter $\mu$ of $\GU_n$ and let $b \in B(\GU_n, \mu)$ be the unique basic element. Then $J_b$ has the structure of an extended pure inner twist $(J_b, \varrho_b, z_b)$ of $\GU_n$ and also $(J_b, \varrho_b \circ \varrho, z + z_b)$ of $\GU^*_n$. Suppose that $\kappa(z) = m_0, \kappa(b) = m_1 \in \Z=X^*(\widehat{\GU^*_n})^{\Gamma})$. For $I \subset \{1, \dots, r\}$, we let $\pi_{[I],m_0 + m_1}$ denote the unique element of $\Pi_{\phi}(J_b, \varrho_b \circ \varrho)$ such that $\iota_{\mf{w}}(\pi_{[I],m_0 + m_1}) = \ttau_{[I],m_0 + m_1}$.

We now make the following definition.
\begin{definition}
 For $\phi$ a discrete parameter of $\GU_{n}$ and $\pi \in \Pi_{\phi}(\GU_{n}, \varrho)$ and $\rho \in \Pi_{\phi}(J_{b}, \varrho_b \circ \varrho)$, we define the representation $\delta_{\pi,\rho} := \iota_{\mf{w}}(\pi)^{\vee} \otimes \iota_{\mf{w}}(\rho)$ of $S_{\phi}$. 
\end{definition}
\begin{remark}
The representation $\delta_{\pi, \rho}$ does not depend on $\mf{w}$ since modifying $\mf{w}$ scales both $\iota_{\mf{w}}(\pi)$ and $\iota_{\mf{w}}(\rho)$ and hence keeps $\delta_{\pi, \rho}$ constant (\cite{KalethaContragredients}).
\end{remark}

In preparation for applications to the Kottwitz conjecture, we further elucidate the representation $\delta_{\pi, \rho}$ in the particular case that $\mu$ is given by $z \mapsto (\diag(z,1, \dots, 1), z)$, with reflex field $E$. In particular, we describe the $W_{E}$-representation $\Hom_{S_{\phi}}(\delta_{\pi, \rho}, r_{\mu} \circ \phi|_{W_E})$ where $r_{\mu}$ is the representation of $\widehat{\GU_n} \rtimes W_E \subset \LL \GU_n$ as in \cite[Lemma (2.1.2)]{KottwitzTOO}. 

The representation $r_{\mu}$ restricted to $\widehat{\GU_n} \cong \GL_n(\C) \times \C^{\times}$ is the irreducible representation with highest weight $\mu$ and (since $\mu$ is minuscule) has weights given by the Weyl orbit of the character $\widehat{\mu}$ of $\widehat{T} \times \C^{\times} \subset \widehat{\GU_n}$ corresponding to $\mu$. As a representation of $S_{\phi}$, the $\C^{\times}$-factor acts by $\widehat{c}$ and the $(\Z/2\Z)^{r-1}$-factor acts by $\bigoplus\limits^r_{i=1} \ttau^{\oplus d_i}_i$.  Hence, as an $S_{\phi}$-representation, we have $r_{\mu} \circ \phi|_{\mc{L}_E} \cong \bigoplus\limits^r_{i=1} \ttau^{\oplus d_i}_{i,1}$.

The packet $\Pi_{\phi}(\GU_n, \varrho)$ (resp. $\Pi_{\phi}(J_b, \varrho_b \circ \varrho)$) corresponds via $\iota_{\mf{w}}$ to the $2^{r-1}$ characters of the form $\ttau_{[I], m_0}$ (resp. $\ttau_{[I], m_0+1}$).  Hence, for each $\rho=\pi_{[I], m_0+1} \in \Pi_{\phi}(J_b, \varrho_b \circ \varrho)$, there is precisely one representation $\pi^{\rho}_i=\pi_{[I\oplus \{i\}],m_0}  \in \Pi_{\phi}(\GU_n, \varrho)$ such that $\delta_{\pi^{\rho}_i, \rho}  \cong \ttau_{i,1}$. Thus, we have $\bigoplus_{\pi \in \Pi_{\phi}(\GU_n, \varrho)} \mathrm{Hom}_{S_{\phi}}(\delta_{\pi,\rho},r_{\mu} \circ \phi|_{W_E}) \simeq  r_{\mu} \circ \phi|_{W_E}$ as $W_E$-representations (the same is true for $\mc{L}_E$ representations, but in the paper we will only be considering the $W_E$-action).

We now repeat our analysis for $\U_n$ equipped with the extended pure inner form $(\U_n, \varrho, z)$ and $\mu$ given by $z \mapsto \diag(z,1, \ldots, 1)$. Let $\ov{\phi} \in \Phi_2(\U^*_n)$. In this case, the unique basic $b \in B(\U_n, \mu)$ satisfies $\kappa(b) \in X^*(Z(\widehat{\U_n})^{\Gamma}) \cong \Z / 2 \Z$ is nontrivial. The packet $\Pi_{\ov{\phi}}(\U_n, \varrho)$ (resp. $\Pi_{\ov{\phi}}(J_b, \varrho_b \circ \varrho)$) is of size $2^{r-1}$ and corresponds via $\iota_{\mf{w}}$ with those characters of $S_{\ov{\phi}}$ whose restriction to $Z(\widehat{\U_n})^{\Gamma}$ equals $\kappa(z)$ (resp. $\kappa(z)+\kappa(b)$). As before, we have that $r_{\mu} \circ \ov{\phi}|_{W_E} = \ov{\phi}_E \cong \bigoplus\limits^r_{i=1} \tau^{\oplus d_i}_i$. We note that each $\tau_i$ restricts non-trivially to $Z(\widehat{\U_n})^{\Gamma}$. It follows that $\bigoplus\limits_{\pi \in \Pi_{\ov{\phi}}(\U_n, \varrho)}\Hom_{S_{\ov{\phi}}}(\delta_{\pi, \rho}, r_{\mu} \circ \ov{\phi}|_{W_E}) = \ov{\phi}_E|_{W_E}$.

We record the results of these computations in the following corollary.

\begin{corollary}{\label{stdmucor}}
Suppose that $\phi$ is a discrete parameter of $\GU_{n}$ and consider the dominant cocharacter $\mu$ given by $z \mapsto (\diag(z,1, \ldots, 1), z)$ of $\GU_{n}$ and $b \in B(\GU_n,\mu)$ the unique basic element, then we have an isomorphism
\[ \bigoplus_{\pi \in \Pi_{\phi}(\GU_n, \varrho)} \mathrm{Hom}_{S_{\phi}}(\delta_{\pi,\rho},r_{\mu} \circ \phi|_{\mc{W}_{E}}) \simeq  r_{\mu} \circ \phi|_{W_E} \]
for all $\rho \in \Pi_{\phi}(J_b, \varrho_{b} \circ \varrho)$. 
Similarly, if $\ov{\phi}$ is a discrete parameter of $\U_{n}$ and $\mu$ is given by $z \mapsto \diag(z,1, \ldots, 1)$ and $b \in B(\U_n,\mu)$ is the unique basic element, then we have an isomorphism
\[ \bigoplus_{\pi \in \Pi_{\ov{\phi}}(\U_n, \varrho)} \mathrm{Hom}_{S_{\ov{\phi}}}(\delta_{\pi,\rho},r_{\mu} \circ \ov{\phi}|_{W_{E}}) \simeq  \ov{\phi}_E|_{W_{E}} \]
for all $\rho \in \Pi_{\ov{\phi}}(J_b, \varrho_{b}\circ \varrho)$.

More precisely, in the $\GU_n$ case, if $\rho \in \Pi_{\phi}(J_{b}, \varrho_{b} \circ \varrho)$, and we write $r_{\mu} \circ \phi|_{\mathcal{L}_{E}} = \bigoplus_{i = 1}^{r} \phi_{i}$, where $\phi_{i}$ are distinct irreducible representations of $\mathcal{L}_{E}$ for $i = 1,\ldots,r$. Then, if $\rho = \pi_{[I], m_0+1}$ is the representation corresponding to $[I] \in (\mc{P}(\{1, \dots, r\})/ \sim)$, we have that 
\[ \mathrm{Hom}_{S_{\phi}}(\delta_{\pi_{[I \oplus \{i\},m_0]},\rho},r_{\mu} \circ \phi|_{W_{E}}) \simeq \phi_{i}|_{W_{E}}, \]
for all $i = 1,\ldots,r$, and is $0$ for any irreducible representation $\pi$ other than the $\pi_{[I \oplus \{i\},m_0]}$ (since these are the only $\pi$ such that $\delta_{\pi, \rho}$ appears in the decomposition of $r_{\mu} \circ \phi|_{W_{E}})$. Similarly, in the $\U_n$ case, if we write $\ov{\phi}|_{\mathcal{L}_{E}} = \bigoplus_{i = 1}^{r} \ov{\phi}_{i}$ as a direct sum of irreducible $\mathcal{L}_{E}$-representations then, if $\rho = \pi_{[I]}$ is the representation corresponding to $I \subset \{1,\ldots,r\}$ with odd cardinality we have that
\[ \mathrm{Hom}_{S_{\ov{\phi}}}(\delta_{\pi_{[I \oplus \{i\}]},\rho},r_{\mu} \circ \ov{\phi}|_{W_{E}}) \simeq \ol{\phi}_{i}|_{W_{E}} \]
for all $i = 1,\ldots,r$, and is $0$ otherwise. 
\end{corollary}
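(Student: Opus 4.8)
The plan is to note that Corollary~\ref{stdmucor} only reassembles the explicit identifications carried out in the paragraphs immediately preceding its statement, so I would prove it by making the bookkeeping precise rather than introducing anything new. The ingredients are: the decomposition $r_\mu\circ\phi|_{\mc{L}_E}=\bigoplus_{i=1}^r\phi_i$, which as a $W_E\times S_\phi$-representation has $S_\phi$ acting on the $i$-th summand through the character $\ttau_{i,1}=\widehat{c}\otimes\ttau_i$; the parametrizations $\iota_{\mf{w}}\colon\Pi_\phi(\GU_n,\varrho)\xrightarrow{\sim}\{\ttau_{[J],m_0}\}$ and $\iota_{\mf{w}}\colon\Pi_\phi(J_b,\varrho_b\circ\varrho)\xrightarrow{\sim}\{\ttau_{[I],m_0+1}\}$ from Theorem~\ref{thm: fullLLC} together with the combinatorial description of $X^*(S_\phi)$ from \S\ref{sss: galoisrepcomp}; and the group isomorphism $(\mc{P}(\{1,\dots,r\})/\!\sim)\times\Z\cong X^*(S_\phi)$.

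First I would compute $\delta_{\pi,\rho}$. Every member of the relevant $\Irr$-sets is a one-dimensional character, so for $\pi=\pi_{[J],m_0}$ and $\rho=\pi_{[I],m_0+1}$ one has $\delta_{\pi,\rho}=\iota_{\mf{w}}(\pi)^\vee\otimes\iota_{\mf{w}}(\rho)=\ttau_{[J],m_0}^{-1}\cdot\ttau_{[I],m_0+1}$; since the $\ttau_j$ are $2$-torsion and the $\widehat{c}$-exponents add as $-m_0+(m_0+1)=1$, this equals $\ttau_{[J\oplus I],1}$. Plugging this into the decomposition above and using that $S_\phi$ is abelian,
\[
\mathrm{Hom}_{S_\phi}\big(\delta_{\pi,\rho},\,r_\mu\circ\phi|_{W_E}\big)\;\cong\;\bigoplus_{\,i:\ \ttau_{i,1}=\ttau_{[J\oplus I],1}}\phi_i|_{W_E}.
\]

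The one point that genuinely needs an argument — and the only real obstacle, everything else being formal — is that the characters $\ttau_{1,1},\dots,\ttau_{r,1}$ of $S_\phi$ are pairwise distinct. A priori the surjection $X^*(S_{\ov{\phi}})\to X^*(S_\phi)$ has kernel generated by $\tilde{\det}=\prod_{d_i\ \mathrm{odd}}\ttau_i$, so $\ttau_i=\ttau_j$ with $i\neq j$ would force $\{i,j\}=\{k:d_k\ \mathrm{odd}\}$; but then $n=\sum_k d_k$ would be even, contradicting the hypothesis that $n$ is odd. Hence $\ttau_{i,1}=\ttau_{[J\oplus I],1}$ for at most one $i$, and it holds precisely when $[J]=[I\oplus\{i\}]$, i.e. $\pi=\pi_{[I\oplus\{i\}],m_0}$; this gives the ``more precise'' assertion together with the vanishing for all other $\pi$. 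Summing over $\pi\in\Pi_\phi(\GU_n,\varrho)$ — equivalently over equivalence classes $[J]$ — each $\phi_i|_{W_E}$ is then hit exactly once, by $\pi_{[I\oplus\{i\}],m_0}$, which yields $\bigoplus_{\pi}\mathrm{Hom}_{S_\phi}(\delta_{\pi,\rho},r_\mu\circ\phi|_{W_E})\cong r_\mu\circ\phi|_{W_E}$.

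Finally I would treat $\U_n$ by the same argument, now without the $\C^\times$-factor: $X^*(S_{\ov{\phi}})$ is identified with subsets $I\subseteq\{1,\dots,r\}$ via $I\mapsto\tau_I$, the packets $\Pi_{\ov{\phi}}(\U_n,\varrho)$ and $\Pi_{\ov{\phi}}(J_b,\varrho_b\circ\varrho)$ consist of those $\tau_I$ whose restrictions to $Z(\widehat{\U_n})^\Gamma$ realize the two values differing by the nontrivial class $\kappa(b)$, and $r_\mu\circ\ov{\phi}|_{W_E}=\bigoplus_i\ov{\phi}_i$ with $S_{\ov{\phi}}$ acting on the $i$-th summand by $\tau_i$. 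Since every $\tau_i$ restricts nontrivially to $Z(\widehat{\U_n})^\Gamma$, the $\tau_i$ are in particular pairwise distinct, so repeating the $\delta_{\pi,\rho}$-computation and the Hom-calculation gives $\mathrm{Hom}_{S_{\ov{\phi}}}(\delta_{\pi_{I\oplus\{i\}},\rho},r_\mu\circ\ov{\phi}|_{W_E})\cong\ov{\phi}_i|_{W_E}$ for the unique matching $i$ and $0$ otherwise, and hence the aggregate isomorphism $\bigoplus_\pi\mathrm{Hom}_{S_{\ov{\phi}}}(\delta_{\pi,\rho},r_\mu\circ\ov{\phi}|_{W_E})\cong\ov{\phi}_E|_{W_E}$ after summing.
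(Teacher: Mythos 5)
Your proof is correct and follows the same route as the paper: the corollary is a formal recapitulation of the bookkeeping in the preceding paragraphs of \S\ref{sss: galoisrepcomp} (the decomposition of $r_\mu\circ\phi|_{\mc{L}_E}$ as an $S_\phi\times W_E$-representation, the combinatorial description of $X^*(S_\phi)$ by pairs $([I],m)$, and the parametrization of the packets via $\iota_{\mf w}$). The one genuine addition you make — the explicit check that the characters $\ttau_{1,1},\dots,\ttau_{r,1}$ are pairwise distinct, since $\ttau_i=\ttau_j$ for $i\neq j$ would force $\{i,j\}$ to coincide with the set of odd-dimensional indices, which has odd cardinality because $n$ is odd — is precisely what the paper leaves implicit when it asserts ``there is precisely one representation $\pi^\rho_i$ such that $\delta_{\pi^\rho_i,\rho}\cong\ttau_{i,1}$,'' so this is a welcome gap-filler. (Tiny notational slip: the kernel of $X^*(S_{\ov\phi})\to X^*(S_\phi)$ is generated by $\det=\prod_{d_i\text{ odd}}\tau_i$, not its image $\tilde{\det}$, which is trivial; the substance of the argument is unaffected.)
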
 
We will now upgrade this. Consider the case where $\mu_{d} = ((1^{d},0^{n - d}),1)$ and $b \in B(\GU_{n},\mu_{d})$ is the unique basic element. We note that, in this case, we have identifications
\[ r_{\mu_{d}} \circ \phi|_{\mc{L}_E} = \Lambda^{d}(\ov{\phi}_E)\otimes\chi_{\phi} \simeq \bigoplus  \otimes_{i = 1}^{r} \Lambda^{c_{i}}(\ov{\phi}_{i}) \otimes \chi_{\phi}, \]
where the direct sum is over partitions $\sum_{i = 1}^{r} c_{i} = d$ for $c_{i} \in \mathbb{N}_{\geq 0}$. It follows that, as an $S_{\phi}$-representation, the summand associated to $(c_1, \dots, c_r)$ is isomorphic (up to multiplicity) to $\ttau_{[J],1}$, where $J$ contains each $i \in \{1, \dots, r\}$ such that $c_i$ is odd. For simplicity, we will just be interested in the case where $d \leq r$ and $|J|=d$. We now have the following corollary of the above discussion.
\begin{corollary}{\label{extmucor}}
Suppose that $\phi$ is a discrete parameter of $\GU^*_{n}$. Write $r_{\mu_{1}} \circ \phi|_{\mathcal{L}_{E}} = \bigoplus_{i = 1}^{r} \phi_{i}$ for distinct irreducible representations of $\mathcal{L}_{E}$, and consider the dominant cocharacter given by $\mu_{d} = ((1^{d},0^{n - d}),1)$ for $1 \leq d \leq r$, and $b \in B(\GU_{n},\mu_{d})$. Then, if $\rho = \pi_{[I],m_0+1} \in \Pi_{\phi}(J_b, \varrho_{b} \circ \varrho)$ is the representation corresponding to $[I] \in (\mc{P}(\{1, \dots, r\})/ \sim)$ as above, we have an isomorphism 
\[ \mathrm{Hom}_{S_{\phi}}(\delta_{\pi,\rho},r_{\mu_{d}} \circ \phi|_{W_{E}}) \simeq \bigotimes_{j \in J} \phi_{j}|_{W_E} \]
for varying $[J] \in (\mc{P}(\{1, \dots, r\})/ \sim)$ represented by $J \in \mc{P}(\{1, \dots, r\})$ with cardinality $d$, and $\pi = \pi_{[I \oplus J],m_0} \in \Pi_{\phi}(\GU_{n},\varrho)$ the representation corresponding to the symmetric difference of $I$ and $J$. 
\end{corollary}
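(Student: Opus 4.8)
Since the statement is the $\mu_d$-analogue of Corollary~\ref{stdmucor}, the plan is to run the same three moves: (i) compute $\delta_{\pi,\rho}$ explicitly as a character of $S_{\phi}$ via the combinatorial model $X^*(S_{\phi}) \cong (\mc{P}(\{1, \dots, r\})/ \sim) \times \Z$; (ii) write down the $S_{\phi}$-isotypic decomposition of $r_{\mu_d}\circ\phi|_{\mc{L}_E}$; (iii) read off which component is picked out by $\Hom_{S_{\phi}}(\delta_{\pi,\rho},-)$. For (i): as $\phi$ is discrete, $S_{\phi}^{\natural}=S_{\phi}$ and all of $\iota_{\mf{w}}(\pi),\iota_{\mf{w}}(\rho)$ lie among the characters $\ttau_{[K],m}$, so $\delta_{\pi,\rho}$ is again such a character, computed using the group law (symmetric difference in the first coordinate, addition in the second). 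Taking $\rho=\pi_{[I],m_0+1}$ and $\pi=\pi_{[I\oplus J],m_0}$ gives $\delta_{\pi,\rho}=\iota_{\mf{w}}(\pi)^{\vee}\otimes\iota_{\mf{w}}(\rho)=\ttau_{[I\oplus J],-m_0}\cdot\ttau_{[I],m_0+1}=\ttau_{[(I\oplus J)\oplus I],\,1}=\ttau_{[J],1}$, precisely as in the $d=1$ case.

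For (ii) I would use the decomposition $r_{\mu_d}\circ\phi|_{\mc{L}_E}=\Lambda^{d}(\ov{\phi}_E)\otimes\chi_{\phi}=\bigoplus_{\sum_i d_i=d}\bigl(\bigotimes_{i=1}^{r}\Lambda^{d_i}(\ov{\phi}_i)\bigr)\otimes\chi_{\phi}$ displayed just before the statement, together with the fact recorded there that the summand attached to a tuple $(d_1,\dots,d_r)$ with $\sum_i d_i=d$ is $S_{\phi}$-isotypic of type $\ttau_{[J'],1}$ with $J'=\{i:d_i\text{ odd}\}$. Combining with (i), the space $\Hom_{S_{\phi}}(\delta_{\pi,\rho},r_{\mu_d}\circ\phi|_{W_E})$ is the direct sum, over all tuples $(d_1,\dots,d_r)$ with $\sum_i d_i=d$ and $\{i:d_i\text{ odd}\}$ equivalent to $J$, of the $W_E$-representations $\bigl(\bigotimes_{i=1}^{r}\Lambda^{d_i}(\ov{\phi}_i)\bigr)\otimes\chi_{\phi}|_{W_E}$.

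The only real obstacle is step (iii): showing that when $|J|=d\le r$ this direct sum collapses to the single term coming from the tuple with $d_j=1$ for $j\in J$ and $d_j=0$ for $j\notin J$, which contributes $\bigotimes_{j\in J}\phi_j|_{W_E}$. Here the argument I have in mind crucially uses that $n$ is \emph{odd}: since $\sum_i\dim\phi_i=n$, the set $\{i:\dim\phi_i\text{ odd}\}$ has odd cardinality, so two \emph{distinct} equivalent subsets of $\{1,\dots,r\}$ always have cardinalities of opposite parity; on the other hand any tuple with $\{i:d_i\text{ odd}\}=J'$ satisfies $\sum_i d_i\equiv|J'|\pmod 2$, so the constraint $\sum_i d_i=d=|J|$ forces $|J'|\equiv|J|\pmod 2$ and hence $J'=J$ rather than its partner in $[J]$. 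Once $\{i:d_i\text{ odd}\}=J$ exactly, the chain $d=\sum_i d_i\ge\sum_{i\in J}d_i\ge|J|=d$ (each $d_i$ for $i\in J$ being a positive odd integer) forces $d_i=1$ on $J$ and $d_i=0$ off $J$, and $d\le r$ is exactly what ensures such a $J$ exists. Feeding this back through (ii) and (i) yields the asserted isomorphism; everything outside (iii) is routine bookkeeping with the combinatorial model of $X^*(S_{\phi})$, the one delicate point being to keep the equivalence relation straight, for which the oddness of $n$ is precisely what is needed.
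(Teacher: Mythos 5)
Your proposal reproduces the paper's own argument. The paragraph preceding the corollary supplies the decomposition $r_{\mu_d}\circ\phi|_{\mc{L}_E} = \Lambda^d(\ov{\phi}_E)\otimes\chi_\phi$ into summands $\bigl(\bigotimes_i\Lambda^{d_i}(\ov{\phi}_i)\bigr)\otimes\chi_\phi$ over tuples with $\sum_i d_i = d$, together with the observation that the summand indexed by $(d_1,\dots,d_r)$ has $S_\phi$-type $\ttau_{[J'],1}$ where $J'=\{i:d_i\text{ odd}\}$; your steps (i) and (ii) are exactly this. Your step (iii) — the parity argument that oddness of $n$ (hence of $|\{i:\dim\phi_i\text{ odd}\}|$) forces $J'=J$ rather than its partner in $[J]$, after which $\sum_i d_i=d=|J|$ with $d_i\ge 1$ on $J$ forces $d_i=1$ on $J$ and $d_i=0$ off $J$ — is precisely the reasoning the paper leaves implicit when it says ``for simplicity, we will just be interested in the case where $d\le r$ and $|J|=d$.'' So yes: same approach, and your filling in of the collapse argument is correct.

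One bookkeeping caveat, shared with the paper rather than introduced by you: the surviving summand your argument isolates is $\bigl(\bigotimes_{j\in J}\ov{\phi}_j\bigr)\otimes\chi_\phi$ (a single $\chi_\phi$-twist), whereas the displayed conclusion $\bigotimes_{j\in J}\phi_j$, with $\phi_j=\ov{\phi}_j\otimes\chi_\phi$, equals $\bigl(\bigotimes_{j\in J}\ov{\phi}_j\bigr)\otimes\chi_\phi^{d}$. For $d>1$ these differ by $\chi_\phi^{d-1}$. The paper propagates the same $\phi_J=\bigotimes_{j\in J}\phi_j$ notation through Corollary~\ref{explicitstdmuKottwitzConj} and Theorem~\ref{thm: kottwitzconjreps}, so this looks like a notational slip in the source; but strictly speaking, ``feeding (iii) back through (ii)'' produces $\bigl(\bigotimes_{j\in J}\ov{\phi}_j\bigr)\otimes\chi_\phi=\phi_J\otimes\chi_\phi^{1-d}$, not $\phi_J$, and it would be better to flag rather than gloss this final identification.
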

We now record some similar consequences for $\U_{n}$. We abuse notation and similarly write $\mu_{d}$ for the geometric dominant cocharacter of $\U_{n}$ with weights given by $(1^{d},0^{n-d})$. Let $b_d \in B(G,\mu_{d})$ denote the unique basic element. We note that the $\kappa$-invariant induces an isomorphism: $B(\U_{n})_{\bas} \simeq X_*(Z(\hat{\U}_{n})^{\Gamma}) \simeq \mathbb{Z}/2\mathbb{Z}$. We let $b$ denote the element of $B(\U^*_n)_{\bas}$ associated to the fixed extended pure inner twist $(\U_n, \varrho, z)$. If $\ov{\phi}$ is a discrete parameter of $\U_{n}$, we denote $r_{\mu_{1}} \circ \phi|_{\mc{L}_{E}}$ by $\ov{\phi}_{E}$ and observe that, for all $d$, we have an identification $r_{\mu_{d}} \circ \ov{\phi}|_{\mc{L}_{E}} \simeq \Lambda^{d}(\ov{\phi}_{E})$. Writing $\ov{\phi}_{E} = \bigoplus_{i = 1}^{r} \ov{\phi}_i$ as above, we have that $\Lambda^{d}(\ov{\phi}_{E})$ breaks up as a direct sum of terms
\[ \bigotimes_{i = 1}^{r} \Lambda^{c_{i}}(\ov{\phi}_{i}) \] 
for tuples $(c_{i})_{i = 1}^{r}$ such that $\sum_{i = 1}^{r} c_{i} = d$. This summand is (up to multiplicity) isomorphic to $\otimes_{i = 1}^{r} \tau_{i}^{\otimes c_{i}}$
as an $S_{\ov{\phi}}$-representation. Now, restricting to the case that $1 \leq d \leq r$, since the $\tau_{i}$ are of order $2$, this will be isomorphic to representations of the form $\tau_{J}$ for $J \subset \{1,\ldots,r\}$ satisfying $|J| \leq d$, and all the $J$ satisfying $|J| = d$ occur. Moreover, by considering the central character, it follows that all $J$ appearing must satisfy that $|J| = d \mod 2$. This allows us to deduce the following. 
\begin{corollary}{\label{Unextmucor}}
Suppose that $\ov{\phi}$ is a discrete parameter of $\U_{n}$. We write $r_{\mu_{1}} \circ \phi|_{\mathcal{L}_{E}} = \bigoplus_{i = 1}^{r} \ov{\phi}_{i}$ for $r$ distinct irreducible representations of $\mathcal{L}_{E}$, and consider the dominant cocharacter given by $\mu_{d} = (1^{d},0^{n - d})$ for $1 \leq d \leq r$, and $b_d \in B(\U_n,\mu_{d})$. Then, if $\rho = \pi_{I} \in \Pi_{\ov{\phi}}(\U_{n},\varrho_{b_d} \circ \varrho)$ is a representation corresponding to $I \in \mc{P}(\{1,\ldots,r\})$ with $|I|=d + \kappa(b) \mod 2$, we have an isomorphism 
\[ \mathrm{Hom}_{S_{\ov{\phi}}}(\delta_{\pi,\rho},r_{\mu_{d}} \circ \ov{\phi}|_{W_{E}}) \simeq \bigotimes_{j \in J} \ov{\phi}_{j}|_{W_{E}}  \]
for varying $J \in \mc{P}(\{1, \dots, r\})$ such that $|J| = d \mod 2$, and $\pi = \pi_{I \oplus J} \in \Pi_{\ov{\phi}}(\U_{n},\varrho)$ the representation corresponding to the symmetric difference of $I$ and $J$. Moreover, $\mathrm{Hom}_{S_{\ov{\phi}}}(\delta_{\pi,\rho},r_{\mu_{d}} \circ \ov{\phi}|_{W_{E}})$ will be $0$ unless $\pi = \pi_{J}$ for $|J| = d \mod 2$ and $|J| \leq d$. 
\end{corollary}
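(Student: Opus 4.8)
The plan is to read the statement off from the $S_{\ov\phi}$-equivariant decomposition of $r_{\mu_d}\circ\ov\phi|_{\mc{L}_E} = \Lambda^d(\ov\phi_E)$ recorded in the discussion immediately preceding the corollary, together with the explicit shape of $\delta_{\pi,\rho}$ coming from the character-theoretic description of the packets given above and in Theorem \ref{itm: local}.

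First I would recall that, since $\ov\phi \in \Phi_2(\U_n)$, all the multiplicities $n_i$ equal $1$, so $S_{\ov\phi} = S^{\natural}_{\ov\phi} \simeq (\Z/2\Z)^r$ is abelian and $X^*(S_{\ov\phi})$ is identified with the subsets $K \subset \{1,\dots,r\}$ via $K \mapsto \tau_K$. Under $\iota_{\mf{w}}$ the members of $\Pi_{\ov\phi}(\U_n,\varrho)$ (resp. of $\Pi_{\ov\phi}(J_{b_d},\varrho_{b_d}\circ\varrho)$) are exactly the $\pi_K$ with $\tau_K|_{Z(\widehat{\U_n})^{\Gamma}}$ equal to $\kappa(z)$ (resp. to $\kappa(z) + \kappa(b_d)$); since $\tau_K$ restricts to $Z(\widehat{\U_n})^{\Gamma} = \{\pm I_n\}$ by the sign $(-1)^{|K|}$ and $\kappa(b_d) \equiv d \bmod 2$ because $b_d \in B(\U_n,\mu_d)$ is basic, this is the parity condition $|K| \equiv m_0 \bmod 2$ (resp. $|K| \equiv m_0 + d \bmod 2$), where $m_0 := \kappa(z)$. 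In particular $\iota_{\mf{w}}(\pi_K) = \tau_K$, so using that every $\tau_K$ has order two (hence $\tau_K^\vee = \tau_K$) and that $\tau_{K_1}\otimes\tau_{K_2} = \tau_{K_1\oplus K_2}$, the definition of $\delta_{\pi,\rho}$ gives
\[ \delta_{\pi_{I\oplus J},\,\pi_I} = \tau_{I\oplus J}^\vee \otimes \tau_I = \tau_J . \]
This also shows that $\pi_{I\oplus J}$ lies in $\Pi_{\ov\phi}(\U_n,\varrho)$ precisely when $|I\oplus J| \equiv m_0$, i.e. (given $|I| \equiv m_0 + d$) precisely when $|J| \equiv d \bmod 2$, which is the set of indices appearing in the corollary.

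Next I would compute the $\tau_J$-isotypic piece of the $W_E$-representation $r_{\mu_d}\circ\ov\phi|_{W_E} = \Lambda^d(\ov\phi_E)|_{W_E}$. Writing $\ov\phi_E = \bigoplus_{i=1}^r \ov\phi_i$, where $S_{\ov\phi}$ acts on $\ov\phi_i$ through the character $\tau_i$, the exterior-power decomposition $\Lambda^d\bigl(\bigoplus_i \ov\phi_i\bigr) = \bigoplus_{\sum_i d_i = d}\bigotimes_i \Lambda^{d_i}(\ov\phi_i)$ is $S_{\ov\phi}$-equivariant, and on the summand indexed by $(d_i)$ the group $S_{\ov\phi}$ acts through $\prod_i \tau_i^{d_i} = \tau_{\{\,i\,:\,d_i\text{ odd}\,\}}$. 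Hence
\[ \Hom_{S_{\ov\phi}}\bigl(\tau_J,\,\Lambda^d(\ov\phi_E)|_{W_E}\bigr) \simeq \bigoplus_{\substack{\sum_i d_i = d \\ \{\,i\,:\,d_i\text{ odd}\,\} = J}} \bigotimes_i \Lambda^{d_i}(\ov\phi_i)\big|_{W_E}, \]
which vanishes unless $|J| \le d$ and $|J| \equiv d \bmod 2$; and when $|J| = d$ the only admissible tuple is $d_i = 1$ for $i\in J$ and $d_i = 0$ otherwise, so the right-hand side collapses to the single term $\bigotimes_{j\in J}\ov\phi_j|_{W_E}$. Substituting $\delta_{\pi_{I\oplus J},\pi_I} = \tau_J$ from the previous step then yields the asserted isomorphism together with the vanishing statement.

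I do not anticipate a genuine obstacle: the corollary is essentially a bookkeeping consequence of the paragraph preceding it and of Theorem \ref{itm: local}. The two points that require care are (i) confirming that $S_{\ov\phi}$ acts on $\Lambda^{d_i}(\ov\phi_i)$ through $\tau_i^{d_i}$ — immediate once one recalls that $S_{\ov\phi}$ acts on $\ov\phi_i$ through $\tau_i$ and acts diagonally on a $d_i$-fold wedge — and (ii) correctly threading the central-character bookkeeping, which for $\U_n$ (which has no similitude factor) reduces to the two parity conditions above via the Kottwitz invariant of $b_d$. One should also note that for $|J| < d$ the displayed Hom-space is in general a proper direct sum of several $\bigotimes_i\Lambda^{d_i}(\ov\phi_i)|_{W_E}$, collapsing to $\bigotimes_{j\in J}\ov\phi_j|_{W_E}$ exactly when each constituent $\ov\phi_i$ is one-dimensional; so the clean displayed formula in the corollary is the $|J| = d$ instance of this more precise decomposition.
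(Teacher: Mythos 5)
Your proposal is correct and follows essentially the same route as the paper, which deduces the corollary directly from the $S_{\ov{\phi}}$-equivariant decomposition $r_{\mu_{d}}\circ\ov{\phi}|_{\mc{L}_{E}} \simeq \Lambda^{d}(\ov{\phi}_{E}) = \bigoplus_{\sum d_{j}=d}\bigotimes_{j}\Lambda^{d_{j}}(\ov{\phi}_{j})$ (with $S_{\ov{\phi}}$ acting by $\tau_{\{j:\,d_{j}\text{ odd}\}}$) together with the identification $\delta_{\pi_{I\oplus J},\pi_{I}}=\tau_{J}$ and the parity/central-character bookkeeping from \S\ref{sss: galoisrepcomp}. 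Your closing remark that the displayed isomorphism is really the $|J|=d$ instance, with only vanishing information for $|J|<d$, is exactly the reading intended by the paper's ``Moreover'' clause.
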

\subsection{The Correspondence of Fargues--Scholze}    
In this section, we review the Fargues--Scholze local Langlands correspondence as well as its connection to the cohomology of local shtuka spaces. This correspondence is denoted by $\LLC^{\mathrm{FS}}_G$ for $G$ a connected reductive group.

\subsubsection{The Fargues--Scholze correspondence}{\label{ss: FSLLC}}
Let $G/\mathbb{Q}_{p}$ be a connected reductive group. We let $\Perf$ denote the category of perfectoid spaces over $\ol{\mathbb{F}}_{p}$. We write $\ast := \Spd(\ol{\mathbb{F}}_{p})$ for the natural base. The key object of study is the moduli stack $\Bun_{G}$ sending $S \in \Perf$ to the groupoid of $G$-bundles on the relative Fargues--Fontaine curve $X_{S}$. We recall that, for any Artin $v$-stack $X$, Fargues--Scholze define a triangulated category $\Dc_{\blacksquare}(X,\overline{\mathbb{Q}}_{\ell})$ of solid $\overline{\mathbb{Q}}_{\ell}$-sheaves \cite[Section~VII.1]{FS} and isolate a nice full subcategory $\Dlis(X,\overline{\mathbb{Q}}_{\ell}) \subset \Dc_{\blacksquare}(X,\overline{\mathbb{Q}}_{\ell})$ of lisse-\'etale $\overline{\mathbb{Q}}_{\ell}$-sheaves \cite[Section~VII.6.]{FS}. We will be interested in the derived category $\Dlis(\Bun_{G},\ol{\mathbb{Q}}_{\ell})$. The key point is that objects in this category are manifestly related to smooth admissible representations of $G(\mathbb{Q}_{p})$. In particular, bundles on the Fargues--Fontaine curve are parametrized by elements $b \in B(G)$, and each element gives rise to a locally-closed Harder--Narasimhan stratum $\Bun_{G}^{b} \hookrightarrow \Bun_{G}$. These strata admit a natural map $\Bun_{G}^{b} \ra [\ast/J_{b}(\mathbb{Q}_{p})]$ to the classifying stack defined by the $\mathbb{Q}_{p}$-points of the $\sigma$-centralizer $J_{b}$, and, by \cite[Proposition~VII.7.1]{FS}, pullback along this map induces an equivalence
\[ \Dc(J_{b}(\mathbb{Q}_{p}),\ol{\mathbb{Q}}_{\ell}) \simeq \Dlis([\ast/J_{b}(\mathbb{Q}_{p})],\ol{\mathbb{Q}}_{\ell}) \xrightarrow{\simeq} \Dlis(\Bun_{G}^{b},\ol{\mathbb{Q}}_{\ell}), \]
where $\Dc(J_{b}(\mathbb{Q}_{p}),\ol{\mathbb{Q}}_{\ell})$ is the unbounded derived category of smooth $\ol{\mathbb{Q}}_{\ell}$-representations of 
$J_{b}(\mathbb{Q}_{p})$. We will repeatedly use this identification in what follows. In particular, for a smooth irreducible representation $\pi \in \Pi_{\ov{\Q}_{\ell}}(J_{b})$, we get an object $\rho \in \Dlis(\Bun_{G}^{b},\ol{\mathbb{Q}}_{\ell}) \subset \Dc(\Bun_{G},\ol{\mathbb{Q}}_{\ell})$ by extension by zero along the locally closed embedding $j_{b}: \Bun_{G}^{b} \hookrightarrow \Bun_{G}$\footnote{We note that in general a lower $!$-functor does not exist in the world of lisse sheaves; however, in this particular case one can use the definition given in the discussion after \cite[Lemma~3.1]{ImaConv}, where it is also verified that this has correct formal properties.}, and the Fargues--Scholze parameter comes from acting on this representation by endofunctors of $\Dlis(\Bun_{G},\ol{\mathbb{Q}}_{\ell})$ called Hecke operators. To introduce this, for a finite index set $I$, we let $\Rep_{\ol{\mathbb{Q}}_{\ell}}(\phantom{}^{L}G^{I})$ denote the category of algebraic representations of $I$-copies of the Langlands dual group, and we let $\Div^{I}$ be the product of $I$-copies of the diamond $\Div^{1} = \Spd(\Breve{\mathbb{Q}}_{p})/\mathrm{Frob}^{\mathbb{Z}}$. The diamond $\Div^{1}$ parametrizes, for $S \in \Perf$, characteristic $0$ untilts of $S$, which in particular give rise to Cartier divisors in $X_{S}$. We then have the Hecke stack
\[ \begin{tikzcd}
& & \arrow[dl, swap, "h^{\leftarrow}"] \mathrm{Hck} \arrow[dr,"h^{\rightarrow} \times supp"] & & \\
& \Bun_{G} & & \Bun_{G} \times \Div^{I}  & 
\end{tikzcd} \]
defined as the functor parametrizing, for $S \in \Perf$ together with a map $S \rightarrow \Div^{I}$ corresponding to characteristic $0$ untilts $S_{i}^{\sharp}$ defining Cartier divisors in $X_{S}$ for $i \in I$, a pair of $G$-torsors $\mathcal{E}_{1}$, $\mathcal{E}_{2}$ together with an isomorphism
\[ \beta:\mathcal{E}_{1}|_{X_{S} \setminus \bigcup_{i \in I} S_{i}^{\sharp}} \xrightarrow{\simeq} \mathcal{E}_{2}|_{X_{S} \setminus \bigcup_{i \in I} S_{i}^{\sharp}},\]
where $h^{\leftarrow}((\mathcal{E}_{1},\mathcal{E}_{2},i,(S_{i}^{\sharp})_{i \in I})) = \mathcal{E}_{1}$ and $h^{\rightarrow} \times supp((\mathcal{E}_{1},\mathcal{E}_{2},\beta,(S_{i}^{\sharp})_{i \in I})) = (\mathcal{E}_{2},(S_{i}^{\sharp})_{i \in I})$. For each element $W \in \Rep_{\overline{\mathbb{Q}}_{\ell}}(^{L}G^{I})$, the geometric Satake correspondence of Fargues--Scholze \cite[Chapter~VI]{FS} furnishes a solid $\overline{\mathbb{Q}}_{\ell}$-sheaf $\mathcal{S}'_{W}$ on $\mathrm{Hck}$. This allows us to define Hecke operators.
\begin{definition}
For each $W \in \Rep_{\overline{\mathbb{Q}}_{\ell}}(\phantom{}^{L}G^{I})$, we define the Hecke operator
\[ T_{W}: \Dlis(\Bun_{G},\overline{\mathbb{Q}}_{\ell}) \rightarrow \Dc_{\blacksquare}(\Bun_{G} \times X^{I},\ol{\mathbb{Q}}_{\ell}) \]
\[ A \mapsto R(h^{\rightarrow} \times supp)_{\natural}(h^{\leftarrow *}(A) \otimes^{\mathbb{L}} \mathcal{S}'_{W}),\]
where $\mathcal{S}'_{W}$ is the solid $\overline{\mathbb{Q}}_{\ell}$-sheaf defined in \cite[Section~IX.2]{FS}, and the functor $R(h^{\rightarrow} \times supp)_{\natural}$ is the natural push-forward (i.e the left adjoint to the restriction functor in the category of solid $\overline{\mathbb{Q}}_{\ell}$-sheaves \cite[Proposition~VII.3.1]{FS}). 
\end{definition}
It follows by 
\cite[Theorem~I.7.2, Proposition~IX.2.1, Corollary~IX.2.3]{FS} that this induces a functor 
\[ \Dlis(\Bun_{G},\overline{\mathbb{Q}}_{\ell}) \rightarrow \Dlis(\Bun_{G},\overline{\mathbb{Q}}_{\ell})^{BW_{\mathbb{Q}_{p}}^{I}} \]
which we will also denote by $T_{W}$. If $I = \{\ast\}$ is a singleton and $E_{W}$ denotes the reflex field of $W$ then $T_{W}$ factors through a functor taking values in $\Dlis(\Bun_{G},\overline{\mathbb{Q}}_{\ell})^{BW_{E_{W}}}$, composed with the map $\Dlis(\Bun_{G},\overline{\mathbb{Q}}_{\ell})^{BW_{E_{W}}} \ra \Dlis(\Bun_{G},\overline{\mathbb{Q}}_{\ell})^{BW_{\mathbb{Q}_{p}}}$ given by inducing the $W_{E_{W}}$-action to $W_{\mathbb{Q}_{p}}$.  We will abuse notation by writing $T_{W}$ for the functor taking values in $\Dlis(\Bun_{G},\overline{\mathbb{Q}}_{\ell})^{BW_{E_{W}}}$.

The Hecke operators are natural in $I$ and $W$ and compatible with exterior tensor products. For a finite set $I$, a representation $W \in \Rep_{\overline{\mathbb{Q}}_{\ell}}(\phantom{}^LG^{I})$, maps $\alpha: \overline{\mathbb{Q}}_{\ell} \rightarrow \Delta^{*}W$ and $\beta: \Delta^{*}W \rightarrow \overline{\mathbb{Q}}_{\ell}$, and elements $(\gamma_{i})_{i \in I} \in W_{\mathbb{Q}_{p}}^{I}$ for $i \in  I$, one defines the excursion operator on $\Dlis(\Bun_{G},\overline{\mathbb{Q}}_{\ell})$ to be the natural transformation of the identity functor given by the composition:
\[ id = T_{\overline{\mathbb{Q}}_{\ell}} \xrightarrow{\alpha} T_{\Delta^{*}W} = T_{W} \xrightarrow{(\gamma_{i})_{i \in I}} T_{W} = T_{\Delta^{*}W}  \xrightarrow{\beta} T_{\overline{\mathbb{Q}}_{\ell}} = id. \]
In particular, for all such data, we get an endomorphism of a smooth irreducible $\pi \in \Dc(G(\mathbb{Q}_{p}),\ol{\mathbb{Q}}_{\ell}) \simeq \Dlis(\Bun_{G}^{1},\ol{\mathbb{Q}}_{\ell}) \subset \Dlis(\Bun_{G},\ol{\mathbb{Q}}_{\ell})$ which, by Schur's lemma will give us a scalar in $\ol{\mathbb{Q}}_{\ell}$. In other words, to the datum $(I,W,(\gamma_{i})_{i \in I},\alpha,\beta)$ we assign a scalar. The natural compatibilities between Hecke operators will give rise to natural relationships between these scalars. These scalars and the relations they satisfy can be used, via Lafforgue's reconstruction theorem \cite[Proposition~11.7]{VL}, to construct a unique continuous semisimple map
\[ \phi^{\mathrm{FS}}_{\pi}: W_{\mathbb{Q}_{p}} \rightarrow \phantom{}^{L}G(\overline{\mathbb{Q}}_{\ell}), \]
which is the Fargues--Scholze parameter of $\pi$. It is characterized by the property that, for all $I,W,\alpha,\beta$ and $(\gamma_{i})_{i \in I} \in W_{\mathbb{Q}_{p}}^{I}$, the corresponding endomorphism of $\pi$ defined above is given by multiplication by the scalar that results from the composite
\[ \overline{\mathbb{Q}}_{\ell} \xrightarrow{\alpha} \Delta^{*}W = W \xrightarrow{(\phi^{\mathrm{FS}}_{\pi}(\gamma_{i}))_{i \in I}} W = \Delta^{*}W \xrightarrow{\beta} \overline{\mathbb{Q}}_{\ell}. \]
Fargues and Scholze show that their correspondence has various good properties which we will invoke throughout. 
\begin{theorem}{\cite[Theorem~I.9.6]{FS}}{\label{FSproperties}}
The mapping defined above 
\[ \pi \mapsto \phi^{\mathrm{FS}}_{\pi} \]
enjoys the following properties:
\begin{enumerate}
    \item (Compatibility with Local Class Field Theory) If $G = T$ is a torus, then $\pi \mapsto \phi_{\pi}^{\mathrm{FS}}$ is the usual local Langlands correspondence constructed from class field theory.
    \item The correspondence is compatible with character twists, passage to contragredients, and central characters.
    \item (Compatibility with products) Given two irreducible representations $\pi_{1}$ and $\pi_{2}$ of two connected reductive groups $G_{1}$ and $G_{2}$ over $\mathbb{Q}_{p}$, respectively, we have
    \[ \pi_{1} \boxtimes \pi_{2} \mapsto \phi^{\mathrm{FS}}_{\pi_{1}} \times \phi^{\mathrm{FS}}_{\pi_{2}}\]
    under the Fargues--Scholze local Langlands correspondence for $G_{1} \times G_{2}$. 
    \item (Compatibility with parabolic induction) Given a parabolic subgroup $P \subset G$ with Levi factor $M$ and a representation $\pi_{M}$ of $M$, then the Weil parameter corresponding to any sub-quotient of $I_{P}^{G}(\pi_{M})$ the (normalized) parabolic induction is the composition
    \[ W_{\mathbb{Q}_{p}}\xrightarrow{\phi^{\mathrm{FS}}_{\pi_{M}}} \\  ^{L}M(\overline{\mathbb{Q}}_{\ell}) \rightarrow ^{L}G(\overline{\mathbb{Q}}_{\ell}) \]
    where the map $\phantom{}^{L}M(\overline{\mathbb{Q}}_{\ell}) \rightarrow \phantom{}^{L}G(\overline{\mathbb{Q}}_{\ell})$ is the natural embedding. 
    \item (Compatibility with Harris--Taylor/Henniart LLC)
    For $G = GL_{n}$ or an inner form of $G$ (\cite[Theorem 6.6.1]{HKW}) the Weil parameter associated to $\pi$ is the (semi-simplified) parameter $\phi^{\mathrm{FS}}_{\pi}$ associated to $\pi$ by Harris--Taylor/Henniart.
    \item (Compatibility with Restriction of Scalars) Via an analogous construction to the one above, we can construct Fargues--Scholze parameters for any $G'$ a connected reductive group over any finite extension $E'/\mathbb{Q}_{p}$, where one then gets a Weil parameter valued on $W_{E'}$. If $G = Res_{E'/\mathbb{Q}_{p}}G'$ is the Weil restriction of some $G'/E'$ then the Fargues--Scholze correspondence for $G/\mathbb{Q}_{p}$ agree with the Fargues--Scholze correspondence for $G'/E'$ in the usual sense.
    \item (Compatibility with Isogenies) If $G' \rightarrow G$ is a map of reductive groups inducing an isomorphism of adjoint groups, $\pi$ is an irreducible smooth representation of $G(E)$ and $\pi'$ is an irreducible constituent of $\pi|_{G'(E)}$ then $\phi^{\mathrm{FS}}_{\pi'}$ is the image of $\phi^{\mathrm{FS}}_{\pi}$ under the induced map $\LL G \rightarrow \LL G'$. 
\end{enumerate}
\end{theorem}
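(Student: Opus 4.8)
The plan is to treat Theorem~\ref{FSproperties} as a compilation of structural properties of the excursion-operator construction of \cite{FS}, the unifying principle being that $\phi^{\mathrm{FS}}_\pi$ is uniquely determined, via Lafforgue's reconstruction theorem \cite[Proposition~11.7]{VL}, by the scalars attached to all data $(I,W,(\gamma_i)_{i\in I},\alpha,\beta)$. Thus each compatibility reduces to understanding how the Hecke operators $T_W$, and hence the excursion operators built from them, transform under the relevant operation on $G$ or on $\pi$.

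First I would dispose of the purely formal items (1), (2), (3), (6). For a torus $T$, the stack $\Bun_T$ is a disjoint union of copies of $[\ast/T(\Q_p)]$ indexed by $B(T)=\pi_1(T)_\Gamma$, geometric Satake for $T$ is trivial, and the Hecke correspondence acts through translation by $B(T)$; unwinding the definition of the excursion operators on such a sheaf recovers exactly the local reciprocity map, giving (1). Item (2) is functoriality: a character twist twists $\mathcal{S}_W$ and $\pi$ in tandem, passage to the contragredient is implemented by Verdier duality on $\Bun_G$ together with the self-duality built into geometric Satake, and compatibility with central characters comes from the residual action of $Z(G)(\Q_p)$. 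Item (3) follows from $\Bun_{G_1\times G_2}\cong \Bun_{G_1}\times\Bun_{G_2}$ and the factorization of the Hecke stacks, and (6) from the identification of $\Bun_{\Res_{E'/\Q_p}G'}$ with the analogous stack over $E'$, under which the two excursion formalisms coincide.

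Next I would handle the functoriality-in-$G$ statements (4) and (7). For parabolic induction the key input is the geometry of $\Bun_M \xleftarrow{\mathfrak{q}} \Bun_P \xrightarrow{\mathfrak{p}} \Bun_G$: normalized parabolic induction on representations is realized, on the relevant strata, by the associated pull-push (constant-term/Eisenstein) functor on $\Dlis(-,\ov{\Q}_\ell)$, and the compatibility of geometric Satake with constant terms intertwines $T_W$ on $\Bun_G$ with $T_{W|_{\LL M}}$ on $\Bun_M$ along $\LL M \hookrightarrow \LL G$. Hence the excursion operators on any subquotient of $I_P^G(\pi_M)$ are computed from those of $\pi_M$, which yields $\phi^{\mathrm{FS}}_\pi = (\LL M\hookrightarrow \LL G)\circ\phi^{\mathrm{FS}}_{\pi_M}$. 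Item (7) is the same idea for a map $G'\to G$ with isomorphic adjoint groups: it induces $\Bun_{G'}\to\Bun_G$ and the excursion formalism is natural along the dual map $\LL G\to\LL G'$, so an irreducible constituent of $\pi|_{G'(\Q_p)}$ has parameter the image of $\phi^{\mathrm{FS}}_\pi$ in $\LL G'$.

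The hard part will be (5), compatibility with the Harris--Taylor/Henniart correspondence for $\GL_n$ and its inner forms, where the formal manipulations above give nothing: one must identify the excursion scalars with Frobenius eigenvalues, and the only available route is to input the cohomology of Lubin--Tate (equivalently $\GL_n$ local Shtuka) spaces, whose $\pi$-isotypic parts carry a $W_{\Q_p}$-action described via $\phi_\pi$ by Harris--Taylor. Concretely, one uses that the Hecke operator for the minuscule cocharacter of $\GL_n$ computes this cohomology, matches the excursion action with the resulting Galois action, and compares with the Harris--Taylor description; the extension to inner forms then proceeds via the basic local Shtuka spaces and the local Jacquet--Langlands correspondence. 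This is the substantive geometric content, carried out in \cite[Chapter~IX]{FS}; I expect everything else in the theorem to be, by comparison, bookkeeping built on the reconstruction theorem.
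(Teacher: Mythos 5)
This statement is not proved in the paper: it is quoted verbatim as \cite[Theorem~I.9.6]{FS}, so there is no internal argument to compare your sketch against. As a summary of the strategy in \cite{FS}, your outline is essentially accurate in structure and emphasis: items (1), (2), (3), (6) are formal consequences of functoriality of $\Bun_G$, the Hecke stacks, and geometric Satake; (4) and (7) reduce to compatibility of the excursion formalism with constant-term/Eisenstein functors and with central isogenies respectively; and (5) is the genuinely geometric input, requiring the cohomology of Lubin--Tate towers and their comparison with the Harris--Taylor/Henniart construction (together with Jacquet--Langlands for inner forms). Your identification of (5) as the only non-formal part matches the structure of \cite[Chapter~IX]{FS}.

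One caution about (7): your phrase "the excursion formalism is natural along the dual map $\LL G \to \LL G'$" glosses over a real subtlety. The map $\Bun_{G'} \to \Bun_G$ need not be surjective on connected components, so one must restrict to the components indexed by the image of $\pi_1(G')_\Gamma \to \pi_1(G)_\Gamma$ before the naturality statement holds cleanly. This is not merely pedantry: the present paper flags exactly this point (see the footnote in the proof of Proposition \ref{prop: actisogeny}, remarking that the final diagram in the proof of \cite[Theorem~IX.6.1]{FS} is not Cartesian without such a restriction). For the representation-theoretic statement (7) as phrased (with $\pi'$ a constituent of $\pi|_{G'(E)}$) the conclusion survives, since the relevant stratum of $\Bun_G$ is automatically in the image, but your sketch should make the component bookkeeping explicit rather than presenting it as pure functoriality.
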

\begin{remark}
Assume that $G$ is quasi-split for simplicity. Given $b \in B(G)$ with $\sigma$-centralizer $J_{b}$, we note that there are two natural ways of defining the Fargues--Scholze parameter of a smooth irreducible representation $\rho \in \Pi_{\ov{\Q}_{\ell}}(J_{b})$. One could consider the action of the excursion algebra of $\Bun_{J_{b}}$ on $\rho \in \Dlis(\Bun_{J_{b}}^{1},\ol{\mathbb{Q}}_{\ell}) \subset \Dlis(\Bun_{J_{b}},\ol{\mathbb{Q}}_{\ell})$, where $1 \in B(J_{b})$ is the trivial element or one could consider the action of the excursion algebra of $\Bun_{G}$ on $\rho \in \Dlis(\Bun_{G}^{b},\ol{\mathbb{Q}}_{\ell}) \subset \Dlis(\Bun_{G},\ol{\mathbb{Q}}_{\ell})$. The first construction gives a parameter $\phi_{\rho}^{1}$ valued in $\phantom{}^{L}J_{b}$ and the second construction gives a parameter $\phi_{\rho}^{b}$ valued in $\phantom{}^{L}G$. Since $J_{b}$ is an extended pure inner form of a Levi subgroup of $G$, it follows that we have an induced embedding $\phantom{}^{L}J_{b} \ra \phantom{}^{L}G$. Now, by \cite[Theorem~IX.7.2]{FS}, this embedding twisted appropriately takes $\phi_{\rho}^{1}$ to $\phi_{\rho}^{b}$. In particular, if $b$ is basic there are no twists and these parameters are the same under the identification $\phantom{}^{L}J_{b} \simeq \phantom{}^{L}G$ provided by the inner twisting.  
\end{remark}
\subsubsection{Local shtuka spaces}
A major goal of the present work is to relate this correspondence to classical instances of the local Langlands correspondence. The key insight is that the Hecke operators will be computed in terms of the cohomology of certain shtuka spaces, which for the highest weight representations of $\phantom{}^{L}G$ defined by minuscule cocharacters of $G$, will uniformize global Shimura varieties and in turn be related to the trace formula and classical instances of the Langlands correspondence. We now turn to defining the relevant shtuka spaces.

We now introduce the $\mu$-admissible locus. Given a geometric dominant cocharacter $\mu$ of $G$ with reflex field $E_{\mu}$, we can define the element:
\[ \tilde{\mu} := \frac{1}{[E_{\mu}:\mathbb{Q}_{p}]} \sum_{\gamma \in \Gamma/ \Gamma_{E_{\mu}}} \gamma(\mu) \in X_*(T_{\overline{\mathbb{Q}}_{p}})^{+,\Gamma}_{\mathbb{Q}} \]
We let $\mu^{\flat}$ be the image of $\mu$ in $\pi_1(G)_{\Gamma}$.
\begin{definition}{\label{shiftedbgu}}
Let $b'$ be an element in the Kottwitz set $B(G)$ of $G$ and $\mu$ a geometric dominant cocharacter, we define the set $B(G,\mu,b')$ to be set of $b \in B(G)$ for which $\nu_{b} - \nu_{b'} \leq \tilde{\mu}$  with respect to the Bruhat ordering and $\kappa(b) - \kappa(b') = \mu^{\flat}$. 
\end{definition}
These sets will carve out the necessary conditions for our shtuka spaces to be non-empty (See \cite[Corollary~5.4]{Vi}). In particular, if we fix an element $b' \in B(G)$ and let $b \in B(G,\mu,b')$. We call the quadruple $(G,b,b',\mu)$ a local shtuka datum. For simplicity, we will work with just a basic local Shtuka datum (i.e where both $b$ and $b'$ are basic elements), as this will be the only relevant case for us and there are various subtleties involved in making the proceeding discussion work in general. Attached to it, we define the shtuka space
\[ \Sht(G,b,b',\mu)_{\infty} \rightarrow \Spd(\Breve{E}_{\mu}), \]
as in \cite{SW}, to be the space parametrizing modifications 
\[  \mathcal{E}_{b} \dashrightarrow \mathcal{E}_{b'} \]
of $G$-bundles on the Fargues--Fontaine curve $X$ with meromorphy bounded by $\mu$.
\begin{remark}
We note that our definition of $\Sht(G,b,b',\mu)_{\infty}$ coincides with $\Sht(G,b,b',-\mu)_{\infty}$ in the notation of \cite{SW}, where $-\mu$ is the dominant inverse of $\mu$. This convention limits the appearance of duals when studying the cohomology of these spaces.  
\end{remark}
This has commuting actions of $J_{b}(\mathbb{Q}_{p})$ and $J_{b'}(\mathbb{Q}_{p})$ coming from automorphisms of $\mathcal{E}_{b}$ and $\mathcal{E}_{b'}$, respectively. Let $E_{\mu}$ be the reflex field of $\mu$. We define the tower 
\[ \Sht(G,b,b',\mu)_{K} := \Sht(G,b,b',\mu)/\underline{K} \ra \Spd(\Breve{E}_{\mu}) \] of locally spatial diamonds \cite[Theorem~23.1.4]{SW}
for varying open compact subgroups $K \subset J_{b'}(\mathbb{Q}_{p})$, where $\Breve{E}_{\mu} := \Breve{\mathbb{Q}}_{p}E_{\mu}$. When $b'$ is trivial, we denote $\Sht(G,b,b',\mu)$ by $\Sht(G,b,\mu)$. 
There is a natural map 
\[ \mathsf{p}: \Sht(G,b,b',\mu)_{\infty} \ra \mathrm{Hck} \] 
to the Hecke stack, which maps to the locus of modifications with meromorphy bounded by $\mu$. Attached to the geometric cocharacter $\mu$, consider the highest weight representation $V_{\mu} \in \Rep_{\ol{\mathbb{Q}}_{\ell}}(\phantom{}^{L}G)$. The associated $\ol{\mathbb{Z}}_{\ell}$-sheaf  $\mathcal{S}'_{\mu}$ on $\mathrm{Hck}$ considered above will be supported on this locus, and we abusively denote $\mathcal{S}'_{\mu}$ for the pullback of this sheaf along $\mathsf{p}$. Since $\mathsf{p}$ factors through the quotient of $\Sht(G,b,b',\mu)_{\infty}$ by the simultaneous group action of $J_{b}(\mathbb{Q}_{p}) \times J_{b'}(\mathbb{Q}_{p})$, this sheaf will be equivariant with respect to these actions. This allows us to define the complex
\[ R\Gamma_{c}(G,b,b',\mu) := \colim_{K \rightarrow \{1\}} R\Gamma_{c}(\Sht(G,b,b',\mu)_{K,\mathbb{C}_{p}},\mathcal{S}'_{\mu}) \otimes \overline{\mathbb{Q}}_{\ell} \]
which will be a complex of smooth admissible $J_{b}(\mathbb{Q}_{p}) \times J_{b'}(\mathbb{Q}_{p}) \times W_{E_{\mu}}$-modules, where $\Sht(G,b,b',\mu)_{K,\mathbb{C}_{p}}$ is the base change of $\Sht(G,b,b',\mu)_{K}$ to $\mathbb{C}_{p}$. For $\pi_{b} \in \Pi_{\ov{\Q}_{\ell}}(J_{b})$, this allows us to define the following complexes 
\begin{equation}{\label{eq: RGammaflat}}
    R\Gamma^{\flat}_{c}(G,b,b',\mu)[\pi_{b}] := R\mathcal{H}om_{J_{b}(\mathbb{Q}_{p})}(R\Gamma_{c}(G,b,b',\mu),\pi_{b})
\end{equation}
and
\begin{equation}{\label{eq: RGamma}}
    R\Gamma_{c}(G,b,b',\mu)[\pi_{b}] := R\Gamma_{c}(G,b,b',\mu)) \otimes^{\mathbb{L}}_{\mathcal{H}(J_{b})} \pi_{b}
\end{equation}
where $\mathcal{H}(J_{b})$ is the smooth Hecke algebra. Analogously, for $\pi_{b'} \in \Pi_{\ov{\Q}_{\ell}}(J_{b'})$, we can define $R\Gamma_{c}(G,b,b',\mu)[\pi_{b'}]$ and $R\Gamma^{\flat}_{c}(G,b,b',\mu)[\pi_{b'}]$. It follows by \cite[Corollary~I.7.3]{FS} and \cite[Page~317]{FS} that these will be valued in smooth admissible representations of finite length. We now relate these complexes to Hecke operators on $\Bun_{G}$. In particular, we have the following result.
\begin{lemma}{\label{shimhecke}}{\cite[Section~IX.3]{FS}}
Given a basic local shtuka datum $(G,b,b',\mu)$ as above and $\pi_{b'}$ (resp. $\pi_{b}$) a smooth irreducible representation of $J_{b'}(\mathbb{Q}_{p})$ (resp. $J_{b}(\mathbb{Q}_{p})$), we can consider the associated sheaves $\pi_{b} \in \Dc(J_{b}(\mathbb{Q}_{p}),\Lambda) \simeq \Dlis(\Bun_{G}^{b})$ and $\pi_{b'} \in \Dlis(\Bun_{G}^{b'}) \simeq \Dc(J_{b'}(\mathbb{Q}_{p}),\Lambda)$ on the HN-stratum $j_{b}: \Bun_{G}^{b} \hookrightarrow \Bun_{G}$ and $j_{b'}: \Bun_{G}^{b'} \hookrightarrow \Bun_{G}$. There then exists isomorphisms
\[ R\Gamma_{c}(G,b,b',\mu)[\pi_{b}] \simeq  j_{b'}^{*}T_{\mu}j_{b!}(\pi_{b}), \qquad  R\Gamma^{\flat}_{c}(G,b,b',\mu)[\pi_{b}] \simeq  j_{b'}^{*}T_{\mu}Rj_{b*}(\pi_{b}),\]
of complexes of $J_{b'}(\mathbb{Q}_{p}) \times W_{E_{\mu}}$-modules and isomorphisms
\[ R\Gamma_{c}(G,b,b',\mu)[\pi_{b'}] \simeq j_{b}^{*}T_{-\mu}j_{b'!}(\pi_{b}), \qquad  R\Gamma^{\flat}_{c}(G,b,b',\mu)[\pi_{b'}] \simeq j_{b}^{*}T_{-\mu}Rj_{b'*}(\pi_{b}), \]
of complexes of $J_{b}(\mathbb{Q}_{p}) \times W_{E_{\mu}}$-modules, where $-\mu$ is a dominant cocharacter conjugate to the inverse of $\mu$. 
\end{lemma}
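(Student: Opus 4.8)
\medskip
\noindent\textbf{Proof proposal.} Since this is recorded as \cite[Section~IX.3]{FS}, the plan is to unwind the definition of the Hecke operator $T_\mu$ and match it with the cohomology of the shtuka tower; the only genuine work is a geometric identification of a restricted Hecke stack, together with the bookkeeping of the group actions and the $W_{E_\mu}$-action. I would first treat the isomorphism $R\Gamma_{c}(G,b,b',\mu)[\pi_{b}] \simeq j_{b'}^{*}T_{\mu}j_{b!}(\pi_{b})$, and then deduce the other three: the $R\Gamma^{\flat}_{c}$-versions by replacing $j_{b!}$ with $Rj_{b*}$ (so that, under the dictionary below, $-\otimes^{\mathbb{L}}_{\mathcal{H}(J_b)}\pi_b$ is replaced by $R\mathcal{H}om_{J_b(\Q_p)}(-,\pi_b)$), and the $T_{-\mu}$-versions by exchanging the two Harder--Narasimhan strata and invoking the Remark on $-\mu$ conventions, i.e.\ the fact that a modification $\mathcal{E}_b\dashrightarrow\mathcal{E}_{b'}$ of type bounded by $\mu$ is the same datum as the inverse modification $\mathcal{E}_{b'}\dashrightarrow\mathcal{E}_b$ of type bounded by $-\mu$.

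For the main isomorphism, I would start from $T_{\mu}(A)=R(h^{\rightarrow}\times\mathrm{supp})_{\natural}(h^{\leftarrow *}A\otimes^{\mathbb{L}}\mathcal{S}_{\mu})$, recalling that $\mathcal{S}_{\mu}$ is supported on the closed substack $\mathrm{Hck}^{\leq\mu}\subset\mathrm{Hck}$ of modifications of type bounded by $\mu$. Next I would restrict to the locally closed strata: pulling $j_{b!}(\pi_b)$ back along $h^{\leftarrow}$ yields the extension by zero of $h^{\leftarrow *}_{b}(\pi_b)$ from $(h^{\leftarrow})^{-1}(\Bun_G^b)$, and applying $j_{b'}^{*}$ commutes with $R(h^{\rightarrow}\times\mathrm{supp})_{\natural}$ by the base change properties of the $\natural$-pushforward in the solid formalism \cite[Proposition~VII.3.1]{FS}. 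This reduces the whole computation to the "doubly restricted" Hecke stack $\mathrm{Hck}^{\leq\mu}_{b,b'}:=\mathrm{Hck}^{\leq\mu}\times_{\Bun_G\times\Bun_G}(\Bun_G^b\times\Bun_G^{b'})$ with its two projections $h^{\leftarrow}_{b,b'}$ to $\Bun_G^b$ and $(h^{\rightarrow}_{b,b'}\times\mathrm{supp})$ to $\Bun_G^{b'}\times\Div^{1}$.

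The crux is then the identification $\mathrm{Hck}^{\leq\mu}_{b,b'}\simeq[\Sht(G,b,b',\mu)_{\infty}/(J_b(\Q_p)\times J_{b'}(\Q_p))]$, compatibly with the two projections, and under which $\mathcal{S}_\mu$ pulls back to the sheaf $\mathcal{S}_\mu$ on $\Sht(G,b,b',\mu)_{\infty}$ of the text, which was defined precisely by pullback along $\mathsf{p}$. I would prove this by trivializing the two bundles: after choosing $\mathcal{E}_b$ and $\mathcal{E}_{b'}$, a point of $\mathrm{Hck}^{\leq\mu}_{b,b'}$ is exactly a modification $\mathcal{E}_b\dashrightarrow\mathcal{E}_{b'}$ of type bounded by $\mu$ together with a characteristic-$0$ untilt, i.e.\ a point of $\Sht(G,b,b',\mu)_{\infty}$, with the automorphism group diamonds of $\mathcal{E}_b$ and $\mathcal{E}_{b'}$ acting; since the connected ("positive") parts of these automorphism groups are cohomologically negligible — this is the same input that underlies $\Dlis(\Bun_G^{b})\simeq\Dc(J_b(\Q_p))$ — one may take the quotient by the locally profinite groups $J_b(\Q_p),J_{b'}(\Q_p)$ alone.

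Granting this, $j_{b'}^{*}T_{\mu}j_{b!}(\pi_b)$ becomes the $\natural$-pushforward along $(h^{\rightarrow}_{b,b'}\times\mathrm{supp})$ of $(h^{\leftarrow *}_{b,b'}\pi_b)\otimes^{\mathbb{L}}\mathcal{S}_\mu$ over $[\Sht(G,b,b',\mu)_{\infty}/(J_b(\Q_p)\times J_{b'}(\Q_p))]$. Passing to the tower $\Sht(G,b,b',\mu)_K$ over open compact $K\subset J_{b'}(\Q_p)$ and using that the $\natural$-pushforward along a locally spatial diamond of finite $\overline{\mathbb{Q}}_{\ell}$-cohomological dimension computes compactly supported cohomology (together with the finiteness statements of \cite[Corollary~I.7.3]{FS}), the pushforward along $h^{\rightarrow}_{b,b'}$ recovers $\colim_{K\to\{1\}}R\Gamma_c(\Sht(G,b,b',\mu)_{K,\mathbb{C}_p},\mathcal{S}_\mu)\otimes\overline{\mathbb{Q}}_{\ell}=R\Gamma_c(G,b,b',\mu)$ as a $J_{b'}(\Q_p)\times W_{E_\mu}$-module, while integrating out the remaining $J_b(\Q_p)$-action along $h^{\leftarrow}_{b,b'}$ produces $-\otimes^{\mathbb{L}}_{\mathcal{H}(J_b)}\pi_b$, hence $R\Gamma_c(G,b,b',\mu)[\pi_b]$ as claimed. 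I expect the main obstacle to be the geometric identification in the third paragraph — precisely controlling $\mathrm{Aut}(\mathcal{E}_b),\mathrm{Aut}(\mathcal{E}_{b'})$ and checking that their connected parts wash out — together with tracking the $W_{E_\mu}$-equivariance through the support map; the rest is formal manipulation inside the six-functor formalism of \cite[Chapter~VII]{FS}.
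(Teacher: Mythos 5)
The paper gives this Lemma as a citation to \cite[Section~IX.3]{FS} and supplies no proof of its own, so there is no in-paper argument against which to check your write-up; what you have produced is a reconstruction of the Fargues--Scholze argument. As such a reconstruction it is essentially accurate in structure: the reduction to the first isomorphism, the restriction of the Hecke stack to the two basic strata and its identification with the quotient stack $[\Sht(G,b,b',\mu)_{\infty}/(J_{b}(\Q_p)\times J_{b'}(\Q_p))]$, and the passage through the tower over compact opens $K\subset J_{b'}(\Q_p)$ are precisely the steps one finds in loc.\ cit.

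A few points where the sketch is looser than it should be. First, the remark that "the connected positive parts of the automorphism groups are cohomologically negligible" is not quite the relevant fact here: since both $b$ and $b'$ are \emph{basic}, the strata $\Bun_G^{b}$ and $\Bun_G^{b'}$ are open, and $\mathrm{Aut}(\mathcal{E}_b)\simeq\underline{J_b(\Q_p)}$ with no unipotent part at all, so there is nothing to "wash out" --- this simplification is exactly what makes the basic case clean, and it is worth stating, because for non-basic $b$ the corresponding statement is genuinely more subtle. Second, the claim that $R(-)_\natural$ "computes compactly supported cohomology" along the shtuka tower needs the specific input from \cite[Chapter~VII and Section~IX.3]{FS} that for the relevant cohomologically smooth / partially proper maps the $\natural$-pushforward in the solid formalism agrees with the familiar $Rf_!$; you gesture at the finiteness result \cite[Corollary~I.7.3]{FS} but the precise interchange is the technical heart and should not be glossed. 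Third, your one-line reduction of the $R\Gamma^\flat_c$-isomorphisms to "replace $j_{b!}$ by $Rj_{b*}$" should be accompanied by the actual adjunction argument that turns the derived direct image into the $R\mathcal{H}om_{J_b(\Q_p)}(-,\pi_b)$ on the representation-theoretic side; this is a real (if standard) duality step, not merely a formal swap. None of these is a fatal gap, but they are the places where a fully written proof would need to do work rather than cite.
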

\begin{remark}
In the case that $b'$ is the trivial element, we drop it from our notation, simply writing $R\Gamma_{c}(G,b,\mu)$ and $B(G,\mu)$. 
\end{remark}
The distinction between these the complexes $R\Gamma_{c}(G,b,b',\mu)[\pi_{b}]$ and $R\Gamma^{\flat}_{c}(G,b,b',\mu)[\pi_{b}]$ is a bit cumbersome. The complex $R\Gamma_{c}(G,b,b',\mu)[\pi_{b}]$ is much more natural from the point of view of geometric arguments on $\Bun_{G}$ as it involves the much simpler extension by zero functor, while the complex $R\Gamma^{\flat}_{c}(G,b,\mu)[\pi_{b}]$ is what is classically studied in the literature. Fortunately, since we will ultimately only be interested in describing these complexes for representations with supercuspidal Fargues--Scholze parameter, there is in fact no difference between the two. 
\begin{proposition}{\label{flatnatural}}
Let $(G,b,b',\mu)$ be a basic local shtuka datum, $\pi_{b} \in \Pi_{\ov{\Q}_{\ell}}(J_{b})$ a representation with supercuspidal Fargues--Scholze parameter. Then we have an isomorphism
\[ R\Gamma_{c}(G,b,b',\mu)[\pi_{b}] \simeq R\Gamma^{\flat}_{c}(G,b,b',\mu)[\pi_{b}], \]
of $J_{b'}(\mathbb{Q}_{p}) \times W_{E_{\mu}}$-modules. Similarly, we have an isomorphism  
\[ R\Gamma_{c}(G,b,b',\mu)[\pi_{b'}] \simeq R\Gamma^{\flat}_{c}(G,b,b',\mu)[\pi_{b'}], \]
of $J_{b}(\mathbb{Q}_{p}) \times W_{E_{\mu}}$-modules. 
\end{proposition}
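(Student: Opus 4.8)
The plan is to deduce the statement from Lemma \ref{shimhecke} together with the purely geometric assertion: if $\pi_b \in \Pi_{\ov\Q_\ell}(J_b)$ has supercuspidal Fargues--Scholze parameter, then the adjunction map $j_{b!}(\pi_b) \to Rj_{b*}(\pi_b)$ is an isomorphism in $\Dlis(\Bun_G,\ov\Q_\ell)$. Granting this, applying the functor $j_{b'}^* T_\mu$ to it identifies $R\Gamma_c(G,b,b',\mu)[\pi_b]$ with $R\Gamma^\flat_c(G,b,b',\mu)[\pi_b]$ via Lemma \ref{shimhecke}, and the second asserted isomorphism follows by the identical argument with $(b',-\mu)$ in place of $(b,\mu)$. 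So set $\phi := \phi^{\mathrm{FS}}_{\pi_b}$, which we regard as a supercuspidal parameter of $G$ through the inner twisting $\LL J_b \cong \LL G$ (available since $b$ is basic), and let $C = \mathrm{cone}(j_{b!}(\pi_b) \to Rj_{b*}(\pi_b))$.

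First I would pin down the support of $C$. Since $b$ is basic, $\Bun_G^b$ is open in $\Bun_G$, so $j_b$ is an open immersion, $j_{b!}$ is extension by zero, and $j_b^* j_{b!}(\pi_b) = \pi_b = j_b^* Rj_{b*}(\pi_b)$ with the adjunction map restricting to the identity; hence $j_b^* C = 0$, while $j_{b''}^* j_{b!}(\pi_b) = 0$ for every $b'' \ne b$. Therefore $C$ is supported on $\ov{\Bun_G^b} \setminus \Bun_G^b$, which is a union of strata $\Bun_G^{b''}$ with $b''$ necessarily non-basic (the connected component of $\Bun_G$ containing $b$ has exactly one basic stratum, namely $\Bun_G^b$), and on each such stratum $j_{b''}^* C \simeq j_{b''}^* Rj_{b*}(\pi_b)$. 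It thus suffices to show $j_{b''}^* C = 0$ for every non-basic $b''$ in this component.

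The heart of the argument is an excursion-operator computation. The excursion operators on $\Bun_G$ are natural endomorphisms of the identity functor, so they act on $j_{b!}(\pi_b)$, on $Rj_{b*}(\pi_b)$, on the triangle relating them, and on their $j_{b''}^*$-restrictions, in mutually compatible ways. Because $j_b$ is an open immersion, the degree-zero endomorphisms of each of $j_{b!}(\pi_b)$ and $Rj_{b*}(\pi_b)$ are $\mathrm{End}_{J_b(\Qp)}(\pi_b) = \ov\Q_\ell$ by Schur, so each excursion operator acts on both by a scalar; as the nonzero adjunction map is excursion-equivariant, these scalars coincide, and by construction they are the scalars attached to $\phi$. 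A rotation argument in the triangle then shows that each excursion operator acts on $j_{b''}^* C$ by its $\phi$-scalar up to a square-zero operator, so every irreducible subquotient $\rho$ of every cohomology sheaf of $j_{b''}^* C \in \Dlis(\Bun_G^{b''},\ov\Q_\ell) \simeq \Dc(J_{b''}(\Qp),\ov\Q_\ell)$ has the excursion algebra acting through the character of $\phi$; by Lafforgue reconstruction, $\phi^{b''}_\rho = \phi$. But for $b''$ non-basic, $J_{b''}$ is an extended pure inner twist of a proper Levi subgroup $M \subsetneq G$, and by \cite[Theorem~IX.7.2]{FS} (cf.\ the Remark after Theorem \ref{FSproperties}) $\phi^{b''}_\rho$ factors, up to a central twist, through $\LL M \hookrightarrow \LL G$; in particular it factors through a proper Levi of $\LL G$ and is not supercuspidal, contradicting $\phi^{b''}_\rho = \phi$. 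Hence no such $\rho$ exists, $j_{b''}^* C = 0$ for all non-basic $b''$ in the component, and therefore $C = 0$. The main obstacle is precisely this last step: making rigorous that the square-zero-perturbed scalar action of the excursion algebra genuinely forces every constituent of $j_{b''}^* C$ to carry the parameter $\phi$, so that the Levi structure of $J_{b''}$ produces the contradiction; the equivalence $\Dlis(\Bun_G^{b''}) \simeq \Dc(J_{b''}(\Qp))$ and the compatibility recorded in the Remark after Theorem \ref{FSproperties} are the inputs that let this go through.
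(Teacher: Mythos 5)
Your proposal is correct and follows the same route as the paper's proof: reduce to showing the adjunction map $j_{b!}(\pi_b) \to Rj_{b*}(\pi_b)$ is an isomorphism, and check this by showing the restriction of $Rj_{b*}(\pi_b)$ to any non-basic stratum $\Bun_G^{b''}$ vanishes, since any constituent would have to carry the supercuspidal parameter $\phi$ while for non-basic $b''$ the Fargues--Scholze parameter of a representation of $J_{b''}(\mathbb{Q}_p)$ factors through a proper Levi of $\LL G$ (the paper cites \cite[Section~IX.7.1]{FS} for exactly this package). The square-zero refinement you worry about at the end is actually avoidable: since $j_b$ is an open immersion one has $\mathrm{End}_{\Dlis(\Bun_{G},\ov{\Q}_{\ell})}(Rj_{b*}\pi_b)\cong\mathrm{End}_{J_b(\mathbb{Q}_p)}(\pi_b)=\ov{\Q}_{\ell}$ by adjunction and Schur, so each excursion operator acts on $Rj_{b*}\pi_b$ by a genuine scalar (the one attached to $\phi$), and applying $j_{b''}^*$ to that very morphism shows it acts by the same scalar on $j_{b''}^*Rj_{b*}\pi_b\simeq j_{b''}^*C$, which already pins down the parameter of any constituent without passing through a nilpotent correction.
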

\begin{proof}
We just do it for $\pi_{b}$ with the proof for $\pi_{b'}$ being the same. Using Lemma \ref{shimhecke}, there exist isomorphisms
\[ R\Gamma^{\flat}_{c}(G,b,b',\mu)[\pi_{b}] \simeq j_{b'}^{*}T_{\mu}j_{b!}(\pi_{b})\]
and
\[ R\Gamma_{c}(G,b,b',\mu)[\pi_{b}] \simeq j_{b'}^{*}T_{\mu}Rj_{b*}(\pi_{b}) \]
of complexes of $J_{b'}(\mathbb{Q}_{p}) \times W_{E_{\mu}}$-modules. Therefore, the claim follows from the following Lemma.
\end{proof}
\begin{lemma} Let $\pi_{b} \in \Pi_{\ov{\Q}_{\ell}}(J_{b})$ be a representation with supercuspidal Fargues--Scholze parameter $\phi$. Then the natural map 
\[ j_{b!}(\pi_{b}) \rightarrow Rj_{b*}(\pi_{b}) \]
of sheaves in $\Dlis(\Bun_{G},\overline{\mathbb{Q}}_{\ell})$ is an isomorphism. 
\end{lemma}
\begin{proof}
It suffices to show that the sheaf $Rj_{b*}(\pi_{b})$ has trivial restriction to $\Bun_{G}^{b'}$ for any non-basic $b' \in B(G)$. However, the restriction of $Rj_{b*}(\pi_{b})$ must have cohomology valued in representations with Fargues--Scholze parameter equal to $\phi$ under the appropriate twisted embedding $\phantom{}^LJ_{b} \rightarrow \phantom{}^{L}G$, by \cite[Section~IX.7.1]{FS}. However, for any non-basic $b \in B(G)$, the image of $\phantom{}^LJ_{b} \to \LL G$ is a proper Levi subgroup of $\phantom{}^{L}G$. Therefore, this restriction must vanish since $\phi$ was assumed to be supercuspidal and therefore does not factor through a Levi subgroup. 
\end{proof}

To study the cohomology of the complexes $R\Gamma_{c}(G,b,b',\mu)$, we will invoke a tool coming from the geometric Langlands correspondence known as the spectral action.
\subsubsection{The spectral action}{\label{sss: spectralaction}}
We now fix a supercuspidal parameter $\phi$ and $\pi_{b} \in \Pi_{\ov{\Q}_{\ell}}(J_b), \pi_{b'} \in \Pi_{\ov{\Q}_{\ell}}(J_{b'})$ such that the Fargues--Scholze $L$-parameter attached to $\pi_{b}$ and $\pi_{b'}$ is $\phi$. We then work with the complexes $R\Gamma_{c}(G,b,b',\mu)[\pi_{b}]$ and $R\Gamma_{c}(G,b,b',\mu)[\pi_{b'}]$. Our goal is to outline some categorical structure on these complexes coming from the spectral action of the stack of Langlands parameters. This can be found in  \cite[Section~X.2]{FS} when $G$ has trivial split center, and in \cite[Section~3.2]{Ham} in general. We consider the moduli stack $X_{\hat{G}}$ of Langlands parameters over $\ol{\mathbb{Q}}_{\ell}$, as defined in \cite{DH,Zhu1} and \cite[Section~VIII.I]{FS}, and the derived category $\Perf(X_{\hat{G}})$ of perfect complexes on $X_{\hat{G}}$. We write $\Perf(X_{\hat{G}})^{BW_{\mathbb{Q}_{p}}^{I}}$ for the derived category of objects with a continuous $W_{\mathbb{Q}_{p}}^{I}$ action for a finite index set $I$, and $\Dlis(\Bun_{G},\ol{\mathbb{Q}}_{\ell})^{\omega}$ for the triangulated sub-category of compact objects in $\Dlis(\Bun_{G},\ol{\mathbb{Q}}_{\ell})$. By \cite[Corollary~X.I.3]{FS}, there exists a $\ol{\mathbb{Q}}_{\ell}$-linear action 
\[ \Perf(X_{\hat{G}})^{BW_{\mathbb{Q}_{p}}^{I}} \ra \mathrm{End}(\Dlis(\Bun_{G},\ol{\mathbb{Q}}_{\ell})^{\omega})^{BW_{\mathbb{Q}_{p}}^{I}} \]
\[ C \mapsto \{A \mapsto C \star A\}\]
which, extending by colimits, gives rise to an action 
\[ \IndPerf(X_{\hat{G}})^{BW_{\mathbb{Q}_{p}}^{I}} \ra \mathrm{End}(\Dlis(\Bun_{G},\ol{\mathbb{Q}}_{\ell}))^{BW_{\mathbb{Q}_{p}}^{I}} \]
where $\IndPerf(X_{\hat{G}})$ is the triangulated category of Ind-Perfect complexes, and this action is uniquely characterized by some complicated properties. For our purposes, we will need the following:
\begin{enumerate}
    \item For $V = \boxtimes_{i \in I} V_{i}\in \Rep_{\ol{\mathbb{Q}}_{\ell}}(\phantom{}^{L}G^{I})$, there is an attached vector bundle $C_{V} \in \Perf(X_{\hat{G}})^{BW_{\mathbb{Q}_{p}}^{I}}$ whose evaluation at a $\ol{\mathbb{Q}}_{\ell}$-point of $X_{\hat{G}}$ corresponding to a (not necessarily semi-simple) $L$-parameter $\tilde{\phi}: W_{\mathbb{Q}_{p}} \ra \phantom{}^{L}G(\ol{\mathbb{Q}}_{\ell})$ is the vector space $V$ with $W_{\mathbb{Q}_{p}}^{I}$-action given by $\boxtimes_{i \in I} r_{V_{i}} \circ \tilde{\phi}$. The endomorphism
    \[ C_{V} \star (-): \Dlis(\Bun_{G},\ol{\mathbb{Q}}_{\ell}) \ra \Dlis(\Bun_{G},\ol{\mathbb{Q}}_{\ell})^{BW_{\mathbb{Q}_{p}}^{I}} \]
    is the Hecke operator $T_{V}$ defined above.  
    \item The action is symmetric monoidal in the sense that given $C_{1},C_{2} \in \IndPerf(X_{\hat{G}})$, we have a natural equivalence of endofunctors: 
    \[ (C_{1} \otimes^{\mathbb{L}} C_{2}) \star (-) \simeq C_{1} \star (C_{2} \star (-)). \]
\end{enumerate}
We will only be interested in the action of a small piece of $X_{\hat{G}}$. Namely, let $S_{\phi}$ be the centralizer of our fixed supercuspidal $\phi$, and we set $\Rep_{\overline{\mathbb{Q}}_{\ell}}(S_{\phi})$ be the category of finite-dimensional algebraic $\overline{\mathbb{Q}}_{\ell}$-representations of the group $S_{\phi}$. The unramified twists of the parameter $\phi$ define a connected component $C_{\phi} \subset X_{\hat{G}}$, which is isomorphic to $[S/S_{\phi}]$ for some torus $S/\ol{\mathbb{Q}}_{\ell}$. This defines a direct summand 
\[ \Perf(C_{\phi}) \subset \Perf(X_{\hat{G}}), \]
and the spectral action gives rise to a direct summand
\[ \Dlis^{C_{\phi}}(\Bun_{G},\ol{\mathbb{Q}}_{\ell})^{\omega} \subset \Dlis(\Bun_{G},\ol{\mathbb{Q}}_{\ell})^{\omega}, \]
where Schur irreducible objects inside $\Dlis^{C_{\phi}}(\Bun_{G},\ol{\mathbb{Q}}_{\ell})^{\omega}$ will have Fargues-Scholze parameter given by an unramified twist of $\phi$. Thus, we can assume that the non-basic restrictions of $A \in \Dlis^{C_{\phi}}(\Bun_{G},\ol{\mathbb{Q}}_{\ell})^{\omega}$ have to be trivial, and the representations occurring as irreducible constituents of $A$ on the basic locus have to be supercuspidal, by compatibility of Fargues--Scholze with parabolic induction, \ref{FSproperties} (4) and Remark 2.20. Therefore we obtain a decomposition  
\[ \Dlis^{C_{\phi}}(\Bun_{G},\ol{\mathbb{Q}}_{\ell})^{\omega} \simeq \bigoplus_{b \in B(G)_{\mathrm{basic}}} \mathrm{D}^{C_{\phi}}(J_{b}(\mathbb{Q}_{p}),\ol{\mathbb{Q}}_{\ell})^{\omega}, \]
where $\mathrm{D}^{C_{\phi}}(J_{b}(\mathbb{Q}_{p}),\ol{\mathbb{Q}}_{\ell})^{\omega} \subset \mathrm{D}(J_{b}(\mathbb{Q}_{p}),\ol{\mathbb{Q}}_{\ell})^{\omega}$ is a full subcategory. We set $\chi: Z(G)(\mathbb{Q}_{p}) \ra \ol{\mathbb{Q}}_{\ell}^{*}$, to be the central character determined by our parameter $\phi$, and define $\Dlis^{C_{\phi},\chi}(\Bun_{G},\ol{\mathbb{Q}}_{\ell})^{\omega} \simeq \bigoplus_{b \in B(G)_{\mathrm{basic}}} \mathrm{D}^{C_{\phi},\chi}(J_{b}(\mathbb{Q}_{p}),\ol{\mathbb{Q}}_{\ell})^{\omega}$. Here $\mathrm{D}^{C_{\phi},\chi}(J_{b}(\mathbb{Q}_{p})),\ol{\mathbb{Q}}_{\ell})^{\omega}$ is the (not full) subcategory of $\mathrm{D}^{C_{\phi}}(J_{b}(\mathbb{Q}_{p}),\ol{\mathbb{Q}}_{\ell})^{\omega}$ spanned by compact objects with fixed central character $\chi: Z(J_{b}(\mathbb{Q}_{p})) 
 \simeq Z(G)(\mathbb{Q}_{p}) \ra \ol{\mathbb{Q}}_{\ell}^{*}$, where the isomorphism $Z(G) \simeq Z(J_{b})$ is guaranteed by the fact that $b$ is basic. Now, since supercuspidal representations are injective/projective in the category of smooth representations with fixed central character, this implies that $\Dlis^{C_{\phi},\chi}(\Bun_{G},\ol{\mathbb{Q}}_{\ell})^{\omega}$ can be identified with 
\[ \bigoplus_{b \in B(G)_{\bas}} \bigoplus_{\pi_{b}} \Perf(\ol{\mathbb{Q}}_{\ell}) \otimes \pi_{b}, \]
where $\Perf(\overline{\mathbb{Q}}_{\ell})$ is the category of perfect complexes of $\overline{\mathbb{Q}}_{\ell}$-modules, and $\pi_{b}$ runs over representations of $J_{b}(\mathbb{Q}_{p})$ with Fargues--Scholze parameter equal to $\phi$. 

As in \cite[Section~3.2]{Ham}, we can show that Hecke operators preserve this subcategory and it follows that the same is true for the spectral action, using \cite[Theorem~VIII.5.1]{FS}. We note that any $W \in \Rep_{\ol{\mathbb{Q}}_{\ell}}(S_{\phi})$ defines a vector bundle on the classifying stack $[\Spec{(\ol{\mathbb{Q}}_{\ell})}/S_{\phi}]$, which we can pullback to get a vector bundle on the classifying stack $[S/S_{\phi}] \simeq C_{\phi}$. The spectral action of this vector bundle induces a functor
\[ \bigoplus_{b \in B(G)_{\bas}} \bigoplus_{\pi_{b}} \Perf(\overline{\mathbb{Q}}_{\ell}) \otimes \pi_{b} \rightarrow  \bigoplus_{b \in B(G)_{\bas}} \bigoplus_{\pi_{b}} \Perf(\overline{\mathbb{Q}}_{\ell}) \otimes \pi_{b}, \]
which we denote by $\Act_{W}$. We will also abuse notation and write $\Act_{W}$ for the endofunctor of $\Dlis(\Bun_{G},\ol{\mathbb{Q}}_{\ell})^{\omega}$.

We note that the functor $\Act_{\mathbf{1}}$ identifies with the spectral action of the structure sheaf on $C_{\phi}$, and therefore identifies with the idempotent projection
\[ \Dlis(\Bun_{G},\ol{\mathbb{Q}}_{\ell})^{\omega} \ra \Dlis^{C_{\phi}}(\Bun_{G},\ol{\mathbb{Q}}_{\ell})^{\omega} \subset \Dlis(\Bun_{G},\ol{\mathbb{Q}}_{\ell})^{\omega} \]
to the direct summand. In particular, we note that the corresponding endofunctor on $\Dlis^{C_{\phi},\chi}(\Bun_{G},\ol{\mathbb{Q}}_{\ell})^{\omega}$ is the identity map. 

We now consider a geometric dominant minuscule cocharacter $\mu$ with attached highest weight representation $V_{\mu} \in \Rep_{\ol{\mathbb{Q}}_{\ell}}(\phantom{}^{L}G)$ and compute the spectral action of $C_{V_{\mu}}$ on a $\pi_{b}$. This computes the Hecke operator $T_{\mu}(\pi_{b})$ which, by Lemma \ref{shimhecke}, will be given by the cohomology of a local Shtuka space. However, by the above discussion, we can rewrite this in terms of the spectral action of the vector bundle given by the restriction of $C_{V_{\mu}}$ to the connected component $C_{\phi}$, which will be precisely a direct sum of the $\Act$-functors given by the representations of $S_{\phi}$ occurring in $r_{\mu} \circ \phi$. Moreover, if we apply the $\Act$-functors to a representation $\pi_{b}$ with Fargues-Scholze parameter $\phi$ the spectral action will factor over the base-change to the localization of $C_{\phi}$ at the closed point defined by $x_{\phi}$, since, if $\mf{m}_{\phi} \subset \mathcal{O}_{X_{\hat{G}}}$ denotes the maximal ideal in the ring of global functions defined by $\mf{m}_{\phi}$ all functions $f \in \mathcal{O}_{X_{\hat{G}}} \setminus \mf{m}_{\phi}$ will act via isomorphisms on $\mathcal{O}_{X_{\hat{G}}} \star \pi_{b} \simeq \pi_{b}$, by construction of the Fargues-Scholze parameter. By combining these observations, we obtain the following result. 
\begin{theorem}{\cite[Corollary~3.11]{Ham},\cite[Section~X.2]{FS}}{\label{thm: RGammavsAct}}
Let $(G,b,b',\mu)$ be a local shtuka datum with reflex field $E_{\mu}$. We write $\pi_{b}$ for a representation of $J_{b}(\mathbb{Q}_{p})$ with supercuspidal $L$-parameter $\phi$ and $r_{\mu}$ for the representation of $\hat{G} \rtimes W_{E_{\mu}}$ defined by $\mu$. We can view $r_{\mu} \circ \phi$ as a representation of $S_{\phi} \times W_{E_{\mu}}$, then we obtain a decomposition $\bigoplus_{i = 1}^{k} W_{i} \boxtimes \sigma_{i}$, where $W_{i} \in \Rep_{\overline{\mathbb{Q}}_{\ell}}(S_{\phi})$ is an irreducible representation of $S_{\phi}$ and $\sigma_{i}$ is a finite-dimensional $\overline{\mathbb{Q}}_{\ell}$-representation of $W_{E_{\mu}}$. Then we have an isomorphism
\[ R\Gamma_{c}(G,b,b',\mu)[\pi_{b}] \simeq \bigoplus_{i = 1}^{k} \Act_{W_{i}}(\pi_{b}) \boxtimes \sigma_{i} \]
 of $J_{b'}(\mathbb{Q}_{p}) \times W_{E_{\mu}}$-modules. 
\end{theorem}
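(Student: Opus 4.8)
The plan is to combine the description of $R\Gamma_c(G,b,b',\mu)[\pi_b]$ as a Hecke operator applied to $j_{b!}(\pi_b)$ (Lemma \ref{shimhecke}) with the realization of Hecke operators by the spectral action (property (1) of \S\ref{sss: spectralaction}), and then to localize everything at the supercuspidal parameter $\phi$ by means of the regular immersion $i\colon[\Spec\overline{\mathbb{Q}}_\ell/S_\phi]\hookrightarrow X_{\hat{G}}$.

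First I would invoke Lemma \ref{shimhecke} to obtain an isomorphism $R\Gamma_c(G,b,b',\mu)[\pi_b]\simeq j_{b'}^{*}T_\mu j_{b!}(\pi_b)$ of $J_{b'}(\mathbb{Q}_p)\times W_{E_\mu}$-modules, where $T_\mu$ is the Hecke operator attached to $V_\mu=r_\mu$. By property (1) of the spectral action, $T_\mu$ is the endofunctor $C_{V_\mu}\star(-)$ for the $W_{E_\mu}$-equivariant perfect complex $C_{V_\mu}\in\Perf(X_{\hat{G}})^{BW_{E_\mu}}$ whose pullback $i^{*}C_{V_\mu}$ along $i$ is precisely $r_\mu\circ\phi$, regarded as a $W_{E_\mu}$-equivariant vector bundle on $[\Spec\overline{\mathbb{Q}}_\ell/S_\phi]$ (the $W_{E_\mu}$-action being $r_\mu\circ\phi$ and the $S_\phi$-action being $r_\mu$ restricted to $\hat{G}$).

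Since $\pi_b$ has supercuspidal Fargues--Scholze parameter $\phi$, the sheaf $j_{b!}(\pi_b)$ lies in the essential image of $\Act_{\mathbf{1}}$, and on this subcategory the $\Perf(X_{\hat{G}})$-action is computed through restriction along $i$; this is exactly the content recalled in the paragraph preceding the statement and established in \cite[Section~X.2]{FS} and \cite[Section~3.2]{Ham}. Concretely, using monoidality of the spectral action together with the projection formula along the regular immersion $i$, one rewrites
\[ T_\mu j_{b!}(\pi_b)\ \simeq\ i_{*}\bigl(i^{*}C_{V_\mu}\bigr)\star j_{b!}(\pi_b). \]
Decomposing $i^{*}C_{V_\mu}=r_\mu\circ\phi\simeq\bigoplus_{i=1}^{k} W_i\boxtimes\sigma_i$ into $S_\phi$-isotypic pieces, and using that the spectral action is $\overline{\mathbb{Q}}_\ell$-linear (hence additive in the sheaf variable) and respects the $W_{E_\mu}$-equivariant structure, this becomes $\bigoplus_{i=1}^{k}\Act_{W_i}\bigl(j_{b!}(\pi_b)\bigr)\boxtimes\sigma_i$. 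Applying $j_{b'}^{*}$ and recalling the identification of the image of $\Act_{\mathbf{1}}$ with $\bigoplus_{b\in B(G)_{\bas}}\bigoplus_{\pi_b}\Perf(\overline{\mathbb{Q}}_\ell)\otimes\pi_b$ --- in which $j_{b!}(\pi_b)$ represents $\pi_b$ in the $b$-summand, $j_{b'}^{*}$ extracts the $b'$-summand, and $\Act_{W_i}$ is the abusively named functor of the statement --- yields the claimed isomorphism of $J_{b'}(\mathbb{Q}_p)\times W_{E_\mu}$-modules.

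I expect the main obstacle to lie entirely in the inputs rather than the bookkeeping: the existence and monoidality of the spectral action, its realization of $T_\mu$ together with the full $W_{E_\mu}$-equivariant refinement (the deepest point, resting on the $W_{E_\mu}$-equivariant geometric Satake of \cite[Chapter~VI]{FS} and the construction of \cite[Chapter~X]{FS}), and the fact that $[\Spec\overline{\mathbb{Q}}_\ell/S_\phi]$ is genuinely regularly immersed in $X_{\hat{G}}$, so that $i_{*}$ preserves perfect complexes and the projection formula applies --- these being precisely \cite[Section~X.2]{FS} and \cite[Corollary~3.11]{Ham}, which we take as given. Beyond invoking them, the single point I would check with care is that the $W_{E_\mu}$-action transported through Lemma \ref{shimhecke} matches the one coming from the summands $\sigma_i$ of $r_\mu\circ\phi$; this is built into the construction of $T_\mu$ via the Satake sheaf $\mathcal{S}_\mu$ and its compatibility with the map $\mathsf{p}$.
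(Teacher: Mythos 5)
Your proposal is correct and follows essentially the same route the paper itself sketches in the paragraph preceding the statement (the paper cites \cite[Corollary~3.11]{Ham} and \cite[Section~X.2]{FS} rather than reproving it): pass from $R\Gamma_{c}(G,b,b',\mu)[\pi_b]$ to $j_{b'}^{*}T_{\mu}j_{b!}(\pi_b)$ via Lemma \ref{shimhecke}, realize $T_\mu = C_{V_\mu}\star(-)$, localize the spectral action at the closed point $[\Spec\overline{\mathbb{Q}}_\ell/S_\phi]$ using that $j_{b!}(\pi_b)$ lies in the image of $\Act_{\mathbf{1}}$, and identify $i^{*}C_{V_\mu}$ with $r_\mu\circ\phi = \bigoplus_i W_i\boxtimes\sigma_i$. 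The only step you spell out beyond the paper's sketch — the projection-formula identity $T_\mu j_{b!}(\pi_b)\simeq i_{*}(i^{*}C_{V_\mu})\star j_{b!}(\pi_b)$ — implicitly uses $\Act_{\mathbf{1}}(j_{b!}(\pi_b))\simeq j_{b!}(\pi_b)$, a fact the paper invokes without proof in Lemma \ref{actirred} and which ultimately rests on the localization formalism of the cited references, so this is consistent with the paper's level of rigor.
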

The significance of this result is that these $\Act$-functors are monoidal since the spectral action is monoidal, and pullback along the map $C_{\phi} \ra [\Spec{\ol{\mathbb{Q}}_{\ell}}/S_{\phi}]$ commutes with tensor products. In particular, for $W,W' \in \Rep_{\ol{\mathbb{Q}}_{\ell}}(S_{\phi})$, we have an isomorphism:
\[ \Act_{W} \circ \Act_{W'}(-) \simeq \Act_{W \otimes W'}(-) \]
This means that if one knows the values of the $\Act$-functors on a set of irreducible representations $W \in \Rep_{\ol{\mathbb{Q}}_{\ell}}(S_{\phi})$ which generate all irreducible representations of $S_{\phi}$ under the operation of tensor products, then we can determine the value of $\Act$-functors for all representations and in turn how representations with supercuspidal Fargues--Scholze parameter contribute to the cohomology of local Shtuka spaces (even non-minuscule ones). In particular, for $\GU_{n}$, $\mu = ((1,0,\ldots,0),1)$ and $b'$ the trivial element, we can use the results of \cite{BMN} together with Theorem 1.1 to tell us just enough about the values of the $\Act$-functors to determine the rest by this monoidal property. To pass between the description of the complexes $R\Gamma_{c}(G,b,b',\mu)[\pi_{b}]$ and the values of the $\Act$-functors, we will invoke the following easy Lemma. 
\begin{lemma}{\label{actirred}}
For a basic element $b \in B(G)$, we let $\pi_{b} \in \Pi_{\ov{\Q}_{\ell}}(J_{b})$ be a smooth irreducible representation with supercuspidal Fargues--Scholze parameter $\phi$. Suppose that $\chi \in \Rep_{\ol{\mathbb{Q}}_{\ell}}(S_{\phi})$ is an irreducible character of $S_{\phi}$.  Then $\Act_{\chi}(\pi_{b})$ is an irreducible smooth representation supported on a single basic HN-stratum. 
\end{lemma}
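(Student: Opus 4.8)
The plan is to leverage that, since $\chi$ is one-dimensional, $\Act_{\chi}$ is an auto-equivalence of the relevant block, and then to extract both the irreducibility and the ``single stratum'' statement from a computation of endomorphisms.

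First I would set up the block. Let $\mc{C} \subset \Dlis(\Bun_{G},\ov{\Q}_{\ell})^{\omega}$ be the essential image of $\Act_{\mathbf{1}}$, which as recalled above is identified with $\bigoplus_{b' \in B(G)_{\bas}} \bigoplus_{\pi_{b'}} \Perf(\ov{\Q}_{\ell}) \otimes \pi_{b'}$, the inner sum running over irreducible smooth representations of $J_{b'}(\Qp)$ with Fargues--Scholze parameter $\phi$; our given representation, pushed forward via $j_{b!}$, is an object of $\mc{C}$. Since $\chi$ is one-dimensional we have $\chi \otimes \chi^{\vee} \cong \mathbf{1}$ in $\Rep_{\ov{\Q}_{\ell}}(S_{\phi})$, so by the monoidality of the spectral action (the isomorphism $\Act_{W} \circ \Act_{W'} \simeq \Act_{W \otimes W'}$ recalled above) one gets $\Act_{\chi} \circ \Act_{\chi^{\vee}} \simeq \Act_{\mathbf{1}} \simeq \Act_{\chi^{\vee}} \circ \Act_{\chi}$; as $\Act_{\mathbf{1}}$ restricts to the identity on its image $\mc{C}$, this exhibits $\Act_{\chi^{\vee}}$ as a quasi-inverse of $\Act_{\chi}$ on $\mc{C}$. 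In particular $\Act_{\chi}(\pi_{b})$ is a nonzero object of $\mc{C}$, hence supported on the basic locus with all irreducible constituents supercuspidal of Fargues--Scholze parameter $\phi$.

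Next I would compute $\mathrm{RHom}(\Act_{\chi}(\pi_{b}),\Act_{\chi}(\pi_{b}))$. As a quasi-inverse, $\Act_{\chi^{\vee}}$ is right adjoint to $\Act_{\chi}$, so this equals $\mathrm{RHom}(\pi_{b},\Act_{\chi^{\vee}}\Act_{\chi}(\pi_{b})) \simeq \mathrm{RHom}(\pi_{b},\pi_{b})$; and $\mathrm{RHom}_{\Bun_{G}}(j_{b!}\pi_{b},j_{b!}\pi_{b}) = \mathrm{RHom}_{J_{b}(\Qp)}(\pi_{b},\pi_{b})$ by the adjunction $j_{b!} \dashv j_{b}^{*}$, which is $\ov{\Q}_{\ell}$ concentrated in degree $0$ since $\pi_{b}$ is irreducible supercuspidal and hence projective in the block of smooth representations with its central character. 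Writing $\Act_{\chi}(\pi_{b}) \simeq \bigoplus_{b',\pi_{b'}} V_{b',\pi_{b'}} \otimes \pi_{b'}$ in $\mc{C}$, and using that $\mathrm{RHom}(j_{b'!}\pi_{b'},j_{b''!}\pi_{b''})$ vanishes whenever $(b',\pi_{b'}) \neq (b'',\pi_{b''})$ (disjoint Harder--Narasimhan strata, or distinct irreducible supercuspidals on the same stratum), we obtain $\bigoplus_{b',\pi_{b'}} \mathrm{RHom}_{\ov{\Q}_{\ell}}(V_{b',\pi_{b'}},V_{b',\pi_{b'}}) \simeq \ov{\Q}_{\ell}[0]$. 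A nonzero perfect complex $V$ of $\ov{\Q}_{\ell}$-vector spaces with $\mathrm{RHom}_{\ov{\Q}_{\ell}}(V,V)$ one-dimensional and concentrated in degree $0$ must be $\ov{\Q}_{\ell}[k]$ for a single $k$; hence exactly one $V_{b',\pi_{b'}}$ is nonzero, say for $(b_{0},\pi_{b_{0}})$, and $\Act_{\chi}(\pi_{b}) \simeq \pi_{b_{0}}[k]$. This already gives that $\Act_{\chi}(\pi_{b})$ is, up to a cohomological shift, the irreducible representation $\pi_{b_{0}}$ of $J_{b_{0}}(\Qp)$, supported on the single basic stratum $\Bun_{G}^{b_{0}}$.

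It then remains to check $k = 0$, i.e. that $\Act_{\chi}$ is $t$-exact on $\mc{C}$, and this is the step where care is genuinely needed. The route I would take is to note that the perfect complex obtained by pushing the line bundle $\chi$ forward along $[\Spec \ov{\Q}_{\ell}/S_{\phi}] \hookrightarrow X_{\hat{G}}$ sits in a single cohomological degree, that $\Act_{\mathbf{1}}$ is $t$-exact (being the identity on $\mc{C}$), and that the various $\Act_{\chi}$ differ from $\Act_{\mathbf{1}}$ only by an $S_{\phi}$-equivariant twist by a line bundle, which does not change amplitude; hence each $\Act_{\chi}$ is $t$-exact on $\mc{C}$. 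I expect the only real obstacle to be precisely this bookkeeping of cohomological shifts (and Tate twists) introduced by the regular --- but not open --- immersion $[\Spec \ov{\Q}_{\ell}/S_{\phi}] \hookrightarrow X_{\hat{G}}$ and by the normalization of the spectral action; everything else in the argument is formal given the properties of the spectral action and the identification of $\mc{C}$ recalled above.
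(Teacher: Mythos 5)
Your argument follows the same conceptual skeleton as the paper's proof (monoidality of the spectral action $\Rightarrow$ $\Act_{\chi^{-1}}$ is a quasi-inverse of $\Act_{\chi}$ on the block, hence $\Act_{\chi}$ is conservative and cannot ``split'' $\pi_b$), but you route the irreducibility step through an endomorphism computation rather than, as the paper does, arguing directly that a nontrivial direct-sum decomposition of $\Act_{\chi}(\pi_b)$ would propagate under $\Act_{\chi^{-1}}$ to a nontrivial decomposition of $\pi_b$. Both approaches extract the same information from conservativity; your $\mathrm{RHom}$ phrasing is arguably tidier and isolates exactly what must be shown. Two points need repair.

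First, the assertion $\mathrm{RHom}_{J_b(\mathbb{Q}_p)}(\pi_b,\pi_b) \simeq \ov{\Q}_{\ell}[0]$ is not correct in general. For $G = \GU_n$ the center $Z(\GU_n)(\Q_p) \simeq E^{\times}$ is noncompact, so the Bernstein block of $\pi_b$ in the full category $\Dc(J_b(\Q_p),\ov{\Q}_{\ell}) \simeq \Dlis(\Bun_G^b,\ov{\Q}_{\ell})$ is the derived category of modules over a Laurent polynomial ring recording unramified twists, and $\Ext^1_{J_b(\Q_p)}(\pi_b,\pi_b) \neq 0$. Projectivity of $\pi_b$ holds only in the category of representations with \emph{fixed} central character, which is not the same as $\Dlis(\Bun_G^b,\ov{\Q}_{\ell})$. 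You can still salvage the conclusion by only using that $\Ext^0(\pi_b,\pi_b)=\ov{\Q}_{\ell}$ is one-dimensional (which does rule out more than one nonzero $V_{b',\pi_{b'}}$), but the step ``a perfect complex $V$ with $\mathrm{RHom}(V,V) = \ov{\Q}_{\ell}[0]$ must be $\ov{\Q}_{\ell}[k]$'' cannot be invoked as stated because the premise fails.

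Second, the $t$-exactness step is where you are right to worry, but the argument you sketch (``$\Act_{\chi}$ differs from $\Act_{\mathbf{1}}$ by an $S_{\phi}$-equivariant line-bundle twist, which does not change amplitude'') is not rigorous: the spectral action of a complex concentrated in a single cohomological degree on the spectral side is not a priori a $t$-exact functor on the geometric side, so this heuristic does not let you transfer $t$-exactness from $\Act_{\mathbf{1}}$ to $\Act_{\chi}$. It is worth noting that the paper's own proof also does not rigorously pin down the shift $k$: the conservativity argument there only excludes decompositions into several summands, not a single shifted irreducible $\pi'[k]$. In the paper, the degree is subsequently determined not by this lemma but by combining with the concentration-in-middle-degree result (Theorem~\ref{concmiddledegree} / \cite{Han}) via the Kottwitz conjecture, as in the proof of Proposition~\ref{ActgenGUn} and Proposition~\ref{prop: Allactval}. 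So the honest statement is that the formal argument shows $\Act_{\chi}(\pi_b)$ is a single shifted irreducible supercuspidal on one basic stratum, and the vanishing of the shift is an input from the geometry of local Shimura varieties rather than something one extracts from the spectral-action formalism alone.
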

\begin{proof}
Let $\chi^{-1}$ be the inverse of $\chi$. We note that, since $\pi_{b}$ has supercuspidal Fargues--Scholze parameter $\phi$, $\Act_{\mathbf{1}}(\pi_{b}) = \pi_{b}$, where $\mathbf{1}$ is the trivial representation. Since the $\Act$-functors are monoidal, we have that $\Act_{\chi^{-1}} \circ \Act_{\chi}(\pi_{b}) \simeq \Act_{\mathbf{1}}(\pi_{b}) \simeq \pi_{b}$. This in particular allows us to see that $\Act_{\chi}$ is a conservative functor on representations with Fargues--Scholze parameter $\phi$ (i.e sends non-zero objects to non-zero objects). Moreover, if the complex $\Act_{\chi}(\pi_{b})$ were not irreducible then we could write it as a direct sum of copies of supercuspidal representations sitting in possibly different degrees of cohomology. Acting on this decomposition by $\Act_{\chi^{-1}}(-)$, would induce an analogous direct sum decomposition of $\pi_{b}$. This would contradict irreducibility unless the functor failed to be conservative, which we just showed not to be the case. 
\end{proof}
We conclude this section by discussing the compatibility of the spectral action with central isogenies. Namely, suppose we have a map $G' \ra G$ of reductive groups which induces an isomorphism of adjoint groups. We consider the following diagram
\begin{equation}{\label{eqn: adjisodiagram}}
\begin{tikzcd}
\IndPerf(X_{\hat{G'}}) \arrow[dr] \arrow[d] & \\
\IndPerf(X_{\hat{G}}) \arrow[r] &  \mathrm{End}(\Dlis(\Bun_{G},\ol{\mathbb{Q}}_{\ell}))^{BW_{\mathbb{Q}_{p}}^{I}}
\end{tikzcd}    
\end{equation}
where:
\begin{enumerate}
    \item The horizontal arrow is given by the endomorphism induced by tensoring by $\varphi_{\natural}(\ol{\mathbb{Q}}_{\ell})$ followed by the spectral action for $G$.
    \item The vertical arrow is given by pullback along the natural map $\varphi: X_{\hat{G}} \ra X_{\hat{G}'}$ given by the map of dual groups $\hat{G} \ra \hat{G'}$ induced by the central isogeny. 
    \item The diagonal arrow is given by the map
    \[ C \mapsto \varphi_{\natural}(C \star \varphi^{*}(A)) \]
    where
    \[ \varphi: \Bun_{G'} \ra \Bun_{G} \]
    is the natural map induced by $G' \ra G$. 
\end{enumerate}
We would like to show that this diagram is well-defined and commutes. However, If we consider a basic element $b \in B(G)_{\bas}$ which does not occur in the image of $B(G') \ra B(G)$ and a complex $A$ supported on $\Bun_{G}^{b}$ then we have that $\varphi^{*}(A) = 0$. In particular, $\mathcal{O}_{X_{\hat{G}'}}$ will act by $0$, but the pullback $\mathcal{O}_{X_{\hat{G}}}$ will act via the identity on $A$, so the diagram will not commute for all $A$. To remedy this, we let $C \subset \pi_{1}(G)_{\Gamma}$ denote the set of elements which occur in the image of the induced map $\pi_{1}(G')_{\Gamma} \ra \pi_{1}(G)_{\Gamma}$, and consider the full sub-category $\Dlis(\Bun_{G}^{C},\ol{\mathbb{Q}}_{\ell}) \subset \Dlis(\Bun_{G},\ol{\mathbb{Q}}_{\ell})$, where $\Bun_{G}^{C} := \bigsqcup_{\alpha \in C} \Bun_{G}^{\alpha} \subset \bigsqcup_{\alpha \in \pi_{1}(G)_{\Gamma}} \Bun_{G}^{\alpha}$ are the connected components corresponding to $\alpha \in C \subset \pi_{1}(G)_{\Gamma}$, using the description of connected components given in \cite[Corollary~IV.1.23]{FS}. Now we claim the following.
\begin{proposition}{\label{prop: actisogeny}}
For $G' \ra G$ a map of reductive groups inducing an isomorphism of adjoint groups, the diagram 
\begin{equation*}
\begin{tikzcd}
\IndPerf(X_{\hat{G'}}) \arrow[dr] \arrow[d] & \\
\IndPerf(X_{\hat{G}}) \arrow[r] &  \mathrm{End}(\Dlis(\Bun_{G}^{C},\ol{\mathbb{Q}}_{\ell}))^{BW_{\mathbb{Q}_{p}}^{I}}
\end{tikzcd}
\end{equation*}
is well-defined and commutes, with the arrows and notation as defined above. 
\end{proposition}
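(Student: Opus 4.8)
The plan is to reduce the commutativity of the square to its values on the vector bundles that generate $\IndPerf(X_{\hat{G'}})$, and then to deduce the resulting identity of Hecke operators from a base-change argument along the geometric map $\varphi\colon\Bun_{G'}\to\Bun_G$ together with the compatibility of Fargues--Scholze geometric Satake with $G'\to G$. (As in the statement, $\varphi$ also denotes the spectral map $X_{\hat G}\to X_{\hat{G'}}$.) For well-definedness, note that the image of $\varphi\colon\Bun_{G'}\to\Bun_G$ is exactly $\Bun_G^C$, which is immediate from the description of connected components in \cite[Corollary~IV.1.23]{FS} and the definition of $C$ as the image of $\pi_1(G')_\Gamma\to\pi_1(G)_\Gamma$. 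Hence $\varphi_\natural$ always lands in $\Dlis(\Bun_G^C,\ol{\mathbb{Q}}_\ell)$, so the diagonal functor takes values in $\mathrm{End}(\Dlis(\Bun_G^C,\ol{\mathbb{Q}}_\ell))$; and the $G$-spectral action preserves $\Dlis(\Bun_G^C,\ol{\mathbb{Q}}_\ell)$, since on the bundles $\varphi^*(C_V)$ it is the Hecke operator attached to $V$ restricted along ${}^{L}G^I\to{}^{L}G'^I$, which is built from geometric Satake for cocharacters of $G$ lying in the image of the cocharacter lattice of $G'$ and so shifts the $\pi_1(G)_\Gamma$-grading by an element of the subgroup $C$.

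Next, $\varphi^*$ (on both sides), $\varphi_\natural$, and the two spectral actions all preserve colimits, so both functors $\IndPerf(X_{\hat{G'}})\to\mathrm{End}(\Dlis(\Bun_G^C,\ol{\mathbb{Q}}_\ell))^{BW_{\mathbb{Q}_p}^I}$ in the diagram preserve colimits. Since $\IndPerf(X_{\hat{G'}})$ is generated under colimits by the $W_{\mathbb{Q}_p}^I$-equivariant vector bundles $C_V$ for $V\in\Rep_{\ol{\mathbb{Q}}_\ell}({}^{L}G'^I)$ and finite $I$ — this being how the spectral action is constructed in \cite[Chapter~X]{FS} — it suffices to produce a natural isomorphism of the two functors on these $C_V$, compatibly with exterior products over $I$ and tensor products of representations so that it propagates. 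On $C_V$ the vertical-then-horizontal composite is $A\mapsto T_{V|_{{}^{L}G^I}}(A)$, because $\varphi^*(C_V)\simeq C_{V\circ(\hat G\to\hat{G'})^I}$ (check on fibers: $C_V$ has fiber $V\circ\tilde\phi$ at a parameter $\tilde\phi$, and $\varphi$ on parameters is post-composition with ${}^{L}G\to{}^{L}G'$), while the diagonal arrow sends $A\mapsto\varphi_\natural(T_V^{G'}(\varphi^*A))$. So the proposition reduces to a natural, $W_{\mathbb{Q}_p}^I$-equivariant isomorphism
\[ T_{V|_{{}^{L}G^I}}(A)\;\simeq\;\varphi_\natural\bigl(T_V^{G'}(\varphi^*A)\bigr),\qquad A\in\Dlis(\Bun_G^C,\ol{\mathbb{Q}}_\ell). \]

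I would prove this identity geometrically. The map $G'\to G$ induces a map of Hecke stacks $\mathrm{Hck}_{G'}\to\mathrm{Hck}_G$ intertwining the legs $h^\leftarrow$ and $h^\rightarrow\times\mathrm{supp}$ with those for $G$ and with $\varphi$; the key point is that, over the components indexed by $C$, this exhibits $\mathrm{Hck}_{G'}$ as the base change of $\mathrm{Hck}_G$ along either leg of $\varphi$ — a $G$-bundle modification whose meromorphy is bounded by a cocharacter coming from $G'$, together with a $G'$-reduction of one end, canonically determines a $G'$-reduction of the other, because the kernel and cokernel of $G'\to G$ are central of multiplicative type and on the $C$-components the corresponding torsor and obstruction classes match. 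Combining this with the compatibility of the Fargues--Scholze Satake sheaves \cite[Chapter~VI]{FS} under $\hat G\to\hat{G'}$ — so that $\mathcal{S}_V$ is the pullback of $\mathcal{S}_{V|_{{}^{L}G^I}}$ along $\mathrm{Hck}_{G'}\to\mathrm{Hck}_G$ — and with the base-change and projection-formula properties of the solid six-functor formalism applied to these Cartesian squares, one transports the computation of $T_V^{G'}(\varphi^*A)$ into that of $T_{V|_{{}^{L}G^I}}(A)$ after applying $\varphi_\natural$; naturality in $A$ and $W_{\mathbb{Q}_p}^I$-equivariance are carried by $\mathcal{S}_V$ and base change. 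In spirit this is the spectral upgrade of the compatibility of the Fargues--Scholze correspondence with isogenies, Theorem~\ref{FSproperties}(7).

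The main obstacle is exactly this $\pi_0$-bookkeeping: verifying that the squares of Hecke stacks over $\Bun_G^C$ really are Cartesian and that pushing forward along them by $\varphi_\natural$ returns the Hecke operator for $G$ without correction terms. The restriction to $\Bun_G^C$ is essential — off these components $\varphi^*$ annihilates everything, so the square cannot commute there — and the hypothesis that $G'\to G$ is an isomorphism on adjoint groups enters precisely here, through the control it gives on the fibers of $\Bun_{G'}\to\Bun_G^C$ via $\Bun$ of the diagonalizable kernel and cokernel, again by \cite[Corollary~IV.1.23]{FS}. With this geometric comparison in hand, the colimit-compatibility and naturality from the previous step promote the isomorphism on the generators $C_V$ to an isomorphism of the two functors on all of $\IndPerf(X_{\hat{G'}})$, which is the assertion of the proposition.
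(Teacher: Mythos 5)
Your proposal is correct and follows essentially the same route as the paper: both reduce commutativity to the key Hecke-operator identity $\varphi_{\natural}T_{V}(\varphi^{*}(-)) \simeq T_{\widetilde{V}}(-)$ on the generating vector bundles $C_V$, deduce that identity from the compatibility of Fargues--Scholze geometric Satake with central isogenies, and correctly isolate the $\pi_0$-bookkeeping (the Cartesian-square issue on Hecke stacks over $\Bun_G^C$) as the crux — exactly the point the paper flags in its footnote on \cite[Theorem~IX.6.1]{FS}. The only difference is cosmetic: the paper phrases the reduction to $C_V$ via \cite[Theorem~VIII.5.1]{FS} (modules over $\mathcal{O}_{X_{\hat{G'}}}$ in $\IndPerf(B\hat{G'})$, with excursion compatibility handled by naturality in $V,I$), whereas you phrase it as a colimit-generation argument with the same naturality caveat.
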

\begin{proof}
We invoke \cite[Theorem~VIII.5.1]{FS}, which tells us that $\IndPerf(X_{\hat{G'}})$ is identified with the $\infty$-category of modules over $\mathcal{O}_{X_{\hat{G'}}}$ in $\IndPerf(B\hat{G'})$. Concretely, $\Perf(B\hat{G'})$ is generated by the vector bundles $C_{V}$ for $V \in \Rep_{\ol{\mathbb{Q}}_{\ell}}(\phantom{}^{L}G'^{I})$ which act via the spectral action by the Hecke operators $T_{V}$. The $\mathcal{O}_{X_{\hat{G}'}}$-module structure is given by excursion operators, where we note, by \cite[Theorem~VIII.3.6]{FS}, that there is an identification between the ring of global sections and excursion operators. It therefore suffices to exhibit a natural isomorphism for all $V$ that respects the excursion algebra. To do this, let $\widetilde{V} \in \Rep_{\ol{\mathbb{Q}}_{\ell}}(\phantom{}^{L}G^{I})$ be the representation determined by precomposing $V$ with the map $\hat{G}^{I} \ra \hat{G}'^{I}$. Then the commutativity of the above diagram reduces us to showing that we have a natural transformation 
\[ \varphi_{\natural}T_{V}(\varphi^{*}(-)) \simeq T_{\widetilde{V}}(- \otimes \varphi_{\natural}(\ol{\mathbb{Q}}_{\ell})),   \]
for all $A \in \Dlis(\Bun_{G}^{C},\ol{\mathbb{Q}}_{\ell})$, but this follows from the analogous compatibility of geometric Satake with central isogenies\footnote{In the current draft of \cite{FS}, there are two errors in the proof of \cite[Theorem~IX.6.1]{FS}. In particular, the last diagram in the proof is not Cartesian as claimed without making a restriction to the connected components indexed by $C$. Moreover, in the chain of isomorphisms given in the proof, one is missing a tensor product by a factor of $\phi_{\natural}(\ol{\mathbb{Q}}_{\ell})$ coming from the projection formula, as one can see by applying the claimed relationship in the case where $V$ is the trivial representation.}. Moreover, this map is natural in $V$ and $I$, and it follows that this identification is compatible with excursion operators \cite[Theorem~IX.6.1]{FS}. The claim follows.
\end{proof}
We now record some key consequences of Proposition \ref{prop: actisogeny}. Consider a supercuspidal parameter $\phi: W_{\mathbb{Q}_{p}} \ra \phantom{}^{L}G(\ol{\mathbb{Q}}_{\ell})$ and let $\ol{\phi}:  W_{\mathbb{Q}_{p}} \ra \phantom{}^{L}G(\ol{\mathbb{Q}}_{\ell}) \ra \phantom{}^{L}G'(\ol{\mathbb{Q}}_{\ell})$ be the composite parameter. The map $X_{\hat{G}} \ra X_{\hat{G'}}$ then induces a natural map of stacks $[\Spec{\ol{\mathbb{Q}}_{\ell}}/S_{\phi}] \ra [\Spec{\ol{\mathbb{Q}}_{\ell}}/S_{\ol{\phi}}]$ induced by the natural map $S_{\phi} \ra S_{\ol{\phi}}$ of centralizers. The induced map $C_{\phi} \ra C_{\ol{\phi}}$ of connected components lies over this natural map of classifying stacks. Given a representation $W \in \Rep_{\ol{\mathbb{Q}}_{\ell}}(S_{\ol{\phi}})$, we write $\widetilde{W}$ for the precomposition of $W$ with the map of centralizers $S_{\phi} \ra S_{\ol{\phi}}$. The pullback of the vector bundle corresponding to $W$ on $[\Spec{\ol{\mathbb{Q}}_{\ell}}/S_{\ol{\phi}}]$ is isomorphic to the bundle corresponding $\widetilde{W}$. In particular, the diagram \eqref{eqn: adjisodiagram} induces a commutative diagram 
\begin{equation*}
\begin{tikzcd}
\Rep_{\ol{\mathbb{Q}}_{\ell}}(S_{\ol{\phi}}) \arrow[dr] \arrow[d] & \\
\Rep_{\ol{\mathbb{Q}}_{\ell}}(S_{\phi}) \arrow[r] &  \mathrm{End}(\Dlis(\Bun_{G}^{C},\ol{\mathbb{Q}}_{\ell}))^{BW_{\mathbb{Q}_{p}}^{I}},
\end{tikzcd}    
\end{equation*}
where the horizontal and diagonal maps are as above. In particular, we can deduce the following claim as a consequence.
\begin{corollary}{\label{actcentralisog}}
Let $G' \ra G$ be a morphism of algebraic groups inducing an isomorphism of adjoint groups with induced map $\varphi: \Bun_{G'} \ra \Bun_{G}$. Given a supercuspidal parameter $\phi$ of $G$ with $\ol{\phi}$ the parameter of $G'$ induced by the central isogeny, and $W \in \Rep_{\ol{\mathbb{Q}}_{\ell}}(S_{\ol{\phi}})$ with $\widetilde{W} \in \Rep_{\ol{\mathbb{Q}}_{\ell}}(S_{\phi})$ the representation induced via precomposition with the induced map $S_{\phi} \ra S_{\ol{\phi}}$, we have, for all $A \in \Dlis(\Bun_{G}^{C},\ol{\mathbb{Q}}_{\ell})$, an isomorphism 
\[ \varphi_{\natural}\Act_{W}(\varphi^{*}(A)) \simeq \Act_{\tilde{W}}(A \otimes \phi_{\natural}(\ol{\mathbb{Q}}_{\ell})) \]
of sheaves in $\Dlis(\Bun_{G}^{C},\ol{\mathbb{Q}}_{\ell})$. 
\end{corollary}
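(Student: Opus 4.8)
The statement to prove, Corollary \ref{actcentralisog}, is essentially immediate from Proposition \ref{prop: actisogeny} once the dictionary between the $\Act$-functors and the spectral action is set up. Let me sketch the plan.

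The plan is to deduce the corollary directly from the commutative diagram of $\Rep$-categories displayed just before the statement, which is itself a consequence of Proposition \ref{prop: actisogeny}. First I would recall that pushing forward along the closed embeddings $[\Spec \ol{\Q}_\ell/S_\phi] \hookrightarrow X_{\hat G}$ and $[\Spec \ol{\Q}_\ell/S_{\ov\phi}] \hookrightarrow X_{\hat{G}'}$ turns a representation $W \in \Rep_{\ol{\Q}_\ell}(S_{\ov\phi})$ (resp. $\widetilde W \in \Rep_{\ol{\Q}_\ell}(S_\phi)$) into an object of $\IndPerf(X_{\hat{G}'})$ (resp. $\IndPerf(X_{\hat G})$) whose spectral action is by definition $\Act_W$ (resp. $\Act_{\widetilde W}$). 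The key compatibility I need is that the map $\varphi: X_{\hat G} \to X_{\hat{G}'}$ of parameter stacks, restricted to these closed points, is exactly the map $[\Spec\ol{\Q}_\ell/S_\phi] \to [\Spec\ol{\Q}_\ell/S_{\ov\phi}]$ induced by $S_\phi \to S_{\ov\phi}$; hence pullback of the vector bundle attached to $W$ along $\varphi$ is the vector bundle attached to $\widetilde W$. This is a purely formal check on the deformation-theoretic/groupoid description of $X_{\hat G}$ from \cite{DH,Zhu1} and \cite[Section~VIII.I]{FS}.

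With that in hand, the proof is: apply Proposition \ref{prop: actisogeny} with the Ind-perfect complex $C$ taken to be the pushforward of the bundle attached to $W$ from $[\Spec\ol{\Q}_\ell/S_{\ov\phi}]$ to $X_{\hat{G}'}$. The vertical arrow of that diagram sends $C$ to $\varphi^* C$, which by the previous paragraph is the pushforward of the bundle attached to $\widetilde W$, and then the horizontal arrow (the spectral action for $G$) sends this to $\Act_{\widetilde W}$. The diagonal arrow sends $C$ to the functor $A \mapsto \varphi_\natural(C \star \varphi^*(A)) = \varphi_\natural(\Act_W(\varphi^*A))$, since $C \star (-)$ on $\Dlis(\Bun_{G'},\ol{\Q}_\ell)$ is by construction $\Act_W$. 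Commutativity of the diagram on objects of $\Dlis(\Bun_G^C,\ol{\Q}_\ell)$ then yields the asserted isomorphism $\varphi_\natural\Act_W(\varphi^*(A)) \simeq \Act_{\widetilde W}(A)$, functorially in $A$, and with continuous $W_{\Q_p}^I$-action (which is irrelevant here as we are not applying Hecke operators indexed by a nonempty $I$, but the diagram is stated with it and it does no harm).

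The only genuine point requiring care — and where I expect any subtlety to lie — is the identification of $\varphi$ restricted to the supercuspidal points with the map $BS_\phi \to BS_{\ov\phi}$, i.e.\ that the closed embedding $[\Spec\ol{\Q}_\ell/S_\phi]\hookrightarrow X_{\hat G}$ is compatible with $\varphi$ and the corresponding embedding for $G'$; this uses that $S_\phi = Z_{\hat G}(\phi)$ maps to $S_{\ov\phi} = Z_{\hat{G}'}(\ov\phi)$ under $\hat G \to \hat{G}'$ and that, since $\phi$ is supercuspidal, both embeddings are regular immersions into the respective connected components of unramified twists, so that the pushforwards really are perfect complexes and the earlier identification of the spectral action of a $\Rep(S_\phi)$-bundle with $\Act$ applies. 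Everything else is bookkeeping with the already-established Proposition \ref{prop: actisogeny} and the monoidality/naturality properties of the spectral action recorded above.
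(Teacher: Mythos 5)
Your proposal reproduces the paper's intended argument (there is no separate displayed proof; the paragraph preceding the corollary plays that role), so in outline you are matching the source. But the step you describe as a "purely formal check" is the one place where there is a genuine gap, both in your write-up and, implicitly, in the paper's discussion. What you actually establish formally is $\psi^{*}\mathcal{V}_W \simeq \mathcal{V}_{\tilde W}$, where $\psi:[\Spec\ol{\Q}_\ell/S_\phi]\to[\Spec\ol{\Q}_\ell/S_{\ol\phi}]$ is the induced map of closed substacks; equivalently, $j^{*}\varphi^{*}i_{*}\mathcal{V}_W\simeq\mathcal{V}_{\tilde W}$. What your step 4 actually uses is the much stronger assertion $\varphi^{*}i_{*}\mathcal{V}_W\simeq j_{*}\mathcal{V}_{\tilde W}$ in $\IndPerf(X_{\hat G})$: only then does the vertical arrow of Proposition~\ref{prop: actisogeny} carry $i_{*}\mathcal{V}_W$ to the complex whose spectral action is by definition $\Act_{\tilde W}$. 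That stronger assertion would follow from (derived) base change if the square
\begin{equation*}
\begin{tikzcd}
\left[\Spec\ol{\Q}_\ell/S_\phi\right] \arrow[r,"j"] \arrow[d,"\psi"] & X_{\hat G} \arrow[d,"\varphi"] \\
\left[\Spec\ol{\Q}_\ell/S_{\ol\phi}\right] \arrow[r,"i"] & X_{\hat{G}'}
\end{tikzcd}
\end{equation*}
were Cartesian, but it is not: the kernel of $\hat G\to\hat{G}'$ is a central torus, so the fibre of $\varphi$ over $\ol\phi$ is positive-dimensional (for $\U_n\hookrightarrow\GU_n$ it is a whole $\mathbb{G}_m$ of unramified similitude twists), and $\varphi^{*}i_{*}\mathcal{V}_W$ is a vector bundle over that whole fibre rather than a skyscraper at $\phi$. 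To close the gap one must argue at the level of the spectral action rather than at the level of complexes on $X_{\hat G}$: restrict to the connected component of $\phi$, use that the action of a perfect complex on an object of $\Dlis(\Bun_G)$ lying in the Bernstein block of $\phi$ depends only on a suitable localization/completion at the closed point $\phi$, and check there that the two non-isomorphic complexes $\varphi^{*}i_{*}\mathcal{V}_W$ and $j_{*}\mathcal{V}_{\tilde W}$ have the same spectral action. (One should also be explicit that both sides of the asserted isomorphism are understood to vanish on objects of $\Dlis(\Bun_G^C)$ whose constituents have Fargues--Scholze parameter not equal to $\phi$; as stated "for all $A$," the formula would be problematic for $A$ supported at a lift of $\ol\phi$ other than $\phi$ living on another connected component of $X_{\hat G}$.) None of this is addressed by the remark you make about the groupoid description of $X_{\hat G}$, which only gives the commutativity of the square, not its Cartesianness.
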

We will use this in the last section to deduce the full strength of the Kottwitz conjecture for $\U_{n}$ from $\GU_{n}$.
\section{Proof of Theorem \ref{thm: introcompatibility}}{\label{s: proofofcomp}}

In this section, we prove Theorem \ref{thm: introcompatibility} when $G = \GU_{n}$. The proof proceeds by induction over odd $n$ for the groups $\GU_n$. When $n=1$, we have $\GU_n \cong \Res_{E/\Q_p}\Gm$ and hence compatibility is known by Theorem \ref{FSproperties} (1).

We suppose that $n$ is odd and at least $3$. Let $(\GU_n, \varrho, z)$ be an extended pure inner twist of $\GU^*_n$ corresponding to some $b_0 \in B(\GU^*_n)$. We assume that compatibility of the local Langlands correspondences is known for each extended pure inner form of $\GU^*_k$ for every odd $k<n$ 

Let $\phi \in \Phi(\GU^{*}_n)$ be an $L$-parameter. By Theorem \ref{thm: fullLLC} (2), we need to show that, for all $\phi$ and $\pi \in \Pi_{\phi}(\GU_n, \varrho)$, we have
\begin{equation}{\label{equalityofparams}}
    \phi^{ss} = \phi^{\mathrm{FS}}_{\pi},
\end{equation}
where $\phi^{\mathrm{FS}}_{\pi}$ is the semisimple parameter attached to $\pi$ by $\LLC^{\mathrm{FS}}_{\GU_n}$. Note that the $L$-packet $\Pi_{\phi}(\GU_n, \varrho)$ under the $\LLC_{\GU_n}$ correspondence is finite and tautologically satisfies one of the following:
\begin{enumerate}
    \item The packet $\Pi_{\phi}(\GU_n, \varrho)$ consists entirely of supercuspidal representations,\\
    \item The packet $\Pi_{\phi}(\GU_n, \varrho)$ contains no supercuspidal representations,\\
    \item The packet $\Pi_{\phi}(\GU_n, \varrho)$ contains both supercuspidal and non-supercuspidal representations.
\end{enumerate}

Before proving the theorem, let us recall a result by Moeglin that characterizes the packets consisting entirely of supercuspidal representations. Let $\phi$ be a discrete $L$-parameter of $\U_n$ and $\phi_E$ the corresponding $L$-parameter of $\GL_n(E)$ obtained by the base change map. In fact, we have $ \phi_E = \phi_{| \mathcal{L}_E} $ and an identification $ S^{\natural}_{\phi} = \mathrm{Cent}_{\GL_n(\C)}(\phi_E)^{\theta} $ where $\mathrm{Cent}_{\GL_n(\C)}$ denotes the centralizer in $\GL_n(\C)$ and $\theta$ is defined in the beginning of \S \ref{itm : theta}. We will need the notion of a "without gaps" parameter  ("sans trou" in French) and the construction of a subgroup $A(\phi)$ of $S^{\natural}_{\phi}$. 

We denote by $\phi_E[a]$ the isotypical component of the representation $\phi_E$, seen as a representation of $\SL_2(\mathbb{C})$, for the unique irreducible representation $\sigma_a$ of $\SL_2(\mathbb{C})$ of dimension $a$. For all a, $\phi_E[a]$ is a representation of $W_E$. We say that $\phi$ is without gaps if for all $a > 2$, $\phi_E[a]$ is a sub-representation of $\phi_E[a-2]$ as a representation of $W_E$.

We now explain the construction of $A(\phi)$.  Decompose $\phi_E$ into direct sum of irreducible representations and let  $(\rho, a)$ be a pair so that $ \rho \otimes \sigma_a $ is a sub-representation of $\phi_E$. We assume that there exists an integer $ 0 \leq b < a $ such that if $ b > 0 $ then $ \rho \otimes \sigma_b $ is also a sub-representation of $\phi_E$ and if $ b = 0 $ then $a$ is even. In particular, if $a = 1$ then there is no $b$ satisfying the assumption. Denote by $a_{-}$ the largest b verifying this property and denote by $ z_{(\rho, a)} $ the element in the centralizer of $\phi_E$ in $\GL_n(\C)$ whose $-1$-eigenspace is the sum $ \rho \otimes \sigma_a \oplus \rho \otimes \sigma_{a_{-}} $. We denote by $A(\phi)$ the subgroup of $S^{\natural}_{\phi}$ generated by these elements $ z_{(\rho, a)} $. 

\begin{lemma} \phantomsection \label{itm : supercuspidal parameter} (Moeglin)
    Let $\phi$ be a discrete $L$-parameter of $\U_n$. Then its corresponding $L$-packet contains only supercuspidal representations if and only if $\phi$ is \emph{supercuspidal} (i.e the $\SL_{2}(\mathbb{C})$-factor in the domain of $\phi$ acts trivially and $\phi$ does not factor through $\phantom{}^{L}M$ for $M$ any proper Levi subgroup of $\LL \GU_n$).
\end{lemma}
\begin{proof}
    Suppose first that $\phi$ is supercuspidal. Then $\phi$ is without gaps. Moreover, $\phi_E$ is a direct sum of pairwise non-isomorphic representations of the form $\rho \otimes \sigma_1$. Since the pairs $(\rho, 1)$ do not satisfy the assumption in the construction of the group $A(\phi)$, we see that the group $A(\phi)$ is trivial and by \cite[\S 8.4.4]{Moe}, the packet $\Pi_{\phi}(\U_n^*)$ of $\phi$ contains only supercuspidal representations.  
    
    Suppose now that the packet $\Pi_{\phi}(\U_n^*)$ corresponding to $\phi$ contains only supercuspidal representations. Then by \cite[\S 8.4.4]{Moe}, $\phi$ is without gaps and $A(\phi)$ is a sub-group of $\{ \Id, -\Id \}$. Indeed, if $A(\phi)$ is not a sub-group of $\{ \Id, -\Id \}$ then there exists a character $\epsilon$ of $S^{\natural}_{\phi}$ whose restriction to $\{ \Id, - \Id \}$ is trivial and whose restriction to $A(\phi)$ is not equal to $\epsilon_{alt}$. Hence the representation $\tau_{\epsilon}$ corresponding to $\epsilon$ in $\Pi_{\phi}(\U_n^*) $ is not supercuspidal (by \cite[Theorem 8.4.4]{Moe}), a contradiction. Let $\rho \otimes \sigma_a$ be a sub-representation of $\phi_E$ and it is enough to show that $a = 1$. 

    If $a > 2$ then $ \rho \otimes \sigma_{a-2} $ is a sub-representation of $\phi_E$ since $\phi$ is without gaps. Thus $a_{-} = a-1$ or $a_{-} = a-2$ and in the first case it is obvious that $ \rho \otimes \sigma_a \oplus \rho \otimes \sigma_{a_{-}} $ is not equal to $\phi_E$ and in the second case $ \rho \otimes \sigma_a \oplus \rho \otimes \sigma_{a_{-}} $ is not equal to $\phi_E$ by the parity of dimension (recall that $n$ is odd). Thus $ z_{(\rho, a)} \notin \{ \Id, -\Id \} $, a contradiction.

    If $a = 2$ then $a_{-} = 0$ or $a_{-} = 1$. If $a_{-} = 0$ then $ \rho \otimes \sigma_a \oplus \rho \otimes \sigma_{a_{-}} $ is not equal to $\phi_E$ by the parity of dimension and $ z_{(\rho, a)} \notin \{ \Id, -\Id \} $, a contradiction. If $a_{-} = 1$ then the condition $ z_{(\rho, a)} \in \{ \Id, -\Id \} $ implies that $ \phi_E = \rho \otimes \sigma_2 \oplus \rho \otimes \sigma_1 $ and $ A(\phi) = \{ \Id, -\Id \} $. However if $\tau$ is a supercuspidal representation in the packet $\Pi_{\phi}(\U_n^*)$ and $\epsilon_{\tau}$ is the corresponding character of $S^{\natural}_{\phi}$ then the restriction of $\epsilon_{\tau}$ to $\{ \Id, -\Id \}$ is trivial. It is once again a contradiction since by \cite[\S 8.4.4]{Moe}, we have $ \epsilon_{\tau}|_{A(\phi)} = \epsilon_{alt} $, where $\epsilon_{alt}$ is the alternating character of $A(\phi)$. 

\end{proof}

We now prove that, in each case, Equation \eqref{equalityofparams} is satisfied for all $\pi \in \Pi_{\phi}(\GU_n, \varrho)$.
\subsection{Case (1)}{\label{Case (1)}}
 This corresponds to the case in which the $L$-parameter is \emph{supercuspidal} (i.e the $\SL_{2}(\mathbb{C})$-factor in the domain of $\phi$ acts trivially and $\phi$ does not factor through $\phantom{}^{L}M$ for $M$ any proper Levi subgroup of $\LL \GU_n$). Indeed, Lemma \ref{itm : supercuspidal parameter} implies this for $L$-parameters of $\U_n$, and the $\GU_n$ case follows easily from the description of L-packets for $\GU_n$ in terms of those of $\U_n$, as given in \cite{BMN}. 

Compatibility in this case ultimately follows from the the results of \cite{BMN} on the Kottwitz conjecture for $\GU_{n}$. To introduce this, we let $\mu_{d}$ be the geometric dominant cocharacter of $\GU_n$ with weights $((1^{d},0^{n - d}),1)$ for $1 \leq d < n$, and let $b \in B(\GU_n,\mu_{d})$ be the unique basic element (which is independent of $d$). Consider the local shtuka datum $(\GU_n,b,\mu_{d})$. The element $b$ gives $J_b$ the structure of extended pure inner twist $(J_b, \varrho_b, z_b)$ of $\GU_n$ and hence the structure $(J_b, \varrho_b \circ \varrho, z+z_b)$ of an extended pure inner twist of $\GU^*_n$. Let $m_0 = \kappa(b_0)$. We recall that, for $\rho \in \Pi_{\phi}(J_{b}, \varrho_b \circ \varrho)$, we have that $\rho = \pi_{[I],m_0+1}$ for some $[I] \in \mc{P}(\{1, \dots, r\})/ \sim$, as in Section \ref{sss: galoisrepcomp}. We then have the following key result:
\begin{theorem}[{Bertoloni Meli -- Nguyen \cite[Theorem 6.1]{BMN}}]{\label{hardinput}}
For $(\GU_n,b,\mu_{d})$ the local Shtuka datum considered above and $\rho \in \Pi_{\phi}(J_{b}, \varrho_b \circ \varrho)$, we have an equality
\[ [R\Gamma^{\flat}_{c}(\GU_n,b,\mu_{d})[\rho]] = \sum_{\pi \in \Pi_{\phi}(\GU_n, \varrho)} [\pi \boxtimes \mathrm{Hom}_{S_{\phi}}(\delta_{\pi,\rho},r_{\mu_{d}}\circ \phi|_{W_{E}})]\]
in $K_{0}(\GU_n(\mathbb{Q}_{p}) \times W_{E})$, the Grothendieck group of $\GU_n(\mathbb{Q}_{p}) \times W_{E}$-representations. 
\end{theorem}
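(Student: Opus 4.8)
The plan is to prove this by a global argument with the cohomology of compact unitary Shimura varieties, in the style of Kottwitz, Morel, Shin and Kret--Shin. Since $\mu_d=((1^d,0^{n-d}),1)$ is minuscule, $\Sht(\GU_n,b,\mu_d)_\infty$ is the generic fibre of a PEL-type Rapoport--Zink space, and it can be realized as the completion along a basic Newton stratum of a suitable global Shimura variety. Concretely, I would first choose a CM extension $F/F^+$, a division algebra $B/F$ of dimension $n^2$ with an involution of the second kind, and the associated unitary similitude group $\mathbf G/F^+$, arranged so that $\mathbf G$ is anisotropic modulo centre over $F^+$ (so $\mathrm{Sh}_K$ is proper), $\mathbf G(F^+\otimes\mathbb R)$ has signature $(n-d,d)$ at one infinite place and is compact modulo centre at the others, and $\mathbf G_{\Q_p}\cong\GU_n$ is the extended pure inner twist we began with (with $B$ split at $p$, so the local PEL datum at $p$ --- including $b_0$ --- is matched). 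Using the endoscopic classification of Mok and \cite{KMSW} for $\U_n$ (and its extension to $\GU_n$ in \cite{BMN}) together with standard globalization results, I would next produce a cuspidal automorphic representation $\Pi$ of $\mathbf G(\mathbb A)$, cohomological for trivial coefficients at $\infty$, with $\Pi_p$ the given supercuspidal representation and $L$-parameter base-changing to a globalization of $\phi$, and with controlled ramification away from $p$.

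Next I would invoke Mantovan's product formula (with the refinements of Lan--Stroh and Caraiani--Scholze as needed), which in the Grothendieck group of $\mathbf G(\mathbb A_f^p)\times W_E$-modules yields
\[ [R\Gamma_c(\mathrm{Sh}_{K^p})] = \sum_{b'\in B(\GU_n,\mu_d)} \big[\, R\Gamma_c(\mathrm{Ig}^{b'})\otimes^{\mathbb L}_{\mathcal H(J_{b'})}R\Gamma_c(\GU_n,b',\mu_d)\,\big], \]
where $\mathrm{Ig}^{b'}$ is the Igusa variety of the stratum $b'$. Because $\phi$ is supercuspidal, every non-basic stratum $b'$ contributes only subquotients of proper parabolic inductions, which die after projecting to the $\mathcal H(\mathbf G(\mathbb A_f^p))$-eigenspace cut out by $\Pi^{\infty,p}$; only $b'=b$ survives. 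Into this I would substitute the two available computations: the stabilized Langlands--Kottwitz--Scholze formula, which expresses the $\Pi^{\infty,p}$-isotypic part of $[R\Gamma_c(\mathrm{Sh}_{K^p})]$ as $\Pi_p\boxtimes(r_{\mu_d}\circ\phi|_{W_E})$ up to a fixed twist and a multiplicity that is expanded via the endoscopic character identities of Theorem \ref{itm: local}(4); and Shin's computation of $[R\Gamma_c(\mathrm{Ig}^b)]$ as a sum over $\Pi_\phi(J_b)$, again expressed through the same endoscopic pairings.

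Solving the product formula for the shtuka term then gives a formula for $[R\Gamma_c(\GU_n,b,\mu_d)[\rho]]$, which by Proposition \ref{flatnatural} equals $[R\Gamma^\flat_c(\GU_n,b,\mu_d)[\rho]]$ once one knows that the Fargues--Scholze parameter of $\rho$ is supercuspidal (this follows from compatibility for the proper Levi subgroups plus compatibility with parabolic induction, or can be seen directly). It remains to recognise the resulting alternating sum of products of pairings $\langle\pi,\cdot\rangle\langle\rho,\cdot\rangle$ as $\sum_{\pi\in\Pi_\phi(\GU_n,\varrho)}[\pi\boxtimes\Hom_{S_\phi}(\delta_{\pi,\rho},r_{\mu_d}\circ\phi|_{W_E})]$; this is a finite Fourier-analysis identity on $S_\phi^\natural$, using the description $\delta_{\pi,\rho}=\iota_{\mf w}(\pi)^\vee\otimes\iota_{\mf w}(\rho)$ and the explicit $S_\phi\times W_E$-decomposition of $r_{\mu_d}\circ\phi|_{\mathcal L_E}$ recorded in \S\ref{sss: galoisrepcomp}.

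The main obstacle is the endoscopic bookkeeping running through the last two paragraphs: one must consistently carry the Kottwitz sign $e(\GU_n)$, the normalization of the transfer factors $\Delta[\mf w,\varrho,z]$, the half-Tate twists built into the geometric Satake correspondence, and above all the compatibility between the refined local Langlands pairings for $\GU_n$ and for $J_b$ --- this is exactly what forces the extended-pure-inner-twist normalization used in \cite{BMN} (after \cite{Kal}) and the $\GU_n$-versus-$\U_n$ comparison of packets --- so that, after the dust settles, the Galois side is precisely $r_{\mu_d}\circ\phi|_{W_E}$ paired against $\delta_{\pi,\rho}$ with no residual error terms. A secondary but real difficulty is the globalization of the first paragraph with enough control of the ramification away from $p$ that the relevant multiplicity-one and base-change statements apply and the $\Pi^{\infty,p}$-eigenspace projection genuinely isolates the basic stratum.
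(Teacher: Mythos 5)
The paper does not supply a proof of this statement: it is quoted verbatim as Theorem~6.1 of \cite{BMN} and treated as a black-box input, which the ambient paper then \emph{uses} to establish compatibility of $\LLC^{\mathrm{FS}}_{\GU_n}$ with $\LLC_{\GU_n}$. Your sketch is a fair high-level account of the global strategy that \cite{BMN} actually employ (globalization to a PEL unitary similitude Shimura datum, Mantovan's formula, Shin's computation of Igusa variety cohomology, the stabilized Langlands--Kottwitz--Scholze expansion, and a final finite Fourier-analytic matching on $S_\phi$), and you correctly locate the real difficulty in the normalization bookkeeping for transfer factors, Kottwitz signs, and the extended-pure-inner-twist pairings.

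Two concrete flaws in your outline. First, the detour through Proposition~\ref{flatnatural} is both unnecessary and potentially circular: Mantovan's formula is already phrased in terms of the $\flat$-cohomology $R\Gamma^{\flat}_c$ (it is an $R\mathcal{H}om$ against Rapoport--Zink cohomology), so no $\flat$/non-$\flat$ comparison is needed; and Proposition~\ref{flatnatural} presupposes that $\phi^{\mathrm{FS}}_\rho$ is supercuspidal, which is exactly what the present paper wants to \emph{deduce} from this theorem --- invoking it here would beg the question. The argument of \cite{BMN} is entirely classical and does not touch the Fargues--Scholze parameter. Second, your claim that the non-basic strata \emph{``die after projecting to the $\mathcal{H}(\mathbf{G}(\mathbb{A}_f^p))$-eigenspace''} is imprecise: the prime-to-$p$ eigenspace projection does not by itself kill the non-basic terms; the vanishing comes from supercuspidality of $\Pi_p$ at $p$, via the fact that the non-basic Rapoport--Zink cohomology at $p$ is built from proper parabolic inductions.
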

Now, we deduce the following consequence by Corollaries \ref{stdmucor} and \ref{extmucor}. 
\begin{corollary}{\label{explicitstdmuKottwitzConj}}
For $\pi_{[I],m_0+1} \in \Pi_{\phi}(J_{b}, \varrho_b \circ \varrho)$ as above, we have an equality 
\[ [R\Gamma^{\flat}_{c}(\GU_n,b,\mu_{1})[\pi_{[I], m_0+1}]] = \sum_{i = 1}^{r} [\pi_{[I \oplus \{i\}],m_0} \boxtimes {\phi_{i}}]  \]
in $K_{0}(\GU_n(\mathbb{Q}_{p}) \times W_{E})$ the Grothendieck group of finite length admissible $\GU_n(\mathbb{Q}_{p})$-representations admitting a smooth action of $W_{E}$-representations, where
\begin{itemize}
    \item 
     $r_{\mu} \circ \phi|_{W_E} := \bigoplus_{i = 1}^{r} \phi_{i}$
is a decomposition into irreducible representations $\phi_{i}$ of $W_{E}$,
    \item $\pi_{[I \oplus \{i\}],m_0}$ is the unique representation in $\Pi_{\phi}(\GU_n, \varrho)$ corresponding to the symmetric difference $[I \oplus \{i\}] \in \mathcal{P}(\{1,\ldots,r\})/\sim$ (see \S\ref{sss: galoisrepcomp}).
    \end{itemize}
More generally, for $1 \leq d \leq r$, we write $\Pi^{d}_{\phi}(\GU_{n},\varrho) \subset \Pi_{\phi}(\GU_{n},\varrho)$ for the subset of representations of the form $\pi_{[I \oplus J],m_0}$, for $[J] \in \mc{P}(\{1,\ldots,r\})/\sim$ represented by a subset $J \in \mc{P}(\{1,\ldots,r\})$ of cardinality $|d|$. We then have an equality 
\[ [R\Gamma^{\flat}_{c}(\GU_n,b,\mu_{1})[\pi_{[I],m_0 + 1}]] = \sum_{\pi_{[I \oplus J],m_0} \in \Pi^{d}_{\phi}(\GU_{n},\varrho)} [\pi_{[I \oplus J],m_0} \boxtimes \phi_{J}]  +  \sum_{\pi \notin \Pi^{d}_{\phi}(\GU_{n},\varrho)} [\pi \boxtimes \mathrm{Hom}_{S_{\phi}}(\delta_{\pi,\rho},r_{\mu_{d}}\circ \phi|_{W_{E}})] \]
in $K_{0}(\GU_n(\mathbb{Q}_{p}) \times W_{E})$, where $\phi_{J} := \bigotimes_{j \in J} \phi_{j}$. 
\end{corollary}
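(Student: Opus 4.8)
The plan is to read both equalities directly off Theorem \ref{hardinput} by substituting the explicit descriptions of the multiplicity spaces $\mathrm{Hom}_{S_\phi}(\delta_{\pi,\rho},r_{\mu_d}\circ\phi|_{W_E})$ supplied by Corollaries \ref{stdmucor} and \ref{extmucor}; no additional geometric input is required. First I would record that, as noted just before the statement, $\phi$ is supercuspidal, so the $\SL_2(\mathbb{C})$-factor acts trivially and $r_{\mu_d}\circ\phi|_{W_E}$ is an honest $W_E$-representation for each $1\le d\le r$. Hence Theorem \ref{hardinput} applies to the basic local shtuka datum $(\GU_n,b,\mu_d)$ and, taking $\rho=\pi_{[I],m_0+1}$, yields
\[ [R\Gamma^{\flat}_{c}(\GU_n,b,\mu_{d})[\pi_{[I],m_0+1}]] = \sum_{\pi \in \Pi_{\phi}(\GU_n, \varrho)} \big[\pi \boxtimes \mathrm{Hom}_{S_{\phi}}(\delta_{\pi,\rho},r_{\mu_{d}}\circ \phi|_{W_{E}})\big] \]
in $K_0(\GU_n(\Q_p)\times W_E)$.

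For the general $\mu_d$ statement I would split this sum over $\Pi_\phi(\GU_n,\varrho)$ according to membership in $\Pi^d_\phi(\GU_n,\varrho)$. By Corollary \ref{extmucor}, for $\pi=\pi_{[I\oplus J],m_0}$ with $J\in\mc{P}(\{1,\dots,r\})$ of cardinality $d$ the multiplicity space is exactly $\phi_J=\bigotimes_{j\in J}\phi_j|_{W_E}$, so the part of the sum indexed by $\Pi^d_\phi(\GU_n,\varrho)$ equals $\sum_{\pi_{[I\oplus J],m_0}\in\Pi^d_\phi(\GU_n,\varrho)}\pi_{[I\oplus J],m_0}\boxtimes\phi_J$, and what remains is the residual term $\sum_{\pi\notin\Pi^d_\phi(\GU_n,\varrho)}\pi\boxtimes\mathrm{Hom}_{S_\phi}(\delta_{\pi,\rho},r_{\mu_d}\circ\phi|_{W_E})$, which is precisely the asserted identity. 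The first equality is then the case $d=1$: here $\Pi^1_\phi(\GU_n,\varrho)=\{\pi_{[I\oplus\{i\}],m_0}\}_{i=1}^r$ with $\phi_{\{i\}}=\phi_i$, and Corollary \ref{stdmucor} gives $\mathrm{Hom}_{S_\phi}(\delta_{\pi,\rho},r_{\mu_1}\circ\phi|_{W_E})=0$ for all $\pi\notin\Pi^1_\phi(\GU_n,\varrho)$, so the residual term vanishes and the sum collapses to $\sum_{i=1}^r[\pi_{[I\oplus\{i\}],m_0}\boxtimes\phi_i]$.

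The argument is essentially bookkeeping: the geometric content is entirely carried by Theorem \ref{hardinput} and the representation-theoretic content by Corollaries \ref{stdmucor}--\ref{extmucor}. The only point requiring care --- and the closest thing to an obstacle --- is keeping the combinatorial parametrization by $\mc{P}(\{1,\dots,r\})/\sim$ straight: one must check that $\Pi^d_\phi(\GU_n,\varrho)$ and its complement genuinely partition $\Pi_\phi(\GU_n,\varrho)$, that each $\pi\in\Pi^d_\phi(\GU_n,\varrho)$ is hit by a well-defined summand $\phi_J$ (so that the right-hand sum is unambiguous), and that the central-character bookkeeping relating $\kappa(b)$, $m_0$ and the $S_\phi$-action matches the characters $\ttau_{[I\oplus J],m_0}$. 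All of this has already been unwound in \S\ref{sss: galoisrepcomp} --- where the oddness of $n$ enters, e.g. to see that the relevant equivalence classes have unique representatives of the prescribed cardinality --- so the proof needs no new ingredient beyond assembling these references.
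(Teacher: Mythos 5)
Your proposal is correct and matches the paper's intent exactly: the paper gives no proof beyond the sentence "Now, we deduce the following consequence by Corollaries \ref{stdmucor} and \ref{extmucor}," and your write-up simply spells out that deduction. You correctly apply Theorem \ref{hardinput} with cocharacter $\mu_d$, substitute the multiplicity-space computations from Corollaries \ref{stdmucor} and \ref{extmucor}, and observe that for $d=1$ the residual term vanishes because the multiplicity space is identically zero outside $\Pi^1_\phi$, while for general $d$ it genuinely persists (since $r_{\mu_d}\circ\phi$ has $S_\phi$-isotypic pieces indexed by $[J]$ with $|J|<d$ coming from partitions with some $d_j\geq 2$). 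The one thing worth flagging: in the statement of the corollary the left-hand side of the second display reads $R\Gamma^{\flat}_{c}(\GU_n,b,\mu_{1})[\pi_{[I],m_0+1}]$, but it should read $\mu_d$ to match the $r_{\mu_d}$ on the right and the intended application of Theorem \ref{hardinput}; your proof implicitly (and correctly) interprets it as $\mu_d$.
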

We leverage this result to deduce compatibility.
\begin{proposition}{\label{prop: case 1 compatibility}}
Let $\phi$ be a supercuspidal parameter of $\GU_n$ as above. Then, for any $\rho \in \Pi_{\phi}(J_{b},  \varrho_{b} \circ \varrho)$, the Fargues--Scholze correspondence is compatible with $\LLC_{\GU_n}$.
\end{proposition}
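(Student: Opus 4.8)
The plan is to play the two available descriptions of $R\Gamma^{\flat}_{c}(\GU_n,b,\mu_{1})[\rho]$ against each other, where $b$ and $\mu_{1}$ are as in the setup of this subsection. Since $\phi$ is supercuspidal we have $\phi^{ss}=\phi$, so compatibility amounts to proving $\phi^{\mathrm{FS}}_{\pi}=\phi$ for every $\pi\in\Pi_{\phi}(\GU_n,\varrho)$ (and, via the identification $\phantom{}^{L}J_b\simeq\phantom{}^{L}\GU_n$, the analogue for each $\rho$). On the one hand, Corollary \ref{explicitstdmuKottwitzConj}, which encodes the Kottwitz conjecture of \cite{BMN}, gives
\[ [R\Gamma^{\flat}_{c}(\GU_n,b,\mu_{1})[\pi_{[I],m_0+1}]]=\sum_{i=1}^{r}[\pi_{[I\oplus\{i\}],m_0}\boxtimes\phi_{i}] \]
in $K_{0}(\GU_n(\mathbb{Q}_{p})\times W_{E})$. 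On the other hand, Lemma \ref{shimhecke} identifies this complex, as a complex of $\GU_n(\mathbb{Q}_{p})\times W_{E}$-modules, with $j_{1}^{*}T_{\mu_{1}}Rj_{b*}(\rho)$: the restriction to the neutral stratum of the Hecke operator $T_{\mu_{1}}$ applied to $Rj_{b*}(\rho)\in\Dlis(\Bun_{\GU_n},\ov{\mathbb{Q}}_{\ell})$.

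The first step is to transport $\phi^{\mathrm{FS}}_{\rho}$ to the subquotients of this complex. Because $b$ is basic, the excursion operators on $\Bun_{\GU_n}$ act on $Rj_{b*}(\rho)$ by the scalars attached to $\phi^{\mathrm{FS}}_{\rho}$ viewed in $\phantom{}^{L}\GU_n$ (this is the Remark after Theorem \ref{FSproperties}, resting on \cite[Section~IX.7]{FS}); as they commute with $T_{\mu_{1}}$ and are compatible with the restriction functor $j_{1}^{*}$, they act on $j_{1}^{*}T_{\mu_{1}}Rj_{b*}(\rho)$ by the same scalars, and therefore every irreducible $\GU_n(\mathbb{Q}_{p})$-subquotient $\sigma$ of $R\Gamma^{\flat}_{c}(\GU_n,b,\mu_{1})[\rho]$ has $\phi^{\mathrm{FS}}_{\sigma}=\phi^{\mathrm{FS}}_{\rho}$. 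Now, because $n$ is odd the number of indices $i$ with $\dim\phi_{i}$ odd is odd, hence $\neq 2$, so the $\pi_{[I\oplus\{i\}],m_0}$ are pairwise distinct; since they occur in the displayed identity with the strictly positive multiplicities $\dim\phi_{i}$, each of them actually appears as a subquotient of $R\Gamma^{\flat}_{c}(\GU_n,b,\mu_{1})[\rho]$. Hence $\phi^{\mathrm{FS}}_{\pi_{[I\oplus\{i\}],m_0}}=\phi^{\mathrm{FS}}_{\rho}$ for all $i$ and all $\rho=\pi_{[I],m_0+1}$. Varying $[I]$ and $i$ and using the connectivity of the ``flip'' graph on $\mc{P}(\{1,\dots,r\})/\!\sim$, I conclude that there is a single semisimple $\psi\colon W_{\mathbb{Q}_{p}}\to\phantom{}^{L}\GU_n(\ov{\mathbb{Q}}_{\ell})$ with $\phi^{\mathrm{FS}}_{\pi}=\psi$ for all $\pi\in\Pi_{\phi}(\GU_n,\varrho)$ and $\phi^{\mathrm{FS}}_{\rho}=\psi$ for all $\rho\in\Pi_{\phi}(J_b,\varrho_b\circ\varrho)$.

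It remains to identify $\psi$ with $\phi$, and here the Weil group action supplies what is missing. The relation between $R\Gamma^{\flat}_{c}$ and $\LLC^{\mathrm{FS}}$ provided by \cite{Ko} and \cite{Ham} — the ``just enough'' referred to in the introduction, i.e.\ the eigensheaf property of $T_{\mu_{1}}$ relative to $\phi^{\mathrm{FS}}_{\rho}=\psi$ — forces every irreducible $W_{E}$-constituent of the $\pi_{[I\oplus\{i\}],m_0}$-isotypic part of $R\Gamma^{\flat}_{c}(\GU_n,b,\mu_{1})[\rho]$ to occur in $r_{\mu_{1}}\circ\psi|_{W_{E}}$. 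By the displayed $K_{0}$-identity this isotypic part has $[\phi_{i}]$ as its class in $K_{0}(W_{E})$, and the $\phi_{i}$ are distinct irreducible, so $\bigoplus_{i}\phi_{i}$ embeds into $r_{\mu_{1}}\circ\psi|_{W_{E}}$; comparing dimensions (both are $n$-dimensional) yields $r_{\mu_{1}}\circ\psi|_{W_{E}}=\bigoplus_{i}\phi_{i}=r_{\mu_{1}}\circ\phi|_{W_{E}}$, that is $\ov{\psi}_{E}\otimes\chi_{\psi}=\ov{\phi}_{E}\otimes\chi_{\phi}$. Compatibility of $\LLC^{\mathrm{FS}}_{\GU_n}$ with central characters (Theorem \ref{FSproperties}(2)) together with Theorem \ref{itm: local}(3) gives $\chi_{\psi}=\chi_{\phi}$, hence $\ov{\psi}_{E}=\ov{\phi}_{E}$. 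Since $\phi$ is discrete, $\ov{\phi}_{E}$ is multiplicity-free with conjugate-orthogonal constituents, so the $\U_n$-parameter $\ov{\psi}$ is pinned down by its restriction $\ov{\psi}_{E}$ — the only potential ambiguity, a twist by $\eta_{E/\mathbb{Q}_{p}}$, being trivial on $W_{E}$ and, for such $\ov{\phi}_{E}$, conjugate inside $\GL_n(\ov{\mathbb{Q}}_{\ell})$ to the trivial twist — so $\ov{\psi}=\ov{\phi}$; and a parameter of $\GU_n$ is determined by its $\phantom{}^{L}\U_n$-projection together with its central character (following \cite[\S2.3]{BMN}), so $\psi=\phi=\phi^{ss}$, which is \eqref{equalityofparams} for all $\pi\in\Pi_{\phi}(\GU_n,\varrho)$ and the analogous statement for $J_b$.

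The step I expect to be the main obstacle is the last one: promoting the $K_{0}$-level Kottwitz formula of Corollary \ref{explicitstdmuKottwitzConj} and the ``weak'' eigensheaf input of \cite{Ko,Ham} into an honest identification of $r_{\mu_{1}}\circ\psi|_{W_{E}}$, and then running the rigidity argument that recovers a $\phantom{}^{L}\GU_n$-valued parameter from its restriction to $W_{E}$. A possibly cleaner way to organize this last step: since $\GU_n(\mathbb{Q}_{p})=\U_n(\mathbb{Q}_{p})\cdot Z(\GU_n)(\mathbb{Q}_{p})$ the restriction $\pi|_{\U_n(\mathbb{Q}_{p})}$ is irreducible, so by Theorem \ref{FSproperties}(7) one may transfer the whole comparison to $\phantom{}^{L}\U_n$, where the classification is phrased directly in terms of conjugate-orthogonal $W_{E}$-representations via \cite[Theorem~8.1]{GGP}. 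One minor point to keep in mind throughout is that a priori nothing forces $\phi^{\mathrm{FS}}_{\rho}$ to be supercuspidal until the argument is complete, so one works with $Rj_{b*}(\rho)$ rather than $j_{b!}(\rho)$ and invokes Proposition \ref{flatnatural} only afterwards.
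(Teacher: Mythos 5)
Your proof is correct and tracks the paper's own argument: in both, the crux is to combine the Kottwitz-conjecture formula of Corollary~\ref{explicitstdmuKottwitzConj} with the constraint of \cite[Theorem~1.3]{Ko} (cf.~\cite[Lemma~3.10]{Ham}) to force $r_{\mu_1}\circ\phi^{\mathrm{FS}}_\rho|_{W_E}=\bigoplus_i\phi_i=r_{\mu_1}\circ\phi|_{W_E}$, and then to recover $\phi^{\mathrm{FS}}_\rho=\phi$ from the pair (central character, base-changed unitary parameter) via the fibre-product description of $\LL\GU_n$ and \cite[Theorem~8.1(ii)]{GGP}. Your preliminary detour---propagating a common $\psi$ across the whole packet through excursion operators applied to $j_1^*T_{\mu_1}Rj_{b*}(\rho)$, with the parity check on $\mc{P}(\{1,\dots,r\})/\sim$---is not needed, since the paper applies \cite[Theorem~1.3]{Ko} directly to $\rho$ and then obtains compatibility for the $\GU_n$-side representations by rerunning the proposition for the twist $z-z_b$; but the detour is harmless and delivers that $\GU_n$-side statement as a byproduct rather than by a separate re-application.
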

\begin{proof}
Fix $\rho \in \Pi_{\phi}(J_{b}, \varrho_{b} \circ \varrho)$ and let $\mu = \mu_{1}$. Corollary \ref{explicitstdmuKottwitzConj} tells us that $\phi_{i}$ for $i = 1,\ldots,r$ occurs as a subquotient of the cohomology of $R\Gamma^{\flat}_{c}(\GU_n,b,\mu_{1})[\rho]$. It follows, by \cite[Theorem~1.3]{Ko} (c.f. \cite[Lemma~3.10]{Ham}), that $\phi_{i}$ is an irreducible constituent of $r_{\mu} \circ {\phi_{\rho}^{\mathrm{FS}}}|_{W_E}$ for all $i = 1,\ldots,r$. This implies that $r_{\mu} \circ \phi|_{W_E}$ and $r_{\mu} \circ {\phi_{\rho}^{\mathrm{FS}}}|_{W_E}$ are equal in $K_{0}(W_{E})$. Since $r_{\mu} \circ \phi|_{W_E}$ and $r_{\mu} \circ \phi^{\mathrm{FS}}_{\rho}|_{W_E}$ are semisimple, we have an equality, $r_{\mu} \circ \phi|_{W_E} = r_{\mu} \circ {\phi_{\rho}^{\mathrm{FS}}}|_{W_E}$ ,
as conjugacy classes of $L$-parameters. Now we want to conclude that $\phi = \phi_{\rho}^{\mathrm{FS}}$. We have a cartesian diagram
\begin{equation*}
\begin{tikzcd}
\LL \GU_n \arrow[r] \arrow[d] &  \LL Z(\GU_n) \arrow[d]\\
 \LL \U_n \arrow[r] &  \LL Z(\U_n),
\end{tikzcd}    
\end{equation*}
where the maps are the obvious ones induced by duality. Hence, to show that $\phi = \phi^{\mathrm{FS}}_{\rho}$, we need only show that the central characters $\omega, \omega^{\mathrm{FS}}$ and induced unitary parameters $\ov{\phi}, \ov{\phi}^{\mathrm{FS}}_{\rho}$ coincide. Since both correspondences are compatible with the Langlands correspondence for their central characters ( \cite[Theorem~I.9.6 (iii)]{FS}, Theorem \ref{itm: local}), we have that $\omega = \omega^{\mathrm{FS}}$. From the above, we have that $r_{\mu} \circ \phi|_{W_E} = r_{\mu} \circ {\phi_{\rho}^{\mathrm{FS}}}|_{W_E}$. The representation $r_{\mu} \circ \phi|_{W_E}$ differs from $\ov{\phi}|_{W_E}$ by the character $p_2 \circ \omega|_{W_E}$, where $p_2$ is the projection of $\LL Z(\GU_n)$ onto the copy of $\C^{\times}$ corresponding to the similitude factor, and the analogous statement is also true for ${\phi_{\rho}^{\mathrm{FS}}}|_{W_E}$. Hence, we deduce that  $\overline{\phi}_{E} = \overline{\phi_{\rho}}_{E}^{\mathrm{FS}}.$
It now follows from \cite[Theorem~8.1 (ii)]{GGP} that this implies that $\overline{\phi} = \overline{\phi}_{\rho}^{\mathrm{FS}}$, as desired. 
\end{proof}

Finally, to deduce the compatibility for $\pi \in \Pi_{\phi_{\pi}}(\GU_n, \varrho)$, we apply Proposition \ref{prop: case 1 compatibility} to the extended inner twist of $\GU^*_n$ induced by $z-z_b$, so that $(J_b, \varrho_b\circ \varrho)$ becomes $(\GU_n, \varrho)$.
\subsection{Case (2)}
The main goal of this subsection is to prove the following, from which Case (2) of Theorem 1.1 follows.

\begin{proposition}{\label{prop: case 2 compatibility}}
Let $\pi\in \Pi(\GU_n)$ be a subquotient of a parabolic induction. Then the Fargues--Scholze correspondence for $\pi$ is compatible with $\LLC_{\GU_n}$.
\end{proposition}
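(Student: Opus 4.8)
The strategy is to reduce Case (2) to compatibility for a proper Levi subgroup — which is available by the inductive hypothesis — and then transport it through parabolic induction on both the spectral and geometric sides. Since $\pi$ is a subquotient of a parabolic induction it is non-supercuspidal, so we may fix a proper parabolic subgroup $P = MN \subset \GU_n$ and an irreducible admissible $\pi_M \in \Pi_{\C}(M)$ with $\pi$ an irreducible constituent of $I^{\GU_n}_{P(\Q_p)}(\pi_M)$. By the structure of Levi subgroups (see the remark after Proposition \ref{parindcomp}), $M \cong \GU_{n_0} \times \prod_{i} \Res_{E/\Q_p}\GL_{m_i}$ with $n_0$ odd and $n_0 + 2\sum_i m_i = n$; since $P$ is proper, $\sum_i m_i \geq 1$, hence $n_0 < n$.

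First I would record compatibility of $\LLC_M$ with $\LLC^{\mathrm{FS}}_M$. By Lemma \ref{lem: BGLevilem}, $M$ inherits an extended pure inner twist structure over $M^* \cong \GU^*_{n_0}\times\prod_i \Res_{E/\Q_p}\GL_{m_i}$, and its restriction to the first factor is an extended pure inner twist of $\GU^*_{n_0}$ with $n_0 < n$ odd; compatibility for this factor is therefore exactly the inductive hypothesis. For each factor $\Res_{E/\Q_p}\GL_{m_i}$, the classical correspondence for $\GL_{m_i}$ over $E$ is that of Harris--Taylor/Henniart, so compatibility follows from Theorem \ref{FSproperties}(5) together with compatibility of $\LLC^{\mathrm{FS}}$ with restriction of scalars, Theorem \ref{FSproperties}(6). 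As the classical correspondences of \cite{Mok,KMSW,BMN} and Harris--Taylor are multiplicative on direct products while $\LLC^{\mathrm{FS}}$ is compatible with products by Theorem \ref{FSproperties}(3), we obtain an equality $\phi^{ss}_{\pi_M}=\phi^{\mathrm{FS}}_{\pi_M}$ in $\Phi^{ss}(M)$, where $\phi_{\pi_M}=\LLC_M(\pi_M)$.

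Next I would push this equality into $\GU_n$. On the geometric side, Theorem \ref{FSproperties}(4) identifies $\phi^{\mathrm{FS}}_{\pi}$ with the composite
\[ W_{\Q_p}\xrightarrow{\phi^{\mathrm{FS}}_{\pi_M}}\LL M(\ov{\Q}_\ell)\to \LL\GU_n(\ov{\Q}_\ell) \]
along the canonical embedding. On the spectral side, Proposition \ref{parindcomp} — applicable since $\GU_n\cong\GU^*_n$ for $n$ odd, and every Levi of $M$ is a product of an odd unitary similitude group and general linear groups — identifies $\phi^{ss}_{\pi}=\LLC_{\GU_n}(\pi)^{ss}$ with the composite
\[ W_{\Q_p}\xrightarrow{\phi^{ss}_{\pi_M}}\LL M(\ov{\Q}_\ell)\to \LL\GU_n(\ov{\Q}_\ell) \]
along the same embedding. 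Combining with the equality $\phi^{ss}_{\pi_M}=\phi^{\mathrm{FS}}_{\pi_M}$ from the previous step yields $\phi^{ss}_{\pi}=\phi^{\mathrm{FS}}_{\pi}$, which is Equation \eqref{equalityofparams}; both sides are genuine semisimple Weil parameters, so no further semisimplification is needed. Running this over all irreducible constituents of all such $I^{\GU_n}_{P(\Q_p)}(\pi_M)$ proves Proposition \ref{prop: case 2 compatibility}, hence Case (2) of Theorem \ref{thm: introcompatibility}.

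I expect the only point requiring care to be the first step: one must match the extended pure inner twist data on $M$ with that on its $\GU_{n_0}$-factor so that the inductive hypothesis genuinely applies, and then assemble compatibility for $M$ from compatibility for its factors (if one prefers not to invoke the product description of $M$, this can instead be done via its $\U$-type Levi and central characters, using Theorem \ref{FSproperties}(2),(7) as in the proof of Theorem \ref{thm: fullLLC}). No geometric input — local Shtuka spaces, the Kottwitz conjecture of \cite{BMN} — is needed here, in sharp contrast with Case (1), so beyond this bookkeeping I do not anticipate a substantive obstacle.
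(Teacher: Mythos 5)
Your proof is correct and follows essentially the same route as the paper's: reduce to compatibility for the Levi $M$ via compatibility with parabolic induction on both sides (Theorem \ref{FSproperties}(4) and Proposition \ref{parindcomp}), then use the description of $M$ as $\GU_{n_0}\times\prod_i\Res_{E/\Q_p}\GL_{m_i}$ together with the inductive hypothesis for $\GU_{n_0}$, compatibility with Harris--Taylor/Henniart, restriction of scalars, and products. You simply spell out the bookkeeping (the role of Lemma \ref{lem: BGLevilem}, the factorwise matching of extended pure inner twist data) that the paper leaves implicit.
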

\begin{proof}
Suppose $\pi$ is a sub-quotient of $I^{\GU_n}_P(\sigma)$ for some proper parabolic subgroup $P$ of $\GU_n$ with Levi subgroup $M$ and $\sigma \in \Pi(M)$.  By compatibility of the Fargues--Scholze correspondence and classical correspondence with parabolic induction (\cite[Theorem~I.9.6 (viii)]{FS}, Proposition \ref{parindcomp}), compatibility of $\pi$ will follow from compatibility of $\sigma$. The Levi subgroups of $\GU_n$ are of the form $\GU_k \times \Res_{E/\Q_p} G$ where $k<n$ is odd and $G$ is a product of general linear groups. Hence, compatibility follows from inductive assumption and compatibility of the Fargues--Scholze construction with products and in the $\GL_n$ case ( \cite[Theorem~I.9.6 (vi), (ix)]{FS} ).
\end{proof}
\subsection{Case (3)} In this case, $\Pi_{\phi}(\GU_n, \varrho)$ contains both supercuspidal and non-supercuspidal representations. We call such a parameter a \emph{mixed supercuspidal parameter}. Since $L$-packets are disjoint and supercuspidal representations are essentially tempered, it follows from Theorem \ref{itm: local} that $\phi$ is bounded modulo center. Hence, by \cite[\S8.4.4]{Moe}, it follows that $\phi$ is a discrete parameter with non-trivial $\SL_2$-factor. 
We will deduce compatibility by combining the results of the previous sections with a weaker description of the cohomology groups $R\Gamma^{\flat}_{c}(\GU_n,b,\mu_{d})[\rho]$ for $\rho$ satisfying  $\LLC_{\GU}(\rho) \in \Phi_2(\GU_n)$, as described in \cite{HKW}. In particular, we describe the projection of $R\Gamma^{\flat}_{c}(\GU_n,b,\mu_{d})[\rho]$ to  $K_{0}(\GU_n(\mathbb{Q}_{p}))_{\mathrm{ell}}$ of elliptic admissible finite length $\GU_n(\mathbb{Q}_{p})$-representations (defined to be the quotient of $K_0(\GU_n(\Q_p))$ by the span of the non-elliptic representations as in \cite[Appendix C]{HKW}). In particular, \cite[Theorem~1.0.2]{HKW} and Corollaries \ref{stdmucor} and \ref{extmucor} imply the following.
\begin{theorem}{\label{appliedHKW}}
For $\phi$ a mixed supercuspidal $L$-parameter, $\pi_{[I], m_0+1} \in \Pi_{\phi}(J_{b}, \varrho_b \circ \varrho)$ corresponding to $[I] \in \mc{P}(\{1, \dots, r\})/ \sim$, we have an equality 
\[ [R\Gamma^{\flat}_{c}(\GU_n,b,\mu_{1})[\pi_{[I], m_0+1}]] = \sum^r_{i=1} d_{i}[\pi_{[I \oplus \{i\}],m_0}]  \]
in $K_{0}(\GU_n(\mathbb{Q}_{p}))_{\mathrm{ell}}$, where if we write $\bigoplus_{i = 1}^{r} \phi_{i} = \phi|_{\mathcal{L}_{E}}$ as a direct sum of distinct irreducible representation of $\mathcal{L}_{E}$ then $d_{i} = \dim(\phi_{i})$. Moreover, if $1 \leq d \leq r$, we have an equality 
\[ [R\Gamma^{\flat}_{c}(\GU_n,b,\mu_{1})[\pi_{[I],m_0+1}]] = \sum_{\pi_{[I \oplus J],m_0} \in \Pi^{d}_{\phi}(\GU_{n},\varrho)} d_{J}\pi_{[I \oplus J],m_0}  +  \sum_{\pi \notin \Pi^{d}_{\phi}(\GU_{n},\varrho)} \pi \boxtimes \mathrm{Hom}_{S_{\phi}}(\delta_{\pi,\rho},r_{\mu_{d}}\circ \phi|_{W_{E}}), \]
in $K_{0}(\GU_n(\mathbb{Q}_{p}))_{\mathrm{ell}}$, where $\Pi^{d}_{\phi}(\GU_{n},\varrho)$ is defined as in Theorem \ref{explicitstdmuKottwitzConj} and $d_{J} := \prod_{j \in J} d_{i}$. 
\end{theorem}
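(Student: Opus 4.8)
The plan is to deduce both displayed equalities by feeding the explicit multiplicity computations of Corollaries \ref{stdmucor} and \ref{extmucor} into the elliptic form of the Kottwitz conjecture established in \cite[Theorem~1.0.2]{HKW}. First I would record the standing hypotheses: as already observed, a mixed supercuspidal parameter $\phi$ is bounded modulo centre (by Theorem \ref{itm: local}, since its packet meets $\Pi_{\scusp}$, which is essentially tempered) and has non-trivial $\SL_2$-factor, hence by \cite[\S8.4.4]{Moe} is a discrete parameter of $\GU^*_n$; consequently every $\rho \in \Pi_{\phi}(J_b, \varrho_b\circ\varrho)$ is essentially square-integrable and has discrete $L$-parameter $\phi$. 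Thus for each $1 \le d < n$ the minuscule local shtuka datum $(\GU_n, b, \mu_d)$ falls within the scope of \cite[Theorem~1.0.2]{HKW}, which computes the image of $R\Gamma^{\flat}_c(\GU_n,b,\mu_d)[\rho]$ in $K_0(\GU_n(\Q_p))_{\mathrm{ell}}$ as predicted by the refined Kottwitz conjecture, i.e.
\[ [R\Gamma^{\flat}_c(\GU_n,b,\mu_d)[\rho]] = \sum_{\pi \in \Pi_{\phi}(\GU_n, \varrho)} [\pi] \cdot \dim \mathrm{Hom}_{S_{\phi}}(\delta_{\pi,\rho}, r_{\mu_d}\circ\phi|_{W_E}) \]
in $K_0(\GU_n(\Q_p))_{\mathrm{ell}}$, with the multiplicities normalised exactly as in Theorem \ref{hardinput} (whose proof in \cite{BMN} proceeds through the same normalisation of transfer factors fixed in \S\ref{ss: classicalLLC}).

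Granting this, the two equalities are pure substitution. For $d=1$, Corollary \ref{stdmucor} gives $\mathrm{Hom}_{S_{\phi}}(\delta_{\pi_{[I\oplus\{i\}],m_0},\rho}, r_{\mu_1}\circ\phi|_{W_E}) \simeq \phi_i|_{W_E}$ for $i=1,\dots,r$ and vanishing for every other $\pi$; since $\dim(\phi_i|_{W_E}) = \dim(\phi_i) = d_i$, the sum collapses to $\sum_{i=1}^r d_i[\pi_{[I\oplus\{i\}],m_0}]$, which is the first displayed identity. For general $1 \le d \le r$, I would split the sum over $\Pi_{\phi}(\GU_n,\varrho)$ into the subset $\Pi^d_{\phi}(\GU_n,\varrho)$ and its complement: on the former, Corollary \ref{extmucor} identifies the $\pi_{[I\oplus J],m_0}$-summand (for $J$ of cardinality $d$) with $\bigotimes_{j\in J}\phi_j|_{W_E}$, of dimension $\prod_{j\in J} d_j = d_J$, while on the complement one simply retains the term $\mathrm{Hom}_{S_{\phi}}(\delta_{\pi,\rho}, r_{\mu_d}\circ\phi|_{W_E})$. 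This yields the second displayed identity.

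The main obstacle is not the bookkeeping but the invocation of \cite[Theorem~1.0.2]{HKW} for $\GU_n$, which has a one-dimensional split centre: one must check that the Lefschetz--Verdier/trace-formula argument of \cite{HKW} (combining Hansen's averaging results, the Fargues--Scholze formalism, and properties of the classical correspondence) applies to this case and that its multiplicity output really coincides with $\dim\mathrm{Hom}_{S_{\phi}}(\delta_{\pi,\rho}, r_{\mu_d}\circ\phi|_{W_E})$ under our normalisations. If a verbatim citation is unavailable, the safe route is to reduce to the unitary case: writing $\GU_n(\Q_p)$ as a quotient of $\U_n(\Q_p)\times Z(\GU_n)(\Q_p)$ (possible since $n$ is odd) and using the compatibility of local shtuka cohomology with the resulting central isogeny, one bootstraps the elliptic identity for $\GU_n$ from the one for $\U_n$ twisted by a central character, exactly as Theorem \ref{hardinput} was deduced from the unitary case in \cite{BMN}. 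Once that input is in place, the rest of the argument is the routine substitution sketched above.
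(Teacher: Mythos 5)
Your argument is exactly the paper's: Theorem \ref{appliedHKW} is stated as an immediate consequence of \cite[Theorem~1.0.2]{HKW} together with the multiplicity computations of Corollaries \ref{stdmucor} and \ref{extmucor}, which is precisely the substitution you carry out (using that a mixed supercuspidal parameter is discrete, as noted just before the statement). Your worry about the split centre of $\GU_n$ is not an obstacle the paper sees — it cites \cite[Theorem~1.0.2]{HKW} directly for $\GU_n$, the refined correspondence and endoscopic character identities required as input being supplied by \cite{BMN}, so your fallback reduction to $\U_n$ is unnecessary.
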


With this in hand, we can finally prove the following which will conclude the proof of Theorem 1.1.
\begin{proposition}
Let $\phi$ be a mixed supercuspidal parameter then, for any $\pi \in \Pi_{\phi}(\GU_n, \varrho)$, we have that $\LLC_{\GU_n}$ is compatible with $\LLC^{\mathrm{FS}}_{\GU_n}$. 
\end{proposition}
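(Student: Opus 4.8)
The plan is to first settle compatibility for the non-supercuspidal members of $\Pi_{\phi}(\GU_n,\varrho)$ by reduction to Case (2), and then to propagate it to the supercuspidal members by realizing every member of the packet inside the cohomology of a basic shtuka space attached to a single fixed representation $\rho$ of a $\sigma$-centralizer, exploiting that Hecke operators commute with excursion operators. For the first part: since $\phi$ is mixed supercuspidal, $\Pi_{\phi}(\GU_n,\varrho)$ contains a non-supercuspidal representation $\pi'$, which by the structure of its cuspidal support is an irreducible subquotient of $I^{\GU_n}_{P}(\sigma)$ for a proper parabolic $P$ with Levi $M$ and a supercuspidal $\sigma\in\Pi(M)$; Proposition~\ref{prop: case 2 compatibility} then gives $\phi^{\mathrm{FS}}_{\pi'}=\phi^{ss}$, recalling that $\LLC_{\GU_n}(\pi')=\phi$.

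For the second part, we would fix the unique basic $b\in B(\GU_n,\mu_d)$ (independent of $d$) and a representation $\rho=\pi_{[I],m_0+1}\in\Pi_{\phi}(J_b,\varrho_b\circ\varrho)$, which exists since $\Irr(S^{\natural}_{\phi},\kappa_{\GU^*_n}(z+z_b))$ is nonempty. For $1\le d\le r$ (and note $r<n$, since $\phi$ has nontrivial $\SL_2$-part) Lemma~\ref{shimhecke} gives an isomorphism $R\Gamma^{\flat}_{c}(\GU_n,b,\mu_d)[\rho]\simeq j_{1}^{*}T_{\mu_d}Rj_{b*}(\rho)$ of $\GU_n(\Q_p)\times W_E$-modules, where $j_1\colon\Bun_{\GU_n}^{1}\hookrightarrow\Bun_{\GU_n}$ is the neutral stratum. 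By \cite[Section~IX.7]{FS}, every irreducible subquotient of $Rj_{b*}(\rho)$ along every HN-stratum has Fargues--Scholze parameter $\phi^{\mathrm{FS}}_{\rho}$, viewed in $\LL\GU_n$ through the canonical identification $\LL J_b\simeq\LL\GU_n$ coming from $b$ being basic; since $T_{\mu_d}$ commutes with the excursion operators this property persists after applying $T_{\mu_d}$ and then $j_1^{*}$, so every irreducible subquotient of the $\GU_n(\Q_p)$-representation $R\Gamma^{\flat}_{c}(\GU_n,b,\mu_d)[\rho]$ has Fargues--Scholze parameter $\phi^{\mathrm{FS}}_{\rho}$.

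Next we would show that every $\pi\in\Pi_{\phi}(\GU_n,\varrho)$ occurs as a genuine subquotient of $R\Gamma^{\flat}_{c}(\GU_n,b,\mu_d)[\rho]$ for a suitable $1\le d\le r$. Given $\pi$, choose a representative $J\subset\{1,\dots,r\}$ of the symmetric difference of $[I]$ with the $\sim$-class of $\pi$ having $1\le |J|\le r$; with $d=|J|$ we have $\pi\in\Pi^{d}_{\phi}(\GU_n,\varrho)$, and Theorem~\ref{appliedHKW} shows that $\pi$ occurs in $[R\Gamma^{\flat}_{c}(\GU_n,b,\mu_d)[\rho]]$ with positive coefficient $d_J=\prod_{j\in J}d_j$ in $K_0(\GU_n(\Q_p))_{\mathrm{ell}}$. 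Since $\phi$ is discrete, every member of $\Pi_{\phi}(\GU_n,\varrho)$ is tempered essentially square-integrable, hence elliptic; as elliptic tempered representations are linearly independent in $K_0(\GU_n(\Q_p))_{\mathrm{ell}}$ (\cite[Appendix~C]{HKW}) and the projection $K_0(\GU_n(\Q_p))\to K_0(\GU_n(\Q_p))_{\mathrm{ell}}$ annihilates every parabolically induced constituent, a nonzero coefficient in the elliptic Grothendieck group forces $\pi$ to be a genuine subquotient. Combining with the previous paragraph, $\phi^{\mathrm{FS}}_{\pi}=\phi^{\mathrm{FS}}_{\rho}$ for every $\pi\in\Pi_{\phi}(\GU_n,\varrho)$; feeding in the $\pi'$ above yields $\phi^{\mathrm{FS}}_{\rho}=\phi^{ss}$, so $\phi^{ss}=\phi^{\mathrm{FS}}_{\pi}$ for all $\pi\in\Pi_{\phi}(\GU_n,\varrho)$, in particular for the supercuspidal ones. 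This is exactly \eqref{equalityofparams}, and completes Case (3) and hence the proof of Theorem~\ref{thm: introcompatibility} for $\GU_n$.

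The hard part will be this last step: Theorem~\ref{appliedHKW} only records an alternating-sum-of-constituents identity in the \emph{elliptic} Grothendieck group, so one must upgrade it to the assertion that each member of the $L$-packet genuinely occurs as a subquotient of the shtuka cohomology; this is exactly where discreteness of $\phi$ — hence ellipticity and linear independence of the packet members — is indispensable, and is the reason the supercuspidal and non-supercuspidal cases (and the inductive hypothesis for smaller $\GU_k$) must be in place first. A secondary point needing care is bookkeeping the (untwisted, since $b$ is basic) $L$-embedding $\LL J_b\to\LL\GU_n$ and the compatibility of Fargues--Scholze parameters computed on different HN-strata, supplied by \cite[Section~IX.7]{FS}.
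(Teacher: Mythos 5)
Your overall strategy matches the paper's: dispatch the non-supercuspidal members of $\Pi_{\phi}(\GU_n,\varrho)$ via Case~(2), then propagate compatibility to the supercuspidal members by placing them inside the cohomology of a basic local shtuka space $R\Gamma^{\flat}_{c}(\GU_n,b,\mu_d)[\rho]$ for a fixed $\rho\in\Pi_{\phi}(J_b,\varrho_b\circ\varrho)$, using that Hecke operators commute with excursion operators so that every genuine subquotient has Fargues--Scholze parameter $\phi^{\mathrm{FS}}_{\rho}$, and using \cite[Theorem~1.0.2]{HKW} plus ellipticity of the packet to get positivity in the elliptic Grothendieck group. However, there is a genuine gap in how you determine $\phi^{\mathrm{FS}}_{\rho}$.

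You take $\rho$ to be an \emph{arbitrary} member of $\Pi_{\phi}(J_b,\varrho_b\circ\varrho)$ and then try to recover $\phi^{\mathrm{FS}}_{\rho}$ by arguing that the non-supercuspidal $\pi'\in\Pi_{\phi}(\GU_n,\varrho)$ genuinely occurs as a subquotient of $R\Gamma^{\flat}_{c}(\GU_n,b,\mu_d)[\rho]$, so that $\phi^{\mathrm{FS}}_{\rho}=\phi^{\mathrm{FS}}_{\pi'}=\phi^{ss}$. The step from ``$\pi'$ has positive coefficient in $K_0(\GU_n(\Q_p))_{\mathrm{ell}}$'' to ``$\pi'$ is a genuine subquotient'' does not follow. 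The elliptic Grothendieck group is the quotient of $K_0(\GU_n(\Q_p))$ by the span of the \emph{full} parabolic inductions $\ind^G_P(\sigma)$ (not their individual constituents; the claim that the projection ``annihilates every parabolically induced constituent'' is a misstatement). For a supercuspidal $\pi$ the coefficient is unambiguous, because a supercuspidal never occurs as a constituent of $\ind^G_P(\sigma)$ for proper $P$; this is exactly why the splitting $K_0^{\mathrm{ell}}\to K_0$ the paper invokes recovers the $K_0$-coefficient of supercuspidals. But the non-supercuspidal discrete series $\pi'$ \emph{does} appear in proper parabolic inductions (whose alternating sums are killed in $K_0^{\mathrm{ell}}$), so a class with nonnegative $K_0^{\mathrm{ell}}$-coefficient on $\pi'$ need not have $\pi'$ as an actual cohomology constituent — for instance, $[\pi']$ could be represented by $-[\pi'']$ for a non-elliptic constituent $\pi''$ of the same induction. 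Linear independence of elliptic tempered representations in $K_0^{\mathrm{ell}}$ does not bridge this.

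The remedy is the one the paper uses: since $J_b$ is quasi-split, the packet $\Pi_{\phi}(J_b,\varrho_b\circ\varrho)$ over the mixed supercuspidal $\phi$ contains a non-supercuspidal representation, and one should \emph{choose} $\rho=\rho_{nsc}$ to be such a representation. Then $\phi^{\mathrm{FS}}_{\rho_{nsc}}=\phi^{ss}$ follows directly from Proposition~\ref{prop: case 2 compatibility} (applied to the quasi-split group $J_b$), and the rest of your argument — genuine occurrence of the supercuspidal $\pi_{[I\oplus J],m_0}$ for $d=|J|$, and constancy of the Fargues--Scholze parameter across the cohomology of $R\Gamma^{\flat}_{c}(\GU_n,b,\mu_d)[\rho_{nsc}]$ — goes through unchanged and yields $\phi^{\mathrm{FS}}_{\pi}=\phi^{\mathrm{FS}}_{\rho_{nsc}}=\phi^{ss}$ for all supercuspidal $\pi$ in the packet.
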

\begin{proof}
By assumption (since $J_b$ will be quasi-split), there exists some $\rho_{nsc} \in \Pi_{\phi}(J_{b}, \varrho_b \circ \varrho)$ which is non-supercuspidal. We write $\rho_{nsc} = \pi_{[I],m_0+1}$ for some $[I] \in \mc{P}(\{1, \dots, r\})/ \sim$. By Proposition \ref{prop: case 2 compatibility}, we already know compatibility for any non-supercuspidal representation in $\Pi_{\phi}(\GU_{n},\varrho)$, and we need to show it for the supercuspidal representations. Given any such supercuspidal, we can write it as $\pi_{[I \oplus J],m_0}$ for some $J \in \mc{P}(\{1,\ldots,r\})$. Let $d = |J|$. We now apply Theorem \ref{appliedHKW} to $R\Gamma^{\flat}_{c}(\GU_{n},b,\mu_{d})[\rho_{nsc}]$, and denote the supercuspidal part by $R\Gamma^{\flat}_{c}(\GU_{n},b,\mu_{d})[\rho_{nsc}]_{sc}$. We view this as an element of $K_0(\GU_n(\Q_p))$ via the splitting $K_0(\GU_n(\Q_p))_{\mathrm{ell}} \to K_0(\GU_n(\Q_p))$ induced by the standard basis of $K_0(\GU_n(\Q_p))$ consisting of the irreducible admissible representations. Then we have in $K_{0}(\GU_{n}(\mathbb{Q}_{p}))$ that the supercuspidal $\pi_{[I \oplus J],m_0} \in \Pi(\GU_n, \varrho)$ occurs in the class of $R\Gamma^{\flat}_{c}(\GU_{n},b,\mu_{d})[\rho_{nsc}]_{sc}$ in $K_{0}(\GU_{n}(\mathbb{Q}_{p}))$. Now it follows, by \cite[Corollary~3.15]{Ham}, that any smooth irreducible representation of $\GU_n(\mathbb{Q}_{p})$ occurring in the cohomology of $R\Gamma^{\flat}_{c}(\GU_n,b,\mu_{d})[\rho_{nsc}]$ has Fargues--Scholze parameter equal to $\phi_{\rho_{nsc}}^{\mathrm{FS}}$ under the inner twisting $\phantom{}^{L}J_{b} \simeq \phantom{}^{L}G$. This implies a chain of equalities 
\[ \phi^{\mathrm{FS}}_{\pi_{[I \oplus J],m_0}} =  \phi_{\rho_{nsc}}^{\mathrm{FS}} = \phi^{\semis} \]
where the last equality follows by Proposition \ref{prop: case 2 compatibility}.
\end{proof}
\section{Applications}{\label{s: applications}}
In this section, we will showcase the power of Theorem \ref{thm: introcompatibility}. In particular, it allows us to combine what we know about the local Langlands correspondence for unitary groups together with the techniques from geometric Langlands introduced in Section \ref{sss: spectralaction}. This will result in the construction of eigensheaves attached to supercuspidal $L$-parameters of $\GU_{n}$ or $\U_{n}$, verifying Fargues' conjecture \cite[Conjecture~4.3]{Fa} for these groups. In particular, this is essentially equivalent to the strongest form of the Kottwitz conjecture for these groups and all (not necessarily minuscule) geometric dominant cocharacters $\mu$. 
\subsection{Unitary Similitude Groups}
In the previous section, for a basic local Shtuka datum $(\GU_n,b,\mu)$ we considered the $\rho$-isotypic part $R\Gamma^{\flat}_{c}(\GU_{n},b,\mu)[\rho]$ for $\rho \in \Pi_{\phi}(J_{b},\varrho_{b} \circ \varrho)$; however, for the geometric Langlands style arguments in this section it will be more convenient to work with the complex $R\Gamma_{c}(\GU_{n},b,\mu)[\rho]$. We note that $R\Gamma^{\flat}_{c}(\GU_{n},b,\mu)[\rho] \simeq R\Gamma_{c}(\GU_{n},b,\mu)[\rho]$ for any $\rho$ with supercuspidal Fargues--Scholze parameter, by Proposition \ref{flatnatural}. Throughout this section, we will implicitly combine this fact with Theorem \ref{thm: introcompatibility}, since we will only consider $\rho$ with supercuspidal $L$-parameter (= Fargues--Scholze parameter), and just speak about the complexes $R\Gamma_{c}(G,b,\mu)[\rho]$.
\subsubsection{The minuscule case}
To study the cohomology of the local Shimura varieties defined by unitary groups more carefully, we will invoke the following general result of Hansen.
\begin{theorem}{\cite[Theorem~1.1.]{Han}}{\label{concmiddledegree}}
Let $(G,b,\mu)$ be a basic local Shtuka datum with $E_{\mu}$ the reflex field of a minuscule dominant geometric cocharacter $\mu$, and let $\rho$ be a smooth irreducible representation of $J_{b}(\mathbb{Q}_{p})$. Suppose the following conditions hold: 
\begin{enumerate}
    \item The spaces $(\Sht(G,b,\mu)_{K})_{K \subset G(\mathbb{Q}_{p})}$ occur in the basic uniformization at $p$ of a global Shimura variety in the sense of \cite[Definition~3.2]{Han}.
    \item The Fargues--Scholze parameter $\phi_{\rho}^{\mathrm{FS}}: W_{\mathbb{Q}_{p}} \rightarrow ^{L}G(\overline{\mathbb{Q}}_{\ell})$ is supercuspidal. 
\end{enumerate}
Then the complex $R\Gamma_{c}(G,b,\mu)[\rho]$ is concentrated in degree $0$ (middle degree under our normalizations).
\end{theorem}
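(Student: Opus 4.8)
This is Hansen's theorem \cite{Han}; the strategy I would follow is to transport the question into the cohomology of a global Shimura variety, where concentration in a single cohomological degree is forced by purity, and then transport the conclusion back.

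\emph{Globalization.} Since $\phi^{\mathrm{FS}}_{\rho}$ is supercuspidal it does not factor through any proper Levi of $\phantom{}^{L}J_{b}$; hence, by compatibility of $\LLC^{\mathrm{FS}}$ with parabolic induction (Theorem~\ref{FSproperties}~(4)), $\rho$ cannot be a subquotient of a proper parabolic induction, i.e.\ $\rho$ is supercuspidal. Using hypothesis~(1), I would fix a global Shimura datum realizing the tower $(\Sht(G,b,\mu)_{K})_{K}$ in its basic uniformization at $p$, chosen (as permitted in the sense of \cite[Definition~3.2]{Han}, after passing to a suitable inner form that is anisotropic modulo center) so that the associated Shimura variety $\mathrm{Sh}_{K}$ over the reflex field $E$ is proper and smooth; equip it with an automorphic local system $\mathcal{L}_{\xi}$ whose weight is dictated by $\mu$. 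Choosing $K=K_{p}K^{p}$ and a cuspidal automorphic representation $\Pi$ with $\Pi_{\infty}$ cohomological for $\xi$ and $\Pi_{p}$ corresponding to $\rho$ under Jacquet--Langlands (such $\Pi$ exists by a standard simple--trace--formula globalization argument, using that $\rho$ is supercuspidal), the content of hypothesis~(1) is that $R\Gamma_{c}(G,b,\mu)[\rho]$ appears inside $R\Gamma(\mathrm{Sh}_{K,\ov{E}},\mathcal{L}_{\xi})$.

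\emph{Passing through the uniformization.} The basic Newton stratum of $\mathrm{Sh}_{K,\ov{\F}_{p}}$ is uniformized by $I(\Q)\backslash(\Sht(G,b,\mu)_{\infty}\times G(\A_{f}^{p}))/\underline{K}$, where $I$ is the inner form of $G$ with $I(\Qp)=J_{b}(\Qp)$ and $I(\A_{f}^{p})=G(\A_{f}^{p})$. Taking cohomology yields a Hochschild--Serre spectral sequence computing the basic contribution to $R\Gamma(\mathrm{Sh}_{K,\ov{\F}_{p}},\mathcal{L}_{\xi})$ out of $R\Gamma_{c}(\Sht(G,b,\mu),\mathcal{S}_{\mu})$ and the space of automorphic forms on $I$ (the basic case of Mantovan's formula). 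After projecting to the $\Pi_{f}^{p}$-isotypic part on $G(\A_{f}^{p})$ and the $\rho$-isotypic part for the $J_{b}(\Qp)$-action, the outer group cohomology degenerates to a single degree, because $\rho$ is supercuspidal, hence projective in the category of smooth $J_{b}(\Qp)$-representations with fixed central character, and $I(\Q)$ is discrete. One is left with an identification, up to a fixed Tate twist and a fixed cohomological shift, of a nonzero multiple of $R\Gamma_{c}(G,b,\mu)[\rho]$ with the $\Pi_{f}$-isotypic part of the basic contribution to $R\Gamma(\mathrm{Sh}_{K,\ov{\F}_{p}},\mathcal{L}_{\xi})$; moreover this basic contribution \emph{is} the whole $\Pi_{f}$-isotypic part, since the non-basic strata contribute only representations whose Fargues--Scholze parameters factor through the image of $\phantom{}^{L}J_{b'}\to\phantom{}^{L}G$ for non-basic $b'$, a proper Levi subgroup, and such parameters are not supercuspidal (this is exactly the argument used inside the proof of Proposition~\ref{flatnatural}).

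\emph{Concentration by weights.} By \cite[Theorem~1.3]{Ko} (compare \cite[Lemma~3.10]{Ham}), every irreducible $W_{E_{\mu}}$-subquotient of $\bigoplus_{i}H^{i}_{c}(G,b,\mu)[\rho]$ is a subquotient of $r_{\mu}\circ\phi^{\mathrm{FS}}_{\rho}|_{W_{E_{\mu}}}$; since $\phi^{\mathrm{FS}}_{\rho}$ is supercuspidal it is bounded, so after the half-Tate-twist normalization of the Conventions each $H^{i}_{c}(G,b,\mu)[\rho]$ is pure of one and the same weight $w$. On the other hand $\mathrm{Sh}_{K}$ is proper and smooth over $E$, so $H^{n}(\mathrm{Sh}_{K,\ov{E}},\mathcal{L}_{\xi})$ is pure of weight $n+\mathrm{wt}(\xi)$ by Deligne's purity theorem. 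Combining these with the identification of the previous step forces all nonzero cohomology of $R\Gamma_{c}(G,b,\mu)[\rho]$ into the single degree $n$ with $n+\mathrm{wt}(\xi)=w$; a short weight bookkeeping identifies this with the middle degree, which in the paper's normalization is degree $0$. The main obstacle I expect is the degeneration of the Hochschild--Serre spectral sequence in the second step -- i.e.\ controlling the cohomology of the $S$-arithmetic group $I(\Q)$ acting on the infinite-dimensional supercuspidal coefficients -- together with the input, hidden in hypothesis~(1), that one may choose the global datum so that $\mathrm{Sh}_{K}$ is genuinely proper, so that Deligne purity and the uniformization formula are both available in clean form.
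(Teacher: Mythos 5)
This statement is not proved in the paper at all: it is imported verbatim from \cite[Theorem~1.1]{Han}, so what you are proposing is a reconstruction of Hansen's proof rather than something to be matched against an internal argument. Judged on its own terms, your sketch has a genuine gap at exactly the point you flag at the end: the reduction to a \emph{proper} Shimura variety. Hypothesis (1) (basic uniformization in the sense of \cite[Definition~3.2]{Han}) only asserts that the given tower $(\Sht(G,b,\mu)_K)_K$ is uniformized by \emph{some} global Shimura variety; it does not permit you to ``pass to a suitable inner form that is anisotropic modulo center,'' since for the replacement datum hypothesis (1) would have to be re-established from scratch, and nothing guarantees such a datum exists. In the very situation this theorem is invoked for in the present paper it provably does not: for $G=\GU_n/\Q_p$ the uniformizing group is a unitary similitude group over $\Q$ (similitude factor in $\Gm/\Q$) attached to a rank-$n$ Hermitian space over an imaginary quadratic field with indefinite signature $(d,n-d)$, and for $n\ge 3$ such a space is isotropic at every finite place and at infinity, hence globally isotropic, so the derived group is $\Q$-isotropic and every Shimura variety satisfying conditions (2)--(3) of \S 5.1 is non-proper; keeping the local group exactly $\GU_n(\Q_p)$ rules out the usual device of passing to a CM field with larger totally real subfield. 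Since Deligne purity on a smooth proper $\mathrm{Sh}_K$ is the only place in your argument where vanishing outside a single degree is actually forced, the whole weight bookkeeping collapses with it, and your argument at best proves a statement with a strengthened hypothesis that is never satisfied in the applications here.

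By contrast, Hansen's proof must, and does, handle the non-proper case: there the concentration is not extracted from Weil--II purity on a proper model, but from the supercuspidality of the Fargues--Scholze parameter itself (the isomorphism $j_{b!}\rho\simeq Rj_{b*}\rho$ and the resulting vanishing of all non-basic and boundary-type contributions, whose constituents have parameters factoring through proper Levis) combined with geometric arguments in the style of Caraiani--Scholze \cite{CS} on the interplay between compactly supported and ordinary cohomology via the Hodge--Tate period map. Your remaining steps are in the right spirit --- deducing that $\rho$ is supercuspidal from Theorem~\ref{FSproperties}~(4), globalizing $\rho$ to the inner form $I$ with $I(\mathbb{R})$ compact modulo center, degenerating the uniformization spectral sequence using projectivity of supercuspidals, and killing non-basic strata via Fargues--Scholze parameters (as in Proposition~\ref{flatnatural}) --- but none of these produce any vanishing by themselves; the missing ingredient is precisely a substitute for purity that works without properness, and that is the actual content of \cite[Theorem~1.1]{Han}. (Two smaller inaccuracies: a supercuspidal parameter of $\GU_n$ is only bounded modulo the similitude factor, so the single-weight claim for the local cohomology needs the central-character twist spelled out; and even granting everything, identifying the surviving degree with the middle one requires this same bookkeeping rather than being automatic.)
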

Now, if we consider the case where $G = \GU_{n}$, $\mu = \mu_{d} = ((1^{d},0^{n - d}),1)$, and $b \in B(G, \mu)$ is the unique basic element. We let $\rho \in \Pi(J_{b})$ be a representation with supercuspidal $L$-parameter $\phi$. We know, by Theorem \ref{thm: introcompatibility}, that $\rho$ has supercuspidal Fargues--Scholze parameter. Therefore, if we can show that the local Shimura variety $\Sht(G,b,\mu)_{\infty}$ occurs in the basic uniformization of a global Shimura variety, we can conclude the complex $R\Gamma_{c}(G,b,\mu)[\rho]$ is concentrated in degree $0$ by Theorem \ref{concmiddledegree}. Uniformization is defined with respect to our fixed isomorphism $\iota_p: \overline{\mathbb{Q}}_{p} \xrightarrow{\sim} \mathbb{C}$. Following \cite[Section~2.1]{Mo}, we can consider the global unitary similitude group $\mathbf{G} := \mathbf{G}\mathbf{U}(d,n - d)/\mathbb{Q}$ defined by the natural Hermitian form of signature $(d,n - d)$ over $\R$ such that there exists a global Shimura datum  $(\mathbf{G},X)$ satisfying the following conditions:
\begin{enumerate}
    \item $(\mathbf{G},X)$ is of PEL type. 
    \item We have an isomorphism: $\mathbf{G}_{\mathbb{Q}_{p}} \simeq G$.
    \item The composition
    \[ \mathbb{G}_{m.\mathbb{C}} \hookrightarrow \prod_{\Gal(\mathbb{C}/\mathbb{R})} \mathbb{G}_{m,\mathbb{C}} \xrightarrow{X_{\mathbb{C}}} \mathbf{G}_{\mathbb{C}} \]
    is in the same conjugacy class as the geometric cocharacter $\mu_{d}$ after applying the isomorphism $\iota^{-1}_p$.
\end{enumerate}
Since the unitary group $G$ was defined with respect to $E/\mathbb{Q}_{p}$ an unramified extension it follows that this satisfies the conditions necessary to apply the results of \cite[Section~6]{RZ} (See also \cite[Theorem~3.4]{Han}) which in particular implies the desired uniformization condition. Combining this with Theorem \ref{hardinput}, we can deduce the following. 
\begin{theorem}
Let $\phi$ be a supercuspidal $L$-parameter and $\rho \in \Pi_{\phi}(J_b, \varrho_{b} \circ \varrho)$. We let $\mu_{d} = ((1^{d},0^{n - d}),1)$ as in Section 3.1 and $b \in B(\GU_{n},\mu_{d})$ be the unique basic element. We have an isomorphism 
\[ R\Gamma_{c}(\GU_n,b,\mu_{d})[\rho] = \bigoplus_{\pi \in \Pi_{\phi}(\GU_n, \varrho)} \pi \boxtimes \mathrm{Hom}_{S_{\phi}}(\delta_{\pi,\rho},r_{\mu_{d}}\circ \phi|_{W_{E}}) \]
of complexes of $G(\mathbb{Q}_{p}) \times W_{E}$-modules.
\end{theorem}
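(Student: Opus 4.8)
The plan is to combine the Grothendieck-group identity of Theorem \ref{hardinput} with the vanishing-outside-middle-degree statement of Theorem \ref{concmiddledegree} to upgrade the equality in $K_0(\GU_n(\mathbb{Q}_p) \times W_E)$ to an honest isomorphism of complexes. First I would verify that the hypotheses of Theorem \ref{concmiddledegree} are met for the datum $(\GU_n, b, \mu_d)$: hypothesis (1), that $\Sht(\GU_n, b, \mu_d)_\infty$ occurs in the basic uniformization at $p$ of a global Shimura variety, is exactly what the paragraph preceding the statement establishes using the PEL Shimura datum $(\mathbf{G}\mathbf{U}(d, n-d), X)$ over $\mathbb{Q}$, the unramifiedness of $E/\mathbb{Q}_p$, and the results of \cite[\S6]{RZ} (cf. \cite[Theorem~3.4]{Han}); hypothesis (2), that $\phi_\rho^{\mathrm{FS}}$ is supercuspidal, follows from Theorem \ref{thm: introcompatibility} since $\rho$ has supercuspidal $L$-parameter $\phi$, so that $\phi_\rho^{\mathrm{FS}} = \phi^{ss} = \phi$. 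Therefore $R\Gamma_c(\GU_n, b, \mu_d)[\rho]$ is concentrated in degree $0$.

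Next I would pass between the two normalizations: by Proposition \ref{flatnatural}, since $\phi_\rho^{\mathrm{FS}}$ is supercuspidal, we have $R\Gamma_c(\GU_n, b, \mu_d)[\rho] \simeq R\Gamma^\flat_c(\GU_n, b, \mu_d)[\rho]$ as $\GU_n(\mathbb{Q}_p) \times W_E$-modules, so the concentration in degree $0$ transfers to the $\flat$-version, and conversely the Grothendieck-group formula of Theorem \ref{hardinput} can be read as a formula for $R\Gamma_c$. Now, once a complex of admissible finite-length representations is concentrated in a single cohomological degree, its class in the Grothendieck group determines it up to isomorphism (the Grothendieck group of finite-length admissible $\GU_n(\mathbb{Q}_p)$-representations is free on the irreducibles, and likewise for the $W_E$-action since everything in sight is semisimple as a $W_E$-module after we know it lives in one degree — here one uses that the $W_E$-representations $\mathrm{Hom}_{S_\phi}(\delta_{\pi, \rho}, r_{\mu_d} \circ \phi|_{W_E})$ appearing on the right are genuine semisimple representations, being built out of the irreducible constituents of the semisimple $r_{\mu_d} \circ \phi|_{W_E}$). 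Hence the equality
\[ [R\Gamma_c(\GU_n, b, \mu_d)[\rho]] = \sum_{\pi \in \Pi_\phi(\GU_n, \varrho)} [\pi \boxtimes \mathrm{Hom}_{S_\phi}(\delta_{\pi, \rho}, r_{\mu_d} \circ \phi|_{W_E})] \]
in $K_0(\GU_n(\mathbb{Q}_p) \times W_E)$ lifts to the claimed isomorphism of complexes in degree $0$.

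The one genuine subtlety — and the step I expect to be the main obstacle — is the bookkeeping needed to make the last implication rigorous: knowing a complex $C$ is concentrated in degree $0$ and knowing $[C]$ in $K_0$ of the product group does pin down $C^0$ as a $\GU_n(\mathbb{Q}_p) \times W_E$-module, but one must be slightly careful that the right-hand side of Theorem \ref{hardinput}, a priori just a virtual class, is represented by an actual (non-virtual) module with a genuine commuting $W_E$-action, i.e. that no negative multiplicities or extension ambiguities survive. This is handled by the explicit description in Corollaries \ref{stdmucor} and \ref{extmucor} (which realize each $\mathrm{Hom}_{S_\phi}(\delta_{\pi, \rho}, r_{\mu_d} \circ \phi|_{W_E})$ as an honest sub-sum of the semisimple $W_E$-module $r_{\mu_d} \circ \phi|_{W_E}$), together with the fact that the $W_E$-action on the cohomology of a local Shtuka space at a fixed degree is automatically a continuous finite-dimensional representation, hence semisimple on the pieces we care about; on the $\GU_n(\mathbb{Q}_p)$-side admissibility and finite length are guaranteed by \cite[Corollary~I.7.3]{FS}. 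With these inputs the virtual identity and the degree-concentration combine formally to give the stated isomorphism, and the proof is complete.
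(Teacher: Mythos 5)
Your proposal follows the paper's overall architecture — combine the Grothendieck-group identity of Theorem~\ref{hardinput} with the degree-concentration of Theorem~\ref{concmiddledegree}, the latter's hypotheses being verified via the PEL uniformization for $\mathbf{GU}(d,n-d)$ and the compatibility Theorem~\ref{thm: introcompatibility} — and the reduction of the $\flat$-normalization via Proposition~\ref{flatnatural} is fine. The gap is in the final step, where you pass from the virtual identity plus concentration in degree $0$ to the claimed isomorphism of honest modules.

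Your assertion that, for a complex $C$ of admissible finite-length representations concentrated in degree $0$, the class $[C]$ in $K_0(\GU_n(\mathbb{Q}_p)\times W_E)$ \emph{determines} $C^0$ up to isomorphism is false: $K_0$ only records Jordan--H\"older constituents with multiplicities, so $C^0$ could be a non-split iterated extension of the representations appearing on the right-hand side. You attempt to excuse this by invoking automatic semisimplicity of the $W_E$-action, but this is both unjustified (the Frobenius action on cohomology need not be semisimple) and in any case irrelevant to the real issue, which is the possibility of non-split extensions in the $\GU_n(\mathbb{Q}_p)$-direction. What actually closes this gap — and what the paper does — is to observe that every constituent of $C^0$ is supercuspidal and that $C^0$ has a fixed central character (that of $\rho$), and then to invoke that supercuspidal representations are injective and projective in the category of smooth representations with fixed central character, so every extension of such constituents splits. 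This splitting argument is the essential missing ingredient; without it the proposal does not establish the isomorphism, only the equality of classes in $K_0$.
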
 
\begin{proof}
By Theorem \ref{hardinput}, we know that the above formula holds in the Grothendieck group of $\GU_{n}(\mathbb{Q}_{p}) \times W_{E}$-modules. By the above discussion, Theorem \ref{thm: introcompatibility}, and Theorem \ref{concmiddledegree}, it follows that $R\Gamma_{c}(\GU_n,b,\mu_{d})[\rho]$ is concentrated in degree $0$. Thus, $R\Gamma_{c}(\GU_n,b,\mu_{d})[\rho]$ is an iterated extension of $G(\mathbb{Q}_{p}) \times W_{E}$-representations of the desired form. If $\omega_{\rho}$ denotes the central character of $\rho$ then $R\Gamma_{c}(\GU_{n},b,\mu_{d})[\rho]$ is valued in a complex of smooth representations with central character $\omega_{\rho}$. However, since all the representations occurring are supercuspidal since they have supercuspidal $L$-parameter, it follows that this extension must split because supercuspidals are injective/projective in the category of smooth representations with fixed central character. The claim follows.
\end{proof}
In particular, we will be interested in the following consequence, as in Corollary \ref{explicitstdmuKottwitzConj}. 
\begin{theorem}{\label{thm: kottwitzconjreps}}
For $\pi_{[I],m_0+1} \in \Pi_{\phi}(J_{b}, \varrho_b \circ \varrho)$ corresponding to $I = \mc{P}(\{1,\ldots,r\})/\sim$, we have an isomorphism
\[ R\Gamma_{c}(\GU_n,b,\mu_{1})[\pi_{[I],m_0+1}] = \bigoplus_{i = 1}^{r} \pi_{[I \oplus \{i\}],m_0} \boxtimes {\phi_{i}}  \]
of complexes of $\GU_n(\mathbb{Q}_{p}) \times W_{E}$-modules, where
\begin{itemize}
    \item 
     $r_{\mu} \circ \phi|_{W_E} := \bigoplus_{i = 1}^{r} \phi_{i}$
is a decomposition into irreducible representations $\phi_{i}$ of $W_{E}$,
    \item $\pi_{[I \oplus \{i\}],0}$ is the unique representation in $\Pi_{\phi}(\GU_n, \varrho)$ corresponding to the symmetric difference $[I \oplus \{i\}] \in \mathcal{P}(\{1,\ldots,r\})/\sim$ (see \S\ref{sss: galoisrepcomp}).
    \end{itemize}
More generally, for $1 \leq d \leq r$, we write $\Pi^{d}_{\phi}(\GU_{n},\varrho) \subset \Pi_{\phi}(\GU_{n},\varrho)$ for the subset of representations of the form $\pi_{[I \oplus J], m_0}$, for $[J] \in \mc{P}(\{1,\ldots,r\})/\sim$ represented by a subset $J \in \mc{P}(\{1,\ldots,r\})$ of cardinality $d$. We then have an isomorphism
\[ R\Gamma_{c}(\GU_n,b,\mu_{d})[\pi_{[I],m_0+1}] \simeq \bigoplus_{\pi_{[I \oplus J],m_0} \in \Pi^{d}_{\phi}(\GU_{n},\varrho)} \pi_{[I \oplus J],m_0} \boxtimes \phi_{J}  \oplus  \bigoplus_{\pi \notin \Pi^{d}_{\phi}(\GU_{n},\varrho)} \pi \boxtimes \mathrm{Hom}_{S_{\phi}}(\delta_{\pi,\rho},r_{\mu_{d}}\circ \phi|_{W_{E}}) \]
where $\phi_{J} := \bigotimes_{j \in J} \phi_{j}$. 
\end{theorem}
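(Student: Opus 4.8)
The plan is to deduce this refinement of the Kottwitz conjecture from the Grothendieck-group statement of Corollary \ref{explicitstdmuKottwitzConj} (equivalently Theorem \ref{hardinput}) by upgrading it to an honest isomorphism of complexes, exactly as was done in the statement immediately preceding this one but now spelled out for the cocharacters $\mu_d$ and the explicit combinatorics of \S\ref{sss: galoisrepcomp}. First I would apply Theorem \ref{hardinput} with $\mu = \mu_d$ and feed the resulting identity through Corollaries \ref{stdmucor} and \ref{extmucor}, which compute each $\Hom_{S_\phi}(\delta_{\pi,\rho}, r_{\mu_d}\circ\phi|_{W_E})$ explicitly: for $d=1$ one gets $\phi_i$ attached to $\pi_{[I\oplus\{i\}],m_0}$, and for general $1\le d\le r$ one gets $\phi_J=\bigotimes_{j\in J}\phi_j$ attached to $\pi_{[I\oplus J],m_0}\in\Pi^d_\phi(\GU_n,\varrho)$, together with the remaining terms indexed by $\pi\notin\Pi^d_\phi(\GU_n,\varrho)$. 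This produces the desired equality in $K_0(\GU_n(\Q_p)\times W_E)$.

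Next I would argue that the complex $R\Gamma_c(\GU_n,b,\mu_d)[\rho]$ is concentrated in degree $0$. Here I invoke Theorem \ref{concmiddledegree} of Hansen: condition (2) holds because $\rho$ has supercuspidal $L$-parameter $\phi$, hence supercuspidal Fargues--Scholze parameter by Theorem \ref{thm: introcompatibility}; condition (1), the basic uniformization hypothesis, is exactly what the preceding discussion establishes using the PEL-type Shimura datum $(\mathbf{GU}(d,n-d),X)$ and the results of \cite[Section~6]{RZ} (see also \cite[Theorem~3.4]{Han}), using that $E/\Q_p$ is unramified. Granting concentration in degree $0$, the object $R\Gamma_c(\GU_n,b,\mu_d)[\rho]$ is a single smooth admissible $\GU_n(\Q_p)\times W_E$-representation which, by the $K_0$-identity just obtained, is an iterated extension of the representations $\pi\boxtimes\Hom_{S_\phi}(\delta_{\pi,\rho},r_{\mu_d}\circ\phi|_{W_E})$.

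Finally I would show the extension splits. All the $\pi$ occurring are supercuspidal, since they have supercuspidal $L$-parameter $\phi$ (being members of the packet $\Pi_\phi(\GU_n,\varrho)$ for $\phi$ supercuspidal), and $R\Gamma_c(\GU_n,b,\mu_d)[\rho]$ has fixed central character $\omega_\rho$; supercuspidal representations are injective and projective in the category of smooth representations with fixed central character, so any extension among them splits. This gives the asserted isomorphism of $\GU_n(\Q_p)\times W_E$-modules; substituting the values of the $\Hom$-spaces from Corollaries \ref{stdmucor} and \ref{extmucor} yields the two displayed formulas. The main obstacle is the verification that the relevant local Shimura varieties $\Sht(\GU_n,b,\mu_d)_\infty$ genuinely appear in the basic uniformization of a global PEL Shimura variety so that Hansen's theorem applies — i.e.\ producing the global group $\mathbf{GU}(d,n-d)$ with the correct local behavior at $p$ and checking the Rapoport--Zink uniformization hypotheses; everything after that is formal bookkeeping with the combinatorics of $X^*(S_\phi)$ and the splitting argument.
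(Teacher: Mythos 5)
Your proof is correct and follows the paper's argument essentially verbatim: upgrade the Grothendieck-group identity of Theorem \ref{hardinput} to an isomorphism using Hansen's degree-$0$ concentration (via compatibility and PEL basic uniformization for $\mathbf{GU}(d,n-d)$), split the resulting iterated extension using injectivity/projectivity of supercuspidals with fixed central character, and then substitute the explicit $\Hom$-computations from Corollaries \ref{stdmucor} and \ref{extmucor}. The only cosmetic difference is that the paper factors this through the unnumbered theorem immediately preceding the statement, whereas you present it in one pass; the content is identical.
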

Now we will combine these strong forms of the Kottwitz conjecture with the spectral action to deduce the analogous results for non-minuscule $\mu$. 
\subsubsection{The non-minuscule case}
We recall that, if write the based-changed parameter $\phi_{E}$ as $\phi_{E} \simeq \bigoplus_{i = 1}^{r} \phi_{i}$, for $\phi_{i}$-distinct irreducibles, then we have an isomorphism: 
\[ S_{\phi} \simeq (\mathbb{Z}/2\mathbb{Z})^{r - 1} \times  \C^{\times}. \]
We now use the notation for $X^*(S^{\natural}_{\phi})$ established in \S\ref{sss: galoisrepcomp}. We write $b_{m}$ for the basic element of $B(\GU_n)$ with $\kappa(b_{m}) = m$. In what follows, we will, for $I \in \mc{P}(\{1,\ldots,r\})$ with image $[I] \in \mc{P}(\{1,\ldots,r\})/\sim$, abuse notation and conflate the representation $\pi_{[I],m}$ with the sheaf $j_{b_{m}!}(\pi_{[I],m}) \in \Dlis(\Bun_{G},\overline{\mathbb{Q}}_{\ell})$. We consider the irreducible characters $\ttau_{[I],m}$ of $S_{\phi}$, and let $\Act_{\ttau_{[I],m}}$ be the $\Act$-functor attached to it, as defined in Section \ref{sss: spectralaction}.  We have the following. 
\begin{proposition}{\label{ActgenGUn}}
For fixed $I,J \in \mc{P}(\{1,\dots,r\})$ and $m \in \Z$, we have isomorphisms of $J_{b_m}(\mathbb{Q}_{p})$-modules
\[ \Act_{\ttau_{[J],1}}(\pi_{[I],m+1}) \simeq \pi_{[I \oplus J],m} \]
\[\Act_{\ttau_{[J],0}}(\pi_{[I],m}) \simeq \pi_{[I \oplus J],m},   \]
and an isomorphism 
\[ \Act_{\ttau_{[J],-1}}(\pi_{[I],m}) \simeq \pi_{[I \oplus J],m+1} \]
of $J_{b_{m+1}}(\mathbb{Q}_{p})$-modules.
\end{proposition}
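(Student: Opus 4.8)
The plan is to reduce Proposition~\ref{ActgenGUn} to the minuscule Kottwitz conjecture (Theorem~\ref{thm: kottwitzconjreps}) using the monoidality of the spectral action, so that the argument splits into a singleton base case and a purely combinatorial propagation step. \emph{Step 1 (the singleton case).} First I would treat $J=\{i\}$. By the computation in \S\ref{sss: galoisrepcomp}, as an $S_\phi\times W_E$-representation one has $r_{\mu_1}\circ\phi\simeq\bigoplus_{i=1}^r\ttau_{i,1}\boxtimes\phi_i$, where the $\phi_i$ are pairwise non-isomorphic irreducible $W_E$-representations (here using that $\phi$ is supercuspidal, so $\mc{L}_E$- and $W_E$-representations agree). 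Applying Theorem~\ref{thm: RGammavsAct} to the basic local shtuka datum $(\GU_n,b,\mu_1)$ gives
\[ R\Gamma_c(\GU_n,b,\mu_1)[\pi_{[I],m+1}]\;\simeq\;\bigoplus_{i=1}^r\Act_{\ttau_{i,1}}(\pi_{[I],m+1})\boxtimes\phi_i, \]
while Theorem~\ref{thm: kottwitzconjreps} identifies the same object with $\bigoplus_{i=1}^r\pi_{[I\oplus\{i\}],m}\boxtimes\phi_i$ and in particular shows it is concentrated in degree $0$. Since the $\phi_i$ are distinct irreducibles, passing to $\phi_i$-isotypic components yields $\Act_{\ttau_{i,1}}(\pi_{[I],m+1})\simeq\pi_{[I\oplus\{i\}],m}$ as $J_{b_m}(\Q_p)$-modules; Lemma~\ref{actirred} promotes this to an isomorphism of sheaves $\Act_{\ttau_{i,1}}(\pi_{[I],m+1})\simeq j_{b_m!}(\pi_{[I\oplus\{i\}],m})$ on $\Bun_{\GU_n}$. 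Letting the extended pure inner twist vary, $m$ ranges over all of $\Z$, so the singleton case holds in full.

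\emph{Step 2 (propagation).} The crucial combinatorial input is that the characters $\ttau_{1,1},\dots,\ttau_{r,1}$ generate the entire group $X^*(S_\phi)$. Writing $O=\{i:\dim\phi_i\text{ odd}\}$, the relation $\tilde\det=\prod_{i\in O}\ttau_i=1$ in $X^*(S_\phi)$ together with the fact that $|O|\equiv\sum_i\dim\phi_i=n$ is \emph{odd} lets one solve first for each $\ttau_i$ (as a product of an even number of $\ttau_j$'s, hence of $\ttau_{j,1}$'s times a power of $\widehat c$) and then for $\widehat c$ itself in terms of the $\ttau_{j,1}$. This is exactly where the hypothesis $n$ odd enters: otherwise the $\ttau_{i,1}$ would span only an index-$2$ subgroup and $\Act_{\ttau_{[J],1}}$ for $|J|$ even would be inaccessible by this method. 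Now $\Act_{\ttau_{i,1}^{-1}}=\Act_{\ttau_{i,-1}}$ because $\Act$ is monoidal and $\Act_{\mathbf 1}=\mathrm{id}$ on the subcategory of sheaves with supercuspidal Fargues--Scholze parameter, and the formula $\Act_{\ttau_{i,-1}}(\pi_{[K],\ell})\simeq\pi_{[K\oplus\{i\}],\ell+1}$ follows formally from Step 1 by composing with $\Act_{\ttau_{i,1}}$. Hence $\chi\mapsto\Act_\chi$ is a homomorphism from $X^*(S_\phi)$ to the group of self-equivalences of this subcategory, determined by its values on the $\ttau_{i,1}$. Writing $\ttau_{[J],1}$ (resp. $\ttau_{[J],-1}$) as a word in the $\ttau_{i,1}^{\pm1}$ and iterating Step 1, the subset indices accumulate by symmetric difference to $[J]$ and the exponents of $\widehat c$ sum to $+1$ (resp. $-1$), so the HN-stratum index shifts by $-1$ (resp. $+1$); this produces exactly $\Act_{\ttau_{[J],1}}(\pi_{[I],m+1})\simeq\pi_{[I\oplus J],m}$ and $\Act_{\ttau_{[J],-1}}(\pi_{[I],m})\simeq\pi_{[I\oplus J],m+1}$. (The second isomorphism also follows directly from the first by applying $\Act_{\ttau_{[J],-1}}$ to it and using $\ttau_{[J],-1}\otimes\ttau_{[J],1}=\mathbf 1$.)

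\emph{Main obstacle.} The delicate step is Step 1: one must match the spectral-action description of $R\Gamma_c(\GU_n,b,\mu_1)[\pi_{[I],m+1}]$ with its Kottwitz-conjecture description while correctly tracking the $W_E$-action and, via the identification of the connected components of $\Bun_{\GU_n}$ with $\pi_1(\GU_n)_\Gamma$, the basic HN-stratum on which $\Act_{\ttau_{i,1}}(\pi_{[I],m+1})$ is supported; the separation of the individual $\Act$-functors genuinely requires the $\phi_i$ to be distinct irreducibles. Once Step 1 is in place, everything else is bookkeeping inside $X^*(S_\phi)\cong(\mc{P}(\{1,\dots,r\})/\!\sim)\times\Z$.
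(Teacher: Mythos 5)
Your proposal is correct, but it reaches the general-$J$ case by a different route than the paper. The paper proves the first isomorphism for arbitrary $J$ in one stroke, by comparing the spectral-action decomposition (Theorem \ref{thm: RGammavsAct}) with the full $\mu_{d}$-case of the Kottwitz description (Theorem \ref{thm: kottwitzconjreps}, i.e.\ Corollary \ref{extmucor}) and invoking Lemma \ref{actirred} and Schur's lemma, and only then uses monoidality to get the $\ttau_{[J],-1}$ statement (your final parenthetical is exactly the paper's argument for that part). You instead use only the $d=1$ case, where the matching is cleanest because the multiplicity spaces are the pairwise distinct irreducibles $\phi_{i}$, and then propagate to arbitrary $[J]$ and to the $-1$ twist by monoidality of the $\Act$-functors together with the observation that the $\ttau_{i,1}$ generate $X^*(S_{\phi})$ — your bookkeeping here is right: $\prod_{j\in J}\ttau_{j,1}=\ttau_{[J],|J|}$, $\ttau_{i,1}^{2}=\widehat{c}^{2}$, and $\prod_{i\in O}\ttau_{i,1}=\widehat{c}^{|O|}$ with $|O|\equiv n$ odd, so every $\ttau_{[J],\pm1}$ is a word in the $\ttau_{i,1}^{\pm1}$, and $\Act_{\mathbf{1}}=\mathrm{id}$ on the $\phi$-subcategory (via Theorem \ref{thm: introcompatibility}) makes the composite well behaved. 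What each approach buys: the paper's argument is shorter given that the exterior-power Kottwitz statement is already on the table and shows directly that the answer is consistent with $r_{\mu_d}\circ\phi$ for all $d$; yours needs strictly less input (only the $\mu_{1}$ case of Theorem \ref{hardinput}) and sidesteps the mild delicacy that for $d>1$ the representations $\phi_{J}=\bigotimes_{j\in J}\phi_{j}$ need not be distinct irreducible $W_{E}$-representations, so the Schur-type separation is less immediate there. Your Step 2 is also very much in the spirit of the induction the paper itself runs in Proposition \ref{prop: Allactval}, so in effect you prove that proposition's mechanism one level earlier; no gap.
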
 
\begin{proof}
By combining the first part of Theorem \ref{thm: kottwitzconjreps}, Theorem \ref{thm: RGammavsAct}, Lemma \ref{actirred}, and Schur's lemma (using that the $\phi_{i}$ are distinct irreducible representations), we deduce an isomorphism: 
\[ \Act_{\tilde{\tau}_{j,1}}(\pi_{[I],m + 1}) \simeq \pi_{[I \oplus \{j\}],m}, \]
for all $m \in \mathbb{Z}$ and $j \in \{1,\ldots,r \}$. We now use that the $\Act$-functors are monoidal. In particular, note that $\ttau_{j,1}^{-1} \simeq \ttau_{j,-1}$, and so we have an isomorphism
\[ \Act_{\ttau_{j,-1}}(\pi_{[I],m}) \simeq \Act_{\ttau_{j,-1}} \circ \Act_{\ttau_{j,1}}(\pi_{[I],m+1}) \simeq  \Act_{\mathbf{1}}(\pi_{[I \oplus \{j\}],m+1}) \simeq \pi_{[I \oplus \{j\}],m+1}, \] 
of $J_{b_{m + 1}}(\mathbb{Q}_{p})$-representations, where $\mathbf{1}$ denotes the trivial representation. It follows that it suffices to show that $\Act_{\ttau_{[J],0}}(\pi_{[I],m}) \simeq \pi_{[I \oplus J],m}$ for all $J \subset \{1,\ldots,r\}$, by applying $\Act_{\ttau_{[J],0}}(-)$ to the previous two isomorphisms, and using the monoidal property again.

To do this note that, given two elements $j_{1},j_{2} \in \{1,\ldots,r\}$, we can use that $\ttau_{[j_{1} \oplus j_{2}],0} \simeq \ttau_{j_{1},1} \otimes \ttau_{j_{2},-1}$ and the monoidal property to deduce that
\[ \Act_{\ttau_{[j_{1} \oplus j_{2}],0}}(\pi_{I,m}) \simeq \pi_{[I \oplus \{j_{1},j_{2}\}],m}, \]
for $j_{1},j_{2}$ distinct elements.
Similarly, for any set $J$ such that $|J|$ is even, we deduce that 
\[ \Act_{\ttau_{[J],0}}(\pi_{I,m}) \simeq \pi_{[I \oplus J],m}. \]
We write $I_{\mathrm{odd}} \subset \{1,\ldots,r\}$ for the subsets for which the $d_{i}$ are odd. As discussed in \S 2.2.4, $J$ and $J \oplus I_{\mathrm{odd}}$ give two representatives of $[J]$. Moreover, since $n$ is odd, $I_{\mathrm{odd}}$ has odd parity, and it follows that $J$ and $J \oplus I_{\mathrm{odd}}$ have different parities. Therefore, we have deduced that $\Act_{\ttau_{[J],0}}(\pi_{I,m}) \simeq \pi_{[I \oplus J],m}$ for all subsets $J \subset \{1,\ldots,r\}$ and $m \in \mathbb{Z}$, as desired.
\end{proof}
We now determine the values of $\Act_{W}$ for all $W \in \Rep_{\ol{\mathbb{Q}}_{\ell}}(S_{\phi})$.  
\begin{proposition}{\label{prop: Allactval}}
Let $I,J \subset \{1,\ldots,r\}$ be two index sets and $m_1,m_2 \in \mathbb{Z}$. Then we have an isomorphism
\[ \Act_{\ttau_{[J],m_1}}(\pi_{[I],m_2}) \simeq \pi_{[I \oplus J],m_2 - m_{1}} \]
of $J_{b_{m_{2} - m_{1}}}(\mathbb{Q}_{p})$-representations. 
\end{proposition}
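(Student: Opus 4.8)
The plan is to deduce everything from the two special cases already established in Proposition \ref{ActgenGUn} together with the monoidality of the spectral action. The first step is a purely group-theoretic observation: under the identification of $X^*(S_\phi)$ with $(\mc{P}(\{1,\dots,r\})/\!\sim)\times\Z$ from \S\ref{sss: galoisrepcomp}, in which $\widehat{c}$ corresponds to $([\emptyset],1)$, one has $\ttau_{[J],m_1}\simeq\widehat{c}^{\,m_1-1}\otimes\ttau_{[J],1}$. Since the $\Act$-functors are monoidal, i.e.\ $\Act_{W\otimes W'}(-)\simeq\Act_W\circ\Act_{W'}(-)$ (as noted after Theorem \ref{thm: RGammavsAct}), this lets me factor $\Act_{\ttau_{[J],m_1}}\simeq\Act_{\widehat{c}^{\,m_1-1}}\circ\Act_{\ttau_{[J],1}}$, reducing the problem to knowing the effect of $\Act_{\ttau_{[J],1}}$ and of $\Act_{\widehat{c}^{\,k}}$ separately.

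The second step is to handle the ``similitude twist'' factor, namely to show that $\Act_{\widehat{c}^{\,k}}(\pi_{[I],m})\simeq\pi_{[I],m-k}$ for all $k\in\Z$. The cases $k=\pm1$ are precisely the specializations $J=\emptyset$ of the two isomorphisms in Proposition \ref{ActgenGUn}, using $\ttau_{[\emptyset],1}=\widehat{c}$ and $\ttau_{[\emptyset],-1}=\widehat{c}^{-1}$; the case $k=0$ is the statement that $\Act_{\mathbf{1}}$ acts as the identity on representations with supercuspidal Fargues--Scholze parameter, which applies here by Theorem \ref{thm: introcompatibility} exactly as in the proof of Lemma \ref{actirred}. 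The general $k$ then follows by a short induction on $|k|$, peeling off one factor $\widehat{c}^{\pm1}$ at a time and invoking monoidality.

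The final step is to assemble the pieces: applying the first isomorphism of Proposition \ref{ActgenGUn} with $m=m_2-1$ gives $\Act_{\ttau_{[J],1}}(\pi_{[I],m_2})\simeq\pi_{[I\oplus J],m_2-1}$, and then the similitude-twist formula with $k=m_1-1$ gives $\Act_{\widehat{c}^{\,m_1-1}}(\pi_{[I\oplus J],m_2-1})\simeq\pi_{[I\oplus J],\,m_2-m_1}$ as a $J_{b_{m_2-m_1}}(\Q_p)$-representation, which is the asserted isomorphism. I do not expect a genuine obstacle in this argument; the only point that needs care is the bookkeeping of $\kappa$-invariants -- keeping track of which group $J_{b_\bullet}(\Q_p)$ each representation is a module over, as the Harder--Narasimhan support is shifted by the central character of the twisting character -- and checking that every representation appearing has supercuspidal Fargues--Scholze parameter, so that $\Act_{\mathbf{1}}$ is trivial on it; this holds because $\phi$ is supercuspidal and each $\pi_{[I],m}$ lies in the $L$-packet of $\phi$ for $J_{b_m}$.
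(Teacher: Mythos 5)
Your proof is correct and follows essentially the same route as the paper: both arguments reduce to the two isomorphisms of Proposition \ref{ActgenGUn} via the monoidality $\Act_{W\otimes W'}\simeq\Act_{W}\circ\Act_{W'}$, inducting on the power of the similitude character $\widehat{c}=\ttau_{[\emptyset],1}$ (the paper phrases this as induction on $m_1$, peeling off one factor $\ttau_{[\emptyset],\pm 1}$ at a time, with the $m_1=0$ case handled by $\ttau_{[J],0}\simeq\ttau_{[\emptyset],-1}\otimes\ttau_{[J],1}$). The only difference is the bookkeeping order in which you apply $\Act_{\ttau_{[J],1}}$ before the $\widehat{c}$-twists, which is immaterial.
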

\begin{proof}
The case where $m_{1} = 0$ follows from the previous Proposition. We assume $m_{1} > 0$, and proceed by induction on $m_{1}$. The base case of $m_{1} = 1$ is Proposition \ref{ActgenGUn}.  For the inductive step, using the monoidal property of $\Act$-functors we have an isomorphism  
\[ \Act_{\ttau_{[J],m_1}}(\pi_{[I],m_2}) \simeq \Act_{\ttau_{\emptyset,1}} \circ \Act_{\ttau_{[J],m_1 - 1}}(\pi_{[I],m_{2}}) \]
which we can rewrite using the inductive hypothesis as 
\[ \Act_{\ttau_{\emptyset,1}}(\pi_{[I \oplus J],m_{2} - m_{1} + 1}) \simeq \pi_{[I \oplus J],m_{2} - m_{1}} \]
as desired, where the previous isomorphism follows from Proposition \ref{ActgenGUn}. The case that $m_{1} < 0$ follows similarly, using the second part of Proposition \ref{ActgenGUn}. 
\end{proof}
We now summarize the consequences of Proposition \ref{prop: Allactval} for the non-minuscule shtuka spaces a little more explicitly. Let $(\GU^*_{n},b_{m_1},b_{m_2},\mu)$ be a basic local shtuka datum for $\GU^*_{n}$, where $\mu$ is a geometric dominant cocharacter with reflex field $E_{\mu}$ such that $\mu^{\flat} = m_1 - m_2$. For a finite index set $I \subset \{1,\ldots,r\}$ and a supercuspidal $L$-parameter $\phi$, we write $r_{\mu} \circ \phi|_{W_{E_{\mu}}} = \bigoplus_{j = 1}^{k} W^{\mu}_{j} \boxtimes \sigma_{j}^{\mu}$, viewed as a representation of $S_{\phi} \times W_{E_{\mu}}$. We can write the representations $W_{j}^{\mu} = \ttau_{[I^{\mu}_{j}],m_1 - m_2}$ for some finite index sets $I^{\mu}_{j} \subset \{1,\ldots,r\}$ and $j \in \{1,\ldots,k\}$. Then, by combining Lemma \ref{shimhecke}, Theorem \ref{thm: RGammavsAct}, and Proposition \ref{prop: Allactval}. We deduce the following generalization of Theorem \ref{thm: kottwitzconjreps}.
\begin{theorem}{\label{nonmindescr}}
For an arbitrary index set $I \subset \{1,\ldots,r\}$, we consider the representations $\pi_{[I],m_1} \in \Pi_{\phi}(J_{b_{m_1}}, \varrho_{b_{m_1}})$ and $\pi_{[I],m_2} \in \Pi_{\phi}(J_{b_{m_2}}, \varrho_{b_{m_2}})$. Then we have an isomorphism
\[ R\Gamma_{c}(\GU^*_n,b_{m_1},b_{m_2},\mu)[\pi_{[I],m_1}] \simeq  \bigoplus_{j = 1}^{k}  \pi_{[I \oplus I^{\mu}_{j}],m_2} \boxtimes {\sigma_{j}^{\mu}}  \]
of $J_{b_{m_2}}(\mathbb{Q}_{p}) \times W_{E_{\mu}}$-modules, and an isomorphism 
\[ R\Gamma_{c}(\GU^*_n,b_{m_1},b_{m_2},\mu)[\pi_{[I],m_2}] \simeq  \bigoplus_{j = 1}^{k}  \pi_{[I \oplus I^{\mu}_{j}], m_1} \boxtimes (\sigma_{j}^{\mu})^{\vee} \]
of $J_{b_{m_1}}(\mathbb{Q}_{p}) \times W_{E_{\mu}}$-modules. 
\end{theorem}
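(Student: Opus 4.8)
The plan is to assemble pieces already in hand: the spectral-action description of $R\Gamma_{c}(\GU^*_n,b_{m_1},b_{m_2},\mu)[-]$ furnished by Theorem \ref{thm: RGammavsAct} (via Lemma \ref{shimhecke}), together with the complete evaluation of all $\Act$-functors on the $L$-packet $\Pi_{\phi}$ carried out in Proposition \ref{prop: Allactval}; what remains is essentially the combinatorial bookkeeping of $S_{\phi}$-characters.

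First I would observe that, since $\phi$ is supercuspidal, Theorem \ref{thm: introcompatibility} makes the Fargues--Scholze parameters of $\pi_{[I],m_1}$ and $\pi_{[I],m_2}$ supercuspidal, so $R\Gamma_{c}$ and $R\Gamma^{\flat}_{c}$ coincide by Proposition \ref{flatnatural} and Theorem \ref{thm: RGammavsAct} applies. Decompose $r_{\mu}\circ\phi|_{W_{E_{\mu}}} \simeq \bigoplus_{j=1}^{k} W^{\mu}_j \boxtimes \sigma^{\mu}_j$ as a representation of $S_{\phi}\times W_{E_{\mu}}$ with each $W^{\mu}_j$ irreducible. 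The one step needing care is the claim that $W^{\mu}_j = \ttau_{[I^{\mu}_j],\,m_1-m_2}$ for suitable $I^{\mu}_j \subset \{1,\dots,r\}$: the center $Z(\widehat{\GU_n})^{\Gamma}\simeq\C^{\times}$ acts on $V_{\mu}$, hence on every $W^{\mu}_j$, through $\widehat{c}^{\,\mu^{\flat}}$, and $\mu^{\flat}=m_1-m_2$ by hypothesis, so the $\Z$-coordinate of the $\ttau$-character is pinned down; the sets $I^{\mu}_j$ record how $V_{\mu}$ restricts to the $(\Z/2\Z)^{r-1}$-factor, exactly as in the dictionary of \S\ref{sss: galoisrepcomp}. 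Then Theorem \ref{thm: RGammavsAct} yields
\[ R\Gamma_{c}(\GU^*_n,b_{m_1},b_{m_2},\mu)[\pi_{[I],m_1}] \simeq \bigoplus_{j=1}^{k} \Act_{\ttau_{[I^{\mu}_j],\,m_1-m_2}}(\pi_{[I],m_1}) \boxtimes \sigma^{\mu}_j, \]
and Proposition \ref{prop: Allactval} identifies $\Act_{\ttau_{[I^{\mu}_j],\,m_1-m_2}}(\pi_{[I],m_1}) \simeq \pi_{[I\oplus I^{\mu}_j],\,m_1-(m_1-m_2)} = \pi_{[I\oplus I^{\mu}_j],\,m_2}$, which gives the first isomorphism.

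For the second isomorphism I would run the same argument through the $\pi_{b'}$-version of the comparison: by Lemma \ref{shimhecke}, $R\Gamma_{c}(\GU^*_n,b_{m_1},b_{m_2},\mu)[\pi_{[I],m_2}] \simeq j_{b_{m_1}}^{*} T_{-\mu} j_{b_{m_2}!}(\pi_{[I],m_2})$, which is also $R\Gamma_{c}(\GU^*_n,b_{m_2},b_{m_1},-\mu)[\pi_{[I],m_2}]$ for the reversed local shtuka datum; applying Theorem \ref{thm: RGammavsAct} to this datum gives $\bigoplus_{j} \Act_{W^{-\mu}_j}(\pi_{[I],m_2}) \boxtimes \sigma^{-\mu}_j$, where $r_{-\mu}\circ\phi|_{W_{E_{\mu}}} \simeq \bigoplus_{j} W^{-\mu}_j \boxtimes \sigma^{-\mu}_j$. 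Since $r_{-\mu}\cong r_{\mu}^{\vee}$, this forces $W^{-\mu}_j = (W^{\mu}_j)^{\vee} = \ttau_{[I^{\mu}_j],\,m_2-m_1}$ (using $\ttau_{[J],m}^{-1}=\ttau_{[J],-m}$, valid because each $\ttau_i$ has order two so $[J]\oplus[J]=[\emptyset]$) and $\sigma^{-\mu}_j=(\sigma^{\mu}_j)^{\vee}$. Proposition \ref{prop: Allactval} then gives $\Act_{\ttau_{[I^{\mu}_j],\,m_2-m_1}}(\pi_{[I],m_2}) \simeq \pi_{[I\oplus I^{\mu}_j],\,m_2-(m_2-m_1)} = \pi_{[I\oplus I^{\mu}_j],\,m_1}$, producing the second isomorphism, of $J_{b_{m_1}}(\Q_p)\times W_{E_{\mu}}$-modules.

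The main obstacle is not located in this proof at all: it sits upstream in Proposition \ref{prop: Allactval}, which rests on the minuscule Kottwitz conjecture with coefficients (Theorem \ref{thm: kottwitzconjreps}), on the concentration of the relevant complexes in a single cohomological degree, and on the monoidality of the spectral action. Granting those, the only genuine verification inside the present argument is the character identification $W^{\mu}_j = \ttau_{[I^{\mu}_j],\,m_1-m_2}$, which is a routine reading-off of weights from the combinatorics of \S\ref{sss: galoisrepcomp}, and the harmless duality bookkeeping for $r_{-\mu}$.
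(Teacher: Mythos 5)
Your proof is correct and follows exactly the route the paper intends (the paper only says the theorem is deduced ``by combining Lemma \ref{shimhecke}, Theorem \ref{thm: RGammavsAct}, and Proposition \ref{prop: Allactval}'' without spelling out details). You have filled in the right details — the identification $W^{\mu}_{j} = \ttau_{[I^{\mu}_{j}],\,m_1-m_2}$ via the central character, the reduction of the $\pi_{b'}$-version to Theorem \ref{thm: RGammavsAct} by passing to the swapped datum $(\GU^*_n,b_{m_2},b_{m_1},-\mu)$ with $r_{-\mu}\simeq r_{\mu}^{\vee}$, and the two applications of Proposition \ref{prop: Allactval} — and these all check out.
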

Thus, modulo understanding the representation-theoretic question of decomposing $r_{\mu} \circ \phi$ as a $S_{\phi} \times W_{E_{\mu}}$-representation, this completely describes the contributions of representations with supercuspidal $L$-parameters to the shtuka spaces. We now want to organize our result in a slightly nicer form. In particular, the datum of all these isomorphisms can be conveniently organized into an eigensheaf attached to the parameter $\phi$. 
\subsubsection{Fargues' Conjecture}
We start with the central definition. 
\begin{definition}{\label{defeigsheaf}}
For $G/\mathbb{Q}_{p}$ any connected reductive group, given an $L$-parameter $\phi: W_{\mathbb{Q}_{p}} \rightarrow \phantom{}^{L}G(\ol{\mathbb{Q}}_{\ell})$, we say a sheaf $\mathcal{G}_{\phi} \in \Dlis(\Bun_{G},\ol{\mathbb{Q}}_{\ell})$ is a Hecke eigensheaf with eigenvalue $\phi$ if, for all $V \in \Rep_{\ol{\mathbb{Q}}_{\ell}}(\phantom{}^{L}G^{I})$, we are given isomorphisms
\[ \eta_{V,I}: T_{V}(\mathcal{G}_{\phi}) \simeq \mathcal{G}_{\phi} \boxtimes r_{V} \circ \phi \]
of sheaves with continuous $W_{\mathbb{Q}_{p}}^{I}$-action, that are natural in $I$ and $V$, and compatible with compositions of Hecke operators.
\end{definition}
We will be interested in constructing for $G=\GU^*_{n}$ the eigensheaves attached to supercuspidal $L$-parameters $\phi$, as conjectured by \cite[Conjecture~4.4]{Fa}. In particular, we consider the following sheaf.
\begin{definition}
We set $\mathcal{G}_{\phi} \in \Dlis(\Bun_{G},\overline{\mathbb{Q}}_{\ell})$ to be the lisse-\'etale sheaf supported on $B(G)_{\bas}$, with stalk $\mathcal{G}_{\phi}|_{\Bun_{G}^{b}} \in \Dlis(\Bun_{G}^{b},\ol{\mathbb{Q}}_{\ell})$ isomorphic to  $\bigoplus_{\rho \in \Pi_{\phi}(J_{b}, \varrho_b)} \rho$, the members of the $L$-packet over the extended pure inner form $J_{b}$.
\end{definition}
We want to write this in a way that makes the Hecke eigensheaf property transparent from the work we did above. Let $k(\phi)_{\mathrm{reg}}$ be sheaf given by pulling back the sheaf on $[\Spec(\ol{\mathbb{Q}}_{\ell})/S_{\phi}]$ corresponding to the regular representation along $C_{\phi} \ra [\Spec(\ol{\mathbb{Q}}_{\ell})/S_{\phi}]$. We have the following proposition.
\begin{proposition}
Let $\pi_{0,\emptyset} \in \Pi_{\phi}(G,\id)$ be the representation corresponding to the trivial representation under the refined local Langlands correspondence. Then we have an isomorphism 
\[ k(\phi)_{\mathrm{reg}} \star \pi_{0,\emptyset} \simeq \mathcal{G}_{\phi} \] 
of sheaves on $\Dlis(\Bun_{G},\ol{\mathbb{Q}}_{\ell})$. 
\end{proposition}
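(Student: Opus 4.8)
The plan is to unwind the definition of the skyscraper sheaf $k(\phi)_{\mathrm{reg}}$ and express its spectral action in terms of the $\Act$-functors we have already computed. Concretely, the regular representation of $S_\phi$ decomposes (as a representation of the compact part $(\Z/2\Z)^{r-1}$ together with the $\C^\times$-factor) into the direct sum over all irreducible characters $\ttau_{[J],m}$ of $S_\phi$, each occurring once. Since pushforward along $[\Spec\ol{\Q}_\ell/S_\phi]\hookrightarrow X_{\hat G}$ of the vector bundle attached to $W\in\Rep_{\ol{\Q}_\ell}(S_\phi)$ is the perfect complex whose spectral action is $\Act_W$, and since the spectral action commutes with direct sums (it extends by colimits to $\IndPerf$), we get
\[ k(\phi)_{\mathrm{reg}}\star(-)\ \simeq\ \bigoplus_{[J],\,m}\ \Act_{\ttau_{[J],m}}(-). \]
So the first step is to record this decomposition carefully, being mindful of the relations in $X^*(S_\phi)$ described in \S\ref{sss: galoisrepcomp} (i.e.\ that the characters $\ttau_{[J],m}$ are genuinely indexed by $(\mc P(\{1,\dots,r\})/\sim)\times\Z$), so that each irreducible character of $S_\phi$ is counted exactly once.

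The second step is to apply Proposition \ref{prop: Allactval} term by term. Taking $A=\pi_{0,\emptyset}=\pi_{[\emptyset],0}$, the representation corresponding to the trivial character under the refined correspondence (using the chosen Whittaker normalization so that $m_0=0$, $\kappa(\id)=0$), Proposition \ref{prop: Allactval} gives
\[ \Act_{\ttau_{[J],m}}(\pi_{[\emptyset],0})\ \simeq\ \pi_{[J],-m}, \]
viewed as the sheaf $j_{b_{-m}!}(\pi_{[J],-m})$ on the basic stratum $\Bun_G^{b_{-m}}$. Summing over all $[J]$ and all $m\in\Z$, and reindexing $m\mapsto -m$, the right-hand side becomes
\[ \bigoplus_{m\in\Z}\ \bigoplus_{[J]\in(\mc P(\{1,\dots,r\})/\sim)}\ j_{b_m!}(\pi_{[J],m}). \]
Now one invokes the description in \S\ref{sss: galoisrepcomp}: for a fixed basic $b_m$, the set $\{\pi_{[J],m}\}$ as $[J]$ ranges over $\mc P(\{1,\dots,r\})/\sim$ is precisely the $L$-packet $\Pi_\phi(J_{b_m},\varrho_{b_m})$ (each such representation corresponds under $\iota_{\mf w}$ to an irreducible character of $S^\natural_\phi$ whose restriction to $Z(\widehat{\GU_n})^\Gamma$ is fixed by $\kappa(b_m)$, and there are $2^{r-1}$ of them). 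Hence the direct sum over $[J]$ for fixed $m$ is exactly $\bigoplus_{\rho\in\Pi_\phi(J_{b_m},\varrho_{b_m})}\rho$, extended by zero to $\Bun_G$, and summing over all $m$ (equivalently all $b\in B(G)_{\bas}$, since $\kappa$ identifies $B(\GU_n)_{\bas}\cong\Z$) yields exactly $\mathcal G_\phi$ by its definition. This completes the identification $k(\phi)_{\mathrm{reg}}\star\pi_{0,\emptyset}\simeq\mathcal G_\phi$.

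The main obstacle is bookkeeping rather than conceptual: one must be careful that the decomposition of the regular representation of $S_\phi$ into characters matches the parametrization of $\Act$-functors used in Proposition \ref{prop: Allactval}, and that the central-character constraint linking $m$ to the basic element $b_m$ (via $\kappa$) is tracked consistently, so that the $m$-indexed sum on the spectral side lands exactly on the disjoint union of basic HN-strata with no overcounting or omissions. A secondary point worth a line of justification is that the individual summands $\Act_{\ttau_{[J],m}}(\pi_{0,\emptyset})$ are genuine representations concentrated in degree $0$ (so that the direct sum is a sheaf of the asserted form and not merely an object of the derived category) --- but this is already guaranteed by Lemma \ref{actirred}, which tells us each $\Act_\chi(\pi_{0,\emptyset})$ is an irreducible smooth representation supported on a single basic stratum. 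Finally, one should remark that the eigensheaf property of $\mathcal G_\phi$ is now immediate: $k(\phi)_{\mathrm{reg}}$ is a coherent sheaf on the component of $X_{\hat G}$ through $\phi$ on which Hecke operators act through $r_V\circ\phi$ (via the monoidality of the spectral action and property (1) of \S\ref{sss: spectralaction}), so $T_V(\mathcal G_\phi)=T_V(k(\phi)_{\mathrm{reg}}\star\pi_{0,\emptyset})\simeq C_V\star(k(\phi)_{\mathrm{reg}}\star\pi_{0,\emptyset})\simeq (C_V\otimes^{\mathbb L}k(\phi)_{\mathrm{reg}})\star\pi_{0,\emptyset}\simeq (k(\phi)_{\mathrm{reg}}\boxtimes r_V\circ\phi)\star\pi_{0,\emptyset}\simeq\mathcal G_\phi\boxtimes r_V\circ\phi$.
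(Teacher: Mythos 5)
Your argument is correct and is essentially the paper's own proof: decompose the regular representation of $S_\phi$ into the characters $\ttau_{[J],m}$ (each occurring once since $S_\phi$ is diagonalizable), apply Proposition \ref{prop: Allactval} term by term with $\Act_{\ttau_{[J],m}}(\pi_{[\emptyset],0})\simeq\pi_{[J],-m}$, and reindex to recover $\mathcal G_\phi$. The extra remarks you append — concentration in degree $0$ via Lemma \ref{actirred} and the eigensheaf property — are accurate but belong to the surrounding discussion and to Theorem \ref{thm: GUneigensheaf} rather than to this proposition.
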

\begin{proof}
The regular representation of $S_{\phi}$ breaks up as an infinite direct sum over $\ttau_{[I],m}$ for $[I] \in \mc{P}\{1,\ldots,r \}/\sim$ and $m \in \mathbb{Z}$. Therefore, we have an isomorphism:
\[ k(\phi)_{\mathrm{reg}} \star \pi_{0,\emptyset} \simeq \bigoplus_{m \in \mathbb{Z}} \quad \bigoplus_{[I] \in \mc{P}\{1,\ldots,r\}/\sim}  \Act_{\ttau_{[I],m}}(\pi_{0,\emptyset}) \]
However, applying Proposition \ref{prop: Allactval}, we get that this is isomorphic to 
\[ k(\phi)_{\mathrm{reg}} \star \pi_{0,\emptyset} \simeq \bigoplus_{m \in \mathbb{Z}} \quad \bigoplus_{[I] \in \mc{P}\{1,\ldots,r\}/\sim}  \pi_{[I],-m} \]
which is precisely the sheaf $\mathcal{G}_{\phi}$. 
\end{proof}
With this in hand, we can easily deduce the desired theorem.
\begin{theorem}{\label{thm: GUneigensheaf}}
For a supercuspidal $L$-parameter $\phi$, $\mathcal{G}_{\phi}$ is a Hecke eigensheaf on $\Bun_{\GU_{n}}$ with eigenvalue $\phi$.
\end{theorem}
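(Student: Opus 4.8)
\textbf{Proof proposal for Theorem \ref{thm: GUneigensheaf}.}

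The plan is to verify the three requirements in Definition \ref{defeigsheaf} for the sheaf $\mathcal{G}_{\phi} \simeq k(\phi)_{\mathrm{reg}} \star \pi_{0,\emptyset}$, leveraging the fact that the spectral action has already done most of the work. First I would observe that $\mathcal{G}_{\phi}$ is built out of $\pi_{0,\emptyset}$ by the spectral action of a fixed object $k(\phi)_{\mathrm{reg}} \in \IndPerf(X_{\hat{G}})$, and that by the defining property of the spectral action the Hecke operator $T_V$ is itself the action of the vector bundle $C_V \in \Perf(X_{\hat G})^{BW_{\Q_p}^I}$. Hence, using the symmetric monoidality of the spectral action (property (2) in \S\ref{sss: spectralaction}),
\[ T_V(\mathcal{G}_{\phi}) \simeq C_V \star (k(\phi)_{\mathrm{reg}} \star \pi_{0,\emptyset}) \simeq (C_V \otimes^{\mathbb{L}} k(\phi)_{\mathrm{reg}}) \star \pi_{0,\emptyset}. \]
The key geometric input is then that $k(\phi)_{\mathrm{reg}}$ is supported on the closed point $[\Spec \ol{\Q}_\ell/S_\phi] \hookrightarrow X_{\hat G}$, so $C_V \otimes^{\mathbb{L}} k(\phi)_{\mathrm{reg}}$ is the pushforward of $i^* C_V \otimes (\text{regular rep})$, where $i$ is the closed immersion; but $i^* C_V$ is exactly the $S_\phi$-representation $r_V \circ \phi$ together with its residual $W_{\Q_p}^I$-action. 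Since tensoring the regular representation of $S_\phi$ by a finite-dimensional representation $W$ yields $W \otimes_{\ol{\Q}_\ell} (\text{regular rep})$ with the $W$-factor carrying only its intrinsic structure, we get
\[ (C_V \otimes^{\mathbb{L}} k(\phi)_{\mathrm{reg}}) \star \pi_{0,\emptyset} \simeq k(\phi)_{\mathrm{reg}} \star \pi_{0,\emptyset} \boxtimes (r_V \circ \phi) \simeq \mathcal{G}_{\phi} \boxtimes r_V \circ \phi, \]
as sheaves with continuous $W_{\Q_p}^I$-action, which is precisely the eigensheaf isomorphism $\eta_{V,I}$. The naturality in $I$ and $V$ and the compatibility with composition of Hecke operators then follow formally from the corresponding properties of the spectral action: naturality of $C_{(-)}$ in representations and index sets, and the monoidality isomorphism $(C_1 \otimes^{\mathbb L} C_2)\star(-) \simeq C_1 \star (C_2 \star(-))$ applied with $C_1 = C_{V'}$, $C_2 = C_V \otimes^{\mathbb L} k(\phi)_{\mathrm{reg}}$.

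The main obstacle, and the one requiring care rather than formality, is bookkeeping the continuous $W_{\Q_p}^I$-equivariance through the isomorphism $i^* C_V \simeq r_V \circ \phi$: one must check that the Weil group action that $T_V$ produces on $T_V(\mathcal{G}_\phi)$ is identified, under the chain of isomorphisms above, with the action on $r_V \circ \phi$ coming from post-composing $\phi$ with $r_V$, and not some twist thereof. This is where the normalization conventions for half Tate twists in geometric Satake (fixed in the Conventions section via $\iota_\ell^{-1}(p^{1/2})$) enter, and where one should cross-check against the minuscule computations in Theorem \ref{nonmindescr}: for $V = V_{\mu_d}$ and after restricting to the relevant connected component, $\eta_{V_{\mu_d},\{*\}}$ must recover exactly the isomorphisms $R\Gamma_c(\GU_n, b_{m_1}, b_{m_2}, \mu_d)[\pi_{[I],m_1}] \simeq \bigoplus_j \pi_{[I\oplus I_j^\mu],m_2} \boxtimes \sigma_j^\mu$ established there, via Lemma \ref{shimhecke} relating $R\Gamma_c$ to $j^* T_\mu j_!$. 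Since those identifications were already proven compatibly with the $W_E$-action, this consistency check both confirms the normalization and shows there is no residual ambiguity. Finally I would note that $\mathcal{G}_\phi$ lies in $\Dlis(\Bun_G, \ol{\Q}_\ell)$ (it is a direct sum, over the finitely many basic $b$ and the finite $L$-packets $\Pi_\phi(J_b,\varrho_b)$, of the sheaves $j_{b!}(\rho)$, each lisse-étale), so all the operations above are taking place in the correct category, completing the verification that $\mathcal{G}_\phi$ is a Hecke eigensheaf with eigenvalue $\phi$.
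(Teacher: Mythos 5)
Your proposal is correct and follows essentially the same route as the paper: identify $\mathcal{G}_\phi$ with $k(\phi)_{\mathrm{reg}}\star\pi_{0,\emptyset}$, use the monoidality of the spectral action to pull $C_V$ inside, observe that restricting $C_V$ to the skyscraper $[\Spec\ol{\Q}_\ell/S_\phi]$ yields $r_V\circ\phi$ so that $C_V\otimes^{\mathbb L} k(\phi)_{\mathrm{reg}}\simeq (r_V\circ\phi)\boxtimes k(\phi)_{\mathrm{reg}}$, and conclude. The additional paragraph you devote to cross-checking the Weil-action normalization against Theorem \ref{nonmindescr} and Lemma \ref{shimhecke} is a sensible sanity check but is not part of the paper's proof, which instead cites the compatibility of the spectral action with the $W_{\Q_p}^I$-action directly from \cite[Corollary~IX.3]{FS}.
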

\begin{proof}
The idea is the same as \cite[Proposition~7.4]{AL}. For any $V \in \Rep_{\ol{\mathbb{Q}}_{\ell}}(\phantom{}^{L}G^{I})$ for some finite index set $I$, the spectral action gives a natural isomorphism
\[ T_{V}(k(\phi)_{\mathrm{reg}} \star \pi_{0,\emptyset}) \simeq  C_{V} \star k(\phi)_{\mathrm{reg}} \star \pi_{0,\emptyset} \]
where $C_{V}$ is the vector bundle on $X_{\hat{G}}$ with $W_{\mathbb{Q}_{p}}^{I}$-action given by $r_{V}$, as in Section \ref{sss: spectralaction}. However, since the spectral action is monoidal and respects the $W_{\mathbb{Q}_{p}}^{I}$-action, this is the same as 
\[ C_{V} \star k(\phi)_{\mathrm{reg}} \star \pi_{0,\emptyset} \simeq (r_{V} \circ \phi \otimes_{C_{\phi}} k(\phi)_{\mathrm{reg}}) \star \pi_{0,\emptyset}  \simeq r_{V} \circ \phi \boxtimes k(\phi)_{\mathrm{reg}} \star \pi_{0,\emptyset} \simeq r_{V} \circ \phi \boxtimes \mathcal{G}_{\phi} \]
as desired, which will be natural in $V$ and $I$ and satisfy the desired compatibilities by the analogous compatibilities for the spectral action \cite[Corollary~IX.3]{FS}. Here for the second isomorphism, we have used that the Fargues-Scholze parameter of $\pi_{0,\emptyset}$ is $\phi$. In particular, this implies that the spectral action of $C_{V} \star k(\phi)_{\mathrm{reg}}$ factors over the base-change to the localization of $C_{\phi}$ at the closed point defined by $\phi$, since the endomorphisms of $\pi_{0,\emptyset} \simeq \mathcal{O}_{X_{\hat{G}}} \star \pi_{0,\emptyset}$ defined by $f \in \mathcal{O}_{X_{\hat{G}}} \setminus \mf{m}_{\phi}$ are isomorphisms, where $\mf{m}_{\phi}$ is the maximal ideal defined by $\phi$, allowing us to see that we get the correct $W_{\mathbb{Q}_{p}}$-action.
\end{proof}

\subsection{Unitary Groups}{\label{ss: unitarygroupapps}}
In this section, we deduce Fargues' conjecture and the full strength of the Kottwitz conjecture for unitary groups $\U_{n}$, using Theorem \ref{FSproperties} (7), and Corollary \ref{actcentralisog} with respect to the central isogeny $\U_{n} \ra \GU_{n}$. In general, it seems a bit tricky to propagate Fargues' conjecture along such central isogenies. However, in the case of odd unitary groups, we can take advantage of the special property that restriction along this map induces a bijection of $L$-packets.  

As for the similitude groups, our starting point is compatibility of the correspondences. Fix $(\U_n, \varrho, z)$ an extended pure inner twist of $\U^*_n$ and lift to an extended pure inner twist $(\GU^*_n, \tilde{\varrho}, \tilde{z})$ (on the level of cocycles, $\tilde{z}$ is given by composing $z$ with the inclusion $\U^*_n \to \GU^*_n$). Note that $(\GU^*_n, \tilde{\varrho}, \tilde{z})$ will give the trivial class in $B(\GU^*_n)$.
\begin{theorem}{\label{Uncompatibility}}
The diagram 
\begin{equation*}
\begin{tikzcd}[ampersand replacement=\&]
            \Pi(\U_{n})  \ar[rr, "\LLC_{\U_{n}}"] \arrow[drr, swap,"\LLC^{\mathrm{FS}}_{\U_{n}}"] \& \&   \Phi(\U_{n}) \ar[d,"(-)^{ss}"] \\
            \& \& \Phi^{ss}(\U_{n})
        \end{tikzcd}
\end{equation*}
commutes. 
\end{theorem}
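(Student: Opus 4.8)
The plan is to deduce Theorem \ref{Uncompatibility} from the already-established compatibility for $\GU_n$ (Theorem \ref{thm: introcompatibility} in the case $G = \GU_n$, which is proven in Section \ref{s: proofofcomp}) together with the compatibility of both correspondences with the central isogeny $\varphi \colon \U_n \to \GU_n$. Since $n$ is odd, restriction along $\varphi$ has the exceptional feature that each $\pi \in \Pi_{\C}(\GU_n)$ restricts to an irreducible representation of $\U_n(\Q_p)$, and every $\pi' \in \Pi_{\C}(\U_n)$ arises this way (indeed, from the surjection $\U_n(\Q_p) \times Z(\GU_n)(\Q_p) \to \GU_n(\Q_p)$ used in the proof of Theorem \ref{itm: local}(2), one sees every $\pi'$ extends to a representation of $\GU_n(\Q_p)$). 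So the strategy is: given $\pi' \in \Pi_{\C}(\U_n)$, choose $\pi \in \Pi_{\C}(\GU_n)$ with $\pi|_{\U_n(\Q_p)} = \iota_\ell \pi'$, apply compatibility for $\GU_n$, and then push the equality of parameters down along the surjection $\LL\GU_n \to \LL\U_n$ induced by $\varphi$.

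The key steps, in order, would be the following. First, recall the classical side: by construction of $\LLC_{\U_n}$ and $\LLC_{\GU_n}$ (as in \cite{BMN}, reviewed in \S\ref{ss: classicalLLC}), if $\pi \in \Pi_{\phi}(\GU_n, \tilde\varrho)$ then $\pi|_{\U_n(\Q_p)}$ lies in $\Pi_{\ov\phi}(\U_n, \varrho)$, where $\ov\phi$ is the composite $\mc{L}_{\Q_p} \xrightarrow{\phi} \LL\GU_n \to \LL\U_n$; that is, $\LLC_{\U_n}(\pi|_{\U_n}) = \LLC_{\GU_n}(\pi) \circ (\text{proj})$ up to the identification $\LL\U_n = \GL_n(\C) \rtimes W_{\Q_p}$. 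This passes to semisimplifications since the projection $\LL\GU_n \to \LL\U_n$ is a homomorphism over $W_{\Q_p}$ and therefore commutes with the $(-)^{ss}$ twist by $|\cdot|$. Second, recall the Fargues--Scholze side: by Theorem \ref{FSproperties}(7) (compatibility with isogenies), since $\varphi \colon \U_n \to \GU_n$ induces an isomorphism of adjoint groups and $\pi|_{\U_n(\Q_p)}$ is an irreducible constituent of $\pi|_{\U_n(\Q_p)}$, we have $\phi^{\mathrm{FS}}_{\pi|_{\U_n}} = $ image of $\phi^{\mathrm{FS}}_{\pi}$ under $\LL\GU_n \to \LL\U_n$. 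Third, combine: for any $\pi' \in \Pi_{\C}(\U_n)$, pick $\pi \in \Pi_{\C}(\GU_n)$ extending $\iota_\ell\pi'$; compatibility for $\GU_n$ gives $\LLC^{\mathrm{FS}}_{\GU_n}(\iota_\ell^{-1}\pi) = \LLC_{\GU_n}(\pi)^{ss}$, and applying the projection $\LL\GU_n \to \LL\U_n$ to both sides, using steps one and two, yields $\LLC^{\mathrm{FS}}_{\U_n}(\iota_\ell^{-1}\pi') = \LLC_{\U_n}(\pi')^{ss}$, which is exactly the commutativity of the diagram.

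The main obstacle is verifying that the classical $\LLC_{\U_n}$ really is compatible with restriction from $\GU_n$ in the precise sense needed — i.e.\ that both the construction in \cite{BMN} of $\LLC_{\GU_n}$ by lifting from $\U_n$ and the correspondence $\LLC_{\U_n}$ of \cite{KMSW, Mok} agree, so that restriction of packets matches projection of parameters. This is essentially built into the construction in \cite{BMN} (the $\GU_n$-packet over $\phi$ is defined so that its members restrict to the $\U_n$-packet over $\ov\phi$), but one should state it cleanly, perhaps as a lemma, and check that it behaves well with respect to the extended pure inner twist bookkeeping: the class $b_0 \in B(\GU^*_n)$ of $(\GU^*_n, \tilde\varrho, \tilde z)$ is trivial, consistent with $\tilde z$ being the image of $z \in Z^1(\mc{E}_{\iso}, \U^*_n(\ov\Q_p))$ under $\U^*_n \hookrightarrow \GU^*_n$, so that $\LLC_{\GU_n}$ and $\LLC_{\U_n}$ are taken with respect to compatible Whittaker data and cocycles. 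A secondary point is that one must know every irreducible representation of $\U_n(\Q_p)$ extends to $\GU_n(\Q_p)$; this follows from the surjection $\U_n(\Q_p)\times Z(\GU_n)(\Q_p)\to\GU_n(\Q_p)$ noted above and Clifford theory, since $\GU_n(\Q_p)/\U_n(\Q_p)Z(\GU_n)(\Q_p)$ is trivial. Once these compatibilities are in place, the argument is a short diagram chase and the theorem follows.
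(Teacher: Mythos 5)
Your proposal is correct and follows essentially the same route as the paper: reduce to the already-established compatibility for $\GU_n$ by lifting $\pi' \in \Pi(\U_n)$ to $\tilde\pi \in \Pi(\GU_n)$ (the paper justifies the lift via exactness of Pontryagin duality for the central character, you via the surjection $\U_n(\Q_p)\times Z(\GU_n)(\Q_p)\twoheadrightarrow \GU_n(\Q_p)$ and Clifford theory — both work), then match the two sides using the compatibility of $\LLC_{\U_n}$ with restriction from $\GU_n$ (cited as \cite[\S2.3]{BMN}) and the Fargues--Scholze compatibility with central isogenies (Theorem \ref{FSproperties}(7)). The extra observations you flag — that the projection $\LL\GU_n\to\LL\U_n$ commutes with $(-)^{ss}$ and that the extended pure inner twist bookkeeping is consistent — are reasonable things to note explicitly, but they are implicit and unproblematic in the paper's shorter argument.
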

\begin{proof}
Let $\pi \in \Pi(\U_n)$. We claim there is a lift of $\pi$ to some $\tilde{\pi} \in \Pi(\GU_n)$. Indeed, such a $\tilde{\pi}$ is given by a pair $(\pi, \chi)$ where $\pi$ is as before and $\chi$ is a character of $Z_{\GU_n}(\Q_p)$ lifting the central character, $\omega_{\pi}$, of $\pi$. The existence of such a $\chi$ follows from the exactness of the Pontryagin dual functor.

Now, it follows by \cite[\S2.3]{BMN} that the $L$-parameter of $\pi$ under $\LLC_{\mathrm{U}_{n}}$ is equal to the composition 
\[ W_{\mathbb{Q}_{p}} \times \SL(2,\ol{\mathbb{Q}}_{\ell}) \xrightarrow{\phi_{\tilde{\pi}}} \phantom{}^{L}\GU_{n}(\ol{\mathbb{Q}}_{\ell}) \ra \phantom{}^{L}\U_{n}(\ol{\mathbb{Q}}_{\ell}) \]
as a conjugacy class of parameters. Similarly, since the map $\U_{n} \ra \GU_{n}$ induces an isomorphism on adjoint groups, it follows by \cite[Theorem~I.9.6 (v)]{FS} that $\phi_{\pi}^{\mathrm{FS}}$ is given by $\phi_{\tilde{\pi}}^{\mathrm{FS}}$ composed with the induced map $\phantom{}^{L}\GU_{n}(\ol{\mathbb{Q}}_{\ell}) \ra \phantom{}^{L}\U_{n}(\ol{\mathbb{Q}}_{\ell})$. Therefore, the result immediately follows from Theorem \ref{thm: introcompatibility} for $G = \GU_{n}$. 
\end{proof}
Now, as in the previous section, we use this to deduce results for the cohomology of some local shtuka spaces attached to $\U_{n}$. 
\subsubsection{The Minuscule Case}
We continue with the fixed extended pure inner twist $(\U_n, \varrho, z)$ of $\U^*_n$. We note that in this case $X_*(Z(\hat{\U}_{n})^{\Gamma}) \simeq \mathbb{Z}/2\mathbb{Z}$. The relevant minuscule basic shtuka spaces are given by
\[ \Sht(\U_{n},b_{d},\mu_{d})_{\infty} \]
for $0 \leq d \leq n$, $\mu_{d} = (1^{d},0^{n -d})$, and $b_{d} \in B(\U_n,\mu_{d})$ the unique basic element. We note that, depending on if $d$ is even or odd, $b_{d} \in B(\U_n)$ is the trivial element or the unique non-trivial basic element which we denote simply by $b$. Using the same Hermitian form as the previous section, we can find a global unitary group $\mathbf{G} := \mathbf{U}(d,n - d)/\mathbb{Q}$ with signature $(d,n - d)$ over $\mathbb{R}$ and a Shimura datum $(\mathbf{G},X)$ such that
\begin{enumerate}
    \item $(\mathbf{G},X)$ is of abelian type. 
    \item We have an isomorphism: $\mathbf{G}_{\mathbb{Q}_{p}} \simeq \U_n$.
    \item The composition
    \[ \mathbb{G}_{m.\mathbb{C}} \hookrightarrow \prod_{\Gal(\mathbb{C}/\mathbb{R})} \mathbb{G}_{m,\mathbb{C}} \xrightarrow{X_{\mathbb{C}}} \mathbf{G}_{\mathbb{C}} \]
    is in the same conjugacy class as the geometric cocharacter $\mu_{d}$ after applying the isomorphism $\iota^{-1}_p$.
\end{enumerate}
We now assume that $p > 2$ for the rest of the section. Since the unitary group $\U_n$ was defined with respect to $E/\mathbb{Q}_{p}$ an unramified extension and $p > 2$  it follows that this satisfies the conditions necessary to apply the results of \cite[Theorem~1.2]{Shen} (See also \cite[Theorem~D]{LiHue}). This in particular allows us to deduce the relevant basic uniformization result necessary to apply Theorem \ref{concmiddledegree}, which when combined with Theorem \ref{Uncompatibility} gives us the following. 
\begin{proposition}{\label{unmiddledegree}}
For $\phi$ a supercuspidal $L$-parameter, let $\rho \in \Pi_{\phi}(J_{b_d}, \varrho_{b_d} \circ \varrho)$ be a smooth irreducible representation with $L$-parameter $\phi$. Then 
\[ R\Gamma_{c}(\U_{n},b_{d},\mu_{d})[\rho] \]
is concentrated in degree $0$.
\end{proposition}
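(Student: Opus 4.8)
The plan is to apply Theorem \ref{concmiddledegree} (Hansen's result) to the local shtuka datum $(\U_n, b_d, \mu_d)$ and the representation $\rho$. There are exactly two hypotheses to check: that the tower $(\Sht(\U_n,b_d,\mu_d)_K)_{K}$ arises in the basic uniformization at $p$ of a global Shimura variety, and that the Fargues--Scholze parameter $\phi_\rho^{\mathrm{FS}}$ is supercuspidal. The second is immediate: by hypothesis $\rho$ has supercuspidal $L$-parameter $\phi$, and by Theorem \ref{Uncompatibility} (compatibility of $\LLC_{\U_n}$ with $\LLC^{\mathrm{FS}}_{\U_n}$, which in turn rests on Theorem \ref{thm: introcompatibility} for $\GU_n$) the Fargues--Scholze parameter of $\rho$ is $\phi^{ss} = \phi$, which is supercuspidal. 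So the real content is verifying the uniformization hypothesis.

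For that, I would invoke the global setup recorded just before the statement: the choice of the global unitary group $\mathbf{G} = \mathbf{U}(d, n-d)/\Q$ with signature $(d, n-d)$ at the archimedean place, together with a Shimura datum $(\mathbf{G}, X)$ of abelian type, satisfying $\mathbf{G}_{\Q_p} \simeq \U_n$ and with Hodge cocharacter conjugate to $\mu_d$ after transport along $\iota_p^{-1}$. The point is then that the local Shimura variety $\Sht(\U_n, b_d, \mu_d)_\infty$ occurs in the basic uniformization of the Shimura variety $\mathrm{Sh}(\mathbf{G}, X)$ at $p$. Since $(\mathbf{G}, X)$ is only of abelian type (not PEL type, as was the case for $\GU_n$), the PEL-type uniformization results of \cite{RZ} do not directly apply; instead one uses the work of Shen \cite{Shen} on basic uniformization for Shimura varieties of abelian type. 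This is precisely where the hypothesis $p > 2$ enters, and one also uses that $E/\Q_p$ is unramified so that the relevant local model / integral structure is of the kind handled in loc.\ cit. Once the uniformization is in place, Theorem \ref{concmiddledegree} applies verbatim and gives that $R\Gamma_c(\U_n, b_d, \mu_d)[\rho]$ is concentrated in degree $0$.

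The main obstacle is the abelian-type uniformization step: one must carefully match the global group $\mathbf{G}$, its Shimura datum, and the integral model so that Shen's theorem produces exactly the tower $\Sht(\U_n, b_d, \mu_d)_K$ (rather than some isogenous or twisted variant), and one must confirm that the hypotheses of \cite{Shen} — in particular $p>2$, the unramifiedness of $E$, and the reductive/quasi-split nature of the integral model — are genuinely met in our situation. The parameter-theoretic input (supercuspidality of $\phi_\rho^{\mathrm{FS}}$) is, by contrast, a routine consequence of the earlier compatibility theorems. I would therefore organize the write-up as: (i) recall the global datum and cite \cite{Shen} for the basic uniformization of $(\mathbf{G}, X)$ at $p$, noting the role of $p > 2$; (ii) identify the resulting local uniformizing datum with $(\U_n, b_d, \mu_d)$; (iii) cite Theorem \ref{Uncompatibility} to get $\phi_\rho^{\mathrm{FS}} = \phi$ supercuspidal; (iv) apply Theorem \ref{concmiddledegree} to conclude.
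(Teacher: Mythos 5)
Your proposal is correct and follows essentially the same route as the paper: both arguments reduce to Hansen's Theorem~\ref{concmiddledegree}, verify the uniformization hypothesis via the abelian-type Shimura datum for $\mathbf{U}(d,n-d)$ and Shen's basic uniformization result (which is where $p>2$ is used, since the datum is abelian type rather than PEL), and verify the supercuspidal Fargues--Scholze parameter hypothesis via Theorem~\ref{Uncompatibility}. Your outline (i)--(iv) is precisely the paper's argument, recorded in the discussion immediately preceding the proposition.
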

We now want to combine this with the results of \cite{HKW}. Fix a supercuspidal parameter $\phi$ and consider the representation
\[ r_{\mu_{1}} \circ \phi|_{W_{E}} = \oplus_{i = 1}^{r} \phi_{i}. \]
Let $d_{i} := \dim(\phi_{i})$. Now, we recall that $S_{\phi} \simeq (\mathbb{Z}/2\mathbb{Z})^{r}$, where the center $Z(\hat{G})^{\Gamma} \simeq \mathbb{Z}/2\mathbb{Z}$ embeds diagonally. We consider the representations $\tau_{I}$ for $I \subset \{1,\ldots,r\}$, which is the sign representation on the factors indexed by $I$ and is trivial on the other factors. The refined local Langlands correspondence gives us a bijection between the representations $\Pi_{\phi}(\U_n, \varrho)$ (resp. $\Pi_{\phi}(\U_{n}, \varrho_{b_d} \circ \varrho)$) and the representations $\tau_{I}$ for $|I|  = \kappa(z) \mod 2$ (resp. $|I| = \kappa(z) + d \mod 2$). We write $\pi_{I}$ for the representation associated to $I \subset \{1,\ldots,r\}$. In what follows, we will implicitly regard $\pi_{I}$ as a sheaf on either $\Dc(\U_n(\mathbb{Q}_{p}),\ol{\mathbb{Q}}_{\ell}) \simeq \Dlis(\Bun_{\U_n}^{1},\ol{\mathbb{Q}}_{\ell}) \subset \Dlis(\Bun_{\U_n},\ol{\mathbb{Q}}_{\ell})$ if $|I|$ is even or $\Dc(J_{b}(\mathbb{Q}_{p}),\ol{\mathbb{Q}}_{\ell}) \simeq \Dlis(\Bun_{\U_n}^{b},\ol{\mathbb{Q}}_{\ell}) \subset \Dlis(\Bun_{\U_n},\ol{\mathbb{Q}}_{\ell})$ if $|I|$ is odd.

We note that the representation $\delta_{\pi,\pi_{I}} = \tau_{i}$ for $i = 1,\ldots,r$ if and only if $\pi = \pi_{I \oplus \{i\}}$. It follows that, if we apply the results of Hansen--Kaletha--Weinstein \cite[Theorem~1.0.2]{HKW} we have that 
\[ [R\Gamma_{c}(\U_n,b_1,\mu_{1})[\pi_{I}]] = \sum_{i = 1}^{r} d_{i}\pi_{I \oplus \{i\}},  \]
in $K_{0}(\U_n(\mathbb{Q}_{p}))$ the Grothendieck group of finite length admissible $\U_n(\mathbb{Q}_{p})$-representations, where we have used Theorem \ref{Uncompatibility} to deduce that the Fargues--Scholze parameter of $\pi_{I}$ is supercuspidal to see that the error term in \cite[Theorem~1.0.2]{HKW} is trivial. However, by Proposition \ref{unmiddledegree}, the complex  $R\Gamma_{c}(G,b_1,\mu_{1})[\pi_{I}]$ is concentrated in degree $0$ and is valued in smooth representations with fixed central character equal to $\omega$ the central character of $\pi_{I}$. Moreover, since the parameter $\phi$ is supercuspidal, we know that the representations $\pi_{I \oplus \{i\}}$ are supercuspidal. Therefore, since supercuspidal representations are injective/projective in the category of smooth representations with fixed central character, this extension splits, and we deduce the following. 
\begin{corollary}
For a supercuspidal $L$-parameter $\phi$ and all $\pi_{I} \in \Pi_{\phi}(J_{b_1}, \varrho_{b_1} \circ \varrho)$, we have an isomorphism 
\[ R\Gamma_{c}(\U_n,b_1,\mu_{1})[\pi_{I}] = \bigoplus_{i = 1}^{r} \pi_{I \oplus \{i\}}^{\oplus d_{i}} \]
of $G(\mathbb{Q}_{p})$-representations.
\end{corollary}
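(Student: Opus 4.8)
The plan is to upgrade the Grothendieck group identity recorded above to an honest decomposition of complexes, exploiting that every representation in sight is supercuspidal. First I would pin down the class of $R\Gamma_{c}(\U_{n},b_{1},\mu_{1})[\pi_{I}]$ in $K_{0}(\U_{n}(\Q_p))$. By Theorem \ref{Uncompatibility} the Fargues--Scholze parameter of $\pi_{I}$ agrees with $\phi$, hence is supercuspidal, so the hypotheses of \cite[Theorem~1.0.2]{HKW} are satisfied for the basic local shtuka datum $(\U_{n},b_{1},\mu_{1})$. Feeding in the explicit evaluation of $\mathrm{Hom}_{S_{\phi}}(\delta_{\pi,\pi_{I}},r_{\mu_{1}}\circ\phi|_{W_{E}})$ from the $\U_{n}$ part of Corollary \ref{stdmucor} --- together with the observations that $\delta_{\pi,\pi_{I}}\simeq\tau_{i}$ exactly when $\pi=\pi_{I\oplus\{i\}}$, and that $\dim\phi_{i}=d_{i}$ --- one obtains
\[ [R\Gamma_{c}(\U_{n},b_{1},\mu_{1})[\pi_{I}]] \;=\; \sum_{i=1}^{r} d_{i}\,[\pi_{I\oplus\{i\}}] \]
in $K_{0}(\U_{n}(\Q_p))$, a sum of supercuspidal classes.

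Next I would eliminate the Grothendieck group. Proposition \ref{unmiddledegree} --- which is exactly where the standing assumption $p>2$ enters, via Shen's basic uniformization \cite{Shen} of the abelian-type Shimura varieties attached to $\mathbf{U}(d,n-d)$ and Hansen's concentration theorem \ref{concmiddledegree}, the supercuspidality hypothesis of the latter again being supplied by Theorem \ref{Uncompatibility} --- shows that $R\Gamma_{c}(\U_{n},b_{1},\mu_{1})[\pi_{I}]$ is concentrated in a single cohomological degree. Hence it is a genuine smooth admissible $\U_{n}(\Q_p)$-representation whose class in $K_{0}$ is the right-hand side above; in particular it is an iterated extension of the supercuspidal representations $\pi_{I\oplus\{i\}}$ with the indicated multiplicities.

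Finally I would argue that this extension splits. Since $b_{1}$ is basic we have $Z(J_{b_{1}})=Z(\U_{n})$, and the resulting action of $Z(\U_{n})(\Q_p)$ shows that $R\Gamma_{c}(\U_{n},b_{1},\mu_{1})[\pi_{I}]$ lies in the category of smooth $\U_{n}(\Q_p)$-representations with a fixed central character; in that category supercuspidal representations are both injective and projective, so any iterated extension of them is a direct sum. Therefore $R\Gamma_{c}(\U_{n},b_{1},\mu_{1})[\pi_{I}]\simeq\bigoplus_{i=1}^{r}\pi_{I\oplus\{i\}}^{\oplus d_{i}}$ as $\U_{n}(\Q_p)$-representations, which is the assertion.

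I expect the only genuinely delicate point to be the input Proposition \ref{unmiddledegree}: one must realise the tower $\Sht(\U_{n},b_{1},\mu_{1})_{K}$ inside the basic $p$-adic uniformization of a suitable global unitary Shimura variety of abelian type and verify that the integral models and uniformization results of Shen apply in the unramified, $p>2$ case, so that Hansen's Theorem \ref{concmiddledegree} can be invoked. Everything else --- the $K_{0}$-identity and the splitting --- is then formal.
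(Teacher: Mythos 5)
Your proof is correct and follows essentially the same route as the paper: compatibility (Theorem \ref{Uncompatibility}) supplies supercuspidality of the Fargues--Scholze parameter, HKW's Theorem~1.0.2 together with the $\delta_{\pi,\pi_I}$-computation gives the $K_0$-identity, Proposition \ref{unmiddledegree} (via Shen and Hansen, whence $p>2$) gives concentration in degree $0$, and injectivity/projectivity of supercuspidals with fixed central character splits the resulting extension.
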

More generally, we can apply Corollary \ref{Unextmucor} together with \cite[Theorem~1.0.2]{HKW} and Theorem \ref{Uncompatibility} to argue exactly as above and deduce that the following is true.  
\begin{corollary}{\label{UnGdescr}}
Let $\phi$ be a supercuspidal $L$-parameter. For all $\pi_{I} \in \Pi_{\phi}(J_{b_{d}}, \varrho_{b_d} \circ \varrho)$, we have an isomorphism 
\[ R\Gamma_{c}(\U_n,b_{d},\mu_{d})[\pi_{I}] = (\bigoplus_{\substack{J \in \mc{P}(\{1,\ldots,r\}) \\ |J| = d}} \pi_{I \oplus J}^{\oplus d_{J}}) \oplus \bigoplus_{\substack{J \in \mc{P}(\{1,\ldots,r\}) \\ |J| < d}} \pi_{I \oplus J}^{\oplus \dim(\mathrm{Hom}_{S_{\phi}}(\tau_{J},r_{\mu_{d}} \circ \phi))}  \] 
of $J_{b_{d}}(\mathbb{Q}_{p})$-representations, where $d_{J} := \prod_{j \in J} d_{j}$ and $J$ ranges over all finite index sets satisfying $|J| = d \mod 2$. 
\end{corollary}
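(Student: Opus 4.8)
The plan is to run, essentially verbatim, the argument given just above for the cocharacter $\mu_1$, replacing the elementary identity $\delta_{\pi_{I\oplus\{i\}},\pi_I}=\tau_i$ used there by the more refined combinatorial bookkeeping of Corollary \ref{Unextmucor}, and inputting \cite[Theorem~1.0.2]{HKW}, Theorem \ref{Uncompatibility}, Proposition \ref{flatnatural} and Proposition \ref{unmiddledegree} for the cocharacter $\mu_d$ in place of $\mu_1$.

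First I would compute the class of $R\Gamma^{\flat}_c(\U_n,b_d,\mu_d)[\pi_I]$ in the elliptic Grothendieck group $K_0(\U_n(\Q_p))_{\mathrm{ell}}$. By \cite[Theorem~1.0.2]{HKW}, this class is $\sum_{\pi}[\pi]\cdot\dim\Hom_{S_{\phi}}(\delta_{\pi,\pi_I},r_{\mu_d}\circ\phi|_{W_E})$, the sum running over $\pi\in\Pi_{\phi}(\U_n,\varrho)$; and by Corollary \ref{Unextmucor} this Hom-space is $\bigotimes_{j\in J}\phi_j|_{W_E}$, of dimension $d_J=\prod_{j\in J}d_j$, when $\pi=\pi_{I\oplus J}$ with $|J|=d$, it has dimension $\dim\Hom_{S_{\phi}}(\tau_J,r_{\mu_d}\circ\phi)$ when $\pi=\pi_{I\oplus J}$ with $|J|<d$ and $|J|\equiv d\bmod 2$, and it vanishes otherwise. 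This produces exactly the right-hand side of the asserted formula as an identity in $K_0(\U_n(\Q_p))_{\mathrm{ell}}$. Since every $\pi_{I\oplus J}$ occurring lies in the supercuspidal $L$-packet $\Pi_{\phi}$, and supercuspidal representations remain linearly independent in the elliptic quotient, this lifts to an identity in $K_0(\U_n(\Q_p))$.

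Second I would upgrade this virtual identity to an isomorphism of smooth representations. By Theorem \ref{Uncompatibility}, $\phi_{\pi_I}^{\mathrm{FS}}=\phi$ is supercuspidal, so by Proposition \ref{flatnatural} we may identify $R\Gamma^{\flat}_c(\U_n,b_d,\mu_d)[\pi_I]$ with $R\Gamma_c(\U_n,b_d,\mu_d)[\pi_I]$. Proposition \ref{unmiddledegree} then shows the latter complex is concentrated in a single cohomological degree; hence in $K_0$ it equals its underlying $J_{b_d}(\Q_p)$-representation, which the previous step identifies up to semisimplification. Finally, all of its constituents are supercuspidal with central character $\omega_{\pi_I}$, and supercuspidals are projective and injective in the category of smooth representations with that fixed central character, so every extension among them splits; therefore the semisimplification equals the representation itself, giving the claimed isomorphism of $J_{b_d}(\Q_p)$-modules.

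The only step that is not formal bookkeeping is the concentration statement, Proposition \ref{unmiddledegree}: one must realize the local shtuka spaces $\Sht(\U_n,b_d,\mu_d)_\infty$ inside the basic $p$-adic uniformization of a global abelian-type Shimura variety attached to $\mathbf{U}(d,n-d)$ in order to apply Hansen's concentration theorem \ref{concmiddledegree}, and it is precisely here that the hypothesis $p>2$ is forced, since the uniformization of abelian-type Shimura varieties is only known in that range (via Shen's work). Everything else --- translating $X^*(S_{\phi})$-combinatorics into $\Hom$-multiplicities, lifting the elliptic $K_0$-identity to $K_0$, and splitting extensions of supercuspidals with a fixed central character --- proceeds exactly as in the $\mu_1$ case and in the similitude-group arguments already carried out.
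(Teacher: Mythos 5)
Your proposal is correct and follows essentially the same route as the paper: the class in the (elliptic) Grothendieck group via \cite[Theorem~1.0.2]{HKW} with multiplicities read off from Corollary \ref{Unextmucor}, compatibility (Theorem \ref{Uncompatibility}) plus Proposition \ref{flatnatural} to pass to $R\Gamma_c$, concentration in a single degree via Proposition \ref{unmiddledegree} (whence the $p>2$ hypothesis through Shen's abelian-type uniformization), and splitting of extensions because supercuspidals are injective/projective in the category of smooth representations with fixed central character. The only cosmetic difference is that you make explicit the lift from $K_0(\U_n(\Q_p))_{\mathrm{ell}}$ to $K_0(\U_n(\Q_p))$, which the paper leaves implicit.
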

We will now combine this with the spectral action to describe the Weil group action on this complex and its non-minuscule analogues. Since this claim provides no information on the Weil group action, it will not give us an exact description of the $\Act$-functors a priori, just their values up to permuting the $L$-packet. This is enough to construct Fargues' conjectured eigensheaf, but it is not enough to completely prove the Kottwitz conjecture or compute the expected values of the $\Act$-functors. Nonetheless, we will show that we can still completely describe the $\Act$-functors by combining Proposition \ref{prop: Allactval} with Corollary \ref{actcentralisog}.

In particular, by applying Theorem \ref{thm: RGammavsAct}, we deduce that for a finite index set $I \subset \{1,\ldots,r\}$ such that $|I|= \kappa(z) \mod 2$, we have an isomorphism 
\[ R\Gamma_{c}(\U_n,b,\mu)[\pi_{I}] \simeq \bigoplus_{i = 1}^{r} \Act_{\tau_{i}}(\pi_{I}) \boxtimes \phi_{i} \]
for all $i = 1,\ldots,r$. This isomorphism combined with Corollary \ref{UnGdescr}, Lemma \ref{actirred}, and Schur's Lemma tells us that there must exist some bijection between the representations $\{\Act_{\tau_{i}}(\pi_{I})\}_i$ and the representations $\{\pi_{I \oplus \{i\}}\}_i$. In particular, by varying $(\GU_n ,\varrho, z)$, we deduce the existence of permutations 
\[ \sigma_{I}: \{1,\ldots,r\} \ra \{1,\ldots,r\} \]
for all finite index sets $I \subset \{1,\ldots,r\}$ defined by the property that 
\[ \Act_{\tau_{i}}(\pi_{I}) \simeq \pi_{I \oplus \sigma_{I}(i)}. \]
We record this now.
\begin{corollary}{\label{cor: Actperm1}}
Let $I \in \mc{P}(\{1,\ldots,r\})$ be an index set. Then there exist permutations $\sigma_{I}: \{1,\ldots,r\}$ defined by the property that there exists an isomorphism:
\[ \Act_{\tau_{i}}(\pi_{I}) \simeq \pi_{I \oplus \sigma_{I}(i)} \]
\end{corollary}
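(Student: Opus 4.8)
\section*{Proof proposal for Corollary \ref{cor: Actperm1}}

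The strategy is to extract the permutation by comparing, for the cocharacter $\mu_1$, two \emph{a priori} independent descriptions of the cohomology of the relevant minuscule basic shtuka space: the spectral-action formula of Theorem \ref{thm: RGammavsAct} on the one hand, and the $\U_n(\Q_p)$-module description of Corollary \ref{UnGdescr} (coming from \cite{HKW} together with the compatibility Theorem \ref{Uncompatibility}) on the other. Fix $I \subset \{1,\dots,r\}$; after replacing $(\U_n,\varrho,z)$ by a suitable extended pure inner twist (varying $\kappa(z)\in\Z/2\Z$ handles the two possible parities of $|I|$), we may assume $\pi_I \in \Pi_{\phi}(J_{b_1},\varrho_{b_1}\circ\varrho)$, where $b_1 \in B(\U_n,\mu_1)$ is the unique basic element. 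Since $r_{\mu_1}\circ\phi|_{\mathcal{L}_E} = \bigoplus_{i=1}^r \phi_i$ with the $\phi_i$ pairwise non-isomorphic irreducible, and since the $i$-th block $\phi_i$ corresponds to the sign character $\tau_i$ of $S_{\phi}\simeq(\Z/2\Z)^r$, the decomposition of $r_{\mu_1}\circ\phi$ as an $S_{\phi}\times W_E$-representation is $\bigoplus_{i=1}^r \tau_i\boxtimes\phi_i$. Theorem \ref{thm: RGammavsAct} therefore gives
\[ R\Gamma_{c}(\U_n,b_1,\mu_1)[\pi_I] \;\simeq\; \bigoplus_{i=1}^r \Act_{\tau_i}(\pi_I)\boxtimes\phi_i \]
of $\U_n(\Q_p)\times W_E$-modules.

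Forgetting the $W_E$-action and using $\dim\phi_i = d_i$, the left-hand side becomes $\bigoplus_{i=1}^r \Act_{\tau_i}(\pi_I)^{\oplus d_i}$, while Corollary \ref{UnGdescr} identifies $R\Gamma_{c}(\U_n,b_1,\mu_1)[\pi_I]$ with $\bigoplus_{i=1}^r \pi_{I\oplus\{i\}}^{\oplus d_i}$ as a $\U_n(\Q_p)$-module. By Lemma \ref{actirred} together with Theorem \ref{Uncompatibility}, each $\Act_{\tau_i}(\pi_I)$ is an irreducible smooth representation with supercuspidal (Fargues--Scholze, hence classical) parameter $\phi$; since $\tau_i$ restricts non-trivially to $Z(\hat{\U}_n)^{\Gamma}\simeq\Z/2\Z$, it is supported on the basic Harder--Narasimhan stratum whose $\kappa$-invariant differs from that of $\pi_I$ by the non-trivial element, so it is a member of the $L$-packet $\Pi_{\phi}$ over that stratum. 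Consequently $[\Act_{\tau_i}(\pi_I)]$ lies in the set $\{[\pi_{I\oplus\{1\}}],\dots,[\pi_{I\oplus\{r\}}]\}$, which consists of $r$ pairwise distinct irreducibles (the subsets $I\oplus\{i\}$ are distinct and $\iota_{\mf{w}}$ is injective).

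It remains to upgrade the displayed isomorphism of semisimple $\U_n(\Q_p)$-modules to a bijection. Set $f\colon\{1,\dots,r\}\to\{[\pi_{I\oplus\{1\}}],\dots,[\pi_{I\oplus\{r\}}]\}$, $f(i):=[\Act_{\tau_i}(\pi_I)]$. Comparing the multiplicity of each irreducible constituent on the two sides yields, for every $k$, the equality $\sum_{i\in f^{-1}([\pi_{I\oplus\{k\}}])} d_i = d_k$; in particular $f$ must be surjective, since otherwise some $[\pi_{I\oplus\{k\}}]$ would occur with multiplicity $0$ on the left but with multiplicity $d_k\geq 1$ on the right. A surjection between two $r$-element sets is a bijection, and the induced bijection of $\{1,\dots,r\}$ is the desired permutation $\sigma_I$, characterized by $\Act_{\tau_i}(\pi_I)\simeq\pi_{I\oplus\sigma_I(i)}$; uniqueness of $\sigma_I$ is automatic because the $\pi_J$ are pairwise distinct. (For the parity of $I$ opposite to that realized above, one instead first establishes the statement for all $J$ of one parity and then transports it across $\Act_{\tau_i}^{\,2}\simeq\Act_{\mathbf 1}=\mathrm{id}$, using monoidality of the $\Act$-functors and the already-proved case for the representation $\Act_{\tau_i}(\pi_I)$.)

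The point worth emphasizing is that, in contrast with the similitude case (Proposition \ref{prop: Allactval}), this argument only determines $\Act_{\tau_i}$ on $\pi_I$ up to the permutation $\sigma_I$: Corollary \ref{UnGdescr} records only the $\U_n(\Q_p)$-module structure of the shtuka cohomology, so the Weil-group labels $\phi_i$ cannot be used to match $\Act_{\tau_i}(\pi_I)$ with $\pi_{I\oplus\{i\}}$ on the nose. The main obstacle is thus the subsequent elimination of this ambiguity, i.e. the proof that $\sigma_I=\mathrm{id}$, which is carried out by transporting the explicit values of the $\Act$-functors for $\GU_n$ from Proposition \ref{prop: Allactval} along the central-isogeny compatibility of the spectral action (Corollary \ref{actcentralisog}).
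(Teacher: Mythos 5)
Your proof is correct and follows essentially the same route as the paper: combine the spectral-action decomposition from Theorem \ref{thm: RGammavsAct} with the $\U_n(\Q_p)$-module description from Corollary \ref{UnGdescr}, use Lemma \ref{actirred} and compatibility to place each $\Act_{\tau_i}(\pi_I)$ in the packet over $J_{b_1}$, and extract the permutation by Schur's lemma. The only difference is that you make explicit the multiplicity-counting step that forces surjectivity (which the paper compresses into ``Schur's Lemma''), and you spell out the parity bookkeeping that the paper handles by ``varying $(\U_n,\varrho,z)$.''
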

Naturally, we would expect $\pi_{I \oplus \{i\}}$ to correspond to $\Act_{\tau_{i}}(\pi_{I})$, or in other words that $\sigma_{I} = \mathrm{id}$ for all $I \in \mc{P}(\{1,\ldots,r\})$. In which case, we could deduce the Kottwitz conjecture for the non-minuscule shtuka spaces using the monoidal property of the $\Act$-functor, as in the previous section. We will study these permutations more carefully at the end of this section. For now, we will just directly compute the $\Act$-functors by combining Proposition \ref{prop: Allactval} and Corollary \ref{actcentralisog}. We first prove the following lemma.
\begin{lemma}{\label{actirred2}}
For any $I,J \in \mc{P}(\{1,\ldots,r\})$, $\Act_{\tau_{J}}(\pi_{I})$ is a smooth irreducible representation concentrated in degree $0$.
\end{lemma}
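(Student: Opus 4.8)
\textbf{Proof proposal for Lemma \ref{actirred2}.}

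The plan is to reduce the general statement about $\Act_{\tau_J}(\pi_I)$ for arbitrary index sets $I, J \subset \{1,\dots,r\}$ to the case $|J| = 1$, which is already covered by Corollary \ref{cor: Actperm1} (where $\Act_{\tau_i}(\pi_I) \simeq \pi_{I \oplus \sigma_I(i)}$ is visibly a smooth irreducible representation concentrated in degree $0$), using the monoidal property of the $\Act$-functors. First I would write $\tau_J = \tau_{j_1} \otimes \tau_{j_2} \otimes \cdots \otimes \tau_{j_k}$ for $J = \{j_1, \dots, j_k\}$, since each $\tau_i$ has order $2$ and the $\tau_i$ generate all irreducible characters of $S_{\ov{\phi}} \simeq (\Z/2\Z)^r$ under tensor product. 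Then, by the monoidal property $\Act_W \circ \Act_{W'} \simeq \Act_{W \otimes W'}$, we have
\[
\Act_{\tau_J}(\pi_I) \simeq \Act_{\tau_{j_1}} \circ \Act_{\tau_{j_2}} \circ \cdots \circ \Act_{\tau_{j_k}}(\pi_I).
\]
Applying Corollary \ref{cor: Actperm1} iteratively from the inside out: $\Act_{\tau_{j_k}}(\pi_I) \simeq \pi_{I \oplus \{\sigma_I(j_k)\}}$, which is a smooth irreducible representation (concentrated in degree $0$) with supercuspidal Fargues--Scholze parameter $\phi$ by Theorem \ref{Uncompatibility}; hence we may apply Corollary \ref{cor: Actperm1} again to this representation, and so on. At each stage the output is a single member of the $L$-packet $\Pi_\phi(J_{b'}, \dots)$ for the appropriate basic $b'$, hence smooth irreducible and concentrated in degree $0$.

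The one point requiring a small argument is that Corollary \ref{cor: Actperm1} is stated for the representations $\pi_I$ indexed by subsets, and one must check that the intermediate representations $\pi_{I \oplus \{\sigma_I(j_k)\}}$ appearing in the composition are again of this form for a (possibly different) extended pure inner twist: this is immediate since the $\Act$-functors preserve the property of having Fargues--Scholze parameter $\phi$ (by \ref{FSproperties} and the discussion preceding Theorem \ref{thm: RGammavsAct}), and every smooth irreducible with supercuspidal parameter $\phi$ on a basic stratum is some $\pi_{I'}$ by the refined local Langlands correspondence. Alternatively, and perhaps more cleanly, one can invoke Lemma \ref{actirred} directly: since $\tau_J \in \Rep_{\ol{\Q}_\ell}(S_{\ov\phi})$ is an irreducible character and $\pi_I$ has supercuspidal Fargues--Scholze parameter, Lemma \ref{actirred} already gives that $\Act_{\tau_J}(\pi_I)$ is an irreducible smooth representation supported on a single basic HN-stratum, concentrated in degree $0$.

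I do not expect a serious obstacle here; the lemma is essentially a bookkeeping consequence of the monoidal structure of the spectral action together with Lemma \ref{actirred}. The only thing to be careful about is to track the extended pure inner twist / the basic element $b'$ on which each intermediate sheaf is supported (it changes parity of $|I|$ at each step, toggling between the trivial element and $b$), and to ensure the central character constraint from $\kappa(z)$ is correctly propagated; but none of this affects irreducibility or the degree of concentration, so the argument goes through directly by invoking Lemma \ref{actirred} with $W = \tau_J$.
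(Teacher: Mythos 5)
Your main argument — inducting on $|J|$ via the monoidal property $\Act_{\tau_J} \simeq \Act_{\tau_{j}} \circ \Act_{\tau_{J\setminus\{j\}}}$ and applying Corollary \ref{cor: Actperm1} at each step — is exactly the paper's proof, and the bookkeeping about which basic stratum each intermediate sheaf lives on is handled correctly.

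The "alternatively, and perhaps more cleanly" suggestion at the end, however, overstates what Lemma \ref{actirred} provides. That lemma concludes only that $\Act_{\chi}(\pi_b)$ is irreducible and supported on a single basic HN-stratum, i.e.\ concentrated in \emph{some single} cohomological degree; its proof (a conservativity argument using $\Act_{\chi^{-1}} \circ \Act_{\chi} \simeq \mathrm{id}$) gives no control over which degree that is. Concentration specifically in degree $0$ is the real content of Lemma \ref{actirred2}, and it does not come for free from the monoidal formalism: it is imported through Corollary \ref{cor: Actperm1}, which in turn relies on the geometric input that $R\Gamma_c(\U_n,b_1,\mu_1)[\pi_I]$ sits in middle degree — Hansen's Theorem \ref{concmiddledegree} applied via Proposition \ref{unmiddledegree} (using basic uniformization of abelian-type Shimura varieties, hence the $p>2$ hypothesis), combined with the Grothendieck-group description from \cite[Theorem 1.0.2]{HKW} in Corollary \ref{UnGdescr}. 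So the shortcut you propose is not available; the induction through Corollary \ref{cor: Actperm1} is where the degree-$0$ statement actually enters.
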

\begin{proof}
We prove this by induction on the cardinality of $J$. If $J = \emptyset$ then $\tau_{J}$ is the trivial representation and the claim is obvious. If $|J| = 1$, this follows from the previous corollary. In general, using the monoidal property, we write 
\[ \Act_{\tau_{J}}(\pi_{I}) \simeq \Act_{\tau_{j}} \circ \Act_{\tau_{J \setminus j}}(\pi_{I}) \]
for some $j \in J$, and apply the inductive hypothesis. 
\end{proof}
\begin{proposition}{\label{prop: Unallactval}}
Let $I,J \in \mc{P}(\{1,\ldots,r\})$ be two finite index sets. Then we have an isomorphism:
\[ \Act_{\tau_{J}}(\pi_{I}) \simeq \pi_{I \oplus J} \]
\end{proposition}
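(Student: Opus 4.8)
The plan is to deduce Proposition \ref{prop: Unallactval} from the corresponding statement for $\GU_n$ (Proposition \ref{prop: Allactval}) by transporting it across the central isogeny $\varphi: \U_n \to \GU_n$, using Corollary \ref{actcentralisog}. The crucial representation-theoretic input is the special feature of odd unitary groups recorded in \S\ref{sss: galoisrepcomp}: the map $S_\phi \to S_{\ov\phi}$ realizes $S_{\ov\phi} \cong (\Z/2\Z)^r$ as a quotient of $S_\phi \cong (\Z/2\Z)^{r-1} \times \C^\times$ (indeed the kernel of $\GL_n(\C) \times \C^\times \to \GL_n(\C)$ restricted to $S_\phi$ is $\C^\times$, which maps trivially to $S_{\ov\phi}$), so that every character $\tau_J \in \Rep_{\ol\Q_\ell}(S_{\ov\phi})$ pulls back to a well-defined character $\widetilde{\tau_J} \in \Rep_{\ol\Q_\ell}(S_\phi)$. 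Tracking the bookkeeping of \S\ref{sss: galoisrepcomp}, the pullback of $\tau_J$ is $\ttau_{[J],0}$ (the $\C^\times$-factor acts trivially because $\tau_J$ is a character of the $(\Z/2\Z)^r$-part only), and restriction along $\varphi$ identifies $\pi_I \in \Pi_\phi(\U_n,\varrho)$ with (a constituent of) the restriction of $\pi_{[I],m}$ for the appropriate $m$ (equivalently $\pi_{[I],m}$ for $\GU_n$ restricts to $\pi_I$ on the relevant basic stratum).

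Concretely, first I would fix $I, J \in \mc{P}(\{1,\dots,r\})$ and choose the extended pure inner twist of $\GU^*_n$ with $\kappa$-invariant matching the parity of $|I|$, so that the connected component $\Bun^C_{\GU_n}$ (in the notation of Proposition \ref{prop: actisogeny}) containing the relevant basic strata is in the image of $B(\U_n) \to B(\GU_n)$ and the sheaf $j_{b!}(\pi_{[I],m})$ lies in $\Dlis(\Bun^C_{\GU_n},\ol\Q_\ell)$. By Corollary \ref{actcentralisog} applied to $\varphi: \Bun_{\U_n} \to \Bun_{\GU_n}$, for $W = \tau_J \in \Rep_{\ol\Q_\ell}(S_{\ov\phi})$ with $\widetilde W = \ttau_{[J],0} \in \Rep_{\ol\Q_\ell}(S_\phi)$ we have $\varphi_\natural \Act_{\tau_J}(\varphi^* A) \simeq \Act_{\ttau_{[J],0}}(A)$ for all $A \in \Dlis(\Bun^C_{\GU_n},\ol\Q_\ell)$. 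Taking $A = j_{b!}(\pi_{[I],m})$ and using Proposition \ref{prop: Allactval} with $m_1 = 0$ gives $\Act_{\ttau_{[J],0}}(\pi_{[I],m}) \simeq \pi_{[I\oplus J],m}$ on the $\GU_n$ side. Since $\varphi^*$ sends $\pi_{[I],m}$ (as a sheaf on $\Bun^b_{\GU_n}$) to $\pi_I$ (as a sheaf on the corresponding $\Bun^{b'}_{\U_n}$) — this is the compatibility of the pullback with the identifications $\Dlis(\Bun^b_G) \simeq \Dc(J_b(\Q_p))$ together with the bijection of $L$-packets along the central isogeny — and $\varphi_\natural$ likewise intertwines the parabolic-restriction on representations, the left-hand side becomes (up to the irreducibility already granted by Lemma \ref{actirred2}) the sheaf $\Act_{\tau_J}(\pi_I)$, while the right-hand side becomes $\pi_{I\oplus J}$. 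Comparing, and invoking Lemma \ref{actirred2} plus Schur's lemma to rule out multiplicities or degree shifts, yields $\Act_{\tau_J}(\pi_I) \simeq \pi_{I\oplus J}$.

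The main obstacle I anticipate is not the abstract transport but the careful bookkeeping of the several normalizations: matching $\kappa$-invariants of the extended pure inner twists on the two sides so that one stays inside the correct connected component $\Bun^C$ for which Proposition \ref{prop: actisogeny} applies; checking that $\varphi^*$ and $\varphi_\natural$ really do implement, on the level of sheaves supported on single HN-strata, the restriction/averaging operation on smooth representations compatibly with the refined local Langlands bijections $\iota_{\mf w}$ on both groups (here one uses that for odd $n$ restriction $\Pi_\phi(\GU_n) \to \Pi_{\ov\phi}(\U_n)$ is a bijection, so no summing over constituents is needed); and confirming that the pullback of $\tau_J$ under $S_\phi \to S_{\ov\phi}$ is exactly $\ttau_{[J],0}$ rather than some twist by $\widehat c$. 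None of these steps should require genuinely new ideas — each follows from the constructions in \S\ref{sss: galoisrepcomp}, Theorem \ref{FSproperties}(7), and Corollary \ref{actcentralisog} — but getting all the indices consistent is where the real work lies. As an alternative (or a consistency check), one could instead run the same monoidal-induction argument as in the proof of Proposition \ref{prop: Allactval} directly on $\U_n$, using Corollary \ref{UnGdescr} as the base case $|J| = 1$ and the monoidality $\Act_{\tau_J} \simeq \Act_{\tau_j} \circ \Act_{\tau_{J\setminus j}}$ for the inductive step, reducing everything to the single-box case; the isogeny argument is cleaner because it pins down the Weil-group action for free, which is exactly what Corollary \ref{cor: Actperm1} leaves ambiguous.
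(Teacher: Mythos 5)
Your overall strategy is the paper's: transport Proposition \ref{prop: Allactval} across the central isogeny via Corollary \ref{actcentralisog}. But there is a genuine gap in the step where you claim that $\varphi^{*}$ sends $\pi_{[I],m}$ (a sheaf on a single basic stratum of $\Bun_{\GU_n}$) to the single sheaf $\pi_{I}$ on a corresponding stratum of $\Bun_{\U_n}$. This is false: since $\pi_{1}(\U_n)_{\Gamma} \simeq \Z/2\Z$ maps trivially to $\pi_{1}(\GU_n)_{\Gamma} \simeq \Z$, \emph{both} basic elements of $B(\U^*_n)$ lie over the trivial basic element of $B(\GU^*_n)$ (and in particular any twist of $\GU^*_n$ lifted from a $\U^*_n$-twist is trivial, so there is no freedom to ``match the parity of $|I|$'' by choosing the twist). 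Hence $\varphi^{-1}$ of the relevant stratum contains two HN-strata of $\Bun_{\U_n}$, and $\varphi^{*}(\pi_{[I],0}) \simeq \pi_{I_e} \oplus \pi_{I_o}$, the two representatives of the class $[I]$ of even and odd cardinality, supported on the two different basic strata. Consequently, Corollary \ref{actcentralisog} plus Proposition \ref{prop: Allactval} only yields
\[ \varphi_{\natural}\bigl(\Act_{\tau_J}(\pi_{I_e})\bigr) \oplus \varphi_{\natural}\bigl(\Act_{\tau_J}(\pi_{I_o})\bigr) \simeq \pi_{[I\oplus J],0}, \]
not the desired statement about $\Act_{\tau_J}(\pi_I)$ itself.

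The missing ingredient is the disentangling step. Lemma \ref{actirred2} and Schur do tell you that $\Act_{\tau_J}(\pi_I) \simeq \pi_K$ for some $K$, but the displayed sum relation is a priori consistent with $K = I\oplus J$ \emph{or} $K = I\oplus J\oplus I_{\mathrm{odd}}$, since both restrict to the same $\GU^*_n$-representation. The paper resolves this by applying the adjunction $\varphi_{\natural} \dashv \varphi^{*}$ to compute $\Hom(\varphi_{\natural}(\pi_K \oplus \pi_{K\oplus I_{\mathrm{odd}}}),\pi_{[I\oplus J],0})$, which forces $K \in \{I\oplus J,\, I\oplus J\oplus I_{\mathrm{odd}}\}$, and then uses the parity constraint $|K| \equiv |I\oplus J| \bmod 2$ (coming from which basic stratum supports $\Act_{\tau_J}(\pi_I)$, e.g.\ via Theorem \ref{thm: RGammavsAct}) together with the fact that $|I_{\mathrm{odd}}|$ is odd because $n$ is odd, to exclude the second option. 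Without this argument your proof does not close. Your alternative suggestion (monoidal induction from Corollary \ref{UnGdescr}) indeed fails for the reason you yourself note: the base case only determines $\Act_{\tau_i}$ up to the permutations $\sigma_I$ of Corollary \ref{cor: Actperm1}, so it cannot replace the isogeny argument.
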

\begin{proof}
There exists a lift $\tilde{\phi}$ to a $\GU^*_{n}$-valued parameter such that precomposition with the natural map $S_{\tilde{\phi}} \ra S_{\phi}$ induced by the central isogeny takes $\tau_{J}$ to $\tilde{\tau}_{[J],0}$, for $J \in \mc{P}(\{1,\ldots,r\})$ with image $[J] \in \mc{P}(\{1,\ldots,r\})/\sim$. Choose also an extended pure inner twist $(\U_n, \varrho, z)$ representing the nontrivial element $b \in B(\U^*_n)_{\bas}$ and extend it to an extended pure inner twist $(\GU_n, \tilde{\varrho}, \tilde{z})$ of $\GU^*_n$. This twist has trivial class in $B(\GU^*_n)$ so there exists a $\Q_p$-isomorphism $t: \GU_n \to \GU_n^*$ trivializing it.

We recall from Section \ref{sss: galoisrepcomp} that if we write $I_{\mathrm{odd}} \subset \{1, \dots, r\}$ for the subset for which $d_{i}$ is odd then we can express the two representatives of $[I]$ in $\mc{P}(\{1,\ldots,r\})$ as $I$ and $I \oplus I_{\mathrm{odd}}$. However, we note that since $n$ is odd, $I_{\mathrm{odd}}$ must have odd cardinality and therefore $I$ and $I \oplus I_{\mathrm{odd}}$ have different parity. We write $I_{o}$ (resp. $I_{e}$) for the representative of $[I]$ with odd (resp. even cardinality). If we write $\varphi: \Bun_{\mathrm{U}^*_{n}} \ra \Bun_{\GU^*_{n}}$ for the natural map induced by the inclusion $\U^*_{n}$ to $\GU^*_{n}$, we note that, under the natural map $B(\U^*_{n}) \ra B(\GU^*_{n})$, the two basic elements both map to the trivial element $B(\GU^*_{n})_{\bas}$. Therefore, $\varphi$ induces a natural map of the HN-strata
\[ \Bun_{\U^{*}_{n}}^{\mathrm{ss}} := \Bun_{\U^*_{n}}^{b} \sqcup \Bun_{\U^*_{n}}^{1}  \ra  \Bun_{\GU^*_{n}}^{1}, \]
which is isomorphic to the map 
\[ [\ast/\underline{\U_{n}(\mathbb{Q}_{p})}] \sqcup  [\ast/\underline{\U^*_{n}(\mathbb{Q}_{p})}] \ra [\ast/\underline{\GU^*_{n}(\mathbb{Q}_{p})}] \]
of classifying $v$-stacks. The map $\varphi^{1}: \Bun_{\U^*_{n}}^{1}  \ra \Bun_{\GU^*_{n}}^{1}$ is given by the natural inclusion, and $\varphi^{b}$ is the composition $\U_n \rightarrow \GU_n \xrightarrow{t} \GU^*_n$. From the construction of the local Langlands correspondence for $\GU^*_n$ in terms of $\U^*_n$ (see \cite{BMN}), it follows that $(\varphi^{1})^{*}(\pi_{[I],0}) = \pi_{I_{e}}$, and that $(\varphi^{b})^{*}(\pi_{[I],0}) = \pi_{I_{o}}$. Here we note that, since $n$ is odd, we have $\GU^*_n(\Q_p) = \U^*_n(\Q_p) E^{\times}$, so these restrictions are irreducible. Now, for two finite index sets $I$ and $J$, Corollary \ref{actcentralisog}, tells us that we have isomorphisms
\[ \varphi_{\natural}\Act_{\tau_{J}}(\varphi^{*}(\pi_{[I],0})) \simeq \Act_{\ttau_{[J],0}}(\pi_{I,0} \otimes \varphi_{\natural}(\ol{\mathbb{Q}}_{\ell})|_{\Bun_{\GU_{n}^{*}}}) \simeq \pi_{[I \oplus J],0}^{\oplus 2} \]
where we have used Proposition \ref{prop: Allactval} for the first isomorphism. For the second isomorphism, first note that we can use \cite[Proposition~VII.3.1]{FS} to rewrite this as $\varphi_{\natural}(\ol{\mathbb{Q}}_{\ell}|_{\Bun_{\U_{n}^{*}}^{\mathrm{ss}}})$. Then using Frobenius reciprocity, the above description of $\varphi$ when restricted to $\Bun_{\U_{n}^{*}}^{\mathrm{ss}}$, and the fact that $\varphi_{\natural}$ is a left adjoint to $\varphi^{*}$, we deduce that $\varphi_{\natural}(\ol{\mathbb{Q}}_{\ell}|_{\Bun_{\U_{n}^{*}}^{\mathrm{ss}}}) \simeq \ol{\mathbb{Q}}_{\ell}^{\oplus 2}|_{\Bun_{\GU_{n}^{*}}^{1}}$, giving the second isomorphism. However, by the above discussion, we know that $\varphi^{*}(\pi_{[I],0}) = \varphi^{1*}(\pi_{[I],0}) \oplus \varphi^{b*}(\pi_{[I],0}) = \pi_{I_{e}} \oplus \pi_{I_{o}}$. Thus, we deduce that
\[ \varphi_{\natural}(\Act_{\tau_{J}}(\pi_{I_{e}})) \oplus \varphi_{\natural}(\Act_{\tau_{J}}(\pi_{I_{o}})) \simeq \pi_{[I \oplus J],0}^{\oplus 2} \]
We now claim that this implies that $\Act_{\tau_{J}}(\pi_{I}) \simeq \pi_{I \oplus J}$ for all $I$ and $J$. To see this, first note that $\Act_{\tau_{J}}(\pi_{I})$ is a smooth irreducible representation concentrated in degree $0$ by Lemma \ref{actirred2}, which will have Fargues--Scholze (= usual) parameter $\phi$. It follows that it must equal some $\pi_{K} \in \Pi_{\phi}(J_{b_{d}}, \varrho_{b_d})$, where $d = |I \oplus J| = |K| \mod 2$ (this parity condition can be deduced, for instance, from Theorem \ref{thm: RGammavsAct}). By the previous isomorphism and Schur's lemma, we have an equality:
\[ \ol{\mathbb{Q}}_{\ell}^{\oplus 2} = \mathrm{Hom}_{\Dlis(\Bun_{\GU^*_{n}},\ol{\mathbb{Q}}_{\ell})}(\varphi_{\natural}(\pi_{K} \oplus \pi_{K \oplus I_{\mathrm{odd}}}),\pi_{[I \oplus J],0}) \]
On the other hand, since $\varphi_{\natural}$ is by definition the left adjoint of $\varphi^{*}$, it follows that the RHS is equal to
\[ \mathrm{Hom}_{\Dlis(\Bun_{\U^*_{n}},\ol{\mathbb{Q}}_{\ell})}(\pi_{K} \oplus \pi_{K \oplus I_{\mathrm{odd}}},\varphi^{*}(\pi_{[I \oplus J],0})) = \mathrm{Hom}_{\Dlis(\Bun_{\U^*_{n}},\ol{\mathbb{Q}}_{\ell})}(\pi_{K} \oplus \pi_{K \oplus I_{\mathrm{odd}}},\pi_{I \oplus J} \oplus \pi_{I \oplus J \oplus I_{\mathrm{odd}}}). \]
Hence we must have $K= I \oplus J$ or $K = I \oplus J \oplus I_{\odd}$. The parity condition for $|K|$ then implies that $K= I \oplus J$ as desired.
\end{proof}
By applying Theorem \ref{thm: RGammavsAct}, this allows us to deduce the strongest form of the Kottwitz conjecture for all geometric dominant cocharacters $\mu$, as well as Fargues' conjecture for $\U_{n}$ by arguing as in the previous section; however, we have invoked the full strength of Theorem \ref{hardinput} to do this. Such a result is a bit overkill if all one is interested in is proving compatibility of Fargues--Scholze local Langlands with the refined local Langlands correspondence (See for example \cite{Ham} for a softer proof of compatibility in the case that $G = \mathrm{GSp}_{4}$). In particular, for the rest of the section, we want to explore what one can deduce just using Theorem \ref{Uncompatibility}. We already saw that it almost gives us the complete result, up to showing that the permutations $\sigma_{I}$ are trivial, using \cite[Theorem~1.0.2]{HKW}. However, since \cite[Theorem~1.0.2]{HKW}, always holds for any group (assuming the refined local Langlands with its various desiderata is known) this result is also something we can assume in the general case.  As we will see, compatibility will be sufficient to describe the stalks of Fargues' eigensheaf, and, using the monoidal property of $\Act$-functors, we can reduce showing the $\sigma_{I}$ are trivial to showing just $\sigma_{\emptyset}$ is trivial. We expect such implications of compatibility to persist for a general reductive group $G$, and it would be interesting to study this in complete generality. We now prove the following claim. 
\begin{proposition}{\label{unbijections}}
Let $\pi_{\emptyset}$ be the representation corresponding to the trivial representation with supercuspidal parameter $\phi$. Then we have an isomorphism 
\[ \bigoplus_{I \in \mc{P}(\{1,\ldots,r\})} \Act_{\tau_{I}}(\pi_{\emptyset}) \simeq \bigoplus_{I \in \mc{P}(\{1,\ldots,r\})} \pi_{I} \]
of sheaves on $\Dlis(\Bun_{\U^*_{n}},\ol{\mathbb{Q}}_{\ell})$. In other words, we have isomorphisms 
\[\bigoplus_{\substack{I \in \mc{P}(\{1,\ldots,r\}) \\ |I| = 1 \mod 2}} \Act_{\tau_{I}}(\pi_{\emptyset}) \simeq \bigoplus_{\rho \in \Pi_{\phi}(J_{b}, \varrho_b)} \rho  \]
of sheaves on $\Dlis(\Bun_{\U^*_n}^{b},\ol{\mathbb{Q}}_{\ell}) \simeq \Dc(J_{b}(\mathbb{Q}_{p}),\ol{\mathbb{Q}}_{\ell})$ and an isomorphism \[\bigoplus_{\substack{I \in \mc{P}(\{1,\ldots,r\}) \\ |I| = 0 \mod 2}} \Act_{\tau_{I}}(\pi_{\emptyset}) \simeq \bigoplus_{\pi \in \Pi_{\phi}(\U^*_n, \id)} \pi  \]
of sheaves on $\Dlis(\Bun_{\U^*_n}^{\mathbf{1}},\ol{\mathbb{Q}}_{\ell}) \simeq \Dc(\U^*_n(\mathbb{Q}_{p}),\ol{\mathbb{Q}}_{\ell})$. 
\end{proposition}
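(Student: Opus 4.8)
The plan is to deduce the statement softly, using only Corollary \ref{cor: Actperm1}, the monoidality of the $\Act$-functors, and compatibility (Theorem \ref{Uncompatibility}), rather than the full-strength Proposition \ref{prop: Unallactval}. First I would set up the bookkeeping. Since $\phi$ is supercuspidal, Theorem \ref{Uncompatibility} identifies Fargues--Scholze parameters with the usual ones, so, as in the description of $\Dlis^{\phi}$ in \S\ref{sss: spectralaction}, the part of $\Dlis(\Bun_{\U^*_n},\ol{\mathbb{Q}}_{\ell})^{\omega}$ on which the $\Act_{\tau_I}$ act nontrivially is $\bigoplus_{K}\pi_K\otimes\Perf(\ol{\mathbb{Q}}_{\ell})$, where $K$ ranges over $\mc{P}(\{1,\dots,r\})$ and $\pi_K$ runs over the members of the two basic $L$-packets (even $|K|$ giving $\Pi_\phi(\U^*_n,\id)$ on the neutral stratum, odd $|K|$ giving $\Pi_\phi(J_b,\varrho_b)$ on $\Bun_{\U^*_n}^{b}$). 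By Lemma \ref{actirred2} each $\Act_{\tau_I}(\pi_\emptyset)$ is an irreducible representation in degree $0$, hence equals some $\pi_{F(I)}$, which defines a parity-preserving self-map $F$ of the finite set $\mc{P}(\{1,\dots,r\})$ with $F(\emptyset)=\emptyset$ (using $\Act_{\mathbf{1}}(\pi_\emptyset)=\pi_\emptyset$). It then suffices to show $F$ is a bijection: in that case $\bigoplus_I\Act_{\tau_I}(\pi_\emptyset)=\bigoplus_I\pi_{F(I)}$ reindexes along $F$ to $\bigoplus_K\pi_K$, and separating the even and odd $|K|$ yields exactly the two displayed isomorphisms.

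The remaining point is surjectivity of $F$ (hence bijectivity, the set being finite), which is where Corollary \ref{cor: Actperm1} enters. Monoidality gives $\Act_{\tau_i}\circ\Act_{\tau_j}=\Act_{\tau_i\otimes\tau_j}$ and $\tau_i\otimes\tau_i=\mathbf{1}$, so the assignment $G_{\{i\}}(J)$ defined by $\Act_{\tau_i}(\pi_J)\simeq\pi_{G_{\{i\}}(J)}$ is an involution of $\mc{P}(\{1,\dots,r\})$, and Corollary \ref{cor: Actperm1} says $G_{\{i\}}(J)=J\oplus\{\sigma_J(i)\}$ with $\sigma_J$ a permutation of $\{1,\dots,r\}$. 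Consequently, for each $J$ the set $\{G_{\{i\}}(J):1\le i\le r\}$ consists of exactly the $r$ subsets differing from $J$ in a single element, so the graph on $\mc{P}(\{1,\dots,r\})$ with edges $\{J,\,G_{\{i\}}(J)\}$ is the full $r$-dimensional hypercube graph, which is connected. Given any $K$, choose a path $\emptyset=J_0,J_1,\dots,J_m=K$ along its edges; at each step $J_t=G_{\{i_t\}}(J_{t-1})$, i.e.\ $\pi_{J_t}\simeq\Act_{\tau_{i_t}}(\pi_{J_{t-1}})$, so composing these isomorphisms and using monoidality, $\pi_K\simeq\Act_{\tau_{i_m}}\cdots\Act_{\tau_{i_1}}(\pi_\emptyset)\simeq\Act_{\tau_I}(\pi_\emptyset)=\pi_{F(I)}$ with $I=\{i_1\}\oplus\cdots\oplus\{i_m\}$. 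Thus $F(I)=K$ and $F$ is onto.

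I expect no serious obstacle here: the only genuine input is Corollary \ref{cor: Actperm1}, and everything else is elementary. The point that most wants care is the first step, namely ensuring that the $\Act_{\tau_I}(\pi_\emptyset)$ really are honest irreducible representations concentrated in degree $0$ and lying in the two basic $L$-packets across \emph{both} Harder--Narasimhan strata — this is precisely Lemma \ref{actirred2} together with Theorem \ref{Uncompatibility} and the description of $\Dlis^{\phi}$ recalled above — and the combinatorial observation that the ``$\Act$-move graph'' on $\mc{P}(\{1,\dots,r\})$ is the connected hypercube. With these in place the two isomorphisms follow by reindexing along the bijection $F$, and by restricting to the corresponding strata $\Bun_{\U^*_n}^{\mathbf{1}}$ and $\Bun_{\U^*_n}^{b}$.
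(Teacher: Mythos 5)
Your argument is correct, but it takes a genuinely different and in fact lighter route than the paper's. The paper proves the proposition by induction on $d=0,1,\dots,r$: at each step it invokes the Kottwitz conjecture in the form of Corollary \ref{UnGdescr} for the cocharacter $\mu_d$ (which ultimately rests on \cite[Theorem~1.0.2]{HKW} for each $\mu_d$ together with the degree-$0$ concentration) and a counting/cancellation argument to match $\{\Act_{\tau_I}(\pi_\emptyset):|I|=d\}$ against $\{\pi_I:|I|=d\}$. You instead extract the needed bijectivity purely from the $\mu_1$ case (packaged in Corollary \ref{cor: Actperm1}) together with monoidality and Lemma \ref{actirred2}, via the observation that the map $J\mapsto G_{\{i\}}(J)=J\oplus\{\sigma_J(i)\}$ realizes the full hypercube graph on $\mc{P}(\{1,\ldots,r\})$, whose connectivity gives surjectivity (hence bijectivity) of $I\mapsto F(I)$. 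A small point worth spelling out is the parity-preservation of $F$, which is what lets you split the total sum into the two displayed stratumwise isomorphisms; it follows because $\tau_I$ restricted to $Z(\widehat{\U_n})^{\Gamma}\cong\Z/2\Z$ is the sign $(-1)^{|I|}$, so $\Act_{\tau_I}$ shifts the $\kappa$-invariant of the supporting basic stratum by $|I|\bmod 2$, exactly as recalled in the discussion of $\Dlis^{\phi}$ in \S\ref{sss: spectralaction}. The upshot of your approach is that it needs only the Kottwitz conjecture for the single minuscule cocharacter $\mu_1$ (via Corollary \ref{cor: Actperm1}) rather than the whole family $\mu_d$, making the softness of the argument more transparent; the paper's inductive proof keeps the exterior-power/Grothendieck-group structure more visible.
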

\begin{proof}
We will prove this by induction on the integer $d = 0,1,\ldots,r$ and the claim that there exists a bijection between the representations $\Act_{\tau_{I}}(\pi_{\emptyset})$ and $\pi_{I}$ for $I \in  \mc{P}(\{1,\ldots,r\})$ satisfying $|I| = d$. The case $d = 0$ is trivial, and the case $d = 1$ follows from Corollary \ref{cor: Actperm1}. Now, by Corollary \ref{UnGdescr}, Corollary \ref{actirred}, and Schur's Lemma, we obtain a bijection between the representations $\pi_{I}$ and $\Act_{\tau_{I}}(\pi_{\emptyset})$ for $\tau_{I}$ occurring as summands in $r_{\mu_{d}} \circ \phi|_{W_{E}}$ as an $S_{\phi}$-representation, where $I$ must satisfy $|I| \leq d$ and $|I| = d \mod 2$, and all the representations $\tau_{I}$ for $|I| = d$ occur. By the inductive hypothesis, the representations $\Act_{\tau_{I}}(\pi_{\emptyset})$ and $\pi_{I}$ for $0 \leq |I| < d$ are already in bijection with one another; therefore, we deduce that the representations $\Act_{\tau_{I}}(\pi_{\emptyset})$ and $\pi_{I}$ for $|I| = d$ are in bijection with one another.
\end{proof}
In particular, if we write $\mathcal{G}_{\phi}$ for the sheaf supported on the basic strata $\Bun_{\U^*_n}^{1}$ and $\Bun_{\U^*_n}^{b}$ with values given by $\bigoplus_{\pi \in \Pi_{\phi}(\U^*_n, \id)} \pi$ and $\bigoplus_{\rho \in \Pi_{\phi}(J_{b}, \varrho_b)} \rho$, respectively, we deduce the following.
\begin{corollary}{\label{Cor: Uneigsheaf}}
We have an isomorphism 
\[ k(\phi)_{\mathrm{reg}}\star \pi_{0,\emptyset} \simeq \mathcal{G}_{\phi} \]
of sheaves on $\Dlis(\Bun_{\U^*_n},\ol{\mathbb{Q}}_{\ell})$. In particular, $\mathcal{G}_{\phi}$ is an eigensheaf with eigenvalues $\phi$.
\end{corollary}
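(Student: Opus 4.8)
The plan is to mirror, almost verbatim, the argument used for $\GU_{n}$ (the Proposition immediately preceding Theorem~\ref{thm: GUneigensheaf}, together with Theorem~\ref{thm: GUneigensheaf} itself), with Proposition~\ref{prop: Unallactval} playing the role of Proposition~\ref{prop: Allactval}. Throughout, the standing hypothesis $p>2$ of \S\ref{ss: unitarygroupapps} is in force, since Proposition~\ref{prop: Unallactval} rests on it through Proposition~\ref{unmiddledegree} and the abelian-type uniformization of Shen. For notational comfort I write $\pi_{0,\emptyset}$ for the representation $\pi_{\emptyset}$ of Proposition~\ref{unbijections}, i.e. the member of $\Pi_{\phi}(\U^*_n,\id)$ attached to the trivial character of $S_{\phi}$.

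First I would identify $k(\phi)_{\mathrm{reg}}\star\pi_{0,\emptyset}$ with $\mathcal{G}_{\phi}$. Since $S_{\phi}\simeq(\mathbb{Z}/2\mathbb{Z})^{r}$ is a finite abelian group, its regular representation is the \emph{finite} direct sum $\bigoplus_{I\in\mc{P}(\{1,\ldots,r\})}\tau_{I}$ over all of its characters, and $k(\phi)_{\mathrm{reg}}$ is the pushforward of this representation along the regular closed embedding $[\Spec\ol{\mathbb{Q}}_{\ell}/S_{\phi}]\hookrightarrow X_{\hat{G}}$. Additivity of the spectral action in its first argument then gives
\[
k(\phi)_{\mathrm{reg}}\star\pi_{0,\emptyset}\;\simeq\;\bigoplus_{I\in\mc{P}(\{1,\ldots,r\})}\Act_{\tau_{I}}(\pi_{0,\emptyset}).
\]
By Proposition~\ref{prop: Unallactval} applied with the empty index set we have $\Act_{\tau_{I}}(\pi_{0,\emptyset})\simeq\pi_{I}$ for every $I$, so the right-hand side is $\bigoplus_{I}\pi_{I}$. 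To match this with $\mathcal{G}_{\phi}$, recall from \S\ref{ss: unitarygroupapps} that $\pi_{I}$ is viewed on $\Bun_{\U^*_n}^{\mathbf{1}}$ or $\Bun_{\U^*_n}^{b}$ according to the parity of $|I|$, that $\Pi_{\phi}(\U^*_n,\id)=\{\pi_{I}:|I|\text{ even}\}$, and that $\Pi_{\phi}(J_{b},\varrho_{b})=\{\pi_{I}:|I|\text{ odd}\}$; hence $\bigoplus_{I}\pi_{I}$ is exactly the sheaf $\mathcal{G}_{\phi}$ of the statement. (Proposition~\ref{unbijections} already records the weaker, permutation-ambiguous form of this identity; using Proposition~\ref{prop: Unallactval} instead pins down each summand and makes the stratum bookkeeping immediate.)

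Next I would deduce the eigensheaf property, which is formal and follows the proof of Theorem~\ref{thm: GUneigensheaf} line for line. For $V\in\Rep_{\ol{\mathbb{Q}}_{\ell}}(\phantom{}^{L}\U_{n}^{I})$ with $I$ a finite index set, property~(1) of the spectral action from \S\ref{sss: spectralaction} identifies the Hecke operator $T_{V}$ with the spectral action of the vector bundle $C_{V}$, so
\[
T_{V}(\mathcal{G}_{\phi})=T_{V}\bigl(k(\phi)_{\mathrm{reg}}\star\pi_{0,\emptyset}\bigr)\simeq C_{V}\star k(\phi)_{\mathrm{reg}}\star\pi_{0,\emptyset}.
\]
Because $k(\phi)_{\mathrm{reg}}$ is a skyscraper supported at the point $\phi$ and $C_{V}$ restricts there to $r_{V}\circ\phi$ with its $W_{\mathbb{Q}_{p}}^{I}$-action, monoidality of the spectral action (property~(2)), together with its compatibility with the $W_{\mathbb{Q}_{p}}^{I}$-action, yields $C_{V}\otimes^{\mathbb{L}}k(\phi)_{\mathrm{reg}}\simeq(r_{V}\circ\phi)\otimes_{\ol{\mathbb{Q}}_{\ell}}k(\phi)_{\mathrm{reg}}$, and hence $C_{V}\star k(\phi)_{\mathrm{reg}}\star\pi_{0,\emptyset}\simeq r_{V}\circ\phi\boxtimes\bigl(k(\phi)_{\mathrm{reg}}\star\pi_{0,\emptyset}\bigr)\simeq r_{V}\circ\phi\boxtimes\mathcal{G}_{\phi}$. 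Naturality in $V$ and $I$ and compatibility with composition of Hecke operators are inherited from the corresponding properties of the spectral action, exactly as cited in the proof of Theorem~\ref{thm: GUneigensheaf}; this verifies Definition~\ref{defeigsheaf}.

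I do not expect a genuine obstacle: the substantive input, the exact computation of all $\Act$-functor values (Proposition~\ref{prop: Unallactval}), is already available, and it already absorbed the central-isogeny compatibility of the spectral action (Corollary~\ref{actcentralisog}). The only points that need care are the parity bookkeeping placing each $\pi_{I}$ on the correct basic HN-stratum and the identification of the two stalks of $\mathcal{G}_{\phi}$ with the $L$-packets of $\U^*_n$ and $J_{b}$ — both already recorded in \S\ref{ss: unitarygroupapps} — and the verification that the skyscraper identity $C_{V}\otimes^{\mathbb{L}}k(\phi)_{\mathrm{reg}}\simeq(r_{V}\circ\phi)\otimes_{\ol{\mathbb{Q}}_{\ell}}k(\phi)_{\mathrm{reg}}$ transports the $W_{\mathbb{Q}_{p}}^{I}$-action correctly. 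Finally, as in Remark~\ref{rem: whittvgen} for $\GU_{n}$, one notes that $\pi_{0,\emptyset}$ is the unique $\mf{w}$-generic member of $\Pi_{\phi}(\U^*_n,\id)$ and that $\Act_{\mathbf{1}}(\mathcal{W}_{\psi})\simeq\pi_{0,\emptyset}$, so the Whittaker-sheaf formulation $k(\phi)_{\mathrm{reg}}\star\mathcal{W}_{\psi}\simeq\mathcal{G}_{\phi}$ of \cite[Definition~7.3]{AL} also follows.
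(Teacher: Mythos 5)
Your proof is correct, and its global structure — decompose the regular representation of the finite abelian group $S_{\phi}$ into characters, evaluate each $\Act_{\tau_{I}}(\pi_{0,\emptyset})$, match the resulting direct sum with the stalks of $\mathcal{G}_{\phi}$ via the parity bookkeeping, and then deduce the eigensheaf property from monoidality of the spectral action exactly as in Theorem~\ref{thm: GUneigensheaf} — is the same as the paper's. The one substantive difference is the key input you cite for the middle step: you invoke Proposition~\ref{prop: Unallactval}, which gives the precise values $\Act_{\tau_{J}}(\pi_{I})\simeq\pi_{I\oplus J}$ and whose proof leans on the central-isogeny compatibility (Corollary~\ref{actcentralisog}) together with the full $\GU_{n}$ computation of Proposition~\ref{prop: Allactval} (and ultimately Theorem~\ref{hardinput}); the paper instead cites the weaker Proposition~\ref{unbijections}, which only records that the collections $\{\Act_{\tau_{I}}(\pi_{\emptyset})\}$ and $\{\pi_{I}\}$ are in bijection, and which is extracted from compatibility of the two correspondences together with the soft finite-length description of \cite[Theorem~1.0.2]{HKW}. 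Both inputs are available at that point in the text, so either citation yields a valid proof; what the paper's choice buys is the conceptual point (developed in the surrounding discussion and the closing remark) that the stalks of Fargues' eigensheaf are already pinned down by compatibility alone, without needing the sharper Kottwitz-type description — whereas your route is more direct, identifying each summand explicitly and removing the permutation ambiguity at the cost of invoking the heavier result.
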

\begin{proof}
We have an identification
\[ k(\phi)_{\mathrm{reg}}\star \pi_{0,\emptyset} \simeq \bigoplus_{I \subset \{1,\ldots,r\}} \Act_{\tau_{I}}(\pi_{\emptyset}) \simeq \mathcal{G}_{\phi} \]
where the last isomorphism follows from Proposition \ref{unbijections}. The proof of the eigensheaf property is now the same as in Theorem \ref{thm: GUneigensheaf}.
\end{proof}
In particular, we have shown that the existence of an eigensheaf attached to $\phi$ with the conjectured stalks follows from compatibility of the Fargues--Scholze correspondence with the refined local Langlands correspondence. We suspect that this implication always holds. We now try to answer the question of how much about the values of the $\Act$-functors we can deduce from just compatibility. To do this, we first introduce the following definition.
\begin{definition}
We consider the function 
\[ d(-,-): \mc{P}(\{1,\ldots,r\}) \times \mc{P}(\{1,\ldots, r\}) \ra \mathbb{N}_{\geq 0} \]
which for $I,J \subset \{1,\ldots,r\}$ is given by $d(I,J) := |I \oplus J|$. 
\end{definition}
We have the following. 
\begin{proposition}{\label{prop: actbij}}
For all $I \subset \{1,\ldots,r\}$ and $1 \leq d \leq r$ there is a bijection between the set of $\pi_{J}$ such that $d(I,J) = d$ and $\Act_{\tau_{K}}(\pi_{I})$ for $|K| = d$. 
\end{proposition}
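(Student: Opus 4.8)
The plan is to deduce Proposition \ref{prop: actbij} by combining the minuscule cohomology computations of Corollary \ref{UnGdescr} with the conservativity and irreducibility properties of the $\Act$-functors established above, arguing by induction on $d$ exactly as in Proposition \ref{unbijections} but now relativized to an arbitrary base index set $I$.

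First I would fix $I \subset \{1,\ldots,r\}$ and pick an extended pure inner twist of $\U^*_n$ so that the HN-stratum carrying $\pi_I$ corresponds to $\varrho_{b_{d'}}\circ\varrho$ with $d' \equiv |I| \bmod 2$; this is exactly the setup of Corollary \ref{UnGdescr}. Applying Theorem \ref{thm: RGammavsAct} to the minuscule datum $(\U_n, b_d, \mu_d)$ (with $b_d$ the basic element adjusted so that $\kappa(b_d)$ matches the parity $d$), we get an isomorphism
\[ R\Gamma_c(\U_n,b_d,\mu_d)[\pi_I] \simeq \bigoplus_{j} \Act_{W_j}(\pi_I) \boxtimes \sigma_j, \]
where $r_{\mu_d}\circ\phi|_{W_E} \simeq \bigoplus_j W_j \boxtimes \sigma_j$ as an $S_\phi \times W_E$-representation, and the $W_j$ are precisely the characters $\tau_K$ for $K$ ranging over index sets with $|K| \le d$ and $|K| \equiv d \bmod 2$, with every $\tau_K$ of cardinality exactly $d$ occurring (this is the content of Corollary \ref{Unextmucor}, combined with the explicit description of $r_{\mu_d}\circ\phi$ as $\Lambda^d$ of $\ov\phi_E$). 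On the other hand, Corollary \ref{UnGdescr} identifies the same complex with $\bigoplus_{|J|=d} \pi_{I\oplus J}^{\oplus d_J} \oplus \bigoplus_{|J|<d} \pi_{I\oplus J}^{\oplus \dim \Hom_{S_\phi}(\tau_J, r_{\mu_d}\circ\phi)}$ as a $J_{b_d}(\Q_p)$-representation. By Lemma \ref{actirred2} each $\Act_{\tau_K}(\pi_I)$ is a single irreducible representation in degree $0$, with supercuspidal parameter $\phi$, hence equals some $\pi_J$; Schur's lemma then forces these two decompositions to agree term by term, matching the multiplicity spaces.

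The induction then runs as follows. For $d=1$ the statement is Corollary \ref{cor: Actperm1} (relativized to $I$, which needs only the same argument with $\pi_\emptyset$ replaced by $\pi_I$). Assume the claim for all $d' < d$. The isomorphism above tells us that, as multisets, $\{\Act_{\tau_K}(\pi_I) : |K|\le d,\ |K|\equiv d\}$ (with the multiplicities coming from $r_{\mu_d}\circ\phi$) coincides with $\{\pi_J : d(I,J)\le d,\ d(I,J)\equiv d\}$ (with the analogous multiplicities). Using the monoidal property $\Act_{\tau_K}(\pi_I) \simeq \Act_{\tau_{K'}}(\Act_{\tau_{K''}}(\pi_I))$ for $K = K' \oplus K''$ together with the inductive hypothesis, the representations $\Act_{\tau_K}(\pi_I)$ for $|K| < d$ are already accounted for — they biject with the $\pi_J$ for $d(I,J) < d$ (note $d(I,J) = |J \oplus I|$, and the parity of $|K|$ forces $d(I,J)\equiv d$ throughout, so no cross-contamination occurs). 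Cancelling these common terms from both sides of the multiset identity leaves a bijection between $\{\Act_{\tau_K}(\pi_I) : |K| = d\}$ and $\{\pi_J : d(I,J) = d\}$, which is exactly the assertion.

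The main obstacle I anticipate is bookkeeping the parity and multiplicity structure carefully enough that the cancellation step is actually valid: one needs to know that the characters $\tau_K$ appearing in $r_{\mu_d}\circ\phi$ with $|K|<d$ are precisely those with $|K|\equiv d \bmod 2$, and that $\dim \Hom_{S_\phi}(\tau_K, r_{\mu_d}\circ\phi)$ matches $d_J$ when $|K|=d$ and matches whatever multiplicity the inductive hypothesis predicts when $|K|<d$ — this is where Corollary \ref{Unextmucor} and the $\Lambda^d$-description must be invoked with some care. Everything else is a formal consequence of monoidality (Theorem \ref{thm: RGammavsAct} and the discussion following it) and the irreducibility Lemma \ref{actirred2}, so no genuinely new input beyond what is already available is required.
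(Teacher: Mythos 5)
Your proof takes essentially the same route as the paper's: the paper proves this by relativizing the inductive argument of Proposition \ref{unbijections} to the base point $\pi_I$, with the base case $d=1$ given by Corollary \ref{cor: Actperm1} and the inductive step supplied by Corollary \ref{UnGdescr} together with Lemma \ref{actirred} (irreducibility of $\Act_{\tau_K}(\pi_I)$) and Schur's lemma. Your more detailed bookkeeping of parities, multiplicities, and the cancellation via monoidality is a fleshed-out version of the same argument rather than a genuinely different approach.
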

\begin{proof}
The proof is exactly the same as Proposition \ref{unbijections}. In particular, for a fixed $I$, one applies induction to $d$, with the base case being precisely Corollary \ref{cor: Actperm1}, and the inductive step following from Corollary \ref{UnGdescr}.
\end{proof}
Now, we recall that we have a permutation 
\[ \sigma_{\emptyset}: \{1,\ldots,r\} \ra \{1,\ldots,r\} \]
defined by the equation:
\[ \Act_{\tau_{i}}(\pi_{\emptyset}) \simeq \pi_{\sigma_{\emptyset}(i)} \]
The Kottwitz conjecture would predict that $\sigma_{\emptyset}$ is the identity permutation. This condition actually guarantees that the all the functors $\Act_{\tau_{I}}$ behave as expected. In particular, we have the following.
\begin{proposition}
Suppose that $\sigma_{\emptyset} = \mathrm{id}_{\{1,\ldots,r\}}$ then, for all $I \subset \{1,\ldots,r\}$, we have an isomorphism
\[ \Act_{\tau_{I}}(\pi_{\emptyset}) \simeq \pi_{I} \]
which, by the monoidal property, implies Proposition \ref{prop: Unallactval}. 
\end{proposition}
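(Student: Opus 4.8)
The plan is to deduce, from the hypothesis $\sigma_{\emptyset}=\id_{\{1,\ldots,r\}}$, that $\sigma_{I}=\id_{\{1,\ldots,r\}}$ for \emph{every} $I\subseteq\{1,\ldots,r\}$. Granting this, the assertion $\Act_{\tau_{I}}(\pi_{\emptyset})\simeq\pi_{I}$ is immediate: writing $\tau_{I}\simeq\bigotimes_{i\in I}\tau_{i}$ and applying the $\Act$-functors one tensor factor at a time, each application is of the form $\Act_{\tau_{i}}(\pi_{J})\simeq\pi_{J\oplus\{i\}}$, so the index set is built up correctly. Proposition \ref{prop: Unallactval} then follows from the monoidality of the spectral action via $\Act_{\tau_{J}}(\pi_{I})\simeq\Act_{\tau_{I\oplus J}}(\pi_{\emptyset})\simeq\pi_{I\oplus J}$, which is exactly what is asserted after the statement.

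First I would record the consequences of monoidality that are used throughout. Since the $\tau_{i}$ have order two, $\tau_{i}^{\otimes 2}\simeq\mathbf{1}$, so $\Act_{\tau_{i}}\circ\Act_{\tau_{i}}\simeq\Act_{\mathbf{1}}$, which is the identity on the relevant subcategory of sheaves with supercuspidal parameter $\phi$ (as in the proof of Lemma \ref{actirred}); more generally $\Act_{\tau_{I}}\circ\Act_{\tau_{J}}\simeq\Act_{\tau_{I\oplus J}}$, and in particular the operators $\Act_{\tau_{i}}$ pairwise commute. By Corollary \ref{cor: Actperm1} each $\Act_{\tau_{i}}$ carries $\pi_{J}$ to $\pi_{J\oplus\{\sigma_{J}(i)\}}$, so it alters the indexing set by a symmetric difference with a single element; and by Proposition \ref{prop: actbij}, applied with $I=\emptyset$ and separately in each cardinality, the map $f\colon I\mapsto f(I)$ determined by $\Act_{\tau_{I}}(\pi_{\emptyset})\simeq\pi_{f(I)}$ is a cardinality-preserving bijection of $\mc{P}(\{1,\ldots,r\})$ with $f(\emptyset)=\emptyset$ and $f(\{i\})=\{i\}$, the latter being precisely the hypothesis $\sigma_{\emptyset}=\id$.

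The core is then an induction on $d$ with hypothesis: $\sigma_{J}=\id$ for all $|J|\le d$. The base case $d=0$ is the standing assumption. For the inductive step, assume the hypothesis for $d$ and fix $I$ with $|I|=d+1$. (i) Peeling $\tau_{I}$ into singletons and applying $\Act$ successively, the inductive hypothesis — used only on sets of size $\le d$ — gives $f(I')=I'$ for all $|I'|\le d+1$, so in particular $\pi_{I}\simeq\Act_{\tau_{I}}(\pi_{\emptyset})$. (ii) For $i\in I$ we get $\Act_{\tau_{i}}(\pi_{I})\simeq\Act_{\tau_{I\setminus\{i\}}}(\pi_{\emptyset})\simeq\pi_{I\setminus\{i\}}$, whence $\sigma_{I}(i)=i$; thus $\sigma_{I}$, being a permutation fixing $I$ pointwise, restricts to a permutation of the complement. (iii) For $k\notin I$, choose $i\in I$ and compare the two sides of $\Act_{\tau_{i}}\circ\Act_{\tau_{k}}(\pi_{I})\simeq\Act_{\tau_{k}}\circ\Act_{\tau_{i}}(\pi_{I})$: the right-hand side equals $\Act_{\tau_{k}}(\pi_{I\setminus\{i\}})\simeq\pi_{(I\setminus\{i\})\cup\{k\}}$ by the inductive hypothesis applied to the $d$-element set $I\setminus\{i\}$, while the left-hand side, namely $\Act_{\tau_{i}}(\pi_{I\oplus\{\sigma_{I}(k)\}})$, differs from $\pi_{I\oplus\{\sigma_{I}(k)\}}$ by a symmetric difference with a single element. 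Computing the symmetric difference of $I\oplus\{\sigma_{I}(k)\}$ with $(I\setminus\{i\})\cup\{k\}$, and using $\sigma_{I}(k)\notin I$ from (ii), one finds this set has cardinality $3$ unless $\sigma_{I}(k)=k$; hence $\sigma_{I}(k)=k$. This gives $\sigma_{I}=\id$ and completes the induction (the case $|I|=1$ of this step already subsumes the computation with the permutations $\sigma_{\{j\}}$).

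The step I expect to require the most care is (iii): one must verify that every appeal to a value of $\sigma$ is to a set of strictly smaller cardinality, so that the induction does not become circular, and the cardinality bookkeeping in the symmetric-difference identity — which is what actually eliminates the possibility $\sigma_{I}(k)\ne k$ — must be carried out precisely. Everything else is a formal consequence of monoidality together with Corollary \ref{cor: Actperm1} and Proposition \ref{prop: actbij}.
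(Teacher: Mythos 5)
Your proof is correct, and it takes a genuinely different route from the paper's. The paper inducts on $|I|$ with hypothesis $\Act_{\tau_J}(\pi_{\emptyset})\simeq\pi_J$ for all smaller $J$; in the inductive step it writes $\Act_{\tau_I}(\pi_\emptyset)\simeq\Act_{\tau_{i_j}}(\pi_{I\setminus\{i_j\}})$ for each $j$, applies Proposition~\ref{prop: actbij} (equivalently Corollary~\ref{cor: Actperm1}) to each $\pi_{I\setminus\{i_j\}}$, peels off the terms of smaller cardinality already known by induction, and then observes that the only index set appearing in every one of the $d$ resulting residual bijections is $I$ itself — a ``global intersection'' argument over the $d$ different one-element removals. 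Your proof instead strengthens the inductive hypothesis to $\sigma_J=\id$ for all $|J|\le d$ and forces $\sigma_I=\id$ for $|I|=d+1$ by a ``local'' argument: commutativity $\Act_{\tau_i}\circ\Act_{\tau_k}\simeq\Act_{\tau_k}\circ\Act_{\tau_i}$ (monoidality) combined with Corollary~\ref{cor: Actperm1} and an explicit symmetric-difference cardinality count, which rules out $\sigma_I(k)\ne k$ because the relevant symmetric difference would have cardinality $3$ rather than $1$. I verified the bookkeeping in your step (iii): all invocations of the inductive hypothesis are on $I\setminus\{i\}$ of strictly smaller cardinality $d$, step (ii) correctly yields that $\sigma_I$ preserves $\{1,\ldots,r\}\setminus I$, and the cardinality dichotomy (1 versus 3) is exactly right given $\sigma_I(k)\notin I$ and $k\notin I$. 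The two approaches use the same basic ingredients — Corollary~\ref{cor: Actperm1} plus monoidality plus Lemma~\ref{actirred} — but the paper leans on the intersection of several bijections while you lean on commutativity of the $\Act$-operators; your version proves the uniform statement $\sigma_I=\id$ directly, which makes the passage to Proposition~\ref{prop: Unallactval} slightly more transparent, while the paper's version is a touch shorter since it does not need to track the permutations $\sigma_I$ for $|I|>1$ separately.
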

\begin{proof}
We argue by induction on the quantity $|I|$. For $|I| = 0$ this is the fact that $\Act_{\mathbf{1}}(-)$ is the identity functor, and for $|I| = 1$ this is by assumption. Now let $I$ be an index set with $1 < |I| = d \leq r$. We want to compute the value of $\Act_{\tau_{I}}(\pi_{\emptyset})$.

We enumerate the elements of $I$ by $\{i_{1},i_{2},\ldots,i_{d}\}$. For all $j = 1,\ldots,d$, we have by the monoidal property that $\Act_{\tau_{I}}(\pi_{\emptyset}) \simeq \Act_{\tau_{i_{j}}} \circ \Act_{\tau_{I \setminus \{i_{j}\}}}(\pi_{\emptyset}) \simeq \Act_{\tau_{i_{j}}}(\pi_{I \setminus \{i_{j}\}})$ for varying $j = 1,\ldots,d$, where we have used the inductive hypothesis for the last isomorphism. Now we can apply Proposition \ref{prop: actbij} to the representations $\pi_{I \setminus \{i_{j}\}}$. It tells us that we have a bijection between $\Act_{\tau_{k}}(\pi_{I \setminus \{i_{j}\}})$ for $k = 1,\ldots,r$ and the representations $\pi_{(I \setminus \{i_{j}\}) \oplus \{k\}}$ where we have added or subtracted $k$ from $I \setminus \{i_{j}\}$. However, if $k \in I \setminus \{i_j\}$ then by the inductive assumption, we have $\Act_{\tau_k}(\pi_{I \setminus \{i_j\}}) = \pi_{(I \setminus \{i_j\}) \oplus k}$. Hence, for $k = 1,\ldots,r$ such that $k \notin I \setminus \{i_{j}\}$, we have a bijection between $\Act_{\tau_{(I \setminus \{i_{j}\}) \cup \{k\}}}(\pi_{\emptyset})$ and the representations $\pi_{(I \setminus \{i_{j}\}) \cup \{k\}}$. For varying $j = 1,\ldots,d$ the representations $\pi_I$ and $\Act_I(\pi_{\emptyset})$ are the only ones appearing in every bijection for varying $j = 1,\ldots,d$. Hence, we must have $\Act_I(\pi_{\emptyset})=\pi_I$, as desired. 
\end{proof}
\begin{remark}
We note that the ideas explained in this section illustrate how, knowing compatibility of the Fargues--Scholze local Langlands correspondence with the refined local Langlands correspondence, is enough to show the Hecke eigensheaf attached to supercuspidal $\phi$ has the correct form, as predicted by \cite[Conjecture~4.13]{Fa}, as well as show the Kottwitz conjecture and compute the value of the $\Act$-functors up to the ambiguity of the permutations matching the irreducible summands of $r_{\mu} \circ \phi$ with the representations appearing in $R\Gamma_{c}(G,b,\mu)[\pi]$, using \cite[Theorem~1.0.2]{HKW}. The previous proposition tells us that for $G = \U_{n}$ it is sufficient to answer this question for the complex $R\Gamma_{c}(G,b,\mu_{1})[\pi_{\emptyset}]$. In particular, Kottwitz's and Fargues' conjecture for any quasi-split group $G$ and a supercuspidal $L$-parameter $\phi$ should ultimately be reduced to computing $\Act_{W_{i}}(\pi_{\mathbf{1}})$, where $W_{i} \in \Rep_{\ol{\mathbb{Q}}_{\ell}}(S_{\phi})$ are a set of irreducible representations such that any irreducible $W \in \Rep_{\ol{\mathbb{Q}}_{\ell}}(S_{\phi})$ can be written as a tensor product of some $W_{i}$, and $\pi_{\mathbf{1}} \in \Pi_{\phi}(G)$ is the unique $\mf{w}$-generic representation for some Whittaker datum $\mf{w}$. The key point is that for $G = \U_{n}$ the $W_{i}$ can be chosen to be $\tau_{i}$ for $i \in \{1,\ldots,r\}$, and all these $S_{\phi}$-representations are realized in $r_{\mu_{1}}$. Therefore, it is interesting to wonder if, for any $G$, one can find a nice list of dominant geometric cocharacters $\mu$ such that $r_{\mu} \circ \phi$ realizes some set of generators $W_{i}$, as well as whether the analysis in the end of this section generalizes.
\end{remark}
\section{Conflict of interest statement}
On behalf of all authors, the corresponding author states that there is no conflict of interest.
\section{Data availability statement}
Data sharing not applicable to this article as no datasets were generated or analysed during the current study.
\bibliographystyle{amsalpha}
\bibliography{publications}
\end{document}